\title{Cluster equivalence and graded derived equivalence}
\author{Claire Amiot}
\address{Institut de Recherche de Math\'ematique Avanc\'ee, 7 rue Ren\'e Descartes, 67084 Strasbourg Cedex, France}
\email{amiot@math.unistra.fr}
\author{Steffen Oppermann}
\address{Insitutt for matematiske fag,
Norges Teknisk-Naturvitenskapelige Universitet,
N-7491 Trondheim, Norway}
\email{steffen.oppermann@math.ntnu.no}
\thanks{Work on this project has been done while both authors were working at NTNU, Trondheim, supported by NFR Storforsk grant no.\ 167130.}
\newcommand{\Hom}{{\sf Hom }}
\newcommand{\RHom}{\mathbf{R}{\sf Hom }}
\newcommand{\End}{{\sf End }}
\newcommand{\Ext}{{\sf Ext }}
\newcommand{\mor}{{\sf mor\hspace{.02in} }}
\newcommand{\Coker}{{\sf Coker }}
\renewcommand{\mod}{{\sf mod \hspace{.02in}  }}
\newcommand{\Mod}{{\sf Mod \hspace{.02in} }}
\newcommand{\add}{{\sf add \hspace{.02in} }}
\newcommand{\ind}{{\sf ind \hspace{.02in} }}
\newcommand{\thick}{{\sf thick \hspace{.02in}  }}
\newcommand{\proj}{{\sf proj \hspace{.02in} }}
\newcommand{\gr}{{\sf gr \hspace{.02in} }}
\newcommand{\ten}{\otimes}
\newcommand{\lten}{\overset{\mathbf{L}}{\ten}}
\renewcommand{\lim}{{\sf lim \hspace{.02in}}}
\newcommand{\colim}{{\sf colim \hspace{.02in}}}
\newcommand{\pretr}{{\sf pretr \hspace{.02in}}}
\newcommand{\rep}{{\sf rep \hspace{.02in}}}
\newcommand{\Jac}{{\sf Jac}}
\newcommand{\Aa}{\mathcal{A}}
\newcommand{\Cc}{\mathcal{C}}
\newcommand{\Dd}{\mathcal{D}}
\newcommand{\Ee}{\mathcal{E}}
\newcommand{\Ff}{\mathcal{F}}
\newcommand{\Hh}{\mathcal{H}}
\newcommand{\Tt}{\mathcal{T}}
\newcommand{\Uu}{\mathcal{U}}
\newcommand{\Vv}{\mathcal{V}}
\newcommand{\Xx}{\mathcal{X}}
\newcommand{\Yy}{\mathcal{Y}}
\newcommand{\Zz}{\mathcal{Z}}
\newcommand{\SSS}{\mathbb{S}_2}
\newcommand{\ZZ}{\mathbb{Z}}
\newcommand{\bsm}{\begin{smallmatrix}}
\newcommand{\esm}{\end{smallmatrix}}
\newtheorem{thma}{Theorem}[section]
\newtheorem{lema}[thma]{Lemma}
\newtheorem*{lem*}{Lemme}
\newtheorem{cora}[thma]{Corollary}
\newtheorem*{prop*}{Proposition}
\newtheorem{prop}[thma]{Proposition}
\theoremstyle{remark}
\newtheorem{rema}[thma]{Remark}
\newtheorem{exa}[thma]{Example}
\theoremstyle{definition}
\newtheorem{dfa}[thma]{Definition}
\DeclareMathOperator{\gldim}{gl.\!dim}
\newcommand{\cov}[2]{{\rm Cov}(#1 , #2)}
\newcommand{\leftsub}[2]{{\vphantom{#2}}_{#1}{#2}}
\begin{document}

\begin{abstract}
In this paper we introduce a new approach for organizing algebras of global dimension at most 2. We introduce the notion of cluster equivalence for these algebras, based on whether their generalized cluster categories are equivalent. We are particularly interested in the question how much information about an algebra is preserved in its generalized cluster category, or, in other words, how closely  two algebras are related if they have equivalent generalized cluster categories.

Our approach makes use of the cluster-tilting objects in the generalized cluster categories: We first observe that cluster-tilting objects in generalized cluster categories are in natural bijection with cluster-tilting subcategories of derived categories, and then prove a recognition theorem for the latter.

Using this recognition theorem we give a precise criterion when two cluster equivalent algebras  are derived equivalent. For a given algebra we further describe all the derived equivalent algebras which have the same canonical cluster tilting object in their generalized cluster category.

Finally we show that in general, if two algebras are cluster equivalent, then (under certain conditions) the algebras can be graded in such a way that the categories of graded modules are derived equivalent. To this end we introduce mutation of graded quivers with potential, and show that this notion reflects mutation in derived categories.
\end{abstract}

\maketitle

\tableofcontents

\section{Introduction}
Tilting theory is an essential tool in representation theory of finite dimensional algebras. It permits to link algebras called derived equivalent, i.e.\ whose bounded derived categories are equivalent (see \cite{Hap} -- also see \cite{tilting} for a broader overview).

The introduction of cluster algebras goes back to Fomin and Zelevinsky \cite{FZ1}. The categorical interpretation of their combinatorics has been a crucial turn  in tilting theory and has brought new perspectives into the field: \emph{cluster-tilting theory}. The first step was the introduction of the cluster category $\Cc_Q$ associated with a finite acyclic quiver $Q$ in \cite{Bua} (using \cite{Mar}). The cluster category $\Cc_Q$ is defined as the orbit category $\Dd^b(kQ)/\SSS$ of the bounded derived category of the path algebra $kQ$ under the action of the autoequivalence $\SSS:=\mathbb{S}[-2]$ where $\mathbb{S}$ is the Serre functor of $\Dd^b(kQ)$. This category is triangulated (cf.\ \cite{Kel}), $\Hom$-finite (the $\Hom$-spaces are finite dimensional) and Calabi-Yau of dimension 2 (2-CY for short), that is there exists a functorial isomorphism $\Hom_{\Cc_Q}(X,Y)\simeq D\Hom_{\Cc_Q}(Y,X[2])$ for any objects $X$ and $Y$, where $D$ is the duality over $k$. The category $\Cc_Q$ has certain special objects called 
 \emph{cluster-tilting objects}. From one cluster-tilting object it is possible to construct others using a procedure called \emph{mutation}, whose combinatorics is very close to the combinatorics developed by Fomin and Zelevinsky for cluster algebras. The images of the tilting $kQ$-modules under the natural projection $\pi_Q \colon \Dd^b(kQ)\rightarrow \Cc_Q$ are cluster-tilting objects, and tilted algebras (= endomorphism algebras of tilting modules over a path algebra $kQ$) can be seen as specific quotients of cluster-tilted algebras (= endomorphism algebras of cluster-tilting objects in $\Cc_Q$).

The notion of cluster categories has been generalized in \cite{Ami3} replacing the hereditary algebra $kQ$ by an algebra $\Lambda$ of global dimension at most two. In this case the orbit category $\Dd^b\Lambda/\SSS$ is in general neither triangulated nor $\Hom$-finite. We restrict to the case where it is $\Hom$-finite (the algebra $\Lambda$ is then said to be $\tau_2$-finite). The generalized cluster category is now defined to be the triangulated hull $(\Dd^b(\Lambda)/\SSS)_\Delta$ in the sense of \cite{Kel} of the orbit category $\Dd^b\Lambda/\SSS$. The natural functor
$$\xymatrix{\pi_\Lambda \colon \Dd^b(\Lambda)\ar[r] & \Dd^b\Lambda/\SSS\ar@{^(->}[r] & (\Dd^b\Lambda/\SSS)_\Delta =:\Cc_\Lambda}$$ is triangulated. 

The aim of this paper is to study to what extent the derived categories are reflected by the cluster categories. More precisely we ask: If two algebras of global dimension at most two have the same cluster category, are they then automatically also derived equivalent? Since the answer to this question is negative in general, we further investigate how the two derived categories are related in case they are not equivalent.

Our means towards this goal is to study cluster-tilting objects. It has been shown in \cite{Ami3} that for any tilting complex $T$ in $\Dd^b(\Lambda)$ such that the endomorphism algebra $\End_{\Dd^b(\Lambda)}(T)$ is of global dimension~$\leq 2$, the object $\pi_{\Lambda}(T)$ in $\Cc_{\Lambda}$ is cluster-tilting.  Therefore cluster-tilting objects in cluster categories can be seen as analogs of tilting complexes in bounded derived categories.

It is an important result in tilting theory that tilting objects determine (in some sense) the triangulated category (\cite{Rickard}, or, more generally, \cite{Kel9}). Unfortunately, in cluster-tilting theory an analog of this theorem only exists for cluster categories coming from acyclic quivers (see \cite{Kel5}).

The first main result of this paper is to provide a `cluster-tilting' analog of  the abovementioned theorem from tilting theory. One key observation is that the endomorphism algebra $\widetilde{\Lambda} := \End_{\Cc_\Lambda}(\pi(\Lambda))$ has a natural grading given by $\widetilde{\Lambda}=\bigoplus_{p\in\ZZ}\Hom_{\Dd^b\Lambda}(\Lambda,\SSS^{-p}\Lambda)$. This graded algebra admits as $\ZZ$-covering the locally finite subcategory
\[ \Uu_\Lambda := \pi_\Lambda^{-1}(\pi_\Lambda(\Lambda))=\add\{\SSS^p\Lambda \mid p\in\ZZ\}\subset \Dd^b\Lambda, \]
which is a cluster-tilting subcategory of the derived category $\Dd^b \Lambda$.

\begin{thma}[Recognition theorem -- Theorem~\ref{clustertilt}] \label{recognition}
Let $\Tt$ be an algebraic triangulated category with a Serre functor and with a cluster-tilting subcategory $\Vv$. Let $\Lambda$ be a $\tau_2$-finite algebra with global dimension~$\leq 2$. Assume that there is an equivalence of additive categories with $\SSS$-action  $\xymatrix{f \colon \Uu_{\Lambda} \ar[r]^-\sim & \Vv}$. Then there exists a triangle equivalence $F \colon \Dd^b\Lambda\rightarrow \Tt$.
\end{thma}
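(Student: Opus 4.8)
The plan is to realise everything inside differential graded (dg) enhancements and to reduce the statement to the existence of a quasi-equivalence of dg categories lifting $f$. Since $\Tt$ is algebraic it admits a dg enhancement; write $\Bb$ for the full dg subcategory supported on the objects of $\Vv$, so that $H^0\Bb\simeq\Vv$ and $H^n\Bb(V,V')=\Hom_\Tt(V,V'[n])$. Do the same inside (a dg model of) $\Dd^b\Lambda$ to obtain a dg category $\Aa$ enhancing $\Uu_\Lambda$. Because $\Lambda$ has finite global dimension and lies in $\Uu_\Lambda$, it is a classical generator, so $\thick_{\Dd^b\Lambda}\Uu_\Lambda=\Dd^b\Lambda$; the approximation triangles provided by the cluster-tilting property of $\Vv$ give likewise $\thick_\Tt\Vv=\Tt$. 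Keller's dg Morita theorem \cite{Kel9} then yields triangle equivalences $\Dd^b\Lambda\simeq\per\Aa$ and $\Tt\simeq\per\Bb$ under which $\Uu_\Lambda$ and $\Vv$ correspond to the representable dg modules. It therefore suffices to produce a quasi-equivalence $\Aa\to\Bb$ inducing $f$ on $H^0$.

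The subtle point is that $f$ carries only a limited amount of information: as an equivalence of additive categories with $\SS$-action it encodes the degree-$0$ morphisms together with the $\SS$-grading, that is the graded algebra isomorphism $\widetilde\Lambda\simeq\bigoplus_p\Hom_\Tt(V_0,\SS^{-p}V_0)$ where $V_0:=f(\Lambda)$. A quasi-equivalence $\Aa\to\Bb$, by contrast, must match \emph{all} the extension groups $\Hom_\Tt(V,V'[n])$ and the higher multiplications, and these are genuinely nonzero in a range of positive and negative degrees $n$ (they are pairwise identified by Serre duality $\Hom_\Tt(V,V'[n])\simeq D\Hom_\Tt(V',\SS V[2-n])$, but do not vanish). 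The reconstruction principle that rescues the argument is that the degree-$0$ part already recovers the algebra itself, $\Lambda=\widetilde\Lambda_0=\End_\Tt(V_0)$, and hence in principle the whole of $\Dd^b\Lambda$; the task is to see that the abstract higher structure on $\Bb$ is forced to agree with this.

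I expect the main obstacle to be exactly this matching of higher structure, and the natural mechanism is the Calabi-Yau completion formalism. The graded algebra $\widetilde\Lambda$ is the zeroth cohomology of the $3$-Calabi-Yau completion $\Pi$ of $\Lambda$ (cf.\ the construction of $\Cc_\Lambda$ in \cite{Ami3}), and $\Aa$ is the associated graded dg model; on the other side the Serre functor $\mathbb{S}=\SS[2]$ on $\Tt$ equips $\Bb$ with a bimodule Calabi-Yau structure lifting, via $f$, the pairing that $\widetilde\Lambda$ already carries on cohomology. I would then invoke the uniqueness of such completions: the deformations of the minimal $A_\infty$-structure on the fixed graded category $\widetilde\Lambda$ are governed by Hochschild cohomology in the relevant negative degrees, and the Calabi-Yau pairing preserved by $f$ is precisely what makes the two potentials, hence the two $A_\infty$-structures, correspond. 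This promotes $f$ to an $A_\infty$-, and thus to a dg, quasi-equivalence $\Aa\to\Bb$, and constitutes the technical heart of the proof.

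Once $f$ has been lifted to a quasi-equivalence $\Aa\to\Bb$, it induces a triangle equivalence $\per\Aa\xrightarrow{\sim}\per\Bb$. Composing with the identifications $\Dd^b\Lambda\simeq\per\Aa$ and $\Tt\simeq\per\Bb$ of the first paragraph produces the desired triangle equivalence $F\colon\Dd^b\Lambda\to\Tt$, which by construction restricts to $f$ on $\Uu_\Lambda$ and commutes with $\SS$.
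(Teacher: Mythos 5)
Your opening reduction is sound: $\Lambda\in\Uu_\Lambda$ generates $\Dd^b\Lambda$, the approximation triangles give $\thick_\Tt\Vv=\Tt$, and Keller's theorem then identifies $\Dd^b\Lambda\simeq\per\Aa$ and $\Tt\simeq\per\Bb$. But the step you yourself call the technical heart --- promoting $f$ to a quasi-equivalence $\Aa\to\Bb$ --- is asserted, not proved, and nothing in your sketch closes it. Concretely: $f$ records only the degree-zero morphism spaces together with the $\SS$-action. Cluster-tilting kills degree $1$, and Serre duality recovers degree $2$ from degree $0$, but the spaces $\Hom_\Tt(V,V'[n])$ for $n\geq 3$ and $n\leq -1$ come in Serre-dual pairs $(n,2-n)$ neither member of which is pinned down by the data, and they are genuinely nonzero (already $\Hom_{\Dd^b\Lambda}(\Lambda,\SS^{-p}\Lambda[-1])=H^{-1}(\SS^{-p}\Lambda)\neq 0$ in general). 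So even the graded categories $H^\bullet\Aa$ and $H^\bullet\Bb$ are not identified by your hypotheses, let alone the $A_\infty$-structures; your phrase ``the minimal $A_\infty$-structure on the fixed graded category $\widetilde\Lambda$'' conflates the $\SS$-grading, which $f$ respects, with the cohomological grading, which $f$ does not see. The appeal to uniqueness of Calabi--Yau completions is moreover circular: to exhibit $\Bb$ as (a covering of) the $3$-CY completion of $\Lambda$ you would already need to know $\Tt\simeq\Dd^b\Lambda$, and the Hochschild-cohomological vanishing that would rigidify the $A_\infty$-structure is precisely what is unavailable beyond the hereditary case --- this is why a recognition theorem previously existed only for acyclic quivers \cite{Kel5}.

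The paper's proof is genuinely different and avoids dg lifting entirely. It introduces the category $\mor\Vv$ of radical maps in $\Vv$ and shows that taking cones gives a bijection on isomorphism classes between $\mor\Vv$ and $\Tt$, compatible with the shift and with $\SS$; via this bijection and its counterpart for $\Uu_\Lambda$, every space $\Hom_\Tt(f\Lambda,\textsf{Cone}(fu)[p])$ is computed as $\Hom_{\Dd^b\Lambda}(\Lambda,\textsf{Cone}(u)[p])$. Taking $u=[0\rightarrow\Lambda]$ shows $f\Lambda$ is a tilting object with endomorphism ring $\Lambda$; taking general $u$ shows $f\Lambda$ generates $\Tt$. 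Keller's tilting theorem for algebraic triangulated categories then produces the equivalence: all enhancement-level difficulties are delegated to a statement about a single tilting object, for which only the vanishing $\Hom_\Tt(f\Lambda,f\Lambda[p])=0$, $p\neq 0$, is required, rather than control of the full higher structure on $\Vv$. If you want to rescue your approach, the missing ingredient is exactly such a device converting degree-zero-plus-$\SS$ data into Ext-vanishing for a generator.
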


In contrast to Keller's and Reiten's theorem, this result is a recognition theorem for the derived category. That is, we use cluster-tilting theory for studying a classical problem in representation theory. 

Applying Theorem~\ref{recognition} helps us to study the notion of \emph{cluster-equivalent} algebras, i.e.\ algebras of global dimension~$\leq 2$ with the same cluster category, which is the main subject of this paper. 

Using it, we give the following criterion on when two cluster equivalent algebras $\Lambda_1$ and $\Lambda_2$ with $\pi_{\Lambda_1} \Lambda_1\simeq\pi_{\Lambda_2} \Lambda_2$ in the common cluster category are derived equivalent.

\begin{thma}[Theorem~\ref{derivedeq1}] \label{derivedeq_intro}
In the above setup the cluster equivalent algebras $\Lambda_1$ and $\Lambda_2$ are derived equivalent if and only if the $\ZZ$-graded algebras $\widetilde{\Lambda}_1$ and $\widetilde{\Lambda}_2$ are graded equivalent. 
\end{thma}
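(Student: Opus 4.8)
The plan is to translate the statement about graded algebras into one about the covering categories $\Uu_{\Lambda_1}$ and $\Uu_{\Lambda_2}$, and then to invoke the Recognition Theorem for the substantial implication. First I would record the covering dictionary. As noted before the statement, $\Uu_{\Lambda_i}$ is the $\ZZ$-covering of the graded algebra $\widetilde{\Lambda}_i$, with $\ZZ$ acting freely by the autoequivalence $\SS$. Under this correspondence a graded equivalence $\widetilde{\Lambda}_1\simeq\widetilde{\Lambda}_2$ is the same datum as an equivalence of additive categories $\Uu_{\Lambda_1}\xrightarrow{\sim}\Uu_{\Lambda_2}$ commuting with $\SS$, i.e.\ an equivalence with $\SS$-action. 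Thus the theorem reduces to the assertion that $\Lambda_1$ and $\Lambda_2$ are derived equivalent if and only if $\Uu_{\Lambda_1}$ and $\Uu_{\Lambda_2}$ are equivalent as additive categories with $\SS$-action.

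For the implication ``graded equivalent $\Rightarrow$ derived equivalent'' I would apply the Recognition Theorem directly. The category $\Tt := \Dd^b\Lambda_2$ is algebraic, carries a Serre functor, and contains $\Uu_{\Lambda_2}$ as a cluster-tilting subcategory; taking $\Lambda := \Lambda_1$ and letting $f \colon \Uu_{\Lambda_1}\xrightarrow{\sim}\Uu_{\Lambda_2}$ be the equivalence with $\SS$-action supplied by the previous paragraph, Theorem~\ref{recognition} produces a triangle equivalence $\Dd^b\Lambda_1\xrightarrow{\sim}\Dd^b\Lambda_2$. Hence $\Lambda_1$ and $\Lambda_2$ are derived equivalent. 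This is where the main technical content of the paper is used.

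For the converse, let $\Phi\colon\Dd^b\Lambda_1\xrightarrow{\sim}\Dd^b\Lambda_2$ be a triangle equivalence. Since any triangle equivalence commutes with the Serre functor up to natural isomorphism, it commutes with $\SS = \mathbb{S}[-2]$, and therefore restricts to an equivalence with $\SS$-action $\Uu_{\Lambda_1} = \add\{\SS^p\Lambda_1\}\xrightarrow{\sim}\add\{\SS^p\Phi\Lambda_1\}$. Now $\Phi\Lambda_1$ is a tilting complex in $\Dd^b\Lambda_2$ with $\End_{\Dd^b\Lambda_2}(\Phi\Lambda_1)\cong\Lambda_1$ of global dimension $\leq 2$, so by the result of \cite{Ami3} recalled above $\pi_{\Lambda_2}(\Phi\Lambda_1)$ is cluster-tilting and $\add\{\SS^p\Phi\Lambda_1\} = \pi_{\Lambda_2}^{-1}(\pi_{\Lambda_2}\Phi\Lambda_1)$ is a cluster-tilting subcategory of $\Dd^b\Lambda_2$ with $\SS$-action. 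Transporting the grading through $\Phi$ already shows $\widetilde{\Lambda}_1$ to be graded equivalent to the canonical graded algebra of $\pi_{\Lambda_2}\Phi\Lambda_1$; it remains to identify the latter, up to graded equivalence, with $\widetilde{\Lambda}_2$, equivalently to produce an $\SS$-equivariant equivalence $\add\{\SS^p\Phi\Lambda_1\}\simeq\Uu_{\Lambda_2}$.

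The main obstacle lies precisely in this last matching step. An arbitrary derived equivalence $\Phi$ descends to an autoequivalence $\overline{\Phi}$ of the common cluster category, and there is no reason for $\overline{\Phi}$ to fix the class of the canonical cluster-tilting object: in general $\pi_{\Lambda_2}\Phi\Lambda_1 = \overline{\Phi}(\pi_{\Lambda_1}\Lambda_1)$ differs from $\pi_{\Lambda_2}\Lambda_2$, so the grading transported from $\Lambda_1$ need not be the intrinsic grading $\widetilde{\Lambda}_2$ on the nose. I would resolve this by using the hypothesis $\pi_{\Lambda_1}\Lambda_1\simeq\pi_{\Lambda_2}\Lambda_2$ to correct $\Phi$ by an autoequivalence so that $\overline{\Phi}$ becomes compatible with the fixed cluster equivalence on the canonical cluster-tilting object; once $\pi_{\Lambda_2}\Phi\Lambda_1\simeq\pi_{\Lambda_2}\Lambda_2$ the two coverings agree up to $\SS$-equivariant equivalence and the required graded equivalence $\widetilde{\Lambda}_1\simeq\widetilde{\Lambda}_2$ follows.
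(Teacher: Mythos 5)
Your first direction (graded equivalence implies derived equivalence) is correct and is precisely the paper's argument for $(2)\Rightarrow(1)$ of Theorem~\ref{derivedeq1}: Theorem~\ref{gordongreen} converts the graded equivalence into an equivalence of additive categories with $\SS$-action $f\colon \Uu_{\Lambda_1}\rightarrow \Uu_{\Lambda_2}$, and the recognition theorem (Theorem~\ref{clustertilt}), applied with $\Tt=\Dd^b\Lambda_2$ and $\Vv=\Uu_{\Lambda_2}$, produces the triangle equivalence. The covering dictionary you set up at the start is likewise the one the paper uses, via Gordon--Green.

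The gap is in your converse, and it is exactly the step you flag: ``correct $\Phi$ by an autoequivalence so that $\overline{\Phi}$ becomes compatible.'' This is not a proof step but a restatement of what would have to be proven. The functor that would achieve the compatibility is the cluster-level composite $\beta=\overline{\Phi}\circ G^{-1}$ of $\Cc_{\Lambda_2}$ (where $G$ is the given cluster equivalence with $G(\pi_1\Lambda_1)\simeq\pi_2\Lambda_2$), and to modify $\Phi$ you would need to realize $\beta^{-1}$, or at least some autoequivalence with the same effect on $\pi_2\Phi\Lambda_1$, as induced by a triangle autoequivalence of $\Dd^b\Lambda_2$. Nothing supplies such a lift: equivalences at the level of generalized cluster categories do not in general come from the derived level --- this failure is the very phenomenon the paper studies (Example~\ref{example} gives cluster equivalent algebras that are not derived equivalent) --- and the natural grading on the endomorphism algebra of a cluster-tilting object is defined through a choice of lift to the derived category, so it is not an invariant that autoequivalences of $\Cc_{\Lambda_2}$ preserve. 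Accordingly, the paper never proves the statement in your literal reading. In Theorem~\ref{derivedeq1}, condition (1) is not ``$\Lambda_1$ and $\Lambda_2$ are derived equivalent'' but ``there exists a derived equivalence $F^\Dd$ whose induced cluster equivalence $F$ satisfies $F(\pi_1\Lambda_1)\simeq\pi_2\Lambda_2$''; the compatibility is part of the condition, not something deduced from bare derived equivalence. Under that reading no correction is needed, and your final sentence becomes the paper's proof of $(1)\Rightarrow(2)$: the compatibility places $F^\Dd\Lambda_1$ inside $\pi_2^{-1}(\pi_2\Lambda_2)=\add\{\SS^p\Lambda_2 \mid p\in\ZZ\}$, hence $F^\Dd\Lambda_1\simeq\bigoplus_{i=1}^n\SS^{-d_i}e_i\Lambda_2$ for suitable integers $d_i$, and then the graded isomorphism $\widetilde{\Lambda}_1\simeq\bigoplus_{p}\Hom_{\Dd_2}(F^\Dd\Lambda_1,\SS^{-p}F^\Dd\Lambda_1)$ together with Theorem~\ref{gordongreen} yields the graded equivalence with $\widetilde{\Lambda}_2$. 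So either adopt the paper's formulation of the hypothesis, or supply the (missing, and in general unavailable) lifting argument; as written, the converse direction is not established.
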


We give a characterization of the tilting complexes $T$ of $\Dd^b(\Lambda_1)$ such that $\pi_{\Lambda_1}(T)\simeq \pi_{\Lambda_1}(\Lambda_1)$ (Theorem~\ref{characterizationinverseimage}), with which we can show that in case the equivalent conditions of Theorem~\ref{derivedeq_intro} hold, the algebras $\Lambda_1$ and $\Lambda_2$ are iterated $2$-APR tilts of one another.

In case the equivalent conditions of Theorem~\ref{derivedeq_intro} do not hold we then consider the case where the two different $\ZZ$-gradings on $\widetilde{\Lambda}_1\simeq\widetilde{\Lambda}_2$ are compatible (that is when they induce a $\ZZ^2$-grading on $\widetilde{\Lambda}_1$; an assumption that seems to be always satisfied in actual examples). In this case, the natural $\ZZ$-grading on $\widetilde{\Lambda}_2$ induces a $\ZZ$-grading on $\Lambda_1$ and vice versa. We then obtain the following result.

\begin{thma}[Theorem~\ref{maintheorem}] \label{gradedderivedeq}
Let $\Lambda_1$ and $\Lambda_2$ be cluster equivalent, and assume we are in the setup described above. Then there is a triangle equivalence
\[ \Dd^b(\gr\Lambda_1) \simeq \Dd^b(\gr\Lambda_2) \]
(where $\gr \Lambda_i$ denotes the category of graded $\Lambda_i$-modules).
\end{thma}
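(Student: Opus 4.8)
The guiding principle I would follow is that passing to graded modules amounts to taking a $\ZZ$-cover with the degree--shift $(1)$ as deck transformation, and that doing this for the two compatible gradings realises both sides as one and the same doubly--infinite cover of the common cluster category $\Cc$. Write $A:=\widetilde{\Lambda}_1\simeq\widetilde{\Lambda}_2$ for the common algebra, now carrying the $\ZZ^2$--grading whose two $\ZZ$--summands are the natural gradings of $\widetilde{\Lambda}_1$ and $\widetilde{\Lambda}_2$, and denote a bidegree by $(p,q)$. By construction $\Lambda_1=\bigoplus_q A_{0,q}$ is the $p$--degree--zero part, graded by $q$, while symmetrically $\Lambda_2=\bigoplus_p A_{p,0}$ is graded by $p$. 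Since $\gldim\Lambda_i\le 2$, the graded algebras again have global dimension at most two, so each $\Dd^b(\gr\Lambda_i)$ is a well--behaved triangulated category with a Serre functor, and I may speak of the two commuting autoequivalences $\sigma_1,\sigma_2$ that will generate the deck group.

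First I would record the covering picture. The push--down (forgetful) functor $\gr\Lambda_1\to\mod\Lambda_1$ is a $\ZZ$--Galois covering whose group is generated by the degree--shift, so it exhibits $\Dd^b(\gr\Lambda_1)$ as a cover of $\Dd^b(\Lambda_1)$. Because the grading on $\Lambda_1$ is the one induced by the \emph{natural} grading of $\widetilde{\Lambda}_2$, this degree--shift should be identified with the lift $\sigma_2$ of the $\SS$ coming from $\Lambda_2$; composing with the covering $\Dd^b(\Lambda_1)\to\Cc$ of \cite{Ami3} (the triangulated hull of the orbit category under $\SS=\sigma_1$) then presents $\Dd^b(\gr\Lambda_1)$ as a cover of $\Cc$ with deck group $\ZZ^2=\langle\sigma_1\rangle\times\langle\sigma_2\rangle$. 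Running the same analysis for $\Lambda_2$ presents $\Dd^b(\gr\Lambda_2)$ as a cover of $\Cc$ with the \emph{same} deck group, with the roles of $\sigma_1$ and $\sigma_2$ interchanged. It therefore suffices to identify both with the universal $\ZZ^2$--cover of $\Cc$ determined by the bigraded datum $A$.

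To turn this heuristic into an equivalence I would realise the universal cover concretely. The compatibility hypothesis is exactly what guarantees that the second grading descends to the quiver with potential $(Q,W)$ presenting $A=\Jac(Q,W)$ and makes $W$ homogeneous for both degrees; hence $(Q,W)$ is $\ZZ^2$--graded and carries a correspondingly bigraded Ginzburg dg algebra $\widehat{\Gamma}$. Following the construction of \cite{Ami3} in its equivariant (graded) form, I would recover $\Dd^b(\gr\Lambda_1)$ from $\per\widehat{\Gamma}$ with the degree--shift matched to one $\ZZ$--factor, and $\Dd^b(\gr\Lambda_2)$ from the other; since $\widehat{\Gamma}$ depends only on the symmetric bigraded datum, the two resulting categories are canonically equivalent. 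The machinery actually developed in the paper enters here in the alternative route: the two gradings are joined inside the grading lattice $\ZZ^2$ by a finite sequence of graded mutations of $(Q,W)$, and by the result that graded mutation reflects mutation in the derived category each step is a graded $2$--APR tilt inducing a triangle equivalence of graded derived categories, whose composite yields $\Dd^b(\gr\Lambda_1)\simeq\Dd^b(\gr\Lambda_2)$.

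The step I expect to be the main obstacle is the graded upgrade of the recognition theorem (Theorem~\ref{recognition}) and of the Amiot construction: one must verify that the whole passage from a cluster--tilting subcategory to its ambient derived category is compatible with the extra $\ZZ$--action, and in particular that the degree--shift on $\gr\Lambda_1$ is identified \emph{on the nose} with the lifted Serre--type functor $\sigma_2$ rather than some twist of it. Equivalently, the hard part is proving that graded mutation of the graded quiver with potential corresponds exactly to the tilt effecting mutation in $\Dd^b(\gr-)$; controlling the degrees and signs in the differential of the graded Ginzburg dg algebra, and checking the graded $\tau_2$--finiteness that the constructions require, is the technical heart of the argument.
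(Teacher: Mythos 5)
Your covering-space heuristic is indeed the motivation behind the paper's argument, and you correctly locate the crux in a graded recognition theorem; but as written the proposal contains no proof, because the pivotal step --- ``it therefore suffices to identify both with the universal $\ZZ^2$-cover of $\Cc$ determined by the bigraded datum $A$'' --- rests on a uniqueness-of-covers principle that is \emph{false} for triangulated categories. Orbit-category quotients cannot be formally inverted: two triangulated categories with group actions whose (triangulated hulls of) orbit categories agree need not be equivalent, even when the ``deck groups'' match. The paper itself supplies the counterexample: in Example~\ref{example}, $\Dd^b(\Lambda_1)$ and $\Dd^b(\Lambda_3)$ are both covers of the same cluster category with deck group generated by $\SS$, yet they are not equivalent. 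So existence and uniqueness of ``the'' cover attached to the bigraded algebra $A$ is exactly the content of the graded recognition theorem (Theorem~\ref{Zclustertilt}), which you flag as ``the main obstacle'' but never establish. The paper's proof of Theorem~\ref{maintheorem} instead proceeds concretely: it uses graded mutation (Theorems~\ref{Zgradedbirs} and~\ref{Z2gradedbirs}) and the compatibility condition to manufacture an equivalence, commuting with $\SS$, between the $\ZZ^2$-graded cluster-tilting subcategories $\add\{\SS^{-p}\Lambda_1\langle -p+q\rangle \mid p,q\in\ZZ\}\subset\Dd^b(\cov{\Lambda_1}{\ZZ})$ and $\add\{\SS^{-q}T_0^2\langle -q+p\rangle \mid p,q\in\ZZ\}\subset\Dd^b(\cov{\Lambda_2}{\ZZ})$, then applies Theorem~\ref{Zclustertilt} to obtain $\Dd^b(\cov{\Lambda_1}{\ZZ})\simeq\Dd^b(\cov{\Lambda_2}{\ZZ})$; the compatibility with the projections to $\Cc$ is handled afterwards by Keller's DG orbit machinery (Proposition~\ref{universalprop2}, Corollary~\ref{doubleorbit}). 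Note also that even the matching of deck transformations is twisted rather than ``on the nose'': in the paper's proof the degree shift $\langle 1\rangle$ on one side corresponds to $\SS^{-1}\langle -1\rangle$ on the other, so your identification of $\langle 1\rangle$ with $\sigma_2$ needs exactly this correction.

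Neither of your two concrete routes closes this gap. The bigraded Ginzburg dg algebra route asserts that $\Dd^b(\gr\Lambda_i)$ can be recovered from $\per\widehat{\Gamma}$ in an ``equivariant (graded) form'' of the Amiot construction; no such statement appears in the paper (Ginzburg dg algebras are never used there), and it is a theorem of at least the same depth as the one to be proved --- and even granting it, one would still have to show that the two slicings of the symmetric bigraded datum yield equivalent categories, which is again the recognition problem. The ``alternative route'' is incorrect on two counts: a mutation of cluster-tilting objects is not a $2$-APR tilt ($2$-APR tilts relate different lifts of the \emph{same} cluster-tilting object, cf.\ Theorem~\ref{characterizationinverseimage}, whereas mutation changes the cluster-tilting subcategory), and a single mutation step does not induce a (graded) derived equivalence of endomorphism algebras --- if it did, cluster equivalent algebras would always be derived equivalent, contradicting Example~\ref{example}, which is the raison d'\^etre of the theorem; moreover the intermediate algebras $\End_{\Cc_1}(T_j)$ need not be of the form $\widetilde{\Lambda}$ for any algebra $\Lambda$ of global dimension $\leq 2$, so their ``graded derived categories'' are not even defined. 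Finally, in the setup of the statement the two canonical cluster-tilting objects coincide, so the mutation sequence is empty and this second route degenerates to a restatement of the claim.
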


These results can be extended in the case where the cluster-tilting objects $\pi_{\Lambda_1}(\Lambda_1)$ and $\pi_{\Lambda_2}(\Lambda_2)$ are not isomorphic but are linked by a sequence of mutations. In order to extend them to this setup, we introduce a notion of left (and right) mutation. Indeed the mutation in the generalized cluster category can be lifted to a mutation in the derived category. Furthermore, by a result of Keller \cite{Kel10},  the cluster category $\Cc_{\Lambda}$ is equivalent to a cluster category associated to a quiver with potential. Hence, by a fundamental result of \cite{KY}, endomorphism algebras of cluster-tilting objects related to the canonical one by mutations are Jacobian algebras, and are related to one another by mutations of quivers with potential introduced in \cite{DWZ}. These two observations lead us to introduce the notion of left (and right) mutation of graded quiver with potential in order to give a combinatorial description of the left (and right) mutation in the derived category. 

 This graded mutation allows us to deduce a combinatorial way to prove that two algebras of global dimension at most 2 are derived equivalent.
  In particular, as a direct consequence of Theorem~\ref{derivedeq_intro}, we obtain the following result which is a generalization of a result due to Happel \cite{Hap87} stating that two path algebras are derived equivalent if and only if they are iterated reflections from one another.
  
  \begin{thma}[Corollary \ref{graded mutation and derived equivalence}]\label{graded mutation and derived equivalence intro}
Let $\Lambda_1$ and $\Lambda_2$ be two algebras of global dimension 2, which are $\tau_2$-finite. Assume that one can pass from the graded quiver with potential  associated with $\widetilde{\Lambda}_1$ to the graded quiver with potential associated with $\widetilde{\Lambda}_2$ using a finite sequence of left and right mutations.  Then the algebras $\Lambda_1$ and $\Lambda_2$ are derived equivalent.
\end{thma}
 
 In \cite{AO3} we obtain the converse of Theorem~\ref{graded mutation and derived equivalence intro} in the case where these algebras are cluster equivalent to hereditary algebras. There we also apply the results presented here to understand and describe algebras which are cluster equivalent to tame hereditary algebras.
 
\medskip

The paper is organized as follows:

Section~\ref{section_backgr} is devoted to recalling background on generalized cluster categories, cluster-tilting subcategories, and graded algebras.

In Section~\ref{section.recognition} we prove Theorem~\ref{recognition}.

This result is applied to Iyama-Yoshino reduction of derived categories in Section~\ref{section.IY}.

In Section~\ref{section_der-eq} we prove Theorem~\ref{derivedeq_intro}, giving a criterion which determines which algebras are derived equivalent among cluster equivalent ones. We further classify derived equivalent algebras having the same canonical cluster-tilting object.

We introduce mutation of graded quiver with potential in Section~\ref{section_left_mutation}. Using a result of Keller and Yang, we show that this notion gives a combinatorial description of mutation in derived categories.

In Section~\ref{section_triang_orbit} we recall and apply to our setup some results on triangulated orbit categories due to Keller.

Theorem~\ref{gradedderivedeq}, which exhibits gradings on cluster equivalent algebras making them graded derived equivalent, is shown in Section~\ref{section_gr_der_eq}.

\subsection*{Notation}
Throughout $k$ is an algebraically closed field and all algebras are finite dimensional $k$-algebras. For a finite-dimensional $k$-algebra $A$, we denote by $\mod A$ the category of finite-dimensional right $A$-modules. For an additive $k$-linear category $\Aa$ we denote by $\mod \Aa$ the category of finitely presented functors $\Aa^{\rm op}\rightarrow \mod k$.  
 By triangulated category we mean $k$-linear triangulated category satisfying the Krull-Schmidt property. For all triangulated categories we denote the shift functor by $[1]$.

\section{Background} \label{section_backgr}
This section is devoted to recalling results that will be used in this paper. We first give the definition of generalized cluster categories, and then state some results on cluster-tilting subcategories, and graded algebras.

\subsection{Generalized cluster categories}

Let $\Lambda$ be an algebra of global dimension~$\leq 2$. We denote by $\Dd^b(\Lambda)$ the bounded derived category
of finitely generated $\Lambda$-modules. It has a Serre functor that
we denote by $\mathbb{S}$. We denote by $\SSS$ the composition
$\mathbb{S}[-2]$, and by $\tau_2$ the composition $H^0\SSS$.

The \emph{generalized cluster category} $\Cc_\Lambda$ of $\Lambda$ has been defined in \cite{Ami3} as
the triangulated hull of the orbit category $\Dd^b(\Lambda)/\SSS$.  We
denote by $\pi_\Lambda$ the triangle functor

$$\xymatrix{\pi_\Lambda \colon \Dd^b(\Lambda)\ar@{->>}[r] &
  \Dd^b(\Lambda)/\SSS\ar@{^(->}[r] & \Cc_\Lambda}.$$
More details on triangulated hulls are given in Section~\ref{section_triang_orbit} (Example~\ref{defclustercat}).

\begin{dfa}
An algebra $\Lambda$ of global dimension~$\leq 2$ is said to be \emph{$\tau_2$-finite} if $\tau_2$ is nilpotent.
\end{dfa}

We set $\widetilde{\Lambda}:=\End_{\Cc_{\Lambda}}(\pi \Lambda)\simeq\bigoplus_{p\geq 0}\Hom_{\Dd^b(\Lambda)}(\Lambda,\SSS^{-p}\Lambda).$ The algebra $\Lambda$ is $\tau_2$-finite if and only if the algebra $\widetilde{\Lambda}$ is finite dimensional. In this case we have the following result:
\begin{thma}[{\cite[Theorem~4.10]{Ami3}}]
Let $\Lambda$ be an algebra of global dimension
$\leq 2$ which is $\tau_2$-finite. Then the generalized cluster category $\Cc_\Lambda$ is a $\Hom$-finite,
$2$-Calabi-Yau category.
\end{thma}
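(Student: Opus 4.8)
The plan is to realize $\Cc_\Lambda$ as a Verdier quotient of derived categories attached to a single differential graded algebra, and then to read off both Hom-finiteness and the Calabi--Yau property from the homological properties of that dg algebra. First I would introduce the derived $3$-preprojective algebra (the $3$-Calabi--Yau completion of $\Lambda$), namely the derived tensor algebra $\Pi := T_\Lambda(\Theta[2])$, where $\Theta = \RHom_{\Lambda^{\mathrm{e}}}(\Lambda, \Lambda^{\mathrm{e}})$ is the shifted inverse dualizing complex and $\Lambda^{\mathrm{e}}$ the enveloping algebra. The initial task is to show that the triangulated hull $\Cc_\Lambda$ is triangle equivalent to the Verdier quotient $\per\Pi / \Dd_{\mathrm{fd}}(\Pi)$, where $\Dd_{\mathrm{fd}}(\Pi)$ denotes the full subcategory of dg modules with finite-dimensional total cohomology. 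This is a formal consequence of the universal property of the triangulated hull of $\Dd^b(\Lambda)/\SS$, together with the computation $H^0(\Pi) \simeq \widetilde{\Lambda}$ and the identification of the image of $\Pi$ with $\pi_\Lambda(\Lambda)$.

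Next I would verify three homological properties of $\Pi$. First, $\Pi$ is homologically smooth: this holds because $\gldim \Lambda \leq 2$ makes $\Lambda$ smooth as a dg algebra, and the derived tensor algebra construction preserves smoothness. Second, $\Pi$ is bimodule $3$-Calabi--Yau, that is $\RHom_{\Pi^{\mathrm{e}}}(\Pi, \Pi^{\mathrm{e}}) \simeq \Pi[-3]$ in the derived category of bimodules; this is the defining feature of the Calabi--Yau completion, and it is here that the shift by $2 = \gldim\Lambda$ and the extra $+1$ conspire to produce the exponent $3$. Third, the cohomology of $\Pi$ is concentrated in non-positive degrees with $H^0(\Pi) \simeq \widetilde{\Lambda}$, and the $\tau_2$-finiteness hypothesis is precisely the statement that this $H^0$ is finite-dimensional.

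With these three properties in hand, I would invoke the general criterion for such dg algebras. Hom-finiteness is obtained by equipping $\per\Pi$ with its natural $t$-structure and exhibiting a bounded \emph{fundamental domain} $\Ff$ for the projection $\per\Pi \to \Cc_\Lambda$ (objects whose cohomology is concentrated in degrees $-1$ and $0$): one shows that every object of $\Cc_\Lambda$ is isomorphic to the image of one in $\Ff$, and that on $\Ff$ the morphism spaces in the quotient are finite-dimensional subquotients of Hom spaces in $\per\Pi$, the finiteness coming from the finite-dimensionality of $H^0(\Pi)$ and the non-positivity of the cohomology. The $2$-Calabi--Yau duality is then obtained by transporting the bimodule $3$-Calabi--Yau structure of $\Pi$ to a functorial Serre-type duality on the quotient: the subcategory $\Dd_{\mathrm{fd}}(\Pi)$ inherits a $3$-Calabi--Yau structure, and the long exact sequence relating $\Hom_{\Cc_\Lambda}$ to the Hom spaces in $\per\Pi$ and in $\Dd_{\mathrm{fd}}(\Pi)$ absorbs one shift, turning the ambient dimension $3$ into a functorial isomorphism $\Hom_{\Cc_\Lambda}(X,Y) \simeq D\Hom_{\Cc_\Lambda}(Y, X[2])$.

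The main obstacle is the proof of Hom-finiteness together with the precise degree bookkeeping in the Calabi--Yau computation. The delicate point is that $\per\Pi$ is itself very far from Hom-finite, so finiteness in the quotient must be extracted from the interplay between the $t$-structure, the truncation functors, and the subcategory $\Dd_{\mathrm{fd}}(\Pi)$ by which one localizes; controlling exactly which cohomological degrees survive in the quotient is where the non-positivity of the cohomology and the finiteness of $H^0(\Pi)$ are genuinely used, and the same control is what matches the shift appearing in the Serre-type duality with the Calabi--Yau dimension $2$ rather than $3$.
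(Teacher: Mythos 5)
This statement is quoted in the paper as background from \cite{Ami3} and is not proved there, so the relevant comparison is with Amiot's original argument; your proposal reconstructs exactly that proof: the identification $\Cc_\Lambda \simeq \per \Pi / \Dd_{\rm fd}(\Pi)$ for the $3$-Calabi--Yau completion $\Pi = T_\Lambda(\Theta[2])$ via Keller's universal property of triangulated hulls, Hom-finiteness via a fundamental domain for the natural $t$-structure on $\per\Pi$ (in \cite{Ami3} the domain is cut out by a perpendicularity condition ${}^{\perp}\Dd^{\leq -2}$ rather than literally by cohomological degrees, a minor bookkeeping difference), and the $2$-Calabi--Yau duality transported from the bimodule $3$-Calabi--Yau property of $\Pi$. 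So your approach is essentially the same as that of the cited source, and it is correct.
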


\subsection{Cluster-tilting subcategories}
\begin{dfa}[Iyama]
Let $\Tt$ be a triangulated category, which is $\Hom$-finite. A functorially finite subcategory $\Vv$ of $\Tt$ is \emph{cluster-tilting} (or $2$-cluster-tilting) if 
$$\Vv=\{X\in \Tt \mid \Hom_\Tt(X,\Vv[1])=0\}=\{X\in \Tt \mid \Hom_\Tt(\Vv,X[1])=0\}.$$
We will call an object $T$ of $\Tt$ \emph{cluster-tilting} if the category $\add(T)$ is cluster-tilting. If $\Tt$ is 2-Calabi-Yau, and $T$ is a cluster-tilting object, the endomorphism algebra $\End_\Tt(T)$ is called \emph{2-Calabi-Yau-tilted}. If the category $\Tt$ has a Serre functor $\mathbb{S}$, then we have  $\SSS\Vv=\Vv$ for any cluster-tilting subcategory $\Vv$ where $\SSS:=\mathbb{S}[-2]$.
\end{dfa}   

\begin{exa}\label{listexamples}
The following examples of cluster-tilting objects will be used in the rest of this paper:
\begin{enumerate}
\item Let $Q$ be an acyclic quiver. If $T\in \mod kQ$ is a tilting module, then $\pi_Q(T)\in \Cc_{Q}$ is a cluster-tilting object in the cluster category $\Cc_Q$ (\cite{Bua}).
\item Let $\Lambda$ be a $\tau_2$-finite algebra of global dimension~$\leq 2$.  Let $T\in\Dd^b(\Lambda)$ be a tilting complex such that $\End_\Dd(T)$ has global dimension~$\leq 2$, then $\Uu_T=\add\{\SSS^p T \mid p\in \mathbb{Z}\}$ is cluster-tilting in $\Dd^b\Lambda$ (\cite[Theorem~1.22]{Iya} or \cite[Proposition~5.4.2]{Ami2}). 
\item Let $\Lambda$ be a $\tau_2$-finite algebra of global dimension~$\leq 2$. Let $T\in\Dd^b(\Lambda)$ be a tilting complex such that $\End_\Dd(T)$ has global dimension~$\leq 2$, then $\pi_\Lambda(T)\in \Cc_\Lambda$ is a cluster-tilting object in $\Cc_\Lambda$ (\cite[Theorem~4.10]{Ami3}). 
\end{enumerate}
\end{exa}

\begin{prop}[\cite{Kel5}] \label{approximation}
Let $\Tt$ be a triangulated category with Serre functor $\mathbb{S}$ and $\Vv\subset\Tt$ be a cluster-tilting subcategory. Then for any $X$ in $\Tt$ there exists a triangle called \emph{approximation triangle}
$$\xymatrix{V_1\ar[r] & V_0\ar[r]^v & X\ar[r] & V_1[1]}$$ where $V_0$ and $V_1$ are objects in $\Vv$ and where $\xymatrix{v \colon V_0\ar[r] & X}$ is a minimal right $\Vv$-approxima-tion.
\end{prop}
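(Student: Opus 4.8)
The plan is to build the triangle out of a minimal right $\Vv$-approximation and then verify that the remaining term automatically lies in $\Vv$ by using the defining property of a cluster-tilting subcategory. Since $\Vv$ is assumed functorially finite, it is in particular contravariantly finite, so $X$ admits a right $\Vv$-approximation; because $\Tt$ is $\Hom$-finite and Krull--Schmidt, one can pass to a \emph{minimal} right $\Vv$-approximation $v \colon V_0 \to X$ with $V_0 \in \Vv$ (by splitting off the superfluous summands of the domain). I would then embed $v$ into a distinguished triangle $V_0 \xrightarrow{v} X \to C \to V_0[1]$ and rotate it to the form $C[-1] \to V_0 \xrightarrow{v} X \to C$; setting $V_1 := C[-1]$ yields a triangle $V_1 \to V_0 \xrightarrow{v} X \to V_1[1]$ of the asserted shape, with $v$ still the chosen minimal right approximation. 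It then remains only to check that $V_1 \in \Vv$.

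For this I would apply $\Hom_\Tt(V', -)$, for an arbitrary $V' \in \Vv$, to the triangle $V_1 \to V_0 \xrightarrow{v} X \to V_1[1]$, obtaining the exact sequence $\Hom_\Tt(V', V_0) \xrightarrow{v_*} \Hom_\Tt(V', X) \to \Hom_\Tt(V', V_1[1]) \to \Hom_\Tt(V', V_0[1])$. Two inputs annihilate the outer terms: the map $v_*$ is surjective precisely because $v$ is a right $\Vv$-approximation (every morphism $V' \to X$ factors through $v$), and $\Hom_\Tt(V', V_0[1]) = 0$ by the rigidity of $\Vv$ (both $V'$ and $V_0$ lie in $\Vv$, and a cluster-tilting subcategory satisfies $\Hom_\Tt(\Vv, \Vv[1]) = 0$). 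Hence $\Hom_\Tt(V', V_1[1]) = 0$ for every $V' \in \Vv$, that is $\Hom_\Tt(\Vv, V_1[1]) = 0$. By the characterization $\Vv = \{Y \in \Tt \mid \Hom_\Tt(\Vv, Y[1]) = 0\}$ built into the definition of a cluster-tilting subcategory, this forces $V_1 \in \Vv$, which completes the construction.

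The argument is essentially formal once the approximation is in hand, so the only genuine point to verify carefully is the translation between the approximation property and exactness: namely that ``right $\Vv$-approximation'' is exactly the surjectivity of $v_*$ on $\Hom_\Tt(V', -)$ for all $V' \in \Vv$, and that the vanishing $\Hom_\Tt(\Vv, V_0[1]) = 0$ is a direct consequence of the cluster-tilting axiom rather than of the $2$-Calabi--Yau or Serre-functor structure. Indeed the Serre functor is not needed for this particular statement; it only guarantees the coincidence of the two descriptions of $\Vv$ in the definition, and here I would simply use the second one. The minimality of $v$ plays no role in the existence of the triangle itself (any right approximation would do); it serves only to pin down $V_0$, and hence $V_1$, up to isomorphism, which is why the statement phrases the approximation as minimal.
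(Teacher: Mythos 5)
Your proof is correct and is exactly the standard argument: the paper itself gives no proof of this proposition (it is quoted from \cite{Kel5}), and the argument found there proceeds just as you do, completing a minimal right $\Vv$-approximation to a triangle and using rigidity of $\Vv$ together with the characterization $\Vv=\{Y\in\Tt \mid \Hom_\Tt(\Vv,Y[1])=0\}$ to place the third term in $\Vv$. Your closing observations — that the Serre functor is not needed here and that minimality only serves to make $V_0$ and $V_1$ unique up to isomorphism — are also accurate.
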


The following result explains how cluster-tilting subcategories can be mutated.

\begin{thma}[{\cite[Theorem 5.3]{IY}}] \label{IyamaYoshino}
Let $\Tt$ be a triangulated category with Serre functor $\mathbb{S}$ and $\Vv\subset\Tt$ be a cluster-tilting subcategory. Let $X\in \Vv$ be indecomposable, and set
\[ \Vv' := \add ( \ind(\Vv) \setminus \{\SSS^p X \mid p\in\ZZ\}), \]
where $\ind(\Vv)$ denotes the indecomposable objects in $\Vv$. Then there exists a unique cluster-tilting subcategory $\Vv^*$ with $\Vv' \subseteq \Vv^* \neq \Vv$. Moreover
\[ \Vv^* = \add (\Vv' \cup \{\SSS^p X^L \mid p\in\ZZ\}) = \add (\Vv' \cup \{\SSS^p X^R \mid p\in \ZZ\}), \]
where $X^L$ and $X^R$ are obtained via triangles
$$\xymatrix{X\ar[r]^f &B\ar[r] & X^L\ar[r] & X[1]}\textrm{ and } \xymatrix{ X^R\ar[r] & B'\ar[r]^g & X\ar[r] & X^R[1]}$$
where $f$ (resp.\ $g$) is a minimal left (resp.\ right) $\Vv'$-approximation.
These triangles are called \emph{left and right exchange triangles}.
\end{thma}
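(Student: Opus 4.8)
The plan is to construct $\Vv^*$ explicitly from the left exchange triangle, verify it is a cluster-tilting subcategory, and then deduce uniqueness together with the two alternative descriptions. First I would check that the exchange triangles exist at all, i.e.\ that minimal left and right $\Vv'$-approximations exist. Since $\Vv'$ is obtained from the functorially finite category $\Vv$ by deleting a single $\SS$-orbit of indecomposables, and since $\Vv'$ is $\SS$-stable (so $\SS\Vv'=\Vv'$), I would argue that $\Vv'$ is again functorially finite: composing a $\Vv$-approximation with the projection onto its $\Vv'$-part yields a $\Vv'$-approximation, using $\Hom$-finiteness of $\Tt$. Granting this, $X^L$ and $X^R$ are well defined.

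Next I would prove that $\Vv^*:=\add(\Vv'\cup\{\SS^pX^L\mid p\in\ZZ\})$ is rigid, i.e.\ $\Hom_\Tt(\Vv^*,\Vv^*[1])=0$. The vanishing $\Hom_\Tt(\Vv',\Vv'[1])=0$ is inherited from $\Vv$. Applying $\Hom_\Tt(-,V'[1])$ for $V'\in\Vv'$ to the triangle $X\xrightarrow{f}B\to X^L\to X[1]$, and using that $f$ is a left $\Vv'$-approximation (so $\Hom_\Tt(B,V')\to\Hom_\Tt(X,V')$ is onto) together with rigidity of $\Vv$, gives $\Hom_\Tt(X^L,\Vv'[1])=0$. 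The opposite vanishing $\Hom_\Tt(\Vv',X^L[1])=0$ follows by Serre duality: $\Hom_\Tt(V',X^L[1])\cong D\Hom_\Tt(X^L,\mathbb{S}V'[-1])$, and $\mathbb{S}V'[-1]=\SS V'[1]\in\Vv'[1]$ because $\mathbb{S}=\SS[2]$ and $\SS\Vv'=\Vv'$. The self-extension vanishing $\Hom_\Tt(X^L,\SS^mX^L[1])=0$ is the only delicate point. Applying $\Hom_\Tt(-,\SS^mX^L[1])$ to the exchange triangle, the term $\Hom_\Tt(B,\SS^mX^L[1])$ vanishes (by the previous step and $\SS$-stability), so $\Hom_\Tt(X,\SS^mX^L)\xrightarrow{h^*}\Hom_\Tt(X^L,\SS^mX^L[1])$ is onto. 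I would then show every $\alpha\colon X\to\SS^mX^L$ factors through $f$: composing $\alpha$ with the connecting map $\SS^mX^L\to\SS^mX[1]$ lands in $\Hom_\Tt(X,\SS^mX[1])=0$ (rigidity of $\Vv$), so $\alpha$ factors through $\SS^mB\in\Vv'$, which in turn factors through the $\Vv'$-approximation $f$; since $f[1]\circ h=0$, the image $h^*(\alpha)$ vanishes. By $\SS$-stability this yields full rigidity of $\Vv^*$, and the same reasoning shows $\SS\Vv^*=\Vv^*$.

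The main obstacle is \emph{completeness}: proving $\{Y\mid\Hom_\Tt(Y,\Vv^*[1])=0\}\subseteq\Vv^*$ (the reverse inclusion being rigidity). Using Serre duality and $\SS$-stability of $\Vv^*$ exactly as above, the two defining conditions of cluster-tilting coincide, so it suffices to produce for every $Y\in\Tt$ an approximation triangle $V_1^*\to V_0^*\to Y\to V_1^*[1]$ with $V_0^*,V_1^*\in\Vv^*$, which together with rigidity and functorial finiteness is equivalent to $\Vv^*$ being cluster-tilting. My strategy is to start from the $\Vv$-approximation triangle of $Y$ supplied by Proposition~\ref{approximation}, and to convert the $\SS^pX$-summands occurring in $V_0,V_1$ into $\SS^pX^L$ and $\Vv'$-objects by splicing in the exchange triangles and their $\SS$-shifts through a sequence of octahedra. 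Keeping these approximations minimal throughout is the technical heart of the proof; equivalently, one shows directly that an indecomposable $Y\notin\Vv'$ with $\Hom_\Tt(Y,\Vv^*[1])=\Hom_\Tt(\Vv^*,Y[1])=0$ must be of the form $\SS^pX^L$.

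Finally I would identify the right exchange triangle with the $\SS$-orbit of the left one: Serre duality sends $f\colon X\to B$ to the minimal right $\Vv'$-approximation $g\colon B'\to X$, so that $X^R$ lies in $\add\{\SS^pX^L\mid p\in\ZZ\}$; hence $\add(\Vv'\cup\{\SS^pX^R\})=\add(\Vv'\cup\{\SS^pX^L\})=\Vv^*$. Uniqueness then follows from the rigidity constraints: any cluster-tilting $\Vv^{**}$ with $\Vv'\subseteq\Vv^{**}\neq\Vv$ contains some indecomposable $Z\notin\Vv'$, and the completeness analysis forces $Z\in\add\{\SS^pX\}\cup\add\{\SS^pX^L\}$; since a summand in $\add\{\SS^pX\}\subseteq\Vv$ would return $\Vv$, we get $Z\in\add\{\SS^pX^L\}$ and therefore $\Vv^{**}=\Vv^*$.
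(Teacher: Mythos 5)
This statement is not proved in the paper at all: it is recalled as background and attributed to Iyama--Yoshino \cite[Theorem 5.3]{IY} (the paper's own contributions nearby are only Proposition~\ref{prop.IYalg} and Proposition~\ref{tirtil}), so your proposal can only be measured against what a complete proof requires --- and there it has genuine gaps. The first is at the very start. Your argument that $\Vv'$ is functorially finite --- ``composing a $\Vv$-approximation with the projection onto its $\Vv'$-part'' --- is false: a map $W\rightarrow Y$ with $W\in\Vv'$ may factor through the deleted summands $\SS^pX$ in an essential way, so the $\Vv'$-part of a $\Vv$-approximation need not be a $\Vv'$-approximation. The relevant case exposes this immediately: the minimal right $\Vv$-approximation of $Y=X$ is ${\rm id}_X$, whose $\Vv'$-part is $0\rightarrow X$, and this is a right $\Vv'$-approximation only if $\Hom_\Tt(\Vv',X)=0$, which fails whenever $B'\neq 0$ --- yet a right $\Vv'$-approximation of $X$ itself is exactly what the right exchange triangle needs. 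In fact, functorial finiteness of $\Vv'$ does not follow formally from that of $\Vv$; it is a standing hypothesis in Iyama--Yoshino's theorem, and in the present paper's applications it holds because the cluster-tilting subcategories in question (such as $\Uu_\Lambda$) are locally finite.

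The later gaps are more serious. The completeness step --- every $Y\in\Tt$ admits a triangle $V_1^*\rightarrow V_0^*\rightarrow Y\rightarrow V_1^*[1]$ with $V_i^*\in\Vv^*$ --- is where the content of the theorem lies, and you do not prove it: ``splicing in exchange triangles through a sequence of octahedra, keeping approximations minimal'' is a plan, not an argument, and your uniqueness paragraph then cites this unperformed ``completeness analysis''. (Note also that uniqueness needs more than completeness of $\Vv^*$: for an indecomposable $Z\in\Vv^{**}\setminus\Vv'$ one must first establish vanishing such as $\Hom_\Tt(Z,\SS^pX^L[1])=0$ before maximality of $\Vv^*$ can be invoked, and nothing you prove supplies this.) Finally, the identification of $X^R$ with the $\SS$-orbit of $X^L$ via ``Serre duality sends $f$ to $g$'' is incorrect: Serre duality identifies $\Hom_\Tt(X,-)|_{\Vv'}$ with $D\Hom_\Tt(-,\mathbb{S}X)|_{\Vv'}$, so it relates left approximations of $X$ to right approximations of $\mathbb{S}X=\SS X[2]$, and even then it matches projective covers with injective envelopes, which are different minimality notions. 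That this relation is genuinely delicate is visible in the paper itself: Proposition~\ref{tirtil} requires the extra hypothesis that every map $X\rightarrow\SS^pX$ with $p\neq 0$ factors through $\Vv'$ in order to conclude $X^R\simeq\SS X^L$. Your rigidity computations (vanishing of $\Hom_\Tt(X^L,\Vv'[1])$, $\Hom_\Tt(\Vv',X^L[1])$ and $\Hom_\Tt(X^L,\SS^mX^L[1])$) are correct, but they are the routine part. The route that actually works, and the one behind the cited result, is Iyama--Yoshino reduction (recalled in the paper as Theorem~\ref{IyamaYoshinoreduction}): pass to the subfactor triangulated category $\Zz/[\Vv']$, where cluster-tilting subcategories of $\Tt$ containing $\Vv'$ correspond to cluster-tilting subcategories of the quotient and mutation becomes the shift functor $\{1\}$; existence of $\Vv^*$, the two descriptions via $X^L$ and $X^R$, and uniqueness are all obtained through that correspondence rather than by direct octahedral surgery in $\Tt$.
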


\subsection{Basic results on graded algebras}

Let $G$ be an abelian group. (In this paper, $G$ will always be $\ZZ$ or $\ZZ^2$.) Let $\Lambda:=\bigoplus_{p\in G}\Lambda^p$ be a $G$-graded algebra. We denote by $\gr \Lambda$ the category of finitely generated graded modules over $\Lambda$. For a graded module $M=\bigoplus_{p\in G}M^p$, we denote by $M\langle q \rangle$ the graded module $\bigoplus_{p\in \ZZ}M^{p+q}$ (that is, the degree $p$ part of $M\langle q \rangle$ is $M^{p+q}$). The locally bounded subcategory
\[ \cov{\Lambda}{G} := \add\{\Lambda \langle p \rangle \mid p\in G\} \subseteq \gr \Lambda \]
is called \emph{the $G$-covering} of $\Lambda$.

\begin{thma}[\cite{GM}]
Let $\Lambda$ be a $G$-graded algebra. Then there is an equivalence $$\xymatrix{\mod \cov{\Lambda}{G} \ar[r]^(.65)\sim & \gr \Lambda}.$$
\end{thma}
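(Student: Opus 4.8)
The plan is to recognize this as the standard \emph{projectivization} equivalence between a module category and the category of finitely presented functors on its full subcategory of projectives. The key observation is that $\cov{\Lambda}{G} = \add\{\Lambda\langle p\rangle \mid p\in G\}$ is precisely the full subcategory of finitely generated projective objects of $\gr\Lambda$: every finitely generated graded module admits a surjection from a finite direct sum of shifts $\Lambda\langle p\rangle$, and the finitely generated graded projectives are exactly the summands of such sums. I would then define the $k$-linear functor $\Phi\colon \gr\Lambda \to \mod\cov{\Lambda}{G}$ sending $M$ to the restriction $\Hom_{\gr\Lambda}(-,M)|_{\cov{\Lambda}{G}}$, a contravariant functor on $\cov{\Lambda}{G}$, and show it is an equivalence.

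First I would verify that $\Phi$ is well defined, i.e.\ that $\Phi(M)$ is finitely presented. Since $\Lambda$ is finite dimensional, every finitely generated graded module $M$ is finite dimensional, and the graded Yoneda lemma gives $\Hom_{\gr\Lambda}(\Lambda\langle p\rangle, M)\cong M^{-p}$, which lies in $\mod k$; hence $\Phi(M)$ takes values in $\mod k$. Choosing a projective presentation $P_1 \to P_0 \to M \to 0$ with $P_0,P_1 \in \cov{\Lambda}{G}$ and using that $\Hom_{\gr\Lambda}(A,-)$ is exact for projective $A$, the sequence of functors $\Hom(-,P_1) \to \Hom(-,P_0) \to \Phi(M) \to 0$ is exact on $\cov{\Lambda}{G}$. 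As the $\Hom(-,P_i)$ are representable, this exhibits $\Phi(M)$ as finitely presented.

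It then remains to show that $\Phi$ is fully faithful and essentially surjective, which is a formal consequence of the Yoneda lemma. For full faithfulness I would apply $\Hom_{\gr\Lambda}(-,N)$ to the presentation of $M$ and compare it with $\Hom_{\mod\cov{\Lambda}{G}}(\Phi(M),\Phi(N))$ computed from $\Hom(-,P_1) \to \Hom(-,P_0) \to \Phi(M) \to 0$; by Yoneda $\Hom(\Hom(-,P_i),\Phi(N))\cong \Phi(N)(P_i)=\Hom(P_i,N)$, and the two comparison maps agree, so both $\Hom$-spaces are identified with the same kernel. For essential surjectivity, any $F\in\mod\cov{\Lambda}{G}$ admits a presentation $\Hom(-,A_1)\xrightarrow{\Hom(-,\alpha)}\Hom(-,A_0)\to F\to 0$ with $\alpha\colon A_1\to A_0$ in $\cov{\Lambda}{G}$; setting $M:=\Coker(\alpha)$ in $\gr\Lambda$ and invoking projectivity of $A_0,A_1$ once more yields $\Phi(M)\cong F$.

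The argument is essentially formal, so the only real point requiring attention -- and the main obstacle to a clean write-up -- is the preliminary identification of $\cov{\Lambda}{G}$ with the finitely generated projectives of $\gr\Lambda$, together with the finiteness statement. One must confirm that $\gr\Lambda$ has enough projectives lying in $\cov{\Lambda}{G}$ and that all the relevant $\Hom$-spaces are finite dimensional; both rest on $\Lambda$ being finite dimensional and on the graded Yoneda computation $\Hom_{\gr\Lambda}(\Lambda\langle p\rangle, M)\cong M^{-p}$.
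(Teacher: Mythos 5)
Your proof is correct. Note, however, that the paper does not prove this statement at all: it is quoted as a known result with a citation to Green--Marcos \cite{GM}, so there is no internal proof to compare against. Your argument --- identifying $\cov{\Lambda}{G}$ with the full subcategory of finitely generated projective objects of $\gr \Lambda$, and then running the standard projectivization (Auslander-style) equivalence between a module category and the finitely presented functors on its projectives --- is the standard way to establish the result, and every ingredient you flag is indeed available here: $\Lambda$ is finite dimensional by the paper's standing conventions, so finitely generated graded modules are finite dimensional, kernels and cokernels of maps between them are again finitely generated, and the graded Yoneda computation $\Hom_{\gr \Lambda}(\Lambda\langle p\rangle, M)\cong M^{-p}$ shows the relevant $\Hom$-spaces lie in $\mod k$. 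The fully-faithfulness and essential-surjectivity steps are the usual formal Yoneda arguments and are carried out correctly. The only cosmetic discrepancy is the direction of the equivalence: the theorem asserts an equivalence $\mod \cov{\Lambda}{G} \rightarrow \gr \Lambda$, whereas you construct $\Phi \colon \gr \Lambda \rightarrow \mod \cov{\Lambda}{G}$; since $\Phi$ is an equivalence this is immaterial (pass to a quasi-inverse), but it deserves one line in a final write-up.
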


\noindent
Here is a consequence of \cite[Theorem~5.3]{GG1}:

\begin{thma}[Gordon-Green]\label{gordongreen}
Let $\Lambda$ be an algebra with two different $G$-gradings. We denote by $\cov{\Lambda_1}{G}$ the $G$-covering corresponding to the first grading, and $\cov{\Lambda_2}{G}$ the $G$-covering corresponding to the second $G$-grading. Then the following are equivalent:
\begin{enumerate}
\item There is an equivalence $U \colon \mod \cov{\Lambda_1}{G} \overset{\sim}{\longrightarrow} \mod \cov{\Lambda_2}{G}$ such that the following diagram commutes.
$$\xymatrix{\mod \cov{\Lambda_1}{G} \ar[rr]^U\ar[dr] && \mod \cov{\Lambda_2}{G} \ar[dl] \\ & \mod
  \Lambda &}$$
\item There exist  $r_i\in G$ and an isomorphism of $G$-graded algebras
\[ \xymatrix{{\Lambda_2}\ar[r]^-{\sim}_-{G} & \bigoplus_{p\in G}\Hom_{\cov{\Lambda_1}{G}}(\bigoplus_{i=1}^nP_i\langle r_i\rangle,\bigoplus_{i=1}^nP_i\langle r_i+p\rangle) } \]
where $\Lambda_1 = \bigoplus_{i=1}^nP_i$ in $\gr \Lambda_1$.
\end{enumerate}
In this case we say that the gradings are \emph{equivalent}.
\end{thma}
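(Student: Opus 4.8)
The plan is to translate the whole statement through the equivalence $\mod\cov{\Lambda_i}{G}\simeq\gr\Lambda_i$ recalled above, under which the two diagonal functors in (1) become the forgetful functors $F_i\colon\gr\Lambda_i\to\mod\Lambda$. Once this is done, the only extra structure visible on $\gr\Lambda_i$ is the family of degree-shift autoequivalences $\langle p\rangle$, $p\in G$, and the decisive features are that each $F_i$ is shift-invariant, $F_i\langle p\rangle\cong F_i$, while conversely the shifts exhaust (up to isomorphism) the autoequivalences of $\gr\Lambda_i$ lying over $\mod\Lambda$ — this is the deck-transformation principle for the $G$-covering $\cov{\Lambda_i}{G}\to\proj\Lambda$. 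I would first record these facts, together with the observation that the indecomposable graded projectives of $\gr\Lambda_1$ are exactly the $P_i\langle p\rangle$, and that $F_1$ sends all of $P_i\langle p\rangle$, $p\in G$, to the $i$-th indecomposable projective $\Lambda$-module. Write $Q_1,\dots,Q_n$ for the corresponding graded projectives on the $\Lambda_2$-side.

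For the implication (2)$\Rightarrow$(1), which is the constructive direction, I would use graded Morita theory. Put $P:=\bigoplus_{i=1}^n P_i\langle r_i\rangle$, a graded projective generator of $\gr\Lambda_1$ whose underlying module is $\Lambda$. Then $\bigoplus_{p\in G}\Hom_{\cov{\Lambda_1}{G}}(P,P\langle p\rangle)$ is precisely the graded endomorphism algebra of $P$, so the hypothesis of (2) identifies it with $\Lambda_2$ as a $G$-graded algebra. The functor $\bigoplus_{p\in G}\Hom_{\cov{\Lambda_1}{G}}(P,(-)\langle p\rangle)$ then yields an equivalence $\gr\Lambda_1\xrightarrow{\sim}\gr\Lambda_2$, and it commutes with the $F_i$ because on underlying modules it is ordinary Morita equivalence along the projective generator $\Lambda\simeq F_1(P)$. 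Transporting back through the equivalences $\mod\cov{\Lambda_i}{G}\simeq\gr\Lambda_i$ produces the functor $U$ and the commuting triangle.

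For (1)$\Rightarrow$(2) I would begin from $U$ and analyse its effect on projectives. An equivalence preserves indecomposable projectives, and compatibility with the forgetful functors forces $U(P_i)$ to be an indecomposable graded projective of $\gr\Lambda_2$ lying over the $i$-th projective $\Lambda$-module; hence $U(P_i)\cong Q_i\langle r_i\rangle$ for uniquely determined $r_i\in G$. Recovering $\Lambda_2$ as the graded endomorphism algebra $\bigoplus_{p}\Hom_{\gr\Lambda_2}(\bigoplus_i Q_i,\bigoplus_j Q_j\langle p\rangle)$ and rewriting each $Q_j$ as a shift of $U(P_j)$, I would then invoke full faithfulness of $U$ to pull the whole computation back into $\gr\Lambda_1$, where it should reassemble into the expression $\bigoplus_p\Hom_{\cov{\Lambda_1}{G}}(\bigoplus_iP_i\langle r_i\rangle,\bigoplus_iP_i\langle r_i+p\rangle)$ appearing in (2).

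The main obstacle, and exactly where the real content of the Gordon--Green theorem enters, is controlling how $U$ intertwines the degree-shift functors, since pulling the $\Hom$-spaces back through $U$ requires rewriting $U(P_j)\langle s\rangle$ in the form $U(P_j\langle t\rangle)$. Applying the deck-transformation principle to the autoequivalence $U\langle q\rangle U^{-1}$ of $\gr\Lambda_2$, which lies over $\mod\Lambda$ and is therefore a shift, shows that $U\langle q\rangle\cong\langle\phi(q)\rangle U$ for a group automorphism $\phi$ of $G$, so that $U(P_j\langle t\rangle)\cong Q_j\langle r_j+\phi(t)\rangle$. The substitution above then goes through and yields the grading asserted in (2) precisely when $\phi$ is trivial; establishing this — equivalently, seeing that the compatibility encoded in (1) makes $U$ a genuinely \emph{graded} equivalence rather than one twisted by an automorphism of $G$ — is the point I expect to be the crux. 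Everything else is bookkeeping with shifts and the standard recovery of a $G$-graded algebra from its $G$-covering.
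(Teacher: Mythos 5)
You cannot really be compared against the paper here, because the paper contains no proof of this statement: it is quoted as a consequence of Gordon--Green \cite[Theorem~5.3]{GG1}. Judged on its own terms, your proposal has genuine gaps in \emph{both} directions, and they sit exactly at the two points where the statement, read literally, is actually false. For $(1)\Rightarrow(2)$ you correctly isolate the crux --- the automorphism $\phi$ of $G$ measuring how $U$ intertwines the degree shifts --- but you leave it open, and it cannot be closed: condition (1) does not force $\phi$ to be trivial. Take $\Lambda=k[x]/(x^2)$, $G=\ZZ$, with $\deg x=1$ in the first grading and $\deg x=-1$ in the second. The regrading functor $M\mapsto M'$, $(M')^p:=M^{-p}$, is an equivalence $\gr\Lambda_1\to\gr\Lambda_2$ commuting strictly with the forgetful functors, so (1) holds; but (2) fails, since $\Lambda$ is local (so the shifts $r_i$ are irrelevant) and a graded isomorphism $\Lambda_2\to\Lambda_1$ would have to send $x$, which has degree $-1$, into $\Lambda_1^{-1}=0$. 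Here $\phi=-\mathrm{id}_\ZZ$ is unavoidable. The implication only becomes true --- and your bookkeeping then does complete --- if one adds to (1) that $U$ commutes with the shift functors $\langle q\rangle$, i.e.\ is an equivalence of categories with $G$-action; that is the reading under which the theorem should be quoted. Your ``deck-transformation principle'' needs the same repair: it fails for disconnected coverings (for the trivial grading one has $\gr\Lambda\simeq\bigoplus_{p\in G}\mod\Lambda$, and every permutation of the index set $G$ gives an autoequivalence over $\mod\Lambda$), so it cannot be invoked as stated either.

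The direction $(2)\Rightarrow(1)$ contains a gap you did not flag: the sentence claiming the graded Morita functor ``commutes with the $F_i$ because on underlying modules it is ordinary Morita equivalence.'' What the construction actually gives is $F_2\circ U\cong\sigma^*\circ F_1$, where $\sigma$ is the automorphism of $\Lambda$ obtained from the given graded isomorphism $\Lambda_2\xrightarrow{\sim}\bigoplus_{p}\Hom_{\cov{\Lambda_1}{G}}(P,P\langle p\rangle)$ (with $P=\bigoplus_iP_i\langle r_i\rangle$) by composing with the canonical identification $\End_\Lambda(\Lambda)\simeq\Lambda$; the triangle in (1) commutes only if $\sigma$ is inner, and (2) as stated does not guarantee that. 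Concretely, take $\Lambda=k[x,y]/(x,y)^2$ with $(\deg x,\deg y)=(1,2)$ in the first grading and $(2,1)$ in the second. The swap $x\leftrightarrow y$ is an isomorphism of graded algebras $\Lambda_2\to\Lambda_1$, so (2) holds; but (1) fails: if $U$ existed with $F_2U\cong F_1$, write $U(\Lambda\langle p\rangle)\cong\Lambda\langle q_p\rangle$; naturality forces $U$ to send left multiplication by $x$, a graded map $\Lambda\langle p\rangle\to\Lambda\langle p+1\rangle$, to left multiplication by a nonzero scalar multiple of $x$ (conjugation by unit components of the natural isomorphism does nothing since $\rad^2=0$), whence $q_{p+1}-q_p=\deg_2 x=2$; the same argument with $y$ gives $q_{p+2}-q_p=\deg_2 y=1$, which is incompatible. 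So the corrected statement must also require in (2) that the graded isomorphism induce an inner automorphism (e.g.\ the identity) of the ungraded algebra $\Lambda$. In summary, your two-step skeleton --- graded Morita theory one way, transporting the graded endomorphism algebra through $U$ the other way --- is the right strategy for the corrected statement, but as written each half rests on an assertion that is false in general, and only one of the two was identified as a difficulty.
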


\section{Cluster-tilting subcategories determine the derived category} \label{section.recognition}

\subsection{Bijection between cluster-tilting subcategories}

In this subsection we show that, for a $\tau_2$-finite algebra of global dimension~$\leq 2$, the projection functor induces a bijection between cluster-tilting subcategories of its cluster category and cluster-tilting subcategories of its derived category.

\begin{prop}\label{preimage}
Let $\Lambda$ be a $\tau_2$-finite algebra of global dimension~$\leq 2$. Let $T\in\Cc_\Lambda$ be a cluster-tilting object. Then $\pi^{-1}(T)\subset\Dd^b\Lambda$ is a cluster-tilting subcategory of $\Dd^b(\Lambda)$.
\end{prop}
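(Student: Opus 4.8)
The plan is to set $\Vv := \pi^{-1}(\add T)$, the full subcategory of $\Dd^b\Lambda$ of objects $X$ with $\pi X \in \add T$, and to verify the three defining properties of a cluster-tilting subcategory. First I would record the structural features of $\Vv$: it is additive and closed under direct summands (since $\add T$ is), and it is $\SS$-stable because $\pi\SS \simeq \pi$, so that $\pi(\SS X) \simeq \pi X \in \add T$ iff $X \in \Vv$. Using that the orbit category $\Dd^b\Lambda/\SS$ embeds as a full subcategory of its triangulated hull $\Cc_\Lambda$, together with $\tau_2$-finiteness, I have for all $X, Y \in \Dd^b\Lambda$ the fundamental formula
\[ \Hom_{\Cc_\Lambda}(\pi X, \pi Y) \simeq \bigoplus_{p\in\ZZ} \Hom_{\Dd^b\Lambda}(X, \SS^p Y), \]
a finite direct sum. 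Finally, since $\pi$ is dense, every object of $\add T$ lifts along $\pi$, so $\pi(\Vv) = \add T$.

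For the two orthogonality conditions, the rigidity (``$\subseteq$'') direction is immediate: if $X, Y \in \Vv$ then $\pi X, \pi Y \in \add T$, so $\Hom_{\Cc_\Lambda}(\pi X, \pi Y[1]) = 0$ because $\add T$ is cluster-tilting; reading off the $p = 0$ summand of the formula (applied to $Y[1]$) gives $\Hom_{\Dd^b\Lambda}(X, Y[1]) = 0$. For the maximality (``$\supseteq$'') direction, suppose $\Hom_{\Dd^b\Lambda}(\Vv, X[1]) = 0$. Since $\Vv$ is $\SS$-stable, $\SS^{-p}V \in \Vv$ for every $V \in \Vv$, hence $\Hom_{\Dd^b\Lambda}(V, (\SS^p X)[1]) \simeq \Hom_{\Dd^b\Lambda}(\SS^{-p}V, X[1]) = 0$ for all $p$; the formula then yields $\Hom_{\Cc_\Lambda}(\pi V, \pi X[1]) = 0$ for all $V \in \Vv$, that is $\Hom_{\Cc_\Lambda}(\add T, \pi X[1]) = 0$ by density. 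As $\add T$ is cluster-tilting in $\Cc_\Lambda$, this forces $\pi X \in \add T$, i.e.\ $X \in \Vv$. The remaining equality is obtained symmetrically.

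It remains to prove that $\Vv$ is functorially finite. Because $\Dd^b\Lambda$ is $\Hom$-finite and has a Serre functor $\mathbb{S}$, contravariant and covariant finiteness of $\Vv$ are equivalent (Serre duality exchanges the two approximation problems via the identification $\Hom(X,-)|_\Vv \simeq D(\Hom(-,\mathbb{S}X)|_\Vv)$), so it suffices to produce right $\Vv$-approximations. Given $X$, I would start from the approximation triangle $T_1 \to T_0 \xrightarrow{v} \pi X \to T_1[1]$ in $\Cc_\Lambda$ provided by Proposition~\ref{approximation}, choose $V_0 \in \Vv$ with $\pi V_0 = T_0$ (density), and decompose $v = (v_p)_p \in \bigoplus_p \Hom_{\Dd^b\Lambda}(V_0, \SS^p X)$ via the formula. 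The candidate approximation is then the componentwise map $u := (\SS^{-p}v_p)_p \colon \bigoplus_p \SS^{-p}V_0 \to X$, whose source lies in $\Vv$ as a finite $\SS$-twisted sum.

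The main obstacle is exactly this last verification, and it is where the failure of $\pi$ to be fully faithful must be handled by hand: to see that $u$ is a right $\Vv$-approximation, I would take an arbitrary $g \colon V \to X$ with $V \in \Vv$, factor $\pi(g)$ through $v$ in $\Cc_\Lambda$ (using that $v$ is a right $\add T$-approximation), and then match the components of the resulting factorization $\pi(g) = v \circ h$ against the orbit-category composition rule. A direct bookkeeping of indices shows that the $\SS$-twisted components of $h$ assemble into a map $h' \colon V \to \bigoplus_p \SS^{-p} V_0$ with $u \circ h' = g$ in $\Dd^b\Lambda$, which establishes contravariant finiteness and, with the Serre-duality reduction above, completes the proof.
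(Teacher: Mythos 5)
Your argument has a genuine gap at its foundation: the repeated appeal to density of $\pi_\Lambda$. The functor $\pi_\Lambda$ factors as $\Dd^b(\Lambda) \twoheadrightarrow \Dd^b(\Lambda)/\SS \hookrightarrow \Cc_\Lambda$, and while the second functor is fully faithful (which is why your Hom formula is correct), it is in general not dense: $\Cc_\Lambda$ is the \emph{triangulated hull} of the orbit category, obtained by adjoining cones that the orbit category generally lacks (for $\Lambda$ not piecewise hereditary the orbit category is not closed under cones, hence is a proper full subcategory of $\Cc_\Lambda$). Consequently the essential image of $\pi_\Lambda$ is only the orbit category, and you cannot ``choose $V_0\in\Vv$ with $\pi V_0 = T_0$'', nor conclude $\pi(\Vv) = \add T$, nor pass from $\Hom_{\Cc_\Lambda}(\pi V, \pi X[1]) = 0$ for all $V \in \Vv$ to $\Hom_{\Cc_\Lambda}(\add T, \pi X[1]) = 0$ ``by density''. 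A priori it is not even clear that $\pi^{-1}(T)$ contains any object mapping onto a given summand of $T$; this lifting question is precisely the hard point of the proposition.

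The paper closes exactly this gap by invoking its companion paper \cite{AO2} (\emph{On the image of the derived category in the generalized cluster category}): every rigid object of $\Cc_\Lambda$ lies in the image of $\pi_\Lambda$. Applied to the indecomposable summands $T_i$ of $T$ (rigid because $T$ is cluster-tilting), this produces lifts $T'_i \in \Dd^b(\Lambda)$ with $\pi(T'_i) \simeq T_i$; then $\pi^{-1}(T) = \add\{\SS^p T'_i \mid p\in\ZZ,\ i=1,\dots,n\}$, and your maximality argument --- which is otherwise essentially the paper's --- goes through via $\Hom_{\Cc_\Lambda}(T_i,\pi X[1]) \simeq \bigoplus_{p\in\ZZ}\Hom_{\Dd^b(\Lambda)}(\SS^p T'_i, X[1])$. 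So the skeleton of your proof is salvageable, but only after replacing ``density'' by this nontrivial lifting theorem, which must be quoted or proved. Two further remarks: your functorial-finiteness paragraph (which the paper does not address in this proof) depends on the same lifting statement; and the Serre-duality reduction between covariant and contravariant finiteness needs more care than stated, since $\Vv$ is stable under $\SS = \mathbb{S}[-2]$ but not under $\mathbb{S}$ itself, so the dualized functor is controlled by an object of $\Vv[-2]$ rather than of $\Vv$.
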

\begin{proof}
Since the functor $\pi_\Lambda \colon \Dd^b(\Lambda)\rightarrow \Cc_\Lambda$ is triangulated, we get the inclusions
\begin{align*}
\pi^{-1}(T) & \subset \{X\in \Dd^b(\Lambda) \mid \Hom_{\Dd^b(\Lambda)}(X,\pi^{-1}(T)[1])=0\} \textrm{, and} \\ \pi^{-1}(T) & \subset  \{X\in \Dd^b(\Lambda) \mid \Hom_{\Dd^b(\Lambda)}(\pi^{-1}(T),X[1])=0\}
\end{align*}
Let $T\simeq T_1\oplus\cdots\oplus T_n$ be the decomposition on $T$ in indecomposable objects. For all $i=1,\ldots,n$, the object $T_i$ is rigid. Hence, by \cite{AO2}, there exists $T'_i\in \Dd^b(\Lambda)$ such that $\pi(T'_i)=T_i$. Now let $X$ be in $\{X\in \Dd^b(\Lambda)\mid \Hom_{\Dd^b(\Lambda)}(\pi^{-1}(T),X[1])=0\}$. Therefore for all $i=1,\ldots,n$ and all $p\in \ZZ$ the space $\Hom_{\Dd^b\Lambda}(\SSS^pT'_i,X[1])$ vanishes. Then for all $i=1,\ldots, n$ we have $$\Hom_{\Cc}(T_i,\pi(X)[1])=\Hom_{\Cc}(\pi(T'_i),\pi (X)[1]) \simeq \bigoplus_{p\in\ZZ}\Hom_{\Dd}(\SSS^pT'_i,X[1])=0.$$
Since $T$ is cluster-tilting, we have $\pi(X)\in \add(T)$, thus $X\in \pi^{-1}(T)$. The last inclusion is shown similarly.
\end{proof}

The following proposition shows that the converse is also true.

\begin{prop} \label{prop_image_ct_again}
 Let $\Lambda$ be a $\tau_2$-finite algebra of global dimension~$\leq 2$.
Let $\Vv$ be a cluster-tilting subcategory of $\Dd^b(\Lambda)$. Then $\pi_\Lambda(\Vv)$ is a cluster-tilting subcategory of $\Cc_{\Lambda}$.
\end{prop}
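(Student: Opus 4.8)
The plan is to verify the three requirements in the definition of a cluster-tilting subcategory for $\pi_\Lambda(\Vv)$, namely functorial finiteness together with the two orthogonality conditions. Since $\Cc_\Lambda$ is $\Hom$-finite and $2$-Calabi-Yau, Serre duality yields $\Hom_{\Cc_\Lambda}(X,Y[1])\simeq D\Hom_{\Cc_\Lambda}(Y,X[1])$, so the two subcategories $\{X\mid\Hom_{\Cc_\Lambda}(X,\pi_\Lambda(\Vv)[1])=0\}$ and $\{X\mid\Hom_{\Cc_\Lambda}(\pi_\Lambda(\Vv),X[1])=0\}$ coincide. It therefore suffices to prove that $\pi_\Lambda(\Vv)$ is functorially finite and that it equals $\{X\in\Cc_\Lambda\mid\Hom_{\Cc_\Lambda}(\pi_\Lambda(\Vv),X[1])=0\}$. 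Two facts will be used repeatedly: the orbit-category formula $\Hom_{\Cc_\Lambda}(\pi M,\pi N)\simeq\bigoplus_{p\in\ZZ}\Hom_{\Dd^b\Lambda}(M,\SS^pN)$ (as already exploited in the proof of Proposition~\ref{preimage}), and the fact that $\Vv$, being cluster-tilting in the category $\Dd^b\Lambda$ with Serre functor, satisfies $\SS\Vv=\Vv$.

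First I would establish the inclusion $\pi_\Lambda(\Vv)\subseteq\{X\mid\Hom_{\Cc_\Lambda}(\pi_\Lambda(\Vv),X[1])=0\}$. For $V,V'\in\Vv$ the formula gives $\Hom_{\Cc_\Lambda}(\pi V,\pi V'[1])\simeq\bigoplus_{p}\Hom_{\Dd^b\Lambda}(V,(\SS^pV')[1])$, and each summand vanishes: $\SS^pV'$ lies in $\Vv$ by $\SS$-stability, and $\Vv$ is rigid in $\Dd^b\Lambda$. For the reverse inclusion I would invoke the density of $\pi_\Lambda$ coming from the fundamental-domain description of \cite{Ami3}, so that any $X$ in the right-hand side can be written $X=\pi_\Lambda(M)$ with $M\in\Dd^b\Lambda$. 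The vanishing $\Hom_{\Cc_\Lambda}(\pi_\Lambda(\Vv),X[1])=0$ then forces $\Hom_{\Dd^b\Lambda}(V,(\SS^pM)[1])=0$ for all $V\in\Vv$ and all $p\in\ZZ$; applying $\SS^{-p}$ and using $\SS^{-p}\Vv=\Vv$ rewrites this as $\Hom_{\Dd^b\Lambda}(\Vv,M[1])=0$, whence $M\in\Vv$ because $\Vv$ is cluster-tilting in $\Dd^b\Lambda$. Consequently $X=\pi_\Lambda(M)\in\pi_\Lambda(\Vv)$, giving the desired equality (which also shows $\pi_\Lambda(\Vv)$ is closed under isomorphisms and summands).

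It remains to prove functorial finiteness. Given $X=\pi_\Lambda(M)$ I would start from a right $\Vv$-approximation $v\colon V_0\to M$ in $\Dd^b\Lambda$ (which exists since $\Vv$ is functorially finite there) and claim that $\pi_\Lambda(v)\colon\pi_\Lambda(V_0)\to X$ is a right $\pi_\Lambda(\Vv)$-approximation. A morphism $\pi_\Lambda(V')\to X$ decomposes, via the $\Hom$-formula and $\Hom$-finiteness, as a finite sum of the images $\pi_\Lambda(f_p)$ of maps $f_p\colon V'\to\SS^pM$; each $\SS^{-p}f_p\colon\SS^{-p}V'\to M$ factors through $v$ as $\SS^{-p}V'\in\Vv$, and applying $\SS^p$ and then $\pi_\Lambda$—which identifies $\pi_\Lambda(\SS^pv)$ with $\pi_\Lambda(v)$—produces the required factorization through $\pi_\Lambda(v)$. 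This yields contravariant finiteness, and the covariant case is entirely dual.

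The main obstacle I expect is not any single computation but the reliance on the density of $\pi_\Lambda$ in the reverse inclusion: one cannot a priori conclude that an object $X$ in the orthogonal is rigid, so the liftability of arbitrary rigid objects (as in \cite{AO2}, used for Proposition~\ref{preimage}) is not directly available, and instead one must appeal to the essential surjectivity of $\pi_\Lambda$ onto $\Cc_\Lambda$ furnished by the fundamental-domain result of \cite{Ami3}. The bookkeeping with the orbit-category $\Hom$-formula, in particular the systematic use of $\SS$-stability of $\Vv$ to move the twist $\SS^p$ between the two arguments, is the other point requiring care.
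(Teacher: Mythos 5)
Your reduction via the $2$-Calabi--Yau property and your verification that $\pi_\Lambda(\Vv)$ is rigid are both fine, but the step you yourself identify as the crux is a genuine gap: the functor $\pi_\Lambda \colon \Dd^b(\Lambda) \to \Cc_\Lambda$ is \emph{not} essentially surjective in general, and no fundamental-domain statement in \cite{Ami3} gives this. The fundamental domain there lives in $\per \Pi_3(\Lambda)$, the perfect derived category of the $3$-preprojective dg algebra: every object of $\Cc_\Lambda$ lifts to $\per \Pi_3(\Lambda)$, but not to $\Dd^b(\Lambda)$, whose image in $\Cc_\Lambda$ is a proper subcategory as soon as the triangulated hull genuinely exceeds the orbit category (the typical situation in global dimension exactly $2$; only for hereditary algebras does Keller's theorem make the orbit category itself triangulated). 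This is exactly why the paper, in the proof of Proposition~\ref{preimage}, needs the objects $T_i$ to be \emph{rigid} before invoking \cite{AO2} to lift them: the question of which objects of $\Cc_\Lambda$ lie in the image of $\pi_\Lambda$ is the entire subject of that reference, where the image is characterized (by a gradability condition on the $\widetilde{\Lambda}$-module $\Hom_{\Cc_\Lambda}(\pi_\Lambda\Lambda,X)$) and shown to be proper in general. Since both your maximality argument and your functorial-finiteness argument begin by writing an arbitrary object of $\Cc_\Lambda$ (respectively an arbitrary object of the orthogonal, which is not known to be rigid) as $\pi_\Lambda(M)$ with $M \in \Dd^b(\Lambda)$, both steps collapse.

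The paper's own proof is built precisely to avoid lifting any object of $\Cc_\Lambda$. It uses that $\pi_\Lambda(\Uu_\Lambda)$ is already known to be cluster-tilting in $\Cc_\Lambda$ (Example~\ref{listexamples}(3), from \cite{Ami3}), and that approximation triangles in $\Dd^b(\Lambda)$ give $\Uu_\Lambda \subset \Vv[-1]*\Vv$; applying the triangle functor $\pi_\Lambda$ and rigidity of $\pi_\Lambda(\Vv)$ yields
\[ \Cc_\Lambda = \pi(\Uu_\Lambda)*\pi(\Uu_\Lambda)[1] = \pi(\Vv)[-1]*\pi(\Vv)*\pi(\Vv)[1], \]
and then two splitting arguments show that any $X$ with $\Hom_{\Cc_\Lambda}(X,\pi(\Vv)[1])=0$ is a direct summand of an object of $\pi(\Vv)$. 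Your orbit-category computations (the $\Hom$-formula, $\SS$-stability of $\Vv$, rigidity) are correct and correspond to ingredients of this argument, but the $*$-decomposition through the known cluster-tilting subcategory $\pi(\Uu_\Lambda)$ is the device that replaces your density claim, and it cannot be dispensed with.
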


In the proof we will need the following piece of notation:

\begin{dfa}
For $\Uu$ and $\Vv$ subcategories of a triangulated category $\Tt$ we denote by $\Uu * \Vv$ the full subcategory of $\Tt$ consisting of objects $M$ of $\Tt$ appearing in a triangle $\xymatrix{U \ar[r]&  M  \ar[r]& V \ar[r] & U[1]}$  with $U \in \Uu$ and $V \in \Vv$.
\end{dfa}

One easily sees that $\Uu$ is a cluster-tilting subcategory of $\Tt$ if and only if $\Tt=\Uu*\Uu[1]$ and $\Hom_\Tt(\Uu,\Uu[1])=0$.

\begin{proof}[Proof of Proposition~\ref{prop_image_ct_again}]
Let $X$ and $Y$ be objects in $\Vv$. Then we have $$\Hom_{\Cc_\Lambda}(\pi(X),\pi(Y)[1])=\Hom_{\Cc_\Lambda}(\pi(X),\pi(Y[1]))=\bigoplus_{p\in\mathbb{Z}}\Hom_{\Dd^b(\Lambda)}(X,\SSS^pY[1])=0$$ since $\SSS^pY\in\SSS^p\Vv=\Vv$. Therefore we have $\Hom_{\Cc_\Lambda}(\pi(\Vv),\pi(\Vv)[1])=0$.

Denote by $\Uu:=\Uu_{\Lambda}=\pi^{-1}(\pi\Lambda)$ the canonical cluster-tilting subcategory of $\Dd^b(\Lambda)$. Then since $\Vv$ is cluster-tilting in $\Dd^b(\Lambda)$ we have $\Uu\subset \Vv[-1]*\Vv$. Since $\pi$ is a triangle functor, then we have the inclusions $$\pi(\Uu)\subset \pi(\Vv[-1]*\Vv)\subset \pi(\Vv)[-1]*\pi(\Vv).$$ Now since $\pi(\Uu)$ is a cluster-tilting subcategory of $\Cc_\Lambda$ we have:
$$\Cc_\Lambda=\pi(\Uu)*\pi(\Uu)[1]=(\pi(\Vv)[-1]*\pi(\Vv))*(\pi(\Vv)*\pi(\Vv)[1]).$$
Since $\Hom_{\Cc_\Lambda}(\pi(\Vv),\pi(\Vv)[1])=0$, we have $\pi(\Vv)*\pi(\Vv)=\pi(\Vv)$. Therefore, by associativity of $*$, we get $$\Cc_\Lambda=\pi(\Vv)[-1]*(\pi(\Vv)*\pi(\Vv))*\pi(\Vv)[1]=\pi(\Vv)[-1]*\pi(\Vv)*\pi(\Vv)[1].$$
Now let $X\in \Cc_\Lambda$ such that $\Hom_{\Cc_\Lambda}(X,\pi(\Vv)[1])=0$. There exists triangles
$$\xymatrix{ \pi(V_1)[-1]\ar[r] & \pi(V_0)[-1]\ar[r] & Y\ar[r] & \pi(V_1) \quad\textrm{and}\quad \pi(V_2)\ar[r] & Y\ar[r] & X\ar[r] & \pi(V_2)[1]}$$ with $V_0, V_1, V_2$ in $\Vv$. Since $\Hom_{\Cc_\Lambda}(X,\pi(\Vv)[1])=0$ the second triangle splits and we have $X\oplus \pi(V_2)\simeq Y$. Then $\Hom_{\Cc_\Lambda}(\pi(\Vv)[-1],Y)=0$ and the first triangle splits. Hence we have $\pi(V_1)\simeq Y\oplus \pi(V_0)\simeq X\oplus \pi(V_2)\oplus \pi(V_0)$
and $X\in \pi(\Vv)$. Therefore  $\pi(\Vv)$ is a cluster-tilting subcategory of $\Cc_\Lambda$.
\end{proof}

\subsection{Recognition theorem}

The aim of this subsection is to prove Theorem~\ref{clustertilt}. Since its setup is that of algebraic triangulated categories we recall the definition.

\begin{dfa}
A triangulated category $\Tt$ is called \emph{algebraic} if there is a Frobenius exact category $\mathcal{E}$ such that $\mathcal{T} = \underline{\Ee}$.
\end{dfa}

\begin{thma}\label{clustertilt}
Let $\Tt$ be an algebraic triangulated category with a Serre functor and with a cluster-tilting subcategory $\Vv$. Let $\Lambda$ be a $\tau_2$-finite algebra with global dimension~$\leq 2$. Assume that there is an equivalence of additive categories with $\SSS$-action  $\xymatrix{f \colon \Uu_{\Lambda} \ar[r]^-\sim & \Vv}$ (with $\Uu_{\Lambda}$ as in Example~\ref{listexamples}(2)). Then there exists a triangle equivalence $\xymatrix{F \colon \Dd^b\Lambda\ar[r] & \Tt}$ such that the following diagram commutes
$$\xymatrix{\Dd^b(\Lambda)\ar[r]^F & \Tt\\ \Uu_{\Lambda} \ar[r]^f \ar@{^(->}[u] & \Vv.\ar@{^(->}[u]}$$
\end{thma}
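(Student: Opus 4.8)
The plan is to build the equivalence $F$ by recognizing $\Dd^b\Lambda$ as the derived category of the dg endomorphism algebra of $\Uu_\Lambda$, and similarly realizing $\Tt$ (or rather a natural triangulated subcategory generated by $\Vv$) via the dg endomorphism algebra of $\Vv$, and then transporting the equivalence $f$ to the level of these dg algebras. Since $\Tt$ is algebraic, it is the stable category of a Frobenius category, so by Keller's theory of algebraic triangulated categories there is a dg category $\Bb$ (e.g.\ a dg enhancement built from $\Vv$) with $\Tt \simeq \per\Bb$ or at least a triangle functor from the perfect derived category of the dg endomorphism algebra of $\Vv$ into $\Tt$. The key point is that the equivalence $f$ is not merely additive but compatible with the $\SS$-action, and this is what will allow us to upgrade it from an equivalence of additive categories to a quasi-isomorphism of dg algebras.

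\textbf{Main steps.}
First I would fix dg enhancements: let $\widehat{\Uu}_\Lambda$ be the dg subcategory of a dg enhancement of $\Dd^b\Lambda$ on the objects $\SS^p\Lambda$, and let $\widehat{\Vv}$ be the corresponding dg subcategory of an enhancement of $\Tt$ on the objects in $\Vv$. The cohomology of these dg categories recovers $\Hom_{\Dd^b\Lambda}(\SS^p\Lambda,\SS^q\Lambda)$ and $\Hom_\Tt(\SS^pV,\SS^qV)$ respectively in each degree. Second, I would use the facts that $\Uu_\Lambda$ is a cluster-tilting subcategory of $\Dd^b\Lambda$ (Example~\ref{listexamples}(2)) and $\Vv$ is cluster-tilting in $\Tt$, together with the Serre functor and the $2$-Calabi-Yau-type vanishing, to show that both dg categories are concentrated in the expected degrees and that the higher $\Hom$-spaces are controlled; concretely, the cluster-tilting condition forces $\Hom_\Tt(\Vv,\Vv[i]) = 0$ for $i \neq 0$ except where the Serre duality and the grading by powers of $\SS$ contribute, so the dg structure is essentially formal and pinned down by its cohomology together with the $\SS$-action. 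Third, I would show that the given $\SS$-equivariant additive equivalence $f \colon \Uu_\Lambda \to \Vv$ lifts to a quasi-equivalence $\widehat{f} \colon \widehat{\Uu}_\Lambda \to \widehat{\Vv}$ of dg categories; here the compatibility of $f$ with the $\SS$-action is essential because it matches up not just $\Hom_{\Dd^b\Lambda}(\Lambda,\Lambda)$ with $\End_\Tt(V)$ but the whole graded structure $\bigoplus_p \Hom(\Lambda,\SS^{-p}\Lambda)$ with $\bigoplus_p \Hom(V,\SS^{-p}V)$, which is exactly the data needed to reconstruct the dg algebra up to quasi-isomorphism. Finally, passing to perfect derived categories, a quasi-equivalence of dg categories induces a triangle equivalence $\per\widehat{\Uu}_\Lambda \simeq \per\widehat{\Vv}$; identifying the left-hand side with $\Dd^b\Lambda$ (since $\Uu_\Lambda$ generates $\Dd^b\Lambda$ as a thick subcategory, its dg endomorphism category generates the whole derived category) and the right-hand side with $\Tt$ (since $\Vv$ cluster-tilting in particular generates $\Tt$) yields the desired $F$, and the commutativity of the square is built in by construction since $F$ restricts to $f$ on objects.

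\textbf{Main obstacle.}
The hardest part will be the third step: producing the dg-algebra-level lift $\widehat{f}$ of the additive equivalence $f$. An equivalence of additive (or even graded) categories does not automatically lift to a quasi-isomorphism of dg enhancements, because there can be obstructions living in Hochschild cohomology (the $A_\infty$-structure or the dg structure is extra data beyond the graded cohomology). The role of the $\SS$-action hypothesis is precisely to kill or control these obstructions: one expects that the $\SS$-equivariance, combined with the strong vanishing coming from the $2$-Calabi-Yau / cluster-tilting setup (which forces the relevant dg algebras to be intrinsically formal, i.e.\ determined up to quasi-isomorphism by their cohomology), reduces the lifting problem to matching cohomology with its natural grading, which $f$ already does. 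I would therefore expect the technical heart of the argument to be an intrinsic formality statement for the dg endomorphism algebra of a cluster-tilting subcategory in an algebraic $2$-Calabi-Yau category, after which the $\SS$-equivariant isomorphism on cohomology transports directly to a quasi-isomorphism, and the rest follows from standard dg-category yoga.
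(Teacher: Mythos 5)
Your plan founders at exactly the step you flag as the hardest, and the gap there is not a technicality but the whole content of the theorem. First, the vanishing you invoke is not available: the cluster-tilting condition gives only $\Hom_\Tt(\Vv,\Vv[1])=0$ and says nothing about $\Hom_\Tt(\Vv,\Vv[i])$ for $i\neq 0,1$. Already for $\Lambda$ hereditary (a special case of global dimension $\leq 2$) one computes $\Hom_{\Dd^b\Lambda}(\Lambda,\SS^{-p}\Lambda[-p])\simeq\Hom_\Lambda(\Lambda,\tau^{-p}\Lambda)\neq 0$ for all $p$ with $\tau^{-p}\Lambda \neq 0$, so the dg endomorphism category of $\Uu_\Lambda$ has cohomology in infinitely many negative degrees; it is in no sense ``concentrated in the expected degrees'', and no intrinsic formality theorem for such dg categories is known or proved in your outline. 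Second, even granting formality there is a grading mismatch that you pass over: a quasi-equivalence of enhancements must match the cohomologically graded categories $\bigoplus_i\Hom_\Tt(V,V'[i])$ together with all compositions, whereas your hypothesis matches only the degree-zero $\Hom$-spaces together with the $\SS$-action. The space $\bigoplus_p\Hom_{\Dd^b\Lambda}(\Lambda,\SS^{-p}\Lambda)$ is graded by powers of $\SS$, not by the shift, and the assertion that this data ``is exactly the data needed to reconstruct the dg algebra up to quasi-isomorphism'' is precisely the statement to be proved, not a principle one can quote. Indeed your route is essentially the Keller--Reiten argument for acyclic quivers, where formality does hold because the relevant cohomology is hereditary; the introduction of the paper points out that this is exactly what is unavailable in global dimension two, which is why a new argument is needed.

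The paper's proof avoids any formality statement about the whole subcategory by reducing to a single object. It introduces the category $\mor\Vv$ of radical morphisms in $\Vv$ and shows that $\textsf{Cone}\colon\mor\Vv\rightarrow\Tt$ is a bijection on isomorphism classes; the key point is that the shift of $\Tt$ corresponds under this bijection to an endofunctor $\Sigma$ of $\mor\Vv$ constructed purely from the additive structure and the $\SS$-action (via minimal projective presentations in $\mod\Vv$), so the equivalence $f$ transports shifts even though it is only additive and $\SS$-equivariant. This yields $\Hom_\Tt(f\Lambda,\textsf{Cone}(fu)[p])\simeq\Hom_{\Dd^b\Lambda}(\Lambda,\textsf{Cone}(u)[p])$ for all $p$, from which it follows that $f\Lambda$ is a tilting object of $\Tt$ and that $\thick_\Tt(f\Lambda)=\Tt$. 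Then Keller's tilting theorem for algebraic triangulated categories applies, and the only formality used is the classical one: the dg endomorphism algebra of a tilting object has cohomology concentrated in degree zero. If you want to salvage your approach, this reduction from the cluster-tilting subcategory to a tilting object inside $\Tt$ is the idea you are missing.
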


Our strategy for the proof is as follows: We introduce the category of radical morphisms $\mor \Vv$ (see Definition~\ref{def_morcat}) for a cluster-tilting subcategory $\Vv$. In the setup of Theorem~\ref{clustertilt} it follows that $\mor \Vv$ and $\mor \Uu_{\Lambda}$ are equivalent. We would like to complete the proof by saying that $\mor \Vv$ is equivalent to $\Tt$ (and similarly for $\Uu_{\Lambda}$), but unfortunately we do not have such an equivalence but just a bijection on objects (see Lemma~\ref{cone}). We will see that this bijection is nice enough to make the image of $\Lambda$ a tilting object in $\Tt$. Then the theorem follows from the tilting theorem.

\begin{dfa} \label{def_morcat}
Let $\Uu$ be a locally finite $k$-category. We define the category $\mor \Uu$. Objects are radical maps $U_1\rightarrow U_0$ in $\Uu$ and morphisms are commutative squares.
\end{dfa}

Let $u \colon U_1\rightarrow U_0$ be an object in $\Uu$. Since $\Uu$ is locally finite, the kernel $M$ of the map $$\xymatrix{0\ar[r] & M\ar[r] & D\Uu(U_1,-)\ar[r]^{u^*} & D\Uu(U_0,-)}$$ is in $\mod \Uu$ the category of finitely presented functors $\Uu^{\rm op}\rightarrow \mod k$. Hence there exists a map $h \colon H_1\rightarrow H_0$ such that $$\xymatrix{\Uu(-,H_1)\ar[r]^{h_*}  & \Uu(-,H_0)\ar[r] & M\ar[r] & 0}$$ is the minimal projective resolution of $M$. This map is uniquely defined up to isomorphism.
Therefore we can define the map $H \colon \mor \Uu\rightarrow \mor \Uu$ as $Hu=h$. Similarly, we define $H^- \colon \mor \Uu\rightarrow \mor \Uu$.

\begin{lema}
If $u\in \mor \Uu$ is left minimal then we have $H^-Hu\simeq u$. If $u\in\mor\Uu$ is right minimal then we have $HH^-u\simeq u$.
\end{lema}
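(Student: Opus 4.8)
The plan is to reduce the whole statement to a single minimality criterion for the canonical (co)presentations attached to a radical map, everything else being formal. First I would unwind $H^-Hu$. By construction $Hu=h\colon H_1\to H_0$ is extracted from the minimal projective presentation
$$\xymatrix{\Uu(-,H_1)\ar[r]^-{h_*} & \Uu(-,H_0)\ar[r] & M\ar[r] & 0}$$
of $M=\Ker\big(D\Uu(U_1,-)\xrightarrow{Du^*}D\Uu(U_0,-)\big)$, so in particular $M=\Coker(h_*)$. Hence $H^-Hu=H^-h$ is by definition read off from the minimal injective copresentation of this same functor $M$. On the other hand, the very definition of $M$ as a kernel provides a canonical injective copresentation
$$\xymatrix{0\ar[r] & M\ar[r] & D\Uu(U_1,-)\ar[r]^-{Du^*} & D\Uu(U_0,-).}$$
Thus $H^-Hu\simeq u$ amounts to proving that this canonical copresentation is the minimal one exactly when $u$ is left minimal: granting this, uniqueness of minimal injective copresentations (valid since $\mod\Uu$ has injective envelopes, $\Uu$ being locally finite) yields an isomorphism of copresentations, and because $V\mapsto D\Uu(V,-)$ is fully faithful onto the injectives of $\mod\Uu$, such an isomorphism is the image of an isomorphism $u\simeq H^-Hu$ in $\mor\Uu$. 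The claim $HH^-u\simeq u$ is strictly dual, exchanging kernels and injective copresentations for cokernels and projective presentations.

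I would prove the criterion first in its projective-presentation form. Let $u\colon U_1\to U_0$ be radical and set $N=\Coker(u_*)$, giving
$$\xymatrix{\Uu(-,U_1)\ar[r]^-{u_*} & \Uu(-,U_0)\ar[r] & N\ar[r] & 0.}$$
This is a minimal projective presentation precisely when both $\Uu(-,U_0)\to N$ and $\Uu(-,U_1)\to\Imm(u_*)$ are projective covers. The first holds for free: as $u$ is radical and the radical is an ideal, $u_*$ has image in $\rad\,\Uu(-,U_0)$, so the kernel of $\Uu(-,U_0)\to N$ is superfluous. The second holds iff $\Ker(u_*)\subseteq\rad\,\Uu(-,U_1)$. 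Now a split monomorphism from a summand $\Uu(-,U_1')\hookrightarrow\Uu(-,U_1)$ landing in $\Ker(u_*)$ corresponds, by Yoneda, to a direct summand $U_1'$ of $U_1$ with $u|_{U_1'}=0$, and $\Ker(u_*)\not\subseteq\rad$ is equivalent to the existence of such a split submodule. Since the absence of a nonzero summand of $U_1$ annihilated by $u$ is exactly right minimality of $u$, the canonical projective presentation of $N$ is minimal iff $u$ is right minimal; by the reduction of the first paragraph (in its dual form) this proves $HH^-u\simeq u$ for right minimal $u$.

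It remains to establish the injective-copresentation criterion governing $H^-Hu$, which I would deduce by duality. The exact contravariant duality $D\colon\mod\Uu\to\mod\Uu^{\rm op}$ interchanges $\Ker$ with $\Coker$, sends the injective $D\Uu(U,-)$ to the projective $\Uu(U,-)$, and so carries minimal injective copresentations to minimal projective presentations. Applying $D$ to the canonical copresentation of $M$ turns it into the canonical projective presentation in $\mod\Uu^{\rm op}$ of $u$ regarded as a radical map $U_0\to U_1$ of $\Uu^{\rm op}$; by the previous paragraph this is minimal iff $u$ is right minimal in $\Uu^{\rm op}$, i.e.\ left minimal in $\Uu$. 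This is the criterion needed in the first paragraph, so the proof is complete. The step I expect to demand the most care is precisely this bookkeeping: matching left/right minimality of the radical map $u$ with the two superfluousness conditions defining a minimal (co)presentation, and checking along the way that $Hu$ and $H^-u$ are again radical (so that the composites are defined), which holds because the connecting maps of a minimal presentation have image inside the radical.
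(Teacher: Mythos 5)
Your proposal is correct and takes essentially the same route as the paper's proof: both reduce the lemma to the observation that $u$ is left (resp.\ right) minimal exactly when the canonical injective copresentation (resp.\ projective presentation) of the associated functor $M$ is minimal, and then conclude by uniqueness of minimal (co)presentations together with full faithfulness of the (co)Yoneda embedding. The only difference is one of detail: the paper asserts this minimality criterion in a single line, whereas you actually prove it (via the Yoneda/radical argument for projective presentations and then dualizing), which is a fuller write-up of the same idea rather than a different method.
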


\begin{proof}
The morphism $u \colon U_1\rightarrow U_0$ is left minimal if and only if the injective resolution $$\xymatrix{0\ar[r] & M\ar[r] & D\Uu(U_1,-)\ar[r]^{u^*} & D\Uu(U_0,-)}$$ is minimal, hence we get the result.
\end{proof}

Let $\Vv\subset \Tt$ and $\Lambda$ be as in Theorem~\ref{clustertilt}. Since $\widetilde{\Lambda}$ is finite-dimensional, the category $\Uu= \add\{\SSS^p\Lambda \mid p\in \mathbb{Z}\}$ is locally finite, hence so is the category $\Vv$. The autoequivalence $\SSS$ of $\Vv$ induces an autoequivalence of $\mor \Vv$ that we denote also by $\SSS$. Each map $v \colon V_1\rightarrow V_0$ decomposes in the direct sum of a left minimal map and a map of the form $[0\rightarrow V_2]$. Hence we can define a map $\Sigma \colon \mor \Vv\rightarrow \mor \Vv$ by
\[ \Sigma v =  \left\{ \begin{array}{ll} H\SSS^{-}v & \textrm{if } v \textrm{ is left minimal} \\ \left[ V_0 \rightarrow 0 \right] & \textrm{if } v=[0\rightarrow V_0]. 
\end{array} \right. \]
This is clearly a bijection whose inverse is
\[ \Sigma^- v = \left\{ \begin{array}{ll} \SSS H^- v &\textrm{if } v \textrm{ is right minimal} \\ \left[ 0 \rightarrow V_1 \right] & \textrm{if } v=[V_1\rightarrow 0].\end{array} \right. \]

\begin{lema} \label{cone}
Let $\Vv\subset\Tt$ and $\Lambda$ be as in Theorem~\ref{clustertilt}. Then the map $$\textsf{Cone} \colon \xymatrix@R=.1cm{\mor \Vv\ar[r] &\Tt\\ v\ar@{|->}[r] & \textsf{Cone}(v)}$$ is a bijection on isomorphism classes of objects of the categories $\mor \Vv$ and $\Tt$.
Moreover we have
\begin{enumerate}
\item $\textsf{Cone}(\Sigma v)\simeq (\textsf{Cone}(v))[1]$;
\item $\textsf{Cone} (\SSS v)\simeq \SSS(\textsf{Cone}(v))$.
\end{enumerate}
\end{lema}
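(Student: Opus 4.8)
The goal is to show that the map $\textsf{Cone} \colon \mor\Vv \to \Tt$ is a bijection on isomorphism classes of objects, and that it intertwines the bijection $\Sigma$ on $\mor\Vv$ with the shift $[1]$ on $\Tt$, and the $\SS$-action on $\mor\Vv$ with the $\SS$-action on $\Tt$. I would treat the three assertions separately, starting with the bijectivity claim, since (1) and (2) are essentially bookkeeping once the main identification is in place.

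\emph{Bijectivity of $\textsf{Cone}$.}
First I would check surjectivity. Given any $X\in\Tt$, I use Proposition~\ref{approximation}: since $\Vv$ is cluster-tilting, there is an approximation triangle $\xymatrix{V_1\ar[r] & V_0\ar[r]^v & X\ar[r] & V_1[1]}$ with $V_0,V_1\in\Vv$ and $v$ a minimal right $\Vv$-approximation. Rotating this triangle, $X\simeq\textsf{Cone}(V_1\to V_0)$ where the map $V_1\to V_0$ is the connecting map $X[-1]\to V_1$ composed appropriately — so $X$ is the cone of a morphism in $\Vv$, and after splitting off the (identity) part that is not radical I may assume this morphism is a radical map, i.e.\ an object of $\mor\Vv$. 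This gives surjectivity. For injectivity, suppose $\textsf{Cone}(u)\simeq\textsf{Cone}(v)$ for two radical maps $u\colon U_1\to U_0$ and $v\colon V_1\to V_0$ in $\Vv$; the key point is that the triangle $\xymatrix{U_1\ar[r]^u & U_0\ar[r] & \textsf{Cone}(u)\ar[r] & U_1[1]}$ with $u$ radical is, up to isomorphism, exactly the minimal right $\Vv$-approximation triangle of $\textsf{Cone}(u)$, because $\Hom_\Tt(\Vv,\textsf{Cone}(u))\to\Hom_\Tt(\Vv,\textsf{Cone}(u))$ and the rigidity/orthogonality $\Hom_\Tt(\Vv,\Vv[1])=0$ force the approximation to be unique and minimal precisely when the map is radical. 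Minimality of the approximation triangle then recovers $u$ from $\textsf{Cone}(u)$ up to isomorphism in $\mor\Vv$, giving injectivity.

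\emph{The compatibilities (1) and (2).}
For (2), $\textsf{Cone}(\SS v)\simeq\SS(\textsf{Cone}(v))$ is immediate: $\SS=\mathbb{S}[-2]$ is a triangle autoequivalence of $\Tt$, so it sends the defining triangle of $\textsf{Cone}(v)$ to the defining triangle of $\textsf{Cone}(\SS v)$, and the $\SS$-action on $\mor\Vv$ is by definition applied entrywise. For (1), I would verify the two cases of the definition of $\Sigma$. When $v=[0\to V_0]$, its cone is $V_0$, and $\Sigma v=[V_0\to 0]$ has cone $V_0[1]$, matching $(\textsf{Cone}(v))[1]$. The substantive case is when $v$ is left minimal: here $\Sigma v=H\SS^- v$, and I must show $\textsf{Cone}(H\SS^- v)\simeq(\textsf{Cone}(v))[1]$. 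The idea is that $\SS^- v$ shifts the approximation triangle by the (inverse) Serre twist, while applying $H$ — which passes to the minimal projective presentation of the functor kernel, i.e.\ computes the next term of the $\Vv$-resolution — exactly realizes the rotation of the triangle, producing the approximation triangle of $(\textsf{Cone}(v))[1]$. Concretely, the Serre functor identity $\Hom_\Tt(V,X[1])\cong D\Hom_\Tt(X,\SS V)$ is what converts the operation $H$ (defined via the duals $D\Uu(U_i,-)$) into the shift on cones.

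\emph{Main obstacle.}
The delicate point is the left-minimal case of (1): I need to verify that the purely functorial construction $H$ on $\mor\Vv$ — built from minimal projective resolutions of kernels in $\mod\Vv$ — corresponds under $\textsf{Cone}$ to the topological shift $[1]$ in $\Tt$. This requires combining the Serre duality isomorphism with the cluster-tilting orthogonality $\Hom_\Tt(\Vv,\Vv[1])=0$ to see that the minimal right $\Vv$-approximation of $X[1]$ is governed by exactly the data $H\SS^- v$, and that the minimality matching up is not merely an isomorphism of objects but respects the radical-map structure. I would handle this by tracking the approximation triangle of $X=\textsf{Cone}(v)$, applying $\SS^-$ and rotating, and identifying the resulting minimal approximation of $X[1]$ with the projective presentation computed by $H$; the left-minimality of $v$ is precisely what guarantees the outcome is again left minimal so that the formula for $\Sigma$ applies.
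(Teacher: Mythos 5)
Your proposal is correct and follows essentially the same route as the paper: bijectivity via the correspondence between radical maps and minimal right $\Vv$-approximation triangles, the same case split for (1), and, in the left-minimal case, Serre duality combined with the vanishing $\Hom_\Tt(\Vv,\Vv[1])=0$ to identify $H\SS^-v$ with the minimal approximation data of $\textsf{Cone}(v)[1]$ --- which is precisely the four-term exact sequence computation the paper carries out. One minor slip: the duality formula should read $\Hom_\Tt(V,X[2])\simeq D\Hom_\Tt(X,\SS V)$ (equivalently $\Hom_\Tt(V,X[1])\simeq D\Hom_\Tt(X,\SS V[1])$), as your version is off by one shift, but this is exactly the bookkeeping that executing your outlined computation would correct.
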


\begin{proof}
If two objects $u$ and $v$ of $\mor \Vv$ are isomorphic, then their cones are isomorphic. Hence this map is well-defined on isomorphism classes of objects. By Proposition~\ref{approximation}, for each $T\in \Tt$ there exists a triangle $$\xymatrix{V_1\ar[r]^v & V_0\ar[r]^w & T\ar[r] & V_1[1]}.$$ The map $v$ is in the radical if and only if $w$ is a minimal right $\Vv$-approximation. Since minimal right approximations exists and are unique up to isomorphism, the map $\textsf{Cone}$ is bijective. 

Let $v \colon 0\rightarrow V_0$ be in $\mor \Vv$. Then we have
\begin{align*}
\textsf{Cone}(\Sigma v) & = \textsf{Cone}([V_0\rightarrow 0])\\
& = V_0[1]\\
& = (\textsf{Cone}(v))[1].
\end{align*}

Let $v \colon V_1\rightarrow V_0$ be left minimal in $\mor \Vv$. Let $h \colon H_1\rightarrow H_1$ be $H\SSS^-(v)$. Then we have an exact sequence in $\mod \Vv$:

$$\xymatrix{\Hom_\Tt(\Vv,H_1)\ar[r]^{h_*} & \Hom_\Tt(\Vv,H_0)\ar[r] & D\Hom_\Tt(\SSS^-V_1,\Vv)\ar[r]^{(\SSS^-v)^*} & D\Hom_\Tt(\SSS^-V_0,\Vv)}$$

By definition of $\SSS$ this sequence is isomorphic to 
$$\xymatrix{\Hom_\Tt(\Vv,H_1)\ar[r]^{h_*} & \Hom_\Tt(\Vv,H_0)\ar[r] & \Hom_\Tt(\Vv,V_1[2])\ar[r]^{v[2]^*} & D\Hom_\Tt(\Vv, V_0[2])}$$

Since $\Vv$ is cluster-tilting, the space $ \Hom_{\Tt}(\Vv, H_1[1])$ vanishes and the cokernel of $h_*$ is isomorphic to $\Hom_\Tt(\Vv,\textsf{Cone}(h))$. Since $\Hom_{\Tt}(\Vv, V_0[1])$ vanishes, the kernel of the map $v[2]^*$ is $\Hom_\Tt(\Vv,\textsf{Cone}(v)[1] )$. Hence we get 
\begin{align*}
\textsf{Cone}(\Sigma v)& = \textsf{Cone}(H\SSS^-v)\\
& = \textsf{Cone}(h)\\
& = \textsf{Cone}(v)[1]
\end{align*}
and we have (1).

Assertion (2) is immediate.
\end{proof}

\begin{lema}\label{isomorphism}
In the setup of Theorem~\ref{clustertilt}, let $u$ be in $\mor \Uu$. Then for all $p\in \mathbb{Z} $ we have isomorphisms
$$\Hom_\Tt(f\Lambda,\textsf{Cone}(fu)[p])\simeq \Hom_{\Dd^b\Lambda}(\Lambda, \textsf{Cone}(u)[p]).$$
\end{lema}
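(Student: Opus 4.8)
The plan is to reduce the statement for arbitrary $p$ to the case $p=0$ using the bijection $\Sigma$ introduced just before Lemma~\ref{cone}, and then to identify both Hom-spaces with cokernels. First I would note that the hypotheses of Lemma~\ref{cone} hold not only for $\Vv\subset\Tt$ but equally for $\Uu=\Uu_\Lambda\subset\Dd^b(\Lambda)$: the category $\Dd^b(\Lambda)$ is algebraic, carries the Serre functor $\mathbb{S}$, and $\Uu$ is cluster-tilting by Example~\ref{listexamples}(2). Hence on both $\mor\Uu$ and $\mor\Vv$ we have the bijection $\Sigma$ together with the isomorphism $\textsf{Cone}(\Sigma v)\simeq\textsf{Cone}(v)[1]$ of Lemma~\ref{cone}(1). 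Since $\Sigma$ is invertible, iterating gives $\textsf{Cone}(\Sigma^p v)\simeq\textsf{Cone}(v)[p]$ for every $p\in\ZZ$, in both $\Dd^b(\Lambda)$ and $\Tt$.

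Next I would check that $f$ intertwines the two copies of $\Sigma$. As $f\colon\Uu\to\Vv$ is an equivalence of additive categories it induces an equivalence $\mod\Uu\simeq\mod\Vv$ carrying representable functors to representable functors and the duals $D\Uu(U,-)$ to $D\Vv(fU,-)$; consequently $f$ commutes with the operators $H$ and $H^-$, which are defined via minimal projective presentations. Since $f$ is moreover compatible with the $\SS$-action, and since left/right minimality of a map is preserved by an equivalence, the very definition of $\Sigma$ yields a natural isomorphism $f(\Sigma u)\simeq\Sigma(fu)$ in $\mor\Vv$, and hence $f(\Sigma^p u)\simeq\Sigma^p(fu)$ for all $p$. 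Combining this with the first paragraph,
\begin{align*}
\Hom_\Tt(f\Lambda,\textsf{Cone}(fu)[p]) & \simeq \Hom_\Tt(f\Lambda,\textsf{Cone}(\Sigma^p(fu)))\\
& \simeq \Hom_\Tt(f\Lambda,\textsf{Cone}(f(\Sigma^p u))),
\end{align*}
and in the same way $\Hom_{\Dd^b\Lambda}(\Lambda,\textsf{Cone}(u)[p])\simeq\Hom_{\Dd^b\Lambda}(\Lambda,\textsf{Cone}(\Sigma^p u))$. Writing $w:=\Sigma^p u\in\mor\Uu$, it therefore suffices to treat $p=0$, i.e.\ to produce an isomorphism $\Hom_\Tt(f\Lambda,\textsf{Cone}(fw))\simeq\Hom_{\Dd^b\Lambda}(\Lambda,\textsf{Cone}(w))$.

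For the case $p=0$, let $w\colon U_1\rightarrow U_0$ and apply $\Hom_{\Dd^b\Lambda}(\Lambda,-)$ to the defining triangle $\xymatrix{U_1\ar[r]^w & U_0\ar[r] & \textsf{Cone}(w)\ar[r] & U_1[1]}$. Because $\Lambda$, $U_1$ and $U_0$ all lie in the cluster-tilting subcategory $\Uu$, we have $\Hom_{\Dd^b\Lambda}(\Lambda,U_1[1])=0$, so the long exact sequence collapses to $\Hom_{\Dd^b\Lambda}(\Lambda,\textsf{Cone}(w))\simeq\Coker\big(w_*\colon\Hom_{\Dd^b\Lambda}(\Lambda,U_1)\rightarrow\Hom_{\Dd^b\Lambda}(\Lambda,U_0)\big)$. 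Applying the same argument in $\Tt$ to $\xymatrix{fU_1\ar[r]^{fw} & fU_0\ar[r] & \textsf{Cone}(fw)\ar[r] & fU_1[1]}$, and using that $f\Lambda,fU_1,fU_0\in\Vv$ with $\Vv$ cluster-tilting so that $\Hom_\Tt(f\Lambda,fU_1[1])=0$, gives $\Hom_\Tt(f\Lambda,\textsf{Cone}(fw))\simeq\Coker\big((fw)_*\big)$. Finally, since $f$ is fully faithful on $\Uu$ it produces a commutative square with vertical isomorphisms $\Hom_{\Dd^b\Lambda}(\Lambda,U_i)\xrightarrow{\ \sim\ }\Hom_\Tt(f\Lambda,fU_i)$ intertwining $w_*$ and $(fw)_*$; passing to cokernels yields the desired isomorphism.

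The main obstacle is the compatibility $f(\Sigma u)\simeq\Sigma(fu)$: once this internal operation is known to be transported by $f$, the shift $[p]$ is absorbed into $\Sigma^p$ and the problem becomes the degree-zero computation, which is immediate from the cluster-tilting vanishing $\Hom(\Uu,\Uu[1])=0$ and $\Hom(\Vv,\Vv[1])=0$. Verifying that transport is where the bookkeeping lies: one must spell out that an equivalence of additive categories induces an equivalence of the associated functor categories $\mod$ sending representables to representables, hence commuting with the construction $H$ (and with $\Sigma$), and it is precisely here that the $\SS$-compatibility of $f$ is used.
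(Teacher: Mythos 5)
Your proof is correct and follows essentially the same route as the paper's: both handle the shift $[p]$ by passing to the operator $\Sigma^p$ on the morphism category (via Lemma~\ref{cone}(1) together with the fact that $f$, being an equivalence of categories with $\SS$-action, commutes with $\Sigma$), and then compare the two Hom-spaces through the exact sequences obtained from the defining triangles, using the cluster-tilting vanishing $\Hom(\Uu,\Uu[1])=0$, $\Hom(\Vv,\Vv[1])=0$ and the full faithfulness of $f$. The only difference is presentational: the paper encodes your element $w=\Sigma^p u$ as the $\Uu$-approximation triangle $u^p$ of $\textsf{Cone}(u)[p]$, which by the bijectivity of $\textsf{Cone}$ is the same object of $\mor\Uu$.
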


\begin{proof}
Let $\xymatrix{U_1\ar[r]^{u} & U_0\ar[r] & X\ar[r] & U_0[1]}$ be a triangle in $\Dd^b\Lambda$ with $u\in \mor \Uu$.  
Let $$\xymatrix{U_1^p\ar[r]^{u^p} & U_0^p\ar[r] & X[p]\ar[r] & U_0^p[1]}$$ be a $\Uu$-approximation triangle of $X[p]$ in $\Dd^b(\Lambda)$ with $u^p\in\mor \Uu$.  Then we have
\begin{align*}
\textsf{Cone} (f(u^p))&\simeq \textsf{Cone}(f(\Sigma_\Uu^p(u))) && \textrm{by Lemma~\ref{cone}(1)}, \\
& \simeq \textsf{Cone} (\Sigma_\Vv^p(fu)) && \textrm{since } f \textrm{ is an equivalence of } \SSS\textrm{-categories}\\
&\simeq \textsf{Cone}(fu)[p] && \textrm{by Lemma~\ref{cone}(1)}.
\end{align*}

Thus we have a triangle in $\Tt$
$$\xymatrix{f(U_1^p)\ar[r]^{fu^p} &f(U_0^p)\ar[r] & \textsf{Cone}(fu)[p]\ar[r] & (fU_1^p)[1]}$$
which gives an exact sequence
$$\xymatrix{\Hom_\Tt(f\Lambda,fU_1^p)\ar[r] & \Hom_{\Tt}(f\Lambda,fU_0^p)\ar[r] & \Hom_\Tt(f\Lambda,\textsf{Cone}(fu)[p])\ar[r] & \Hom_\Tt(f\Lambda,fU_1^p[1])}.$$
The space $\Hom_\Tt(f\Lambda,fU_1^p[1])$ vanishes since $f\Uu=\Vv$ is a cluster-tilting subcategory of $\Tt$.
But since $f$ is an equivalence we have 
$$\xymatrix{\Hom_\Tt(f\Lambda,fU_1^p)\ar[r] & \Hom_{\Tt}(f\Lambda,fU_0^p)\ar[r] & \Hom_\Tt(f\Lambda,\textsf{Cone}(fu)[p])\ar[r] & 0\\
\Hom_{\Dd^b\Lambda}(\Lambda,U_1^p)\ar[u]^\wr\ar[r] & \Hom_{\Dd^b(\Lambda)}(\Lambda,U_1^p)\ar[r]\ar[u]^\wr & \Hom_{\Dd^b\Lambda}(\Lambda,\textsf{Cone}(u)[p])\ar[r] & \Hom_{\Dd^b\Lambda}(\Lambda,U_1^p[1])=0 }$$
Hence we get
\[ \Hom_\Tt(f\Lambda,\textsf{Cone}(fu)[p])\simeq \Hom_{\Dd^b\Lambda}(\Lambda, \textsf{Cone}(u)[p]). \qedhere \]
\end{proof}

\begin{proof}[Proof of Theorem~\ref{clustertilt}]

Applying Lemma~\ref{isomorphism} to $u=[0\rightarrow \Lambda]$ we get for each $p\in \mathbb{Z}$ $$\Hom_\Tt(f\Lambda,f\Lambda[p])\simeq \Hom_{\Dd^b\Lambda}(\Lambda, \Lambda[p])=0.$$
Therefore the object $f\Lambda$ is a tilting object in the category $\Tt$. 

We will use the following theorem  which can be deduced from \cite[Theorem~8.5]{tiltingKeller}:

\begin{thma}[Keller]
Let $\Tt$ be a $\Hom$-finite algebraic triangulated category. Let $T\in\Tt$ be
a tilting object of $\Tt$, i.e.\ for any $i\neq 0$ the space $\Ext^i_\Tt(T,T)$
vanishes. Denote by $\Lambda$ the endomorphism algebra $\End_\Tt(T)$
and assume it is of finite global dimension. Then there exists an
algebraic equivalence
$F \colon \Dd^b \Lambda\rightarrow\thick_\Tt(T) $ sending the object $\Lambda$ on $T$ where $\thick_\Tt (T)$ is the smallest triangulated subcategory of $\Tt$ containing $T$ and stable under direct summands.
\end{thma}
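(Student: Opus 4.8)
The plan is to descend to dg algebras, where the statement becomes an instance of Keller's derived Morita theory. Since $\Tt$ is algebraic, it is the stable category $\underline{\Ee}$ of a Frobenius exact category $\Ee$, and such a category carries a canonical dg enhancement: a pretriangulated dg category $\Aa$ with $H^0(\Aa)\simeq\Tt$. First I would fix a lift $\widetilde{T}\in\Aa$ of the tilting object $T$ and form its dg endomorphism algebra $B:=\RHom_\Aa(\widetilde{T},\widetilde{T})$, computed on a model (for instance an h-projective resolution of $\widetilde{T}$) for which its cohomology reads off morphisms in $\Tt$; explicitly $H^n(B)\simeq\Hom_\Tt(T,T[n])=\Ext^n_\Tt(T,T)$ for all $n$.

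The tilting hypothesis is exactly what makes $B$ manageable. By assumption $\Ext^n_\Tt(T,T)$ vanishes for $n\neq0$ and equals $\Lambda$ for $n=0$, so $H^\ast(B)$ is concentrated in degree $0$. A dg algebra with cohomology in a single degree is formal: the canonical truncation $\tau_{\leq0}B$ is a dg subalgebra, and because $H^n(B)=0$ for $n\neq0$ the two natural maps $B\xleftarrow{\ \sim\ }\tau_{\leq0}B\xrightarrow{\ \sim\ }H^0(B)=\Lambda$ are both quasi-isomorphisms of dg algebras. Hence $B$ is quasi-isomorphic to $\Lambda$, and by invariance of the derived category under quasi-isomorphism we obtain a triangle equivalence $\per B\simeq\per\Lambda$.

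Next I would produce the functor to $\Tt$. The dg bimodule $\widetilde{T}$ defines a triangle functor $-\lten_B\widetilde{T}\colon\per B\to\Tt$ sending $B$ to $T$. It is fully faithful on the generator, since $\Hom_{\per B}(B,B[n])=H^n(B)\simeq\Hom_\Tt(T,T[n])$, and as $\per B=\thick_{\per B}(B)$ full faithfulness propagates to all of $\per B$ by the standard argument over triangles and shifts. Its essential image is therefore the thick subcategory generated by $T$, i.e.\ $\thick_\Tt(T)$. This is precisely the content extracted from \cite[Theorem~8.5]{tiltingKeller}, and it yields an algebraic equivalence $\per B\simeq\thick_\Tt(T)$.

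Finally I would identify $\per B$ with $\Dd^b\Lambda$. Composing the formality equivalence with the previous step gives $\per\Lambda\simeq\thick_\Tt(T)$, and because $\Lambda=\End_\Tt(T)$ has finite global dimension every bounded complex of finite-dimensional modules admits a finite projective resolution, so $\per\Lambda=\Dd^b(\mod\Lambda)=\Dd^b\Lambda$. The composite $F\colon\Dd^b\Lambda\xrightarrow{\ \sim\ }\per B\xrightarrow{\ \sim\ }\thick_\Tt(T)$ is then an algebraic triangle equivalence with $F(\Lambda)=T$, as required. The main obstacle is the dg-categorical bookkeeping: realizing the enhancement so that $B$ genuinely computes the graded $\Hom$-spaces of $\Tt$, and verifying that $-\lten_B\widetilde{T}$ is fully faithful with essential image exactly $\thick_\Tt(T)$. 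Once formality is in hand the remainder is the standard derived Morita machinery, and the finite global dimension hypothesis is what upgrades $\per\Lambda$ to the bounded derived category $\Dd^b\Lambda$.
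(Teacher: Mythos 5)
Your proposal is correct, and it coincides with the approach underlying the paper's treatment: the paper does not prove this statement at all, but quotes it as deducible from Keller's Theorem~8.5 in the Handbook chapter \cite{tiltingKeller}, and your dg reconstruction --- enhancement of the algebraic category, the quasi-isomorphism zigzag $B \leftarrow \tau_{\leq 0}B \rightarrow H^0(B)=\Lambda$ exploiting that the tilting hypothesis concentrates $H^\ast(B)$ in degree $0$, derived Morita theory for $-\lten_B \widetilde{T}$ with d\'evissage from the generator, and finite global dimension to upgrade $\per \Lambda$ to $\Dd^b\Lambda$ --- is precisely the standard proof of that cited result. No gaps; the only implicit point worth noting is that $\Hom$-finiteness (plus the paper's Krull--Schmidt convention, which makes $\Tt$ idempotent complete so that $\thick_\Tt(T)$ behaves as expected) is what guarantees $\Lambda$ is a finite-dimensional algebra, so that $\Dd^b\Lambda$ means the bounded derived category of finite-dimensional modules.
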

Hence we have an equivalence
$$\xymatrix{\Dd^b(\Lambda)\ar[r]^(.4)\sim & \thick(f\Lambda)\subset \Tt}$$
where $\thick(f\Lambda)$ is the thick subcategory of $\Tt$ generated by $f(\Lambda)$. It remains to show that $\thick(f \Lambda) = \Tt$. Since $\Lambda$ has finite global dimension it suffices to show that the only object $Y \in \Tt$ such that $\Hom_{\Tt}(f\Lambda, Y[p])=0 \, \forall p \in \ZZ$ is $0$. So let $Y\in\Tt$ be
 such that for each $p\in\mathbb{Z}$ the space $\Hom_\Tt(f\Lambda,Y[p])$ vanishes. Form an approximation triangle in $\Tt$
$$\xymatrix{V_1\ar[r]^v & V_0\ar[r] & Y\ar[r] & V_1[1]}.$$ Since $f \colon \Uu\rightarrow \Vv$ is an equivalence and since $v$ is in $\mor \Vv$, there exists $u \colon U_0\rightarrow U_1\in \mor \Uu$ such that $f(u)=v$. Denote by $X$ the cone of $u$. Then by Lemma~\ref{isomorphism} we have an isomorphism
$$ \Hom_{\Dd^b\Lambda}(\Lambda, X[p]) \simeq \Hom_\Tt(f\Lambda,Y[p])=0\textrm{ for all p }\in \mathbb{Z}.$$
Hence we have $X=0$ and $U_0=U_1=0$. Therefore we have $Y=0$. Hence we have an equivalence $\xymatrix{\Dd^b(\Lambda)\ar[r]^(.55)\sim & \Tt}.$

It remains to prove that the following diagram is commutative.
\[ \xymatrix{\Dd^b(\Lambda)\ar[r]^(.55)\sim_F & \Tt\\ \Uu\ar[r]_f^\sim \ar@{^(->}[u] & \Vv\ar@{^(->}[u]} \]
Since $F(\Lambda)=f(\Lambda)$ and since $\Uu$ is the cluster-tilting subcategory $\add\{_\Lambda\SSS^p\Lambda \mid p\in \mathbb{Z}\}$, it is enough to prove that the functor $F$ commutes with $\SSS$. This is clear by the uniqueness of the Serre functor in a triangulated category.
\end{proof}

\section{Application to Iyama-Yoshino reduction} \label{section.IY}

In this section, as an application of the recognition theorem (Theorem~\ref{clustertilt}), we show that certain Iyama-Yoshino reductions of derived categories are derived categories again.

For lightening the writing, in this section we denote by $\Tt(X,Y)$ the space of morphisms $\Hom_\Tt(X,Y)$ in the category $\Tt$. If $\Uu$ is a subcategory of $\Tt$, we denote by $[\Uu](X,Y)$ the space of morphism in $\Tt$ between $X$ and $Y$ factorizing through an object in $\Uu$. If $\Tt$ is a triangulated category with Serre functor $\leftsub{\Tt}{\mathbb{S}}$, we set $\leftsub{\Tt}{\SSS} = \leftsub{\Tt}{\mathbb{S}}[-2]$, and simply write $\SSS$ for $\leftsub{\Tt}{\SSS}$ when there is no danger of confusion.

\subsection{Iyama-Yoshino reduction}
This subsection is devoted to recalling some results of \cite{IY}.

Let $\Dd$ be a triangulated $k$-category which is $\Hom$-finite and
with a Serre functor $_\Dd\mathbb{S}$. Let $\Uu$ be a full rigid (i.e.\ $\Dd(\Uu, \Uu[1]) = 0$) functorially finite subcategory of $\Dd$ which is stable under $\leftsub{\Dd}{\SSS} = \leftsub{\Dd}{\mathbb{S}}[-2]$. We define the full subcategory $\Zz$ of $\Dd$ by
\[ \Zz=\{X\in\Dd\mid \Dd(\Uu,X[1])=0\}. \]
We denote by $\Tt$ the category $\Zz/[\Uu]$. Its objects are those of $\Zz$ and for $X$ and $Y$ in $\Zz$ we have
\[ \Tt(X,Y):=\Dd(X,Y)/[\Uu](X,Y). \]

For $X$ in $\Zz$, let $X\rightarrow U_X$ be a left
$\Uu$-approximation. We define $X\{ 1\}$ to be the cone
$$\xymatrix{X\ar[r] & U_X\ar[r] & X\{ 1 \}\ar[r] & X[1]}.$$

\begin{rema}
In \cite{IY}, Iyama and Yoshino write $X\!\left<1\right>$ instead of $X\{1\}$. Here we deviate from their notation because in this paper pointy brackets are used to denote degree shifts.
\end{rema}

\begin{thma}[\cite{IY}] \label{IyamaYoshinoreduction}
The category $\Tt$ is  triangulated, with shift functor
$\{ 1 \}$ and Serre functor $\leftsub{\Tt}{\mathbb{S}} = \leftsub{\Dd}{\SSS}\{ 2
\}$. Moreover there is a 1-1 correspondence between cluster-tilting subcategories of $\Dd$ containing
$\Uu$ and cluster-tilting subcategories in $\Tt$.
\end{thma}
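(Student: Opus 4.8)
\textbf{Plan of proof for Theorem~\ref{IyamaYoshinoreduction}.}
The statement has two independent halves, and I would treat them separately. The first half---that $\Tt = \Zz/[\Uu]$ is triangulated with shift functor $\{1\}$ and Serre functor $\leftsub{\Tt}{\mathbb{S}} = \leftsub{\Dd}{\SS}\{2\}$---is essentially the content of \cite{IY} and I would cite it, since $\Uu$ is rigid, functorially finite and stable under $\leftsub{\Dd}{\SS}$, which are exactly their hypotheses. The genuinely new content to verify is the stated \emph{Serre functor} on $\Tt$; here the plan is to check directly that for $X,Y \in \Zz$ one has a functorial isomorphism $\Tt(X,Y) \simeq D\,\Tt(Y, \leftsub{\Dd}{\SS}\{2\} X)$. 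I would start from the Serre duality in $\Dd$, namely $\Dd(X,Y) \simeq D\,\Dd(Y,\leftsub{\Dd}{\mathbb{S}} X)$, pass to the quotient by $[\Uu]$, and unwind the two applications of $\{1\}$ using the defining triangles $X \to U_X \to X\{1\} \to X[1]$, tracking carefully how morphisms factoring through $\Uu$ behave under the duality.

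\textbf{The bijection of cluster-tilting subcategories.}
This is the part I would spend most effort on. I want a map in each direction and then check they are mutually inverse. In one direction, given a cluster-tilting subcategory $\Vv$ of $\Dd$ with $\Uu \subseteq \Vv$, I would send it to its image $\Vv/[\Uu]$ in $\Tt$; note $\Vv \subseteq \Zz$ automatically since $\Dd(\Uu,\Vv[1]) = 0$ by rigidity of $\Uu$ inside the cluster-tilting $\Vv$. In the other direction, given a cluster-tilting subcategory $\Ww$ of $\Tt$, I would pull it back to the full subcategory $\widetilde{\Ww} \subseteq \Zz \subseteq \Dd$ of objects whose image lies in $\Ww$, which automatically contains $\Uu$ (as $\Uu$ maps to $0$). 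The two assignments are visibly inverse to one another on the level of object classes, so the real work is to prove that each assignment lands in the claimed target, i.e.\ that $\Vv/[\Uu]$ is cluster-tilting in $\Tt$ and that $\widetilde{\Ww}$ is cluster-tilting in $\Dd$.

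\textbf{Checking the cluster-tilting condition passes through the quotient.}
To verify these two implications I would use the characterization recalled just before Proposition~\ref{prop_image_ct_again}: a functorially finite subcategory $\Cc$ is cluster-tilting in $\Tt$ iff $\Tt = \Cc * \Cc\{1\}$ together with $\Tt(\Cc,\Cc\{1\}) = 0$. The key computational input is to translate $\operatorname{Hom}$ and $*$ in $\Tt$ back to $\Dd$: for objects of $\Zz$ the shift $\{1\}$ is computed by the triangle $X \to U_X \to X\{1\} \to X[1]$, so a vanishing $\Tt(\Vv, \Vv\{1\}) = 0$ should be compared with $\Dd(\Vv,\Vv[1]) = 0$ by applying $\Dd(\Vv,-)$ to that triangle and using $\Uu \subseteq \Vv$ to kill the approximation term. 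The main obstacle, and the step I expect to be most delicate, is the generation condition $\Tt = (\Vv/[\Uu]) * (\Vv/[\Uu])\{1\}$: I would take $X \in \Zz$, use that $\Dd = \Vv * \Vv[1]$ to write a triangle $V_1 \to V_0 \to X \to V_1[1]$ in $\Dd$, and then carefully convert this into a triangle in $\Tt$ relating $X$ to objects of $\Vv/[\Uu]$ and $(\Vv/[\Uu])\{1\}$, correcting $V_1[1]$ into $V_1\{1\}$ by splicing in the $\Uu$-approximation triangle and invoking the octahedral axiom. Functorial finiteness of the image should follow from functorial finiteness of $\Vv$ together with that of $\Uu$. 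The reverse direction is symmetric, using that the preimage functor reflects the relevant $\operatorname{Hom}$-vanishings and that every object of $\Zz$ lifts the corresponding object of $\Tt$.
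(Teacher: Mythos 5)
You should first be aware that the paper contains no proof of this statement: it is recalled verbatim from \cite{IY} as background, so there is no internal argument to compare yours with. The only original contribution of the paper in this direction is Proposition~\ref{prop.IYalg}, where the authors realize $\Tt=\Zz/[\Uu]$ as the stable category $\underline{\Ff}$ of a Frobenius exact category $\Ff$ (the preimage of $\Zz$ in $\Ee$, whose projective-injective objects are the preimage of $\Uu$), and they remark that this yields a simpler proof of the triangulated structure in the algebraic case --- a route quite different from, and slicker than, the direct verification you sketch. Measured against the original argument of Iyama--Yoshino, your treatment of the \emph{image} direction of the bijection is essentially correct: $\Vv\subseteq\Zz$ by rigidity; $\Tt(\pi V,\pi V'\{1\})=0$ follows by applying $\Dd(V,-)$ to the triangle $V'\to U_{V'}\to V'\{1\}\to V'[1]$ and using $\Dd(V,V'[1])=0$; and for $X\in\Zz$ the approximation triangle $V_1\to V_0\to X\to V_1[1]$ of Proposition~\ref{approximation} induces a triangle $V_1\to V_0\to X\to V_1\{1\}$ in $\Tt$ (this is exactly the triangle construction recalled in the paper right after the theorem), giving the generation condition.

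There is, however, a genuine gap where you write that ``the reverse direction is symmetric.'' It is not. To show that the preimage $\widetilde{\mathcal{W}}$ of a cluster-tilting subcategory $\mathcal{W}\subseteq\Tt$ is cluster-tilting in $\Dd$, the condition $\Dd=\widetilde{\mathcal{W}}*\widetilde{\mathcal{W}}[1]$ quantifies over \emph{all} objects of $\Dd$, including those outside $\Zz$; such objects have no image in $\Tt$, so nothing about them can be transported from the quotient, and your lifting argument says nothing about them. The same problem affects functorial finiteness of $\widetilde{\mathcal{W}}$ in $\Dd$. A workable repair is to abandon the $*$-characterization for this direction and use instead the defining equalities of a cluster-tilting subcategory: if $\Dd(\widetilde{\mathcal{W}},X[1])=0$ then $\Dd(\Uu,X[1])=0$ (as $\Uu\subseteq\widetilde{\mathcal{W}}$), so $X\in\Zz$ automatically; if $\Dd(X,\widetilde{\mathcal{W}}[1])=0$ then $\Dd(X,\Uu[1])=0$, which again forces $X\in\Zz$ by Serre duality together with $\SS\Uu=\Uu$. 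Once $X\in\Zz$, applying $\Dd(W,-)$, $W\in\widetilde{\mathcal{W}}$, to the triangle $X\to U_X\to X\{1\}\to X[1]$ shows that $\Tt(\mathcal{W},\pi X\{1\})=0$, whence $\pi X\in\mathcal{W}$ and $X\in\widetilde{\mathcal{W}}$; the converse inclusions follow from the $\Hom$-vanishing computation as in the image direction. Even then, functorial finiteness of $\widetilde{\mathcal{W}}$ in $\Dd$ requires functorial finiteness of $\Zz$ itself in $\Dd$ --- a nontrivial point established in \cite{IY} --- combined with lifting right $\mathcal{W}$-approximations from $\Tt$ and adding right $\Uu$-approximations. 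Without these ingredients the preimage half of your bijection does not close.
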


In this construction, for any triangle $\xymatrix{X\ar[r] & Y\ar[r] &
  Z\ar[r] & X[1]}$ in $\Dd$ such that $X$, $Y$ and $Z$ are in $\Zz$, we have a morphism of triangles 
$$\xymatrix{X\ar[r]^u\ar@{=}[d] & Y\ar[r]^v\ar[d] & Z\ar[r]\ar[d]^{w} &
  X[1]\ar@{=}[d] \\X\ar[r] & U_X\ar[r] & X\{ 1 \}\ar[r] &
  X[1]}$$
Then the image of $\xymatrix{X\ar[r]^u & Y\ar[r]^v & Z\ar[r]^{w} & X\{
  1\} }$ in
$\Tt$ is a triangle.

Here we need the following version of Iyama-Yoshino reduction for the setup of algebraic triangulated categories.

\begin{prop} \label{prop.IYalg}
In the setup of Theorem~\ref{IyamaYoshinoreduction}, if $\Dd$ is algebraic triangulated then so is~$\Tt$.
\end{prop}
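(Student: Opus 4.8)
The plan is to exhibit an explicit Frobenius exact category whose stable category is the Iyama–Yoshino reduction $\Tt = \Zz/[\Uu]$. By hypothesis $\Dd$ is algebraic, so write $\Dd = \underline{\Ee}$ for some Frobenius exact category $\Ee$, with $[1]$ induced by the syzygy (cosyzygy) on $\Ee$. First I would pull the subcategory $\Uu \subset \Dd = \underline{\Ee}$ back to a subcategory $\widetilde\Uu \subseteq \Ee$ (the full subcategory of objects whose image in $\underline\Ee$ lies in $\Uu$, automatically containing the projective-injectives $\Pp$ of $\Ee$), and similarly define $\widetilde\Zz \subseteq \Ee$ as the preimage of $\Zz$. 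The natural candidate is then to put on $\widetilde\Zz$ the exact structure inherited from $\Ee$ and to enlarge the class of projective-injective objects from $\Pp$ to $\add(\Pp \cup \widetilde\Uu)$; the stable category of this new Frobenius category should be exactly $\Tt$.

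The key steps, in order, are as follows. \emph{Step 1:} Check that $\widetilde\Zz$, equipped with the exact structure from $\Ee$ together with the declared projective-injectives $\add(\Pp\cup\widetilde\Uu)$, is again a Frobenius exact category. The delicate points are that every object of $\widetilde\Zz$ admits an admissible mono into an object of $\add(\Pp\cup\widetilde\Uu)$ lying in $\widetilde\Zz$, and dually an admissible epi from such an object; here the left $\Uu$-approximations $X\to U_X$ used to define $\{1\}$ are precisely what provide these, once lifted to $\Ee$. One must verify that the objects of $\add(\Pp\cup\widetilde\Uu)$ are indeed both projective and injective in this exact structure, which uses the rigidity $\Dd(\Uu,\Uu[1])=0$ and the defining property $\Zz = \{X \mid \Dd(\Uu,X[1])=0\}$ to kill the relevant $\Ext^1$-groups. \emph{Step 2:} Identify the stable category $\underline{\widetilde\Zz}$ with $\Tt$ as additive categories: the objects of both are those of $\Zz$, and modding out maps factoring through $\add(\Pp\cup\widetilde\Uu)$ matches modding out $[\Uu]$ (maps through $\Pp$ are already zero in $\underline\Ee = \Dd$). \emph{Step 3:} Check that the triangulated structure produced by the Frobenius/stable machinery on $\underline{\widetilde\Zz}$ coincides with the Iyama–Yoshino triangulated structure on $\Tt$; the shift $\{1\}$ given by the cone of a left $\Uu$-approximation is exactly the cosyzygy in the enlarged Frobenius structure, and the distinguished triangles described in the remark following Theorem~\ref{IyamaYoshinoreduction} are exactly the images of admissible short exact sequences.

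The main obstacle I expect is Step 1, specifically verifying that the enlarged class $\add(\Pp\cup\widetilde\Uu)$ consists of genuinely injective and projective objects and, crucially, that $\widetilde\Zz$ has \emph{enough} of them, i.e.\ that every object of $\widetilde\Zz$ sits in an admissible short exact sequence with outer terms in $\add(\Pp\cup\widetilde\Uu)$ and all terms still in $\widetilde\Zz$. Producing the required conflations amounts to lifting the approximation triangle $\xymatrix{X\ar[r] & U_X\ar[r] & X\{1\}\ar[r] & X[1]}$ to an honest short exact sequence $0\to X\to P\oplus U_X\to X\{1\}'\to 0$ in $\Ee$ and checking that $X\{1\}'$ again lies in $\widetilde\Zz$, for which one needs $\Dd(\Uu, X\{1\}[1])=0$; this follows from the long exact sequence in $\Dd(\Uu,-)$ applied to the approximation triangle together with rigidity of $\Uu$. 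Once the Frobenius structure is established, Steps 2 and 3 are essentially bookkeeping, comparing the two triangulated structures via their common realization of cones.
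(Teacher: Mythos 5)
Your proposal follows essentially the same route as the paper's own proof: the paper likewise passes to the preimages $\Ff$ of $\Zz$ and $\Vv$ of $\Uu$ in a Frobenius model $\Ee$ of $\Dd$, shows the objects of $\Vv$ are projective and injective in $\Ff$ (using rigidity and the definition of $\Zz$), shows approximations supply enough of them, and concludes $\Tt = \Ff/[\Vv] = \underline{\Ff}$ is algebraic. The only cosmetic differences are that your class $\add(\Pp\cup\widetilde\Uu)$ is already all of $\widetilde\Uu$ (projective-injective objects of $\Ee$ become zero in $\Dd$, hence lie in the preimage of $\Uu$), and that your Step 3 spells out the comparison of triangulated structures which the paper leaves implicit.
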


\begin{proof}
Since $\Dd$ is algebraic triangulated we have $\Dd = \underline{\Ee}$ for some Frobenius exact category $\Ee$. We denote by $\Ff$ the preimage of $\Zz$ in $\Ee$. Since $\Ff$ is closed under extensions $\Ff$ is an exact category (whose exact sequences are those exact sequences in $\Ee$ which lie entirely in $\Ff$).

Let $\Vv$ be the preimage of $\Uu$ in $\Ee$. Then clearly $\Vv \subseteq \Ff$. We claim that the objects in $\Vv$ are projective. Indeed if we have a short exact sequence
\[ \xymatrix{ F_1 \ar@{>->}[r] & F_2 \ar@{->>}[r] & V } \]
in $\Ff$ with $V \in \Vv$ then, since $\Hom_{\Dd}(V, F_1[1]) = 0$ by definition of $\Zz$, the sequence splits. Similarly the objects in $\Vv$ are also injective.

Since $\Vv$ contains all projective-injective objects in $\Ee$ one sees that for any $F \in \Ff$ the right $\Vv$-approximation $\xymatrix@-.3cm{V \ar[r] & F}$ is an admissible epimorphism in $\Ee$. One easily checks that its kernel is again in $\Ff$, so that the approximation is also an admissible epimorphism in $\Ff$. Hence $\Ff$ has enough projectives, and these are precisely the objects in $\Vv$. Dually $\Ff$ has enough injectives, which are again the objects in $\Vv$.

Thus $\Ff$ is Frobenius exact, and
\[ \Tt = \Zz / [\Uu] = \Ff / [\Vv] = \underline{\Ff} \]
is algebraic triangulated.
\end{proof}

\begin{rema}
The proof of Proposition~\ref{prop.IYalg} is also a simpler proof for Theorem~\ref{IyamaYoshinoreduction} in the case when $\Dd$ is algebraic triangulated.
\end{rema}

\subsection{Reduction of the derived category}
Let $\Lambda=kQ/I$ be a $\tau_2$-finite algebra of global dimension~$\leq 2$.
Let $i_0\in Q_0$ be a source of $Q$ and $e:=e_{i_0}$ be the associated primitive idempotent of $\Lambda$. We apply Iyama-Yoshino's construction for
$\Uu_e=\add\{\SSS^{p}(e\Lambda) \mid p\in \ZZ\}\subset\Dd^b(\Lambda)$. 
That is, we denote by $\Zz$ the full subcategory of $\Dd:=\Dd^b(\Lambda)$ 
$$\Zz:=\{X\in\Dd\mid  \Dd(\SSS^p(e
\Lambda),X[1])=0\quad \forall p\in \ZZ\}.$$
Then, by Proposition~\ref{prop.IYalg}, the category 
$\Tt=\Zz/[\Uu_e]$
is algebraic triangulated.

Denote by $\Lambda'$ the algebra $\Lambda/\Lambda e\Lambda\simeq
(1-e) \Lambda (1-e)$. Since $i_0$ is a source of the quiver $Q$, the algebra $\Lambda$ is a one point extension of $\Lambda'$, namely
\[ \Lambda= \begin{bmatrix} \Lambda' & (1-e)\Lambda e \\ 0 & k \end{bmatrix}. \]
Then the projective $\Lambda$-modules are 
\[ (1-e)\Lambda=\begin{bmatrix} \Lambda' & (1-e)\Lambda e\end{bmatrix} \quad \textrm{and} \quad e\Lambda= \begin{bmatrix} 0 & k\end{bmatrix},\] and the injective $\Lambda$-modules are
$$(1-e)D\Lambda=\begin{bmatrix} D\Lambda' & 0 \end{bmatrix} \quad \textrm{and} \quad e D\Lambda= \begin{bmatrix} eD\Lambda (1-e) & k\end{bmatrix}.$$

\begin{lema}
The algebra $\Lambda'$ is a $k$-algebra of global dimension~$\leq 2$.
\end{lema}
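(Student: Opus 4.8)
The plan is to manufacture, from a projective resolution over $\Lambda$, a projective resolution over $\Lambda'$ of the same length, by means of the exact functor induced by multiplication by the idempotent $1-e$. Concretely, I would consider
\[ G := \Hom_\Lambda((1-e)\Lambda,-) \colon \mod\Lambda \longrightarrow \mod\Lambda', \qquad GN \cong N(1-e), \]
which sends a right $\Lambda$-module $N$ to $N(1-e)$, viewed as a right $\Lambda'=(1-e)\Lambda(1-e)$-module. Since $(1-e)\Lambda$ is a projective right $\Lambda$-module, $G$ is \emph{exact}; this is the only general homological input needed.

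The key point to verify is that $G$ carries projectives to projectives. Writing $1-e=\sum_{j\neq i_0}e_j$, one checks for $j\neq i_0$ that $(e_j\Lambda)(1-e)=e_j\Lambda(1-e)=e_j\Lambda'$ is exactly the indecomposable projective $\Lambda'$-module at $j$, whereas for the source vertex the displayed description $e\Lambda=\begin{bmatrix}0 & k\end{bmatrix}$ gives $e\Lambda(1-e)=0$. \textbf{This vanishing is where the hypothesis that $i_0$ is a source is used}: it forces the projective $e\Lambda$ to be the simple module supported at $i_0$, so that $G$ kills it. Consequently $G$ sends every projective $\Lambda$-module to a projective $\Lambda'$-module.

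Finally, given an arbitrary $M\in\mod\Lambda'$, I would regard it as a $\Lambda$-module via the surjection $\Lambda\twoheadrightarrow\Lambda'$, so that $Me=0$ and hence $GM=M(1-e)=M$. As $\gldim\Lambda\leq 2$, there is a projective resolution $0\to P_2\to P_1\to P_0\to M\to 0$ in $\mod\Lambda$. Applying the exact functor $G$ yields an exact sequence $0\to GP_2\to GP_1\to GP_0\to M\to 0$ in $\mod\Lambda'$ with each $GP_i$ projective, so $\mathrm{pd}_{\Lambda'}M\leq 2$. Since $M$ was arbitrary, $\gldim\Lambda'\leq 2$.

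The argument is essentially formal once the two observations above are in place; the only non-routine step is the preservation of projectivity, and within it the identity $e\Lambda(1-e)=0$, which encodes the source assumption. Everything else is the standard exactness of multiplication by an idempotent and the fact that inflation along $\Lambda\twoheadrightarrow\Lambda'$ is a section of $G$ on $\mod\Lambda'$.
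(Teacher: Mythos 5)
Your proof is correct, but it runs in the opposite homological direction from the paper's. The paper argues with injectives: since $i_0$ is a source, $e\Lambda$ is the simple module $S_{i_0}$, and therefore the injective $eD\Lambda$ never occurs in the minimal injective resolution of a simple $S_i$ with $i\neq i_0$; by the displayed description of the injective $\Lambda$-modules, such a resolution consists of modules killed by $e$ and is already an injective resolution in $\mod \Lambda'$, so every simple $\Lambda'$-module has injective dimension $\leq 2$. You instead apply the exact functor $(-)(1-e)\simeq \Hom_\Lambda((1-e)\Lambda,-)$ to a projective resolution of an arbitrary inflated $\Lambda'$-module, the source hypothesis entering only through the identity $e\Lambda(1-e)=0$, which is exactly what makes this functor preserve projectivity. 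The two arguments exploit dual halves of the same structural data (the shape of $e\Lambda$ versus the shape of $(1-e)D\Lambda$), so neither is a trivial rephrasing of the other. Your version buys a little extra rigor and generality: the paper's claim that $eD\Lambda$ does not appear in the minimal injective resolution of $S_i$ implicitly uses that the multiplicity of $eD\Lambda$ in homological degree $j$ equals $\dim \Ext^j_\Lambda(S_{i_0},S_i)$, which vanishes because $S_{i_0}=e\Lambda$ is projective -- a step the paper leaves unstated -- whereas your argument needs no multiplicity computation and immediately gives $\gldim \Lambda' \leq \gldim \Lambda$ for any one-point extension of this triangular shape. The paper's version, in turn, is shorter given that the descriptions of the projective and injective $\Lambda$-modules have just been displayed.
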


\begin{proof}
Since $i_0$ is a source of $Q$, for $i\in Q_0$ with $i\neq i_0$ the minimal injective resolution of the simple $S_i$ in $\mod \Lambda$ does not contain the injective module $eD\Lambda$. Therefore, using the description of injectives above, this injective resolution can be seen as an injective resolution in $\mod \Lambda'$.
\end{proof}

The aim of this section is to prove the following theorem:

\begin{thma}\label{reductiontheorem}
There is a triangle equivalence  $\Dd':=\Dd^b(\Lambda ')\simeq\Tt:=\Zz/[\Uu_e]$.
\end{thma}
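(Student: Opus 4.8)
The plan is to deduce the statement from the Recognition Theorem (Theorem~\ref{clustertilt}), applied to the algebra $\Lambda'$ and the category $\Tt$. By Proposition~\ref{prop.IYalg} the category $\Tt$ is algebraic triangulated, and by Theorem~\ref{IyamaYoshinoreduction} it carries a Serre functor $\leftsub{\Tt}{\mathbb{S}} = \leftsub{\Dd}{\SS}\{2\}$. The first thing I would record is that the $\SS$-functor of $\Tt$ is simply the descent of $\leftsub{\Dd}{\SS}$: since the suspension of $\Tt$ is $\{1\}$, one has $\leftsub{\Tt}{\SS} = \leftsub{\Tt}{\mathbb{S}}\{-2\} = \leftsub{\Dd}{\SS}\{2\}\{-2\} = \leftsub{\Dd}{\SS}$, the functor induced on $\Tt$ by $\leftsub{\Dd}{\SS}$ (which preserves both $\Zz$ and $\Uu_e$). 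The cluster-tilting subcategory I would feed into Theorem~\ref{clustertilt} is $\Vv := \pi(\Uu_\Lambda)$, the image in $\Tt = \Zz/[\Uu_e]$ of the cluster-tilting subcategory $\Uu_\Lambda = \add\{\SS^p\Lambda \mid p\in\ZZ\}$ of $\Dd$. As $\Uu_e\subseteq\Uu_\Lambda$ and $\Uu_\Lambda$ is rigid (so $\Uu_\Lambda\subseteq\Zz$), Theorem~\ref{IyamaYoshinoreduction} guarantees that $\Vv$ is cluster-tilting in $\Tt$; and since every object of $\Uu_e$ becomes zero in $\Tt$ while $\Lambda = e\Lambda\oplus(1-e)\Lambda$, we obtain $\Vv = \add\{\SS^p\,\overline{(1-e)\Lambda} \mid p\in\ZZ\}$, where $\overline{(1-e)\Lambda}$ denotes the image of $(1-e)\Lambda$.

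It then remains to produce an equivalence of additive categories with $\SS$-action $f\colon\Uu_{\Lambda'}\xrightarrow{\sim}\Vv$ sending $\SS^p\Lambda'$ to $\SS^p\,\overline{(1-e)\Lambda}$. Both sides are the additive closure of the $\SS$-orbit of a single object, hence are the $\ZZ$-coverings of their $\SS$-graded endomorphism algebras; so by the covering description of graded modules (\cite{GM}, Theorem~\ref{gordongreen}) it suffices to produce an isomorphism of $\ZZ$-graded algebras
\[ \widetilde{\Lambda'} = \bigoplus_{p\geq 0}\Hom_{\Dd^b\Lambda'}(\Lambda',\SS^{-p}\Lambda') \;\cong\; \bigoplus_{p\geq 0}\Tt(\overline{(1-e)\Lambda},\SS^{-p}\,\overline{(1-e)\Lambda}) \]
compatible with grading and multiplication. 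As a bonus, once this is established the right-hand side is a subquotient of the finite-dimensional algebra $\widetilde{\Lambda}$, so $\widetilde{\Lambda'}$ is finite-dimensional and $\Lambda'$ is $\tau_2$-finite; together with the global dimension bound from the preceding lemma this verifies the remaining hypotheses on $\Lambda'$.

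To set up the comparison I would use the standard recollement attached to the idempotent $e$, with fully faithful $i_*\colon\Dd^b\Lambda'\hookrightarrow\Dd^b\Lambda$ (restriction along $\Lambda\twoheadrightarrow\Lambda'$) and right adjoint $i^!$. A short computation with Serre duality on both sides gives the relative formula $\SS_{\Lambda'}\cong i^!\,\leftsub{\Dd}{\SS}\,i_*$. Because $i_0$ is a source, $e\Lambda = S_{i_0}$ is \emph{simple projective}, and the projective cover of $i_*\Lambda'$ over $\Lambda$ yields a two-term resolution $0\to(e\Lambda)^{m}\to(1-e)\Lambda\to i_*\Lambda'\to 0$ with $(e\Lambda)^m\in\Uu_e$; thus $(1-e)\Lambda$ is the $\Zz$-representative whose Homs mod $[\Uu_e]$ should reproduce those of $\Lambda'$ (note that $i_*\Lambda'$ itself is \emph{not} in $\Zz$, as $\Ext^1_\Lambda(\Lambda',S_{i_0})\neq 0$, so one cannot simply pass to $\overline{i_*\Lambda'}$). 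The step I expect to be the main obstacle is showing that quotienting by $[\Uu_e]$ is exactly what repairs the failure of $i_*$ to commute with the Serre functor: concretely, that the discrepancy between $i^!\,\leftsub{\Dd}{\SS}^{-p}\,i_*$ and $(\SS_{\Lambda'})^{-p} = (i^!\,\leftsub{\Dd}{\SS}\,i_*)^{-p}$ — which is controlled by the complementary recollement functor and lives in the subcategory generated by the $\SS$-orbit of $e\Lambda$ — is annihilated in $\Tt$. Carrying this out degree by degree, using the approximation triangles of Proposition~\ref{approximation} to compute the $\SS^{-p}$ inside $\Tt$ and the resolution above to pass between $(1-e)\Lambda$ and $\Lambda'$, produces the graded isomorphism and hence the $\SS$-equivalence $f$.

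Finally, with $\Tt$ algebraic triangulated, equipped with a Serre functor and with the cluster-tilting subcategory $\Vv$, with $\Lambda'$ a $\tau_2$-finite algebra of global dimension $\leq 2$, and with the $\SS$-equivalence $f\colon\Uu_{\Lambda'}\xrightarrow{\sim}\Vv$ in hand, Theorem~\ref{clustertilt} yields a triangle equivalence $\Dd^b\Lambda'\xrightarrow{\sim}\Tt$, which is exactly the assertion.
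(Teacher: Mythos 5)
Your global strategy is exactly the paper's: verify that $\Tt$ is algebraic (Proposition~\ref{prop.IYalg}), take $\Vv$ to be the image of $\Uu_\Lambda$ in $\Tt=\Zz/[\Uu_e]$ (cluster-tilting by Theorem~\ref{IyamaYoshinoreduction}), produce an $\SS$-equivalence $\Uu_{\Lambda'}\simeq\Vv$, and invoke the recognition theorem (Theorem~\ref{clustertilt}). However, the entire technical content of the theorem sits in the step you explicitly defer: the isomorphism of $\ZZ$-graded algebras $\bigoplus_p\Hom_{\Dd'}(\Lambda',\SS^{-p}\Lambda')\simeq\bigoplus_p\Tt(\Lambda,\SS^{-p}\Lambda)$. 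In the paper this occupies Lemmas~\ref{p=-1}, \ref{composition} and \ref{clustertiltcoincide}, and your proposal replaces them with a plan (``the discrepancy \dots is annihilated in $\Tt$ \dots carrying this out degree by degree \dots produces the graded isomorphism''). Asserting that the plan works is not a proof; as it stands this is a genuine gap, not a shortcut.

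Moreover, the plan itself glosses over the precise point that makes the statement delicate. The ideal $[\Uu_e]$ consists of morphisms factoring through objects of $\add\{\SS^p e\Lambda\mid p\in\ZZ\}$ --- \emph{not} through the thick (triangulated) subcategory generated by $e\Lambda$. The discrepancy between $i^!\leftsub{\Dd}{\SS}^{-p}i_*$ and $(i^!\leftsub{\Dd}{\SS}\,i_*)^{-p}$ that you describe is a priori controlled by objects of that thick subcategory (cones, shifts, extensions), and there is no formal reason such morphisms die in $\Zz/[\Uu_e]$. The paper kills exactly this problem by hand: Lemma~\ref{p=-1} builds an explicit projective $\Lambda$-resolution of $D\Lambda'$ from the one-point extension structure and uses that $\gldim\Lambda\leq 2$ and that $i_0$ is a source to split off the $e\Lambda$-terms, and the surjectivity step in Lemma~\ref{composition} shows that every morphism in $[\Uu_e](\Lambda,\SS^{-p}\Lambda)$ factors as a composition whose $\Uu_e$-part sits in a single degree, precisely because the left and right $\add\{\SS^i\Lambda\}$-approximations of $\SS^{-q}e\Lambda$ lie in $\add\SS^{-q}\Lambda$ and $\add\SS^{-q+1}\Lambda$ respectively. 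Until you supply arguments of this kind (or genuinely carry out your recollement computation and show the error terms factor through $\add\{\SS^pe\Lambda\}$ itself), the proof is incomplete at its core step. The remaining observations --- the global dimension of $\Lambda'$, the identification $\Vv=\add\{\SS^p\overline{(1-e)\Lambda}\}$, and the deduction of $\tau_2$-finiteness of $\Lambda'$ from finite-dimensionality of the graded endomorphism algebra --- are correct and consistent with the paper.
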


We first prove several lemmas.

\begin{lema} \label{p=-1}
 We have an isomorphism $\Dd'(\SSS\Lambda',\Lambda')\simeq \Dd( \SSS((1-e)\Lambda), (1-e)\Lambda)/[e\Lambda].$
\end{lema}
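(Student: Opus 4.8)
The plan is to move the left-hand side into $\Dd:=\Dd^b(\Lambda)$ along the restriction functor and then compare $(1-e)\Lambda$ with a quotient of it that differs only by a summand in $\add(e\Lambda)$. First I would introduce the restriction functor $i_*\colon \Dd':=\Dd^b(\Lambda')\to\Dd$ induced by the surjection $\Lambda\to\Lambda'=\Lambda/\Lambda e\Lambda$ and check that it is fully faithful. Since $i_0$ is a source, $e\Lambda$ is the simple projective $S_{i_0}$, and from $e\Lambda(1-e)=0$ one computes $\Lambda e\Lambda = e\Lambda\oplus M$, where $M:=(1-e)\Lambda e$ is a right $\Lambda$-module on which the radical acts by zero, so $M\cong (e\Lambda)^{\dim_k M}\in\add(e\Lambda)$. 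In particular $\Lambda e\Lambda$ is projective as a right $\Lambda$-module and $(\Lambda e\Lambda)^2=\Lambda e\Lambda$, from which $\Lambda'\lten_\Lambda\Lambda'\simeq\Lambda'$; this is exactly the condition making $i_*$ fully faithful.

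\textbf{Identifying the objects.} Next I would identify the two relevant objects inside $\Dd$. Because $e\Lambda(1-e)=0$ gives $\Lambda e_i=\Lambda'e_i$ for every $i\neq i_0$, the injectives match up: $i_*(D\Lambda')=(1-e)D\Lambda$. Hence $i_*(\SS\Lambda')=i_*(D\Lambda')[-2]=(1-e)D\Lambda[-2]$, which is precisely $\SS((1-e)\Lambda)$ computed in $\Dd$. Moreover $i_*\Lambda'=\Lambda/\Lambda e\Lambda\cong (1-e)\Lambda/M$. Full faithfulness of $i_*$ then gives
\[ \Dd'(\SS\Lambda',\Lambda')\simeq \Dd\bigl(\SS((1-e)\Lambda),\,i_*\Lambda'\bigr). \]

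\textbf{The exchange triangle.} Writing $W:=\SS((1-e)\Lambda)$ for brevity, the short exact sequence of $\Lambda$-modules $0\to M\to (1-e)\Lambda\to i_*\Lambda'\to 0$ gives a triangle $M\to(1-e)\Lambda\to i_*\Lambda'\to M[1]$ in $\Dd$. Applying $\Dd(W,-)$ yields an exact sequence
\[ \Dd(W,M)\to \Dd(W,(1-e)\Lambda)\xrightarrow{\ \alpha\ }\Dd(W,i_*\Lambda')\to \Dd(W,M[1]). \]
The final term equals $\Ext^3_\Lambda((1-e)D\Lambda,M)$, which vanishes since $\gldim\Lambda\leq 2$; thus $\alpha$ is surjective and $\ker\alpha$ consists exactly of the morphisms factoring through the inclusion $M\hookrightarrow(1-e)\Lambda$. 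It then remains to identify $\ker\alpha$ with $[e\Lambda]$. Any morphism factoring through $M\in\add(e\Lambda)$ lies in $[e\Lambda]$; conversely, a map $E\to(1-e)\Lambda$ with $E\in\add(e\Lambda)$ is an honest module homomorphism (as $E$ is projective), and its image is a semisimple submodule of type $S_{i_0}$, hence contained in $M=(1-e)\Lambda e$, so it factors through $M\hookrightarrow(1-e)\Lambda$. Therefore $\ker\alpha=[e\Lambda]$, and $\alpha$ induces the desired isomorphism $\Dd(W,(1-e)\Lambda)/[e\Lambda]\xrightarrow{\sim}\Dd(W,i_*\Lambda')\simeq\Dd'(\SS\Lambda',\Lambda')$.

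\textbf{Main obstacle.} I expect the crux to be the bookkeeping around the quotient in the last step: showing that factoring through all of $\add(e\Lambda)$ coincides with factoring through the single inclusion $M\hookrightarrow(1-e)\Lambda$. This rests on the source-specific fact that every homomorphism $e\Lambda\to(1-e)\Lambda$ lands in $M=(1-e)\Lambda e$. The other place where the hypothesis that $i_0$ is a source is genuinely used is the full faithfulness of $i_*$, equivalently the projectivity of $\Lambda e\Lambda$ as a right $\Lambda$-module; without it the transport of the left-hand side into $\Dd$ would fail.
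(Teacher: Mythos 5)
Your proof is correct, but it takes a genuinely different route from the paper's. The paper argues entirely with explicit projective resolutions: starting from a $\Lambda'$-projective resolution $0\to P_2\to P_1\to P_0\to D\Lambda'\to 0$, it tensors with the one-point-extension bimodule sequence $0\to(1-e)\Lambda e\to(1-e)\Lambda\to\Lambda'\to 0$ to build a projective resolution of $D\Lambda'$ as a $\Lambda$-module, uses $\gldim\Lambda\leq 2$ to split off the superfluous $e\Lambda$-summand in homological degree $2$, and then identifies both sides of the lemma as cokernels (i.e.\ as $\Ext^2$-groups) via $\Dd(e_i\Lambda,e_j\Lambda)=\Dd'(e_i\Lambda',e_j\Lambda')$ for $i,j\neq i_0$; the quotient by $[e\Lambda]$ is matched against the $e\Lambda$-terms of the resolution. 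You instead transport the problem along the restriction functor: the key input is that $\Lambda\to\Lambda'$ is a homological epimorphism ($\Lambda e\Lambda$ is idempotent and projective as a right module, whence $\Lambda'\lten_\Lambda\Lambda'\simeq\Lambda'$), so derived restriction is fully faithful (Geigle--Lenzing), and then the long exact sequence obtained by applying $\Dd(\SS((1-e)\Lambda),-)$ to the same bimodule short exact sequence does the rest, with $\gldim\Lambda\leq 2$ entering as the vanishing of an $\Ext^3$ and the source hypothesis entering through the fact that every homomorphism $e\Lambda\to(1-e)\Lambda$ lands in $(1-e)\Lambda e$. Note that both arguments pivot on the identical exact sequence and on the same two hypotheses; the difference is purely in packaging. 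Your version is shorter and more conceptual, cleanly isolating the two places where the source hypothesis is used, but it imports an external result (the characterization of homological epimorphisms by full faithfulness of derived restriction) that the paper neither cites nor proves; the paper's computation is longer but completely self-contained and makes the interaction between the quotient by $[e\Lambda]$ and the resolution terms explicit, which is closer in spirit to how the lemma is consumed in the proof of Lemma~\ref{composition} afterwards.
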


\begin{proof}
We choose a projective resolution of $D\Lambda'$ in $\mod \Lambda'$. 
\[ \tag{$*$} \xymatrix{0\ar[r] & P_2\ar[r] & P_1\ar[r] & P_0\ar[r] & D\Lambda'\ar[r] & 0}. \]
Since $\Lambda$ is a one point extension of $\Lambda'$ there is a short exact sequence
\[ \tag{$\dagger$} \xymatrix{0\ar[r] & (1-e)\Lambda e\ar[r]& (1-e)\Lambda\ar[r] & \Lambda'\ar[r] & 0} \]
in $\mod (\Lambda'^{\rm op}\ten \Lambda)$, where $(1-e)\Lambda e$ is the $\Lambda'$-$\Lambda$-bimodule $\begin{bmatrix} 0 & (1-e)\Lambda e\end{bmatrix}$. Note that as $\Lambda$-module this is just $e\Lambda^{\dim_k (1-e)\Lambda e}$. Applying $P_i\ten_{\Lambda'}-$ to $(\dagger)$ for $i=0,1,2$ we obtain short exact sequences
\[ \xymatrix@R=.4cm{ 0 \ar[r] & P_i \ten_{\Lambda'} (1-e) \Lambda e \ar@{=}[d] \ar[r] & P_i \ten_{\Lambda'} (1-e) \Lambda \ar@{=}[d] \ar[r] & P_i \ar[r] & 0 \\ & P_i  \Lambda e & P_i  \Lambda }. \]
Inserting these in $(*)$ we obtain the following projective resolution of the $\Lambda$-module $D\Lambda'$:
\[ \xymatrix@-.1cm{0\ar[r] &  P_2  \Lambda e \ar[r] & { \begin{matrix} P_2  \Lambda \\ \oplus \\ P_1  \Lambda e \end{matrix} } \ar[r] & { \begin{matrix} P_1  \Lambda \\ \oplus \\ P_0  \Lambda e \end{matrix} } \ar[r] & P_0  \Lambda \ar[r] & D\Lambda' \ar[r] & 0} \]
Since $\Lambda$ is of global dimension~$\leq 2$, the map $\xymatrix@-.3cm{P_2  \Lambda e \ar[r] & P_1  \Lambda e}$ is a split monomorphism, hence we can write 
\[ \xymatrix{0 \ar[r] & { \begin{matrix} P_2  \Lambda \\ \oplus \\ e \Lambda^m \end{matrix} } \ar[r] & { \begin{matrix} P_1  \Lambda \\ \oplus \\ P_0  \Lambda e \end{matrix} } \ar[r] & P_0  \Lambda \ar[r] & D\Lambda'\ar[r] &0} \]
for some $m \in \mathbb{N}$. Since $e$ is attached to a source of the quiver $Q$, the space $[\Uu_e]((1-e)\Lambda,(1-e)\Lambda)$ vanishes and we have $$\Dd(\SSS((1-e)\Lambda), (1-e)\Lambda)/[e\Lambda]=\Coker (\Dd(P_1  \Lambda, (1-e)\Lambda)\rightarrow \Dd(P_2  \Lambda, (1-e)\Lambda)).$$
Since $i_0$ is a source of the quiver of $\Lambda$, if $i,j\neq i_0$ we have $\Dd(e_i\Lambda,e_j\Lambda)=\Dd'(e_i\Lambda',e_j\Lambda').$ Hence we have 
\begin{align*}
\Dd(\SSS((1-e)\Lambda), (1-e)\Lambda)/[e\Lambda] & \simeq \Coker (\Dd(P_1 \Lambda, (1-e)\Lambda)\rightarrow \Dd(P_2 \Lambda, (1-e)\Lambda))\\
& \simeq \Coker (\Dd(P_1, \Lambda')\rightarrow \Dd(P_2, \Lambda')) \\
& \simeq \Dd'(\SSS\Lambda',\Lambda'). \qedhere
\end{align*}
\end{proof}

\begin{lema}\label{composition}
For any $p \geq 1$ the composition map
\[ \xymatrix{ \Tt(\SSS^{-p+1}\Lambda,\SSS^{-p}\Lambda) \ten_{\Lambda} \cdots \ten_{\Lambda} \Tt(\SSS^{-1}\Lambda,\SSS^{-2}\Lambda) \ten_{\Lambda} \Tt(\Lambda,\SSS^{-1}\Lambda) \ar[r] & \Tt(\Lambda,\SSS^{-p}\Lambda) } \]
is an isomorphism.
\end{lema}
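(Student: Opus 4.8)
I want to prove that the iterated composition map
\[ \Tt(\SS^{-p+1}\Lambda,\SS^{-p}\Lambda) \ten_{\Lambda} \cdots \ten_{\Lambda} \Tt(\Lambda,\SS^{-1}\Lambda) \longrightarrow \Tt(\Lambda,\SS^{-p}\Lambda) \]
is an isomorphism, where $\Tt = \Zz/[\Uu_e]$ is the Iyama-Yoshino reduction. Here I read $\Lambda$ as shorthand for its image $(1-e)\Lambda$ in $\Tt$ (the summand $e\Lambda$ generates $\Uu_e$ and dies in $\Tt$). The basic idea is that in the derived category $\Dd = \Dd^b(\Lambda)$ the analogous composition statement already holds: because $\Lambda$ has global dimension $\leq 2$, the graded algebra $\widetilde\Lambda = \bigoplus_{p\geq 0}\Dd(\Lambda,\SS^{-p}\Lambda)$ is \emph{generated in degrees $0$ and $1$} (this is a standard consequence of $\gldim \Lambda \leq 2$, since $\SS^{-1} = \mathbb{S}^{-1}[2]$ and the relevant Ext-spaces live in the right cohomological range). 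So I would first establish the corresponding fact in $\Dd$, namely that the composition map with $\Dd$ in place of $\Tt$ is surjective, and in fact an isomorphism after one accounts for the higher-degree pieces. The content of the lemma is that this persists \emph{after} passing to the reduction $\Tt$.

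\textbf{Key steps.} First I would reduce to the single-step comparison: by associativity of the composition it suffices to understand how $\Tt(\SS^{-j}\Lambda,\SS^{-j-1}\Lambda)$ and the composition into $\Tt(\Lambda,\SS^{-p}\Lambda)$ relate to the corresponding $\Dd$-spaces. Second, I would use the definition of morphisms in the reduction, $\Tt(X,Y) = \Dd(X,Y)/[\Uu_e](X,Y)$, to rewrite every term as a $\Dd$-Hom-space modulo morphisms factoring through $\add\{\SS^p(e\Lambda)\}$. Third — and this is the heart — I would show that the composition in $\Tt$ inherits surjectivity and injectivity from $\Dd$ by controlling the correction terms $[\Uu_e]$: concretely, I expect that $\SS$-shifts of the projective $e\Lambda$ (which is attached to a source $i_0$ of $Q$) interact cleanly with the spaces $\Dd(\Lambda,\SS^{-p}\Lambda)$, exactly as exploited in the proof of Lemma~\ref{p=-1}. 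Indeed Lemma~\ref{p=-1} already computes the $p=1$ case in disguise, identifying $\Tt(\Lambda,\SS^{-1}\Lambda)$ with $\Dd'(\SS\Lambda',\Lambda')$ up to reindexing; I would set up an induction on $p$ whose base case is that identification and whose inductive step multiplies by one more factor $\Tt(\SS^{-p+1}\Lambda,\SS^{-p}\Lambda)$.

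The cleanest route is probably to identify both sides with the corresponding graded pieces of $\widetilde{\Lambda'}$, the analogue of $\widetilde\Lambda$ for the smaller algebra $\Lambda'$. That is, I would prove by induction that $\Tt(\Lambda,\SS^{-p}\Lambda) \simeq \Dd'(\Lambda',\leftsub{\Dd'}{\SS}^{-p}\Lambda')$ compatibly with composition, so that the lemma becomes the statement that $\widetilde{\Lambda'}$ is generated in degree $1$ over its degree-$0$ part — which follows from $\gldim \Lambda' \leq 2$ (the preceding unnamed lemma). The inductive identification requires matching the reduction's shift functor $\{1\}$ against $\leftsub{\Dd'}{\SS}$ and tracking that the left $\Uu_e$-approximations used to define $\{1\}$ contribute only the split-off $e\Lambda$ terms that were already discarded.

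\textbf{Main obstacle.} The hard part will be the bookkeeping of the factored morphisms $[\Uu_e]$ through the iterated composition: surjectivity in $\Dd$ does not automatically descend, because a composite of two maps that individually do \emph{not} factor through $\Uu_e$ could in principle factor through $\Uu_e$ in $\Tt$, and conversely lifting a $\Tt$-morphism to a genuine $\Dd$-composite requires choosing representatives compatibly at each stage. I expect to handle this exactly as in Lemma~\ref{p=-1} — using that $i_0$ is a \emph{source}, so that $[\Uu_e]((1-e)\Lambda,(1-e)\Lambda) = 0$ and more generally the higher self-extensions of $(1-e)\Lambda$ avoid the injective $eD\Lambda$ — which forces the correction terms to vanish or split off predictably. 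Making this vanishing uniform across all $p$ simultaneously, rather than one degree at a time, is the delicate point.
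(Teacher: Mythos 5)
Your overall frame---write each $\Tt$-space as $\Dd(X,Y)/[\Uu_e](X,Y)$, use that the composition map is already an isomorphism at the level of $\Dd$ (the cited result of Amiot for algebras of global dimension $\leq 2$), and then control the error terms---is exactly the paper's strategy, and you correctly locate the difficulty in the $[\Uu_e]$-bookkeeping. But note that surjectivity descends for free (right-exactness of $\ten$ plus surjectivity of $\Dd(\Lambda,\SS^{-p}\Lambda)\rightarrow\Tt(\Lambda,\SS^{-p}\Lambda)$); the genuine issue is injectivity, and the mechanism you propose for it does not work. The correction terms do \emph{not} vanish: $[\Uu_e](\Lambda,\SS^{-p}\Lambda)$ is nonzero in general for $p\geq 1$ (it contains, e.g., all composites $\Lambda\rightarrow e\Lambda\rightarrow\SS^{-p}\Lambda$, and $\Dd(e\Lambda,\SS^{-p}\Lambda)$ is a generally nonzero piece of $\widetilde{\Lambda}$); the source hypothesis only kills $[\Uu_e]((1-e)\Lambda,(1-e)\Lambda)$ in degree $0$. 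What is actually needed---and this is the one idea missing from your proposal---is a \emph{covering} statement: the natural map
\[ \bigoplus_{j=1}^{p} \Dd(\SS^{-p+1}\Lambda,\SS^{-p}\Lambda)\ten_\Lambda\cdots\ten_\Lambda[\Uu_e](\SS^{-j+1}\Lambda,\SS^{-j}\Lambda)\ten_\Lambda\cdots\ten_\Lambda\Dd(\Lambda,\SS^{-1}\Lambda) \longrightarrow [\Uu_e](\Lambda,\SS^{-p}\Lambda) \]
is surjective. The paper proves this by observing that any map $\Lambda\rightarrow\SS^{-p}\Lambda$ factoring through some $\SS^{-q}e\Lambda$ ($0\leq q\leq p$) can be rewritten as a length-$p$ composite with the $[\Uu_e]$-factor in the $q$-th slot, because the minimal right $\add\{\SS^i\Lambda \mid i\in\ZZ\}$-approximation of $\SS^{-q}e\Lambda$ lies in $\add\SS^{-q+1}\Lambda$ and the minimal left one lies in $\add\SS^{-q}\Lambda$. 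Once this is in place, a short diagram chase (exact columns from the quotient sequences, the middle composition map an isomorphism by the $\Dd$-level result) yields injectivity, hence the lemma.

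Your alternative ``cleanest route'' has a different problem: it is circular in this development. Proving $\Tt(\Lambda,\SS^{-p}\Lambda)\simeq\Dd'(\Lambda',\SS^{-p}\Lambda')$ compatibly with composition is precisely Lemma~\ref{clustertiltcoincide}, which the paper deduces \emph{from} the present lemma (together with Lemma~\ref{p=-1}): the inductive step of that identification consists exactly of the composition isomorphism in $\Tt$ that you are trying to prove, applied to split off one more tensor factor. So reducing the lemma to Lemma~\ref{clustertiltcoincide} assumes what is to be shown, unless you provide an independent proof of the identification; your remark about matching the reduction's shift $\{1\}$ against $\leftsub{\Dd'}{\SS}$ and tracking approximations is not developed enough to supply one, and in any case carrying it out would force you back into the same correction-term analysis described above.
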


\begin{proof}
By definition of $\Tt$ for any $X, Y \in \Tt$ we have an exact sequence\
\[ \xymatrix{ [\Uu_e](X,Y) \ar[r] & \Dd(X,Y) \ar[r] & \Tt(X,Y) \ar[r] & 0.} \]
Hence we obtain the following diagram

\[ \xymatrix{0&0\\ \Tt(\SSS^{-p+1}\Lambda,\SSS^{-p}\Lambda)\ten_{\Lambda}\ldots\ten_\Lambda\Tt(\Lambda,\SSS^{-1}\Lambda)\ar[r]\ar[u] & \Tt(\Lambda,\SSS^{-p}\Lambda)\ar[u]\\ \Dd(\SSS^{-p+1}\Lambda,\SSS^{-p}\Lambda)\ten_\Lambda\ldots \ten_{\Lambda} \Dd(\Lambda,\SSS^{-1}\Lambda)\ar[u]\ar[r] & \Dd(\Lambda,\SSS^{-p}\Lambda) \ar[u] \\ {*} \ar[r]\ar[u] & [\Uu_e](\Lambda,\SSS^{-p}\Lambda)\ar[u]}\]
with exact columns, and with
\[ * = \bigoplus_{j=1}^p\Dd(\SSS^{-p+1}\Lambda,\SSS^{-p}\Lambda)\ten_{\Lambda} \cdots \ten_\Lambda [\Uu_e](\SSS^{-j+1}\Lambda, \SSS^{-j}\Lambda)\ten_\Lambda\ldots \ten_{\Lambda} \Dd(\Lambda,\SSS^{-1}\Lambda) \]

The surjectivity of the composition map\[ \xymatrix{ \Tt(\SSS^{-p+1}\Lambda,\SSS^{-p}\Lambda) \ten_{\Lambda} \cdots \ten_{\Lambda} \Tt(\Lambda,\SSS^{-1}\Lambda) \ar[r] & \Tt(\Lambda,\SSS^{-p}\Lambda) } \]  is now  consequence of the following result:
\begin{lema}[\cite{Ami3}]
 The composition map
\[ \xymatrix{ \Dd(\SSS^{-p+1}\Lambda,\SSS^{-p}\Lambda) \ten_{\Lambda} \cdots \ten_{\Lambda} \Dd(\Lambda,\SSS^{-1}\Lambda) \ar[r] & \Dd(\Lambda,\SSS^{-p}\Lambda) } \]
 is an isomorphism.
\end{lema}
We now prove that the map
\begin{align*}
& \bigoplus_{j=1}^p\Dd(\SSS^{-p+1}\Lambda,\SSS^{-p}\Lambda)\ten_{\Lambda} \cdots \ten_\Lambda [\Uu_e](\SSS^{-j+1}\Lambda, \SSS^{-j}\Lambda)\ten_\Lambda\ldots\ten_{\Lambda}  \Dd(\Lambda,\SSS^{-1}\Lambda) \\
& \qquad \qquad \xymatrix{\ar[r] & [\Uu_e](\Lambda,\SSS^{-p}\Lambda)}
\end{align*}
is surjective.
Any morphism in $[\Uu_e](\Lambda, \SSS^{-p} \Lambda)$ is a sum of morphisms factoring through various $\SSS^{-q} e \Lambda$, with $0 \leq q \leq p$. Since the right radical $\add \{ \SSS^i \Lambda \mid i \in \ZZ\}$-approximation of $\SSS^{-q} e \Lambda$ lies in $\add \SSS^{-q+1} \Lambda$, and the left radical $\add \{\SSS^i \Lambda \mid i \in \ZZ\}$-approximation of $\SSS^{-q} e \Lambda$ lies in $\add \SSS^{-q} \Lambda$, we have that any map $\Lambda \rightarrow \SSS^{-p} \Lambda$ factoring through $\SSS^{-q} e \Lambda$ lies in the image of
\[\xymatrix{\Dd(\SSS^{-p+1}\Lambda,\SSS^{-p}\Lambda)\ten_{\Lambda} \cdots \ten_\Lambda [\Uu_e](\SSS^{-q+1}\Lambda, \SSS^{-q}\Lambda)\ten_\Lambda\ldots\ten_{\Lambda}  \Dd(\Lambda,\SSS^{-1}\Lambda)\ar[r] &[\Uu_e](\Lambda,\SSS^{-p}\Lambda)} \]
Therefore, using the above diagram, the composition map
\[ \xymatrix{ \Tt(\SSS^{-p+1}\Lambda,\SSS^{-p}\Lambda) \ten_{\Lambda} \cdots \ten_{\Lambda} \Tt(\Lambda,\SSS^{-1}\Lambda) \ar[r] & \Tt(\Lambda,\SSS^{-p}\Lambda) } \]
is an isomorphism.
\end{proof}

\begin{lema}\label{clustertiltcoincide}
For any $p\in \ZZ$,
we have $\Dd'(\Lambda',\SSS^p\Lambda')\simeq \Tt(\Lambda, \SSS^p\Lambda).$
\end{lema}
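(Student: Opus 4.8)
\emph{The plan is to treat the values of $p$ in cases}: $p>0$ by a vanishing argument, the two base cases $p=0$ and $p=-1$ by hand, and then all $p\leq -2$ by bootstrapping through the composition isomorphism of Lemma~\ref{composition}. For $p>0$, recall from \cite{Ami3} that global dimension $\leq 2$ forces $\Dd(\Lambda,\SS^p\Lambda)=0$ for $p\geq 1$ (this is exactly what makes $\widetilde\Lambda=\bigoplus_{p\geq 0}\Hom_\Dd(\Lambda,\SS^{-p}\Lambda)$ concentrated in non-negative degrees). Since $\Tt(\Lambda,\SS^p\Lambda)$ is a quotient of $\Dd(\Lambda,\SS^p\Lambda)$ it vanishes as well, and the same vanishing applied to the global-dimension-$\leq 2$ algebra $\Lambda'$ gives $\Dd'(\Lambda',\SS^p\Lambda')=0$; both sides are therefore $0$.

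For the two base cases the common mechanism is to show that, on the relevant Hom-spaces, the ideal $[\Uu_e]$ of maps factoring through $\Uu_e=\add\{\SS^q e\Lambda \mid q\in\ZZ\}$ collapses to the ideal $[e\Lambda]$ of maps factoring through the single summand $e\Lambda$. Writing a map through $\Uu_e$ as a sum of maps through individual $\SS^q e\Lambda$ (as in the proof of Lemma~\ref{composition}), I would use the vanishing above together with the source hypothesis on $i_0$ --- concretely $\Dd(\Lambda,\SS^m e\Lambda)=0$ and $\Dd(e\Lambda,\SS^m\Lambda)=0$ for $m\geq 1$, and $\Hom_\Lambda((1-e)\Lambda,e\Lambda)=e\Lambda(1-e)=0$ since a source admits no incoming paths --- to conclude that only the summand $q=0$ contributes. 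For $p=0$ this yields $\Tt(\Lambda,\Lambda)=\Lambda/[\Uu_e](\Lambda,\Lambda)=\Lambda/\Lambda e\Lambda=\Lambda'=\Dd'(\Lambda',\Lambda')$. For $p=-1$, using that each $\SS^q e\Lambda$ is zero in $\Tt$, I identify $\Tt(\Lambda,\SS^{-1}\Lambda)\cong\Tt(\SS(1-e)\Lambda,(1-e)\Lambda)=\Dd(\SS(1-e)\Lambda,(1-e)\Lambda)/[e\Lambda]$, which is $\cong\Dd'(\SS\Lambda',\Lambda')=\Dd'(\Lambda',\SS^{-1}\Lambda')$ by Lemma~\ref{p=-1}. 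I would take care that both identifications are natural enough to be isomorphisms of $\Lambda'$-bimodules, as this is needed below.

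Finally, for $p\leq -2$ I bootstrap. Writing the exponent as $-n$ with $n\geq 2$, Lemma~\ref{composition} identifies $\Tt(\Lambda,\SS^{-n}\Lambda)$ with the $n$-fold tensor power over $\Lambda$ of $M:=\Tt(\Lambda,\SS^{-1}\Lambda)$ (each factor $\Tt(\SS^{-j+1}\Lambda,\SS^{-j}\Lambda)\cong M$ via the autoequivalence $\SS^{j-1}$ of $\Tt$). Since the idempotent $e\in\End_\Dd(\Lambda)=\Lambda$ acts as zero on $M$ --- it factors through $e\Lambda\in\Uu_e$ --- this $\Lambda$-action factors through $\Lambda'=\Lambda/\Lambda e\Lambda$, and the tensor power over $\Lambda$ agrees with the tensor power over $\Lambda'$. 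On the other side, the composition isomorphism of \cite{Ami3} applied to $\Lambda'$ identifies $\Dd'(\Lambda',\SS^{-n}\Lambda')$ with the $n$-fold tensor power over $\Lambda'$ of $M':=\Dd'(\Lambda',\SS^{-1}\Lambda')$. The bimodule isomorphisms $\Lambda'\cong\Tt(\Lambda,\Lambda)$ and $M'\cong M$ from the base cases then give the desired isomorphism for all $p\leq -2$.

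I expect the \textbf{main obstacle} to be the case $p=-1$: both verifying that $[\Uu_e]$ reduces to $[e\Lambda]$ on $\Dd(\SS(1-e)\Lambda,(1-e)\Lambda)$ --- which is precisely where the source hypothesis and the approximation structure of the objects $\SS^q e\Lambda$ must be controlled simultaneously for the target and the source --- and ensuring that the isomorphism supplied by Lemma~\ref{p=-1} is compatible with the $\Lambda'$-bimodule structures. Without this last compatibility the tensor-power bootstrapping in the final step would not be justified.
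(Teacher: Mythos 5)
Your proposal is correct and follows essentially the same route as the paper: vanishing for $p>0$, the case $p=-1$ via Lemma~\ref{p=-1} together with the collapse of $[\Uu_e]$ to maps through a single summand, and the cases $p\leq -2$ by combining Lemma~\ref{composition} with the observation that $e$ acts as zero in $\Tt$, so that tensor products over $\Lambda$ coincide with tensor products over $\Lambda'$ (this is exactly the key fact the paper's ``easy induction'' invokes). Your explicit treatment of $p=0$, where both sides equal $\Lambda'$, is in fact slightly more careful than the paper's wording, which asserts vanishing of both sides for all $p\geq 0$ although this holds only for $p>0$.
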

 \begin{proof}
For $p\geq 0$ both side vanishes since $\Lambda$ and $\Lambda'$ are of global dimension at most 2.
The case $p=-1$ is Lemma~\ref{p=-1} since we have $$[\Uu_e]((1-e)\Lambda,\SSS^{-1}(1-e)\Lambda)=[\SSS^{-1}e\Lambda]((1-e)\Lambda,\SSS^{-1} (1-e)\Lambda).$$

Using Lemmas~\ref{p=-1} and~\ref{composition} we show the assertion for any $p\leq -1$ by an easy induction and using the fact that $$\Tt(\SSS^{-1}\Lambda,\SSS^{-2}\Lambda)\ten_{\Lambda}\Tt(\Lambda,\SSS^{-1}\Lambda)\simeq \Tt(\SSS^{-1}(1-e)\Lambda,\SSS^{-2}(1-e)\Lambda)\ten_{\Lambda'}\Tt((1-e)\Lambda,\SSS^{-1}(1-e)\Lambda).$$
\end{proof}

\begin{rema}
This lemma can also be proved using Theorem~\ref{keller}, but we think it is good to also have a direct proof.
\end{rema}

\begin{proof}[Proof of Theorem~\ref{reductiontheorem}]
The strategy of the proof is to use the recognition theorem (Theorem~\ref{clustertilt}). The category $\Uu_\Lambda=\add\{ _\Lambda \SSS^p\Lambda \mid \ p\in \ZZ\}$ is a cluster-tilting subcategory of $\Dd$ which contains $\Uu_e$. Therefore, by Theorem~\ref{IyamaYoshinoreduction}, its image under the natural functor $\Zz\rightarrow \Zz/[\Uu_e]=\Tt$ is a cluster-tilting subcategory of $\Tt$. By Lemma~\ref{clustertiltcoincide} the category $\Uu_\Lambda/[\Uu_e]$ is equivalent to the category $\Uu_{\Lambda'}=\add\{ _{\Lambda'} \SSS^p\Lambda' \mid \ p\in \ZZ\} \subset \Dd'=\Dd^b(\Lambda')$ as category with $\SSS$-action.
Therefore, by Theorem~\ref{clustertilt}, we get an triangle equivalence $\Dd'\simeq \Tt$.
\end{proof} 

\begin{rema}
\begin{enumerate}
\item Theorem~\ref{reductiontheorem} also holds if $i_0$ is a sink of the quiver of $\Lambda$.
\item This result is related to \cite[Theorem~7.4]{Kel10}, where the author proves that the Iyama-Yoshino reduction of the generalized cluster category associated with a Jacobi-finite quiver with potential at a vertex is again a generalized cluster category associated with a Jacobi-finite quiver with potential.
\end{enumerate}
\end{rema}

\section{Cluster equivalent algebras: the derived equivalent case} \label{section_der-eq}

In this section we give a criterion for two cluster equivalent algebras to be derived equivalent. The main tool for proving this criterion is the recognition theorem (Theorem~\ref{clustertilt}). Further we study derived equivalent algebras satisfying the assumption that the canonical cluster-tilting objects in the common cluster category are isomorphic. We show that these algebras are all iterated $2$-APR tilts of one another.

\subsection{Derived equivalence is graded equivalence}

\begin{dfa}
Two $\tau_2$-finite algebras $\Lambda_1$ and $\Lambda_2$ of global dimension~$\leq 2$
 will be called \emph{cluster equivalent} if there
exists a triangle equivalence between their generalized cluster
categories $\Cc_{\Lambda_1}$ and $\Cc_{\Lambda_2}$.
\end{dfa}

\begin{prop}\label{derivedeqclustereq2}
Let $\Lambda_1$ and $\Lambda_2$ be $\tau_2$-finite algebras of global dimension~$\leq 2$. If $\Lambda_1$ and $\Lambda_2$ are derived equivalent, then there exists an equivalence $F^{\Dd} \colon \Dd^b(\Lambda_1) \rightarrow \Dd^b(\Lambda_2)$ which induces an equivalence $F \colon \Cc_{\Lambda_1} \rightarrow \Cc_{\Lambda_2}$ as in the following diagram.
\[ \xymatrix{
\Dd^b(\Lambda_1)\ar[r]^{F^\Dd}\ar[d]^{\pi_1} & \Dd^b(\Lambda_2)\ar[d]^{\pi_2} \\
\Cc_{\Lambda_1}\ar[r]^F & \Cc_{\Lambda_2}
} \]
In particular derived equivalent algebras are cluster equivalent.
\end{prop}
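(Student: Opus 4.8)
The plan is to exploit the fact that the generalized cluster category is functorially constructed from the derived category, so a derived equivalence ought to descend to the orbit categories and then to their triangulated hulls. The key structural input is that $\SS = \mathbb{S}[-2]$ is built from the Serre functor, which is preserved by any triangle equivalence. So first I would start with a triangle equivalence $G \colon \Dd^b(\Lambda_1) \to \Dd^b(\Lambda_2)$, which exists by the hypothesis of derived equivalence. By uniqueness of the Serre functor, $G$ commutes (up to natural isomorphism) with the Serre functors of the two derived categories, and since the shift functor is manifestly preserved, $G$ commutes with $\SS$ as well. This is exactly the compatibility one needs to pass to orbit categories.

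The second step is to promote this into a genuine equivalence $F^{\Dd}$ that respects the projection functors. A subtlety is that an \emph{arbitrary} derived equivalence $G$ commutes with $\SS$ only up to natural isomorphism, which in general is not enough to induce a functor between orbit categories on the nose; one wants $G \circ \SS = \SS \circ G$ strictly, or at least to replace $G$ by a well-behaved model. The cleanest route is to invoke the construction of the triangulated hull as a triangulated orbit category (referenced in Example~\ref{defclustercat} and Section~\ref{section_triang_orbit} of the paper): the cluster category $\Cc_{\Lambda_i}$ is built as $(\Dd^b(\Lambda_i)/\SS)_\Delta$, and this construction is functorial in triangle functors commuting with $\SS$. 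I would therefore appeal to Keller's machinery to realize $G$ at the dg level so that the commutation with $\SS$ becomes strict, obtaining $F^{\Dd}$ with $F^{\Dd} \circ \SS \cong \SS \circ F^{\Dd}$ in a way that descends. This is the step I expect to be the main obstacle: making the commutation rigid enough to induce a functor on the orbit categories, rather than merely having an abstract isomorphism of functors.

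Once the descent is arranged, the third step is routine: the projection $\pi_i \colon \Dd^b(\Lambda_i) \to \Cc_{\Lambda_i}$ factors through the orbit category followed by the inclusion into the hull, and by functoriality of each of these constructions the square
\[ \xymatrix{
\Dd^b(\Lambda_1)\ar[r]^{F^\Dd}\ar[d]^{\pi_1} & \Dd^b(\Lambda_2)\ar[d]^{\pi_2} \\
\Cc_{\Lambda_1}\ar[r]^F & \Cc_{\Lambda_2}
} \]
commutes, with $F$ a triangle functor. Since $F^{\Dd}$ is an equivalence and the whole construction is symmetric in $\Lambda_1$ and $\Lambda_2$, the induced functor $F$ is an equivalence as well, being invertible via the functor induced by $(F^{\Dd})^{-1}$. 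The final assertion, that derived equivalent algebras are cluster equivalent, is then immediate: the existence of the triangle equivalence $F$ between the generalized cluster categories is precisely the definition of cluster equivalence. Throughout, the only genuinely technical point is the rigidification in the second step; everything else is formal functoriality of the orbit-category and triangulated-hull constructions together with uniqueness of Serre functors.
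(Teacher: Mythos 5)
You have correctly identified the skeleton of the paper's argument (descend a derived equivalence through Keller's orbit-category machinery, Proposition~\ref{universalprop2}), and you have even located exactly where the difficulty lies. But you do not resolve that difficulty, and the way you propose to resolve it is circular. Keller's universal property does not take as input a triangle equivalence commuting with $\SS$ up to natural isomorphism; it takes an isomorphism $F\circ S\simeq S'\circ F$ in $\rep(\Xx_1,\Xx_2)$, that is, an isomorphism in the derived category of $\Lambda_1$-$\Lambda_2$-bimodules. Uniqueness of the Serre functor only produces an isomorphism of triangle functors on $\Dd^b(\Lambda_1)$, and such an isomorphism does not automatically lift to the bimodule level; ``appealing to Keller's machinery to make the commutation strict'' is precisely the statement that needs proof, not a tool you can quote.

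The paper's proof (Corollary~\ref{derivedeqclustereq}) fills this hole with a concrete two-step argument that is absent from your proposal. First, by Rickard's theorem the derived equivalence may be chosen standard: $F^\Dd=-\lten_{\Lambda_1}T$ for a two-sided tilting complex $T\in\Dd^b(\Lambda_1^{\rm op}\ten\Lambda_2)$; this is the honest DG-level realization you gesture at. Second --- and this is the key content --- one must produce an isomorphism $D\Lambda_1\lten_{\Lambda_1}T\simeq T\lten_{\Lambda_2}D\Lambda_2$ in $\Dd^b(\Lambda_1^{\rm op}\ten\Lambda_2)$. The paper constructs a canonical bimodule morphism between these two objects (rewriting $D\Lambda_1\simeq D\Hom_{\Lambda_2}(T,T)$ and using the evaluation map), and then checks that this morphism becomes invertible after forgetting the left $\Lambda_1$-structure, where uniqueness of the Serre functor does apply; since the forgetful functor to $\Dd^b(\Lambda_2)$ reflects isomorphisms, the bimodule morphism is itself an isomorphism. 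Without this construction (or some substitute for it), your second step does not go through, so the proposal has a genuine gap at its central point.
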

\noindent
We refer to Section~\ref{section_triang_orbit} for a formal proof of this proposition (Corollary~\ref{derivedeqclustereq}).

\begin{rema}
As we will see in the examples later, algebras which are not derived equivalent can still be cluster equivalent. 
\end{rema}

As a  consequence of Theorem~\ref{clustertilt} we have a first version of a criterion for cluster equivalent algebras to be derived equivalent. 
\begin{cora}
Let $\Lambda_1$ and $\Lambda_2$ be two $\tau_2$-finite algebras of global dimension
$\leq 2$ which are cluster equivalent.  Denote by $\pi_1$
(resp.\ $\pi_2$) the canonical functor $\Dd^b(\Lambda_1)\rightarrow
\Cc_{\Lambda_1}$ (resp.\ $\Dd^b(\Lambda_2)\rightarrow
\Cc_{\Lambda_2}$). Then the following are equivalent
\begin{enumerate}
\item $\Lambda_1$ and $\Lambda_2$ are derived equivalent;

\item there exists an $\SSS$-equivalence between the categories
  $\pi_2^{-1}(F\pi_1 \Lambda_1)\subset \Dd^b\Lambda_2$ and $\pi_1^{-1}(\pi_1 \Lambda_1)\subset\Dd^b\Lambda_1$ for some triangle equivalence  $F \colon \Cc_{\Lambda_1}\rightarrow \Cc_{\Lambda_2}$. 
\end{enumerate}
\end{cora}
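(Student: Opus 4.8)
The plan is to prove the Corollary as a direct application of the recognition theorem (Theorem~\ref{clustertilt}) together with the naturality statement of Proposition~\ref{derivedeqclustereq2}. The statement is an equivalence of two conditions, so I would prove the two implications separately. Both directions hinge on identifying the canonical cluster-tilting subcategory $\Uu_{\Lambda_i} = \pi_i^{-1}(\pi_i \Lambda_i) \subset \Dd^b\Lambda_i$ and transporting it across equivalences.

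For the implication $(1) \Rightarrow (2)$, suppose $\Lambda_1$ and $\Lambda_2$ are derived equivalent. By Proposition~\ref{derivedeqclustereq2} there is a derived equivalence $F^\Dd \colon \Dd^b(\Lambda_1) \to \Dd^b(\Lambda_2)$ inducing a triangle equivalence $F \colon \Cc_{\Lambda_1} \to \Cc_{\Lambda_2}$ with $F \pi_1 = \pi_2 F^\Dd$. I would then chase the preimage: applying $\pi_2^{-1}$ to $F \pi_1 \Lambda_1 = \pi_2 F^\Dd \Lambda_1$ gives $\pi_2^{-1}(F \pi_1 \Lambda_1) = \pi_2^{-1}(\pi_2 F^\Dd \Lambda_1)$, and since $F^\Dd$ is a triangle equivalence it commutes with $\SS$ (by uniqueness of the Serre functor) and sends the cluster-tilting subcategory $\pi_1^{-1}(\pi_1 \Lambda_1)$ to the cluster-tilting subcategory $\pi_2^{-1}(\pi_2 F^\Dd \Lambda_1)$. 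Thus $F^\Dd$ restricts to an $\SS$-equivalence $\pi_1^{-1}(\pi_1 \Lambda_1) \xrightarrow{\sim} \pi_2^{-1}(F \pi_1 \Lambda_1)$, which is exactly condition~(2).

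For the converse $(2) \Rightarrow (1)$, suppose we are given a triangle equivalence $F \colon \Cc_{\Lambda_1} \to \Cc_{\Lambda_2}$ and an $\SS$-equivalence $f \colon \pi_1^{-1}(\pi_1 \Lambda_1) \xrightarrow{\sim} \pi_2^{-1}(F \pi_1 \Lambda_1)$. The target is a cluster-tilting subcategory of $\Dd^b\Lambda_2$ by Proposition~\ref{preimage}, and the source is precisely $\Uu_{\Lambda_1}$. To invoke Theorem~\ref{clustertilt} I set $\Tt = \Dd^b(\Lambda_2)$, which is algebraic triangulated with a Serre functor, and I take as its cluster-tilting subcategory $\Vv = \pi_2^{-1}(F \pi_1 \Lambda_1)$; with $\Lambda = \Lambda_1$ and the given $\SS$-equivalence $f \colon \Uu_{\Lambda_1} \xrightarrow{\sim} \Vv$, Theorem~\ref{clustertilt} produces a triangle equivalence $\Dd^b(\Lambda_1) \to \Dd^b(\Lambda_2)$. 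Hence $\Lambda_1$ and $\Lambda_2$ are derived equivalent.

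I expect the main subtlety to lie in the forward direction, specifically in verifying that $F^\Dd$ genuinely carries $\pi_1^{-1}(\pi_1\Lambda_1)$ onto $\pi_2^{-1}(F\pi_1\Lambda_1)$ as subcategories, i.e.\ that passing to preimages under the projection functors is compatible with the commuting square $F\pi_1 = \pi_2 F^\Dd$. This is essentially formal once one notes that a triangle equivalence preserves the defining vanishing conditions of cluster-tilting subcategories and commutes with $\SS$, but it must be stated carefully so that the two preimage descriptions match on the nose rather than merely up to the equivalence $F$. The backward direction is comparatively clean, as it is a direct citation of the recognition theorem with the roles of $\Tt$, $\Vv$, and $\Lambda$ as above.
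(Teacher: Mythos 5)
Your proposal is correct and takes essentially the same route as the paper: the forward direction is the same preimage chase through the commuting square of Proposition~\ref{derivedeqclustereq2}, using uniqueness of the Serre functor to see that $F^\Dd$ commutes with $\SS$ and hence carries $\pi_1^{-1}(\pi_1\Lambda_1)=\add\{\SS^p\Lambda_1 \mid p\in\ZZ\}$ onto $\pi_2^{-1}(F\pi_1\Lambda_1)$. The converse is exactly the paper's argument as well: $F\pi_1\Lambda_1$ is cluster-tilting in $\Cc_{\Lambda_2}$, so Proposition~\ref{preimage} makes $\Vv=\pi_2^{-1}(F\pi_1\Lambda_1)$ a cluster-tilting subcategory of $\Tt=\Dd^b(\Lambda_2)$, and Theorem~\ref{clustertilt} applied to the given $\SS$-equivalence yields the derived equivalence.
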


\begin{proof}
 $(1)\Rightarrow (2)$:  By Proposition~\ref{derivedeqclustereq2} there exists a triangle equivalence $F^\Dd$ which induces a triangle equivalence $\xymatrix@-.3cm{F \colon \Cc_{\Lambda_1}\ar[r] & \Cc_{\Lambda_2}}$ such that the diagram
\[ \xymatrix{\Dd^b(\Lambda_1)\ar[r]^{F^\Dd}\ar[d]^{\pi_1} & \Dd(\Lambda_2)\ar[d]^{\pi_2} \\ \Cc_{\Lambda_1}\ar[r]^F & \Cc_{\Lambda_2}} \]
commutes. Therefore we have  $\SSS$-equivalences
\begin{align*}
\pi_2^{-1}(F\pi_1\Lambda_1) & = \pi_2^{-1}(\pi_2 F^\Dd\Lambda_1) \\
& = \add\{\SSS^p F^\Dd\Lambda_1 \mid p\in \ZZ\}  \\
& \simeq \add\{\SSS^p\Lambda_1 \mid p\in\ZZ\}  && \text{(by uniqueness of the Serre functor)}
\\
& \simeq \pi_1^{-1}(\pi_1\Lambda_1).
\end{align*}

$(2)\Rightarrow (1)$ Since $F$ is a triangle equivalence, and since $\pi_1\Lambda_1$ is cluster-tilting in $\Cc_{\Lambda_1}$, then $F\pi_1\Lambda_1$ is cluster-tilting in $\Cc_{\Lambda_2}$. Then by Proposition~\ref{preimage} the subcategory $\pi_2^{-1}(F\pi_1\Lambda_1)$ is cluster-tilting in $\Dd^b\Lambda_2$. Hence, by Theorem~\ref{clustertilt}, we get the result.
\end{proof}

\begin{rema}
It is not clear that the $F^\Dd$ constructed in the proof of $(2)\Rightarrow (1)$ commutes with $F$, but it induces a (possibly different) triangle equivalence $\xymatrix{\Cc_{\Lambda_1}\ar[r] & \Cc_{\Lambda_2}}$.
\end{rema}

\begin{thma}\label{derivedeq1}
Let $\Lambda_1$ and $\Lambda_2$ be two $\tau_2$-finite algebras of global dimension~$\leq 2$. For $i=1,2$ we denote by $\pi_i$
 the canonical functor $\Dd_i\rightarrow
\Cc_i$, where $\Dd_i:=\Dd^b(\Lambda_i)$ and $\Cc_i:=\Cc_{\Lambda_i}$. Denote by $\cov{\widetilde{\Lambda}_i}{\ZZ}$ the $\ZZ$-covering of the $\ZZ$-graded algebra $$\widetilde{\Lambda}_i:=\bigoplus_{p\geq 0}\Hom_{\Dd_i}(\Lambda_i,\SSS^{-p}\Lambda_i).$$ Assume that we have an isomorphism of algebras $\xymatrix{\widetilde{\Lambda}_1\ar[r]^-\sim_-f & \widetilde{\Lambda}_2.}$ 
Then the following are equivalent
\begin{enumerate}
\item there exists a derived equivalence $\xymatrix{F^\Dd \colon \Dd_1\ar[r]^-\sim & \Dd_2}$ such that the induced triangle equivalence $F \colon \Cc_{1}\rightarrow \Cc_{2}$ satisfies $F(\pi_1\Lambda_1)=\pi_2\Lambda_2$;
\item there is a  equivalence $\xymatrix{f^\ZZ \colon \mod\cov{\widetilde{\Lambda}_1}{\ZZ}\ar[r]^-\sim & \mod\cov{\widetilde{\Lambda}_2}{\ZZ}}$ extending $f$. 
\end{enumerate}
In this case the algebras $\Lambda_1$ and $\Lambda_2$ are cluster equivalent, and we have a commutative diagram:
$$\xymatrix{\Dd_1\ar[d]_{\pi_1}\ar[r]^\sim_{F^\Dd} & \Dd_2\ar[d]^{\pi_2}\\ 
\Cc_{1}\ar[r]^\sim_F\ar[d]_{\Hom_{\Cc_1}(\pi_1(\Lambda_1),-)} & \Cc_{2}\ar[d]^{\Hom_{\Cc_2}(\pi_2(\Lambda_2),-)}\\
\mod \widetilde{\Lambda}_1\ar[r]^\sim_{f} &\mod \widetilde{\Lambda}_2\\
 \mod \cov{\widetilde{\Lambda}_1}{\ZZ}\ar[u] \ar[r]^\sim_{f^\ZZ} & \mod\cov{\widetilde{\Lambda}_2}{\ZZ}\ar[u] }$$
\end{thma}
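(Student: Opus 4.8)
The plan is to prove the equivalence of the two conditions by using the recognition theorem (Theorem~\ref{clustertilt}) together with the Gordon-Green theorem (Theorem~\ref{gordongreen}), while keeping careful track of all four squares in the displayed diagram. The key structural fact I would exploit is that $\Uu_{\Lambda_i} = \pi_i^{-1}(\pi_i\Lambda_i) = \add\{\SS^p\Lambda_i \mid p\in\ZZ\}$ is precisely the $\ZZ$-covering $\cov{\widetilde{\Lambda}_i}{\ZZ}$ of the graded algebra $\widetilde{\Lambda}_i$, so that an $\SS$-equivariant additive equivalence $\Uu_{\Lambda_1}\simeq\Uu_{\Lambda_2}$ is the same data as a graded algebra isomorphism compatible with the $\ZZ$-covering. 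This translation is what connects the derived-categorical statement (1) to the graded-module statement (2).

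For the direction $(2)\Rightarrow(1)$ I would start from an equivalence $f^\ZZ\colon \mod\cov{\widetilde{\Lambda}_1}{\ZZ}\xrightarrow{\sim}\mod\cov{\widetilde{\Lambda}_2}{\ZZ}$ extending $f$. Via the Gordon-Green dictionary and the identification $\mod\cov{\widetilde{\Lambda}_i}{\ZZ}\simeq\gr\widetilde{\Lambda}_i$, this $f^\ZZ$ corresponds to an additive equivalence between the covering categories, hence to an $\SS$-equivariant equivalence $\Uu_{\Lambda_1}\simeq\Uu_{\Lambda_2}$ (the degree shift $\langle 1\rangle$ on the covering matching the autoequivalence $\SS$). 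Applying Theorem~\ref{clustertilt} with $\Tt=\Dd_2$ and $\Vv=\Uu_{\Lambda_2}$, I obtain a triangle equivalence $F^\Dd\colon\Dd_1\xrightarrow{\sim}\Dd_2$ restricting to this $\SS$-equivalence on the $\Uu$'s, and in particular sending $\Lambda_1$ to $\Lambda_2$ in a way compatible with $f$. Since $F^\Dd$ commutes with $\SS$, it descends to the orbit categories and then to their triangulated hulls, yielding the induced $F\colon\Cc_1\to\Cc_2$ with $F(\pi_1\Lambda_1)=\pi_2\Lambda_2$; this commutativity of the top square is exactly Proposition~\ref{derivedeqclustereq2}. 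The commutativity of the remaining squares is then formal: the middle square commutes because $F$ identifies the cluster-tilting objects $\pi_1\Lambda_1$ and $\pi_2\Lambda_2$, so $\Hom_{\Cc_2}(\pi_2\Lambda_2,F(-))\simeq\Hom_{\Cc_1}(\pi_1\Lambda_1,-)$ as $\widetilde{\Lambda}$-modules via $f$, and the bottom square commutes by construction of $f^\ZZ$.

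For the converse $(1)\Rightarrow(2)$ I would reverse this reasoning. Given $F^\Dd$ as in (1), its restriction to $\Uu_{\Lambda_1}=\pi_1^{-1}(\pi_1\Lambda_1)$ lands in $\pi_2^{-1}(F\pi_1\Lambda_1)=\pi_2^{-1}(\pi_2\Lambda_2)=\Uu_{\Lambda_2}$, and since $F^\Dd$ commutes with $\SS$ (again by uniqueness of the Serre functor) this restriction is an $\SS$-equivariant equivalence $\Uu_{\Lambda_1}\xrightarrow{\sim}\Uu_{\Lambda_2}$ extending the given $f$ on endomorphism algebras. Re-reading this equivalence through $\mod\cov{\widetilde{\Lambda}_i}{\ZZ}\simeq\gr\widetilde{\Lambda}_i$ produces the desired $f^\ZZ$, and the compatibility with the forgetful functors to $\mod\widetilde{\Lambda}_i$ over $f$ is exactly condition~(1) of Theorem~\ref{gordongreen}, so the two gradings on $\widetilde{\Lambda}_1\simeq\widetilde{\Lambda}_2$ are equivalent in their sense.

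The main obstacle I anticipate is not any single step but the bookkeeping needed to ensure that the \emph{same} functor $F^\Dd$ simultaneously witnesses all the compatibilities—that the equivalence produced by the recognition theorem genuinely descends to give the specific $F$ with $F(\pi_1\Lambda_1)=\pi_2\Lambda_2$, and that this $F$ is the one making the middle and bottom squares commute. Concretely, the delicate point is verifying that the $\SS$-equivariant equivalence $\Uu_{\Lambda_1}\simeq\Uu_{\Lambda_2}$ coming from $f^\ZZ$ satisfies the precise hypotheses of Theorem~\ref{clustertilt} (that it is an equivalence of additive categories \emph{with $\SS$-action}, with $\Dd_2$ algebraic triangulated and possessing a Serre functor), and then confirming that the functor $F^\Dd$ it outputs is compatible with passing to the orbit category and its triangulated hull, so that $F$ exists and closes the diagram. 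I expect this to be where the argument requires the most care, drawing on the results of Section~\ref{section_triang_orbit}.
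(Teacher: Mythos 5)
Your overall strategy coincides with the paper's: both directions are routed through the dictionary between $\SS$-equivariant equivalences $\Uu_{\Lambda_1}\simeq\Uu_{\Lambda_2}$ and graded equivalences in the sense of Gordon--Green (Theorem~\ref{gordongreen}), with the recognition theorem (Theorem~\ref{clustertilt}) producing $F^\Dd$ in the direction $(2)\Rightarrow(1)$, and Proposition~\ref{derivedeqclustereq2} providing the descent to the cluster categories. Your $(1)\Rightarrow(2)$ is also the paper's argument, phrased via the restriction of $F^\Dd$ to $\Uu_{\Lambda_1}$ rather than via a tilting complex realizing $F^\Dd$.

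However, one step in your $(2)\Rightarrow(1)$ fails as stated: the claim that the equivalence obtained from $f^\ZZ$, and hence $F^\Dd$, sends $\Lambda_1$ to $\Lambda_2$. Theorem~\ref{gordongreen} only yields an equivalence of coverings up to degree shifts: its condition (2) produces integers $d_i$ and a graded isomorphism of $\widetilde{\Lambda}_2$ with the endomorphism algebra of $\bigoplus_{i=1}^n e_i\widetilde{\Lambda}_1\langle d_i\rangle$, so the induced $\SS$-equivariant equivalence $\Uu_{\Lambda_1}\rightarrow\Uu_{\Lambda_2}$ matches $\bigoplus_{i=1}^n\SS^{-d_i}e_i\Lambda_1$ --- not $\Lambda_1$ --- with $\Lambda_2$. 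If $F^\Dd(\Lambda_1)\simeq\Lambda_2$ held in $\Dd_2$, then one would get $\Lambda_1\simeq\End_{\Dd_1}(\Lambda_1)\simeq\End_{\Dd_2}(\Lambda_2)\simeq\Lambda_2$ as algebras; this is false in general, e.g.\ in Example~\ref{example} the algebras $\Lambda_1$ and $\Lambda_2$ satisfy both conditions of the theorem but have non-isomorphic quivers. The repair is exactly the extra step the paper makes: since $\pi_1\SS\simeq\pi_1$, one has $\pi_1(\bigoplus_{i=1}^n\SS^{-d_i}e_i\Lambda_1)\simeq\pi_1\Lambda_1$, so commutativity of the top square still gives $F(\pi_1\Lambda_1)\simeq\pi_2 F^\Dd(\bigoplus_{i=1}^n\SS^{-d_i}e_i\Lambda_1)\simeq\pi_2\Lambda_2$. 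So the gap is local and fixable, but as written your argument asserts an identification at the level of the canonical generators of the derived categories that is too strong to be true; the unavoidable presence of the shifts $d_i$ is precisely why the theorem is stated in terms of graded \emph{equivalence} rather than graded \emph{isomorphism} of $\widetilde{\Lambda}_1$ and $\widetilde{\Lambda}_2$.
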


\begin{proof}
$(1)\Rightarrow (2)$: Assume Condition~(1) is satisfied. Then there exists a tilting complex $T\in \Dd^b\Lambda_1$ such that $\End_{\Dd^b\Lambda_1}(T)\simeq \Lambda_2$ and that the following diagram commutes $$\xymatrix{\Dd^b\Lambda_1\ar[d]^{\pi_1}\ar[rrr]^{\RHom_{\Dd^b\Lambda_1}(T,-)} &&& \Dd^b\Lambda_2\ar[d]^{\pi_2}\\ 
\Cc_{\Lambda_1}\ar[rrr]^\sim_F &&& \Cc_{\Lambda_2}}$$
Since $F\pi_1(\Lambda_1)$ is isomorphic to $\pi_2(\Lambda_2)=\pi_2(\RHom(T,T))$, we have $\pi_1(\Lambda_1)=\pi_1(T)$. So $T$ can be written $\bigoplus_{i=1}^n\SSS^{-d_i} e_i\Lambda_1$ for certain $d_i \in \ZZ$, where $\Lambda_1=\bigoplus_{i=1}^n e_i\Lambda_1$ is the decomposition into indecomposable projective modules. Then we have the following isomorphisms of $\ZZ$-graded algebras 
\begin{align*}
\widetilde{\Lambda}_2  & \underset{\ZZ}{\simeq} \bigoplus_{p \in \ZZ}\Hom_{\Dd_2}(\Lambda_2, \SSS^{-p}\Lambda_2)\\
&\underset{\ZZ}{\simeq} \bigoplus_{p \in \ZZ} \Hom_{\Dd_1}(T,\SSS^{-p}T)\\
&\underset{\ZZ}{\simeq} \bigoplus_{p \in \ZZ}\Hom_{\Dd_1}(\bigoplus_{i=1}^n\SSS^{-d_i} e_i\Lambda_1, \bigoplus_{i=1}^n\SSS^{-d_i-p} e_i\Lambda_1)\\
& \underset{\ZZ}{\simeq} \bigoplus_{p \in \ZZ} \Hom_{\cov{\widetilde{\Lambda}_1}{\ZZ}}(\bigoplus_{i=1}^n e_i\widetilde{\Lambda}_1\langle d_i\rangle, \bigoplus_{i=1}^n e_i\widetilde{\Lambda}_1\langle d_i + p\rangle),
\end{align*}
where $e_i\widetilde{\Lambda}_1$ is the projective $\widetilde{\Lambda}_1$-graded module $\bigoplus_{p\in \ZZ}\Hom_{\Dd_1}(\Lambda_1 ,\SSS^{-p} e_i\Lambda_1)$. 
Therefore, by Theorem~\ref{gordongreen}, we have a commutative diagram
$$\xymatrix{\mod\widetilde{\Lambda}_1\ar[r]^\sim & \mod\widetilde{\Lambda}_2\\ \mod\cov{\widetilde{\Lambda}_1}{\ZZ}\ar[r]^\sim \ar[u] & \mod \cov{\widetilde{\Lambda}_2}{\ZZ}\ar[u]}$$ and we get $(2)$.

$(2)\Rightarrow (1)$: By assumption we have a commutative diagram 
$$\xymatrix{\mod\widetilde{\Lambda}_1\ar[r]^\sim & \mod\widetilde{\Lambda}_2\\ \mod\cov{\widetilde{\Lambda}_1}{\ZZ}\ar[r]^\sim \ar[u] & \mod\cov{\widetilde{\Lambda}_2}{\ZZ}\ar[u]}$$ where the upper equivalence comes from the isomorphism of algebras $\widetilde{\Lambda}_1\simeq \widetilde{\Lambda}_2$.
Then, by Theorem~\ref{gordongreen}, there exists integers $d_i$ such that $\widetilde{\Lambda}_2$ is isomorphic as graded algebra to 
\[ \widetilde{\Lambda}_2 \underset{\ZZ}{\simeq} \bigoplus_{p\in\ZZ}\Hom_{\cov{\widetilde{\Lambda}_1}{\ZZ}}(\bigoplus_{i=1}^ne_i\widetilde{\Lambda}_1\langle d_i\rangle,\bigoplus_{i=1}^ne_i\widetilde{\Lambda}_1\langle d_i+p\rangle),\]
where $\widetilde{\Lambda}_1=\bigoplus_{i=1}^ne_i \widetilde{\Lambda}_1$ is the decomposition of $\widetilde{\Lambda}_1$ into indecomposables. Thus we have $e_i\widetilde{\Lambda}_1\simeq \bigoplus_{p\in \ZZ}\Hom_{\Dd_1}(\Lambda_1,\SSS^{-p}e_i \Lambda_1)$. Therefore we have $$\widetilde{\Lambda}_2\underset{\ZZ}{\simeq}\bigoplus_{p\in\mathbb{Z}}\Hom_{\Dd_1}(\bigoplus_{i=1}^n\SSS^{-d_i}e_i \Lambda_1,\bigoplus_{i=1}^n\SSS^{-d_i-p}e_i \Lambda_1).$$   This isomorphism of graded algebras means that we have an equivalence of cluster-tilting subcategories with $\SSS$-action
$$\xymatrix{ \Uu_1=\add\{\SSS^{-d_i-p} e_i \Lambda_1, i=1,\ldots,n,\ p\in\mathbb{Z}\} \ar[r]_(.6)u^(.6)\sim & \add\{\SSS^{-p}\Lambda_2 \mid p\in\mathbb{Z}\}=\Uu_2}$$
sending $\bigoplus_{i=1}^n\SSS^{-d_i} e_i \Lambda_1$ to $\Lambda_2$. 

Therefore, by Theorem~\ref{clustertilt}, we get a triangle equivalence $F^\Dd$ between $\Dd^b\Lambda_1$ and $\Dd^b\Lambda_2$ making the following square commutative.
$$\xymatrix{\Uu_1\ar[r]^u \ar@{^(->}[d] & \Uu_2\ar@{^(->}[d]\\ \Dd^b\Lambda_1\ar[r]^{F^\Dd}& \Dd^b\Lambda_2}$$
By Proposition~\ref{derivedeqclustereq2}, the functor $F^\Dd$ induces a triangle equivalence $F \colon \Cc_{\Lambda_1}\rightarrow\Cc_{\Lambda_2}$ such that the following diagram commutes,
$$\xymatrix{ \Dd^b\Lambda_1\ar[r]^{F^\Dd}\ar[d]^{\pi_1}& \Dd^b\Lambda_2\ar[d]^{\pi_2}\\ \Cc_{\Lambda_1}\ar[r]^F & \Cc_{\Lambda_2}}$$
and we have 
\[ F\pi_1\Lambda_1 \simeq F\pi_1(\bigoplus_{i=1}^n\SSS^{-d_i} e_i \Lambda_1) \simeq \pi_2 F^\Dd(\bigoplus_{i=1}^n\SSS^{-d_i}e_i \Lambda_1) \simeq \pi_2\Lambda_2. \]
This completes the proof of the implication $(2) \Rightarrow (1)$.

Moreover the square 
$$\xymatrix{\Cc_{\Lambda_1}\ar[r]^F\ar[d]_{\Hom_{\Cc_1}(\pi_1\Lambda_1, -)} & \Cc_{\Lambda_2}\ar[d]^{\Hom_{\Cc_2}(\pi_2\Lambda_2, -)} \\ \mod \widetilde{\Lambda}_1\ar[r]^f & \mod\widetilde{\Lambda}_2}$$
is commutative, and we get the commutative diagram of the theorem. 
\end{proof}

\begin{exa}\label{example}
Let $\Lambda_1=kQ^1/I^1$, $\Lambda_2=kQ^2/I^2$ and $\Lambda_3=kQ^3/I^3$ be the algebras given by the following quivers:
$$Q^1=\xymatrix@-.5cm{ &2&\\ 1\ar[ur]^a&&3\ar@<.5mm>[ll]^d\ar@<-.5mm>[ll]_c}, \quad Q^2=\xymatrix@-.5cm{ &2\ar[dr]^b&\\ 1&&3\ar@<.5mm>[ll]^d\ar@<-.5mm>[ll]_c} \textrm{ and } Q^3=\xymatrix@-.5cm{ &2\ar[dr]^b&\\ 1\ar[ur]^a&&3\ar[ll]^d}$$
with relations $I^1=\langle ac\rangle$, $I^2=\langle cb\rangle$ and $I^3=\langle ba\rangle$.
It is easy to check that the algebras $\widetilde{\Lambda}_i$, for $i=1,2,3$ are all isomorphic to the Jacobian algebra $\Jac(\widetilde{Q},W)$ (see Section~\ref{subsection jacobian} for definition) where
$$\widetilde{Q}= \xymatrix@-.5cm{ &2\ar[dr]^b&\\ 1\ar[ur]^a&&3\ar@<.5mm>[ll]^d\ar@<-.5mm>[ll]_c} \textrm{ and } W=cba.$$
The algebras $\Lambda_1$ and $\Lambda_2$ are derived equivalent since they are both derived equivalent to the path algebra of a quiver of type $\tilde{A}_2$.
The quiver $Q^3$ contains an oriented cycle, therefore the algebra $\Lambda_3$ is not derived equivalent to a hereditary algebra. 
We compute the $\ZZ$-coverings with respect to the different gradings:
$$\xymatrix@-.5cm{&&&\\& &2\ar@{..}[ur]&\\
& 1\ar[ur]^a&&3\ar@<.5mm>[ll]^d\ar@<-.5mm>[ll]_c\\&&2\ar[ur]_b&\\ \cov{\widetilde{\Lambda}_1}{\ZZ}= &1\ar[ur]^a&&3\ar@<.5mm>[ll]^d\ar@<-.5mm>[ll]_c\\& &2\ar[ur]_b&\\ &1\ar[ur]^a&&3\ar@<.5mm>[ll]^d\ar@<-.5mm>[ll]_c\\& &\ar@{..}[ur]&}\quad
\xymatrix@-.5cm{&&&\\ &&2\ar@{..}[ul]\ar[dr]^b&\\ &1\ar[dr]_a&&3\ar@<.5mm>[ll]^d\ar@<-.5mm>[ll]_c\\ &&2\ar[dr]^b&\\ \cov{\widetilde{\Lambda}_2}{\ZZ}=& 1\ar[dr]_a&&3\ar@<.5mm>[ll]^d\ar@<-.5mm>[ll]_c\\ &&2\ar[dr]^b&\\& 1&&3\ar@<.5mm>[ll]^d\ar@<-.5mm>[ll]_c\\& &\ar@{..}[ul]&} \quad
\xymatrix@-.5cm{ &&&\\&&2\ar[dr]_b&\\& 1\ar[ur]^a&&3\ar@/_3mm/@{..}[uull] \ar[ll]^d\\& &2\ar[dr]_b&\\ \cov{\widetilde{\Lambda}_3}{\ZZ}=&1\ar[ur]^a&&3\ar@/_3mm/[uull]_c\ar[ll]^d \\&&2\ar[dr]_b&\\& 1\ar[ur]^a&&3\ar@/_3mm/[uull]_c\ar[ll]^d\\&&&\\&&&\ar@/_3mm/@{..}[uull]}$$
It is then clear that the first two locally finite categories are equivalent, but not to the third one.
\end{exa}

Theorem~\ref{derivedeq1} can be generalized to the case that $\widetilde{\Lambda}_2$ is isomorphic not necessarily to $\widetilde{\Lambda}_1$, but to the endomorphism algebra of some cluster-tilting object in $\Cc_1$.

\begin{thma} \label{derivedeq1mutation}
Let $\Lambda_1$ and $\Lambda_2$ be two $\tau_2$-finite algebras of global dimension~$\leq 2$. Assume there is $T \in \Dd_1$ such that $\pi_1(T)$ is basic cluster tilting in $\Cc_1$, and
\begin{enumerate}
\item there is an isomorphism $\xymatrix{\End_{\Cc_1}(\pi_1 T)\ar[r]^-\sim & \End_{\Cc_2}(\pi_2\Lambda_2)}$
\item this isomorphism can be chosen in such a way that the two $\ZZ$-gradings defined on $\widetilde{\Lambda}_2$, given respectively by
$$\bigoplus_{q\in \ZZ}\Hom_{\Dd_2}(\Lambda_2,\SSS^{-q}\Lambda_2)\textrm{ and } \bigoplus_{p\in \ZZ}\Hom_{\Dd_1}(T, \SSS^{-p}T),$$
are equivalent.
\end{enumerate}
Then the algebras $\Lambda_1$ and $\Lambda_2$ are derived equivalent, and hence cluster equivalent.
\end{thma}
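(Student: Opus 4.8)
The plan is to reduce the statement to a single application of the recognition theorem (Theorem~\ref{clustertilt}), manufacturing the $\SS$-equivalence it requires by means of the Gordon--Green theorem (Theorem~\ref{gordongreen}). The target triangulated category will be $\Dd_1$ itself: I will exhibit an $\SS$-equivalence between the canonical cluster-tilting subcategory $\Uu_{\Lambda_2} \subseteq \Dd_2$ and a suitable cluster-tilting subcategory of $\Dd_1$, and then read off a triangle equivalence $\Dd_2 \simeq \Dd_1$.

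First I would set up the relevant cluster-tilting subcategory of $\Dd_1$. Since $\pi_1(T)$ is a basic cluster-tilting object in $\Cc_1$, Proposition~\ref{preimage} shows that $\pi_1^{-1}(\pi_1 T)$ is a cluster-tilting subcategory of $\Dd_1$, and this subcategory equals $\Uu_T := \add\{\SS^p T \mid p\in\ZZ\}$. Viewed as an additive $k$-category with $\SS$-action, $\Uu_T$ is nothing but the $\ZZ$-covering of the endomorphism algebra $\End_{\Cc_1}(\pi_1 T)\simeq\bigoplus_{p}\Hom_{\Dd_1}(T,\SS^{-p}T)$ equipped with its natural grading, the autoequivalence $\SS$ corresponding to the degree shift $\langle -1\rangle$. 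In exactly the same way, $\Uu_{\Lambda_2} = \add\{\SS^p\Lambda_2 \mid p\in\ZZ\} \subseteq \Dd_2$ is the $\ZZ$-covering $\cov{\widetilde{\Lambda}_2}{\ZZ}$ of $\widetilde{\Lambda}_2 = \End_{\Cc_2}(\pi_2\Lambda_2)$ with its natural grading, the $\SS$-action again matching the degree shift.

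Next I would produce the $\SS$-equivalence $\Uu_{\Lambda_2} \simeq \Uu_T$. By Condition~(1) the isomorphism $\End_{\Cc_1}(\pi_1 T) \simeq \widetilde{\Lambda}_2$ transports the natural grading of $\End_{\Cc_1}(\pi_1 T)$ to a second $\ZZ$-grading on $\widetilde{\Lambda}_2$, and Condition~(2) asserts precisely that this second grading is equivalent to the natural one in the sense of Theorem~\ref{gordongreen}. That theorem then yields an equivalence between the module categories over the two $\ZZ$-coverings which is compatible with the forgetful functor to $\mod\widetilde{\Lambda}_2$; restricting it to the full subcategories of finitely generated projectives, i.e.\ to the representable functors, gives an equivalence of the covering categories themselves, and this restricted equivalence commutes with the degree-shift autoequivalence. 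Composing with the two identifications of the previous paragraph, I obtain an equivalence of additive categories with $\SS$-action $f \colon \Uu_{\Lambda_2} \overset{\sim}{\longrightarrow} \Uu_T$.

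Finally I would invoke Theorem~\ref{clustertilt} with $\Tt = \Dd_1$ (which is algebraic triangulated and carries a Serre functor), cluster-tilting subcategory $\Vv = \Uu_T$, the algebra $\Lambda_2$, and the $\SS$-equivalence $f$ just constructed. This produces a triangle equivalence $\Dd^b\Lambda_2 \simeq \Dd_1 = \Dd^b\Lambda_1$, so $\Lambda_1$ and $\Lambda_2$ are derived equivalent; they are then cluster equivalent by Proposition~\ref{derivedeqclustereq2}. The main obstacle is the bookkeeping in the middle step: one must verify carefully that each of the two cluster-tilting subcategories genuinely is the $\ZZ$-covering of its natural graded endomorphism algebra, with the $\SS$-action matching the degree shift, and that the Gordon--Green equivalence descends to an equivalence of the \emph{additive} covering categories that is honestly $\SS$-equivariant, rather than merely an equivalence of module categories. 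Once this matching of $\SS$-structures is secured, the recognition theorem does all the remaining work.
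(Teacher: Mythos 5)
Your proposal is correct and follows essentially the same route as the paper: identify $\pi_1^{-1}(\pi_1 T)=\add\{\SS^pT\mid p\in\ZZ\}$ as a cluster-tilting subcategory of $\Dd_1$ (Proposition~\ref{preimage}), translate Condition~(2) via the Gordon--Green theorem into an equivalence of additive categories with $\SS$-action between $\add\{\SS^q\Lambda_2\mid q\in\ZZ\}$ and $\add\{\SS^pT\mid p\in\ZZ\}$ (this is exactly the argument the paper borrows from the proof of $(2)\Rightarrow(1)$ of Theorem~\ref{derivedeq1}), and then apply the recognition theorem (Theorem~\ref{clustertilt}) with $\Lambda=\Lambda_2$, $\Tt=\Dd_1$, $\Vv=\add\{\SS^pT\mid p\in\ZZ\}$. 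The bookkeeping point you flag --- that the coverings are honestly the cluster-tilting subcategories with $\SS$ matching the degree shift, secured through condition~(2) of Theorem~\ref{gordongreen} --- is precisely what the paper's citation of that argument supplies.
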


\begin{proof}
The object $\pi_1 T$ is a cluster-tilting object in $\Cc_1$. Hence the subcategory $\pi_1^{-1}(\pi_1 T)$ is a cluster-tilting subcategory of $\Dd_1$. It is immediate to see that
$$\pi_1^{-1}(\pi_1 T)=\add\{\SSS^p T \mid p\in \ZZ\}.$$
With the same argument used in the proof $(2)\Rightarrow (1)$ of Theorem~\ref{derivedeq1}, we show that Condition~(2) is equivalent to the fact that we have an equivalence of cluster-tilting subcategories with $\SSS$-action:
$$\Uu_1:=\add\{\SSS^p T \mid p\in \ZZ\}\simeq \add\{ \SSS^q \Lambda_2, q\in\ZZ\}=:\Uu_2$$
Then we can apply Theorem~\ref{clustertilt} to get the result.
\end{proof}

\begin{rema}
In Section~\ref{section_left_mutation} we will introduce the notion of mutation of graded quivers with potential, which makes it possible to check the assumptions of Theorem~\ref{derivedeq1mutation} more easily. Therefore we give an example there (Example~\ref{examplemutation}).
\end{rema}

\subsection{Classification of tilting complexes}

In this subsection we classify tilting complexes giving rise to derived equivalent algebras with the same canonical cluster tilting object.

Let $\Lambda$ be a $\tau_2$-finite algebra of global dimension~$\leq 2$. Let $\Lambda=P_1\oplus \cdots \oplus P_n$ the
decomposition of the free module $\Lambda$ into indecomposable
projectives. Let $T=\bigoplus_{i=1}^n\SSS^{-d_i}P_i$ be a
lift of the canonical cluster-tilting object $\pi(\Lambda)$ (cf.\ Example~\ref{listexamples}(3)). Our aim is to
determine when $T$ is a tilting complex with $\gldim(\End_\Dd(T))\leq 2$.

First recall a result from \cite{IO}.

\begin{prop}[{\cite[Theorem~4.5 and Proposition~4.7]{IO}}]\label{2apr}
Let $\Lambda$ be an algebra of global dimension
$\leq 2$. Let $\Lambda=P_0\oplus P_R$ be a decomposition such that
\begin{enumerate}
\item
$\Hom_\Dd(P_R,P_0)=0$, and
\item $\Ext^1_\Lambda(\mathbb{S}P_R,P_0)=0$ (recall that $\mathbb{S}$ is the Serre functor, so $\mathbb{S} P_R$ is the injective module corresponding to the projective module $P_R$).
\end{enumerate}
 Then the
complex $T=\SSS^{-1} P_0\oplus P_R$ is a tilting complex with $\gldim
\End_\Dd(T)\leq 2$. 
\end{prop}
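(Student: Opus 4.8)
The plan is to verify directly that $T = \SS^{-1}P_0 \oplus P_R$ is a tilting complex, and then separately to bound the global dimension of its endomorphism algebra. Throughout I would use that, since $\gldim\Lambda \leq 2$, the category $\Dd^b(\Lambda) = \per\Lambda = K^b(\proj\Lambda)$ carries the Serre functor $\mathbb{S} = -\lten_\Lambda D\Lambda$, so that $\SS = \mathbb{S}[-2]$ and, because $P_R$ is projective, the object $\mathbb{S}P_R$ is the injective module $I_R$ concentrated in degree $0$. As $P_0,P_R$ are projective and $\SS$ is an autoequivalence, $T$ is automatically a perfect complex, so only self-orthogonality, generation, and the global dimension bound remain to be checked.

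First I would establish self-orthogonality, that is $\Hom_\Dd(T,T[i]) = 0$ for $i \neq 0$, by splitting into four summands. The pieces $\Hom_\Dd(\SS^{-1}P_0,\SS^{-1}P_0[i]) \cong \Ext^i_\Lambda(P_0,P_0)$ and $\Hom_\Dd(P_R,P_R[i]) \cong \Ext^i_\Lambda(P_R,P_R)$ vanish for $i\neq 0$ since $P_0,P_R$ are projective. For the cross terms I would use that $\SS$ is an autoequivalence together with Serre duality to rewrite
\[ \Hom_\Dd(\SS^{-1}P_0, P_R[i]) \cong \Hom_\Dd(P_0, \mathbb{S}P_R[i-2]) \cong \Ext^{i-2}_\Lambda(P_0, I_R), \]
which by projectivity of $P_0$ is concentrated in $i = 2$, where it equals $D\Hom_\Lambda(P_R,P_0)$ and hence vanishes by hypothesis~(1); and symmetrically
\[ \Hom_\Dd(P_R, \SS^{-1}P_0[i]) \cong \Hom_\Dd(\SS P_R, P_0[i]) \cong \Ext^{i+2}_\Lambda(I_R, P_0), \]
which by $\gldim\Lambda \leq 2$ can only be nonzero for $i \in \{-2,-1,0\}$. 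The value at $i=0$ lies in $\End_\Dd(T)$ and is harmless, while $i = -1$ gives $\Ext^1_\Lambda(I_R,P_0) = \Ext^1_\Lambda(\mathbb{S}P_R,P_0) = 0$ by hypothesis~(2).

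The one remaining term, at $i = -2$, is $\Hom_\Lambda(I_R, P_0) = \Hom_\Lambda(\mathbb{S}P_R, P_0)$, and showing that this vanishes is the main obstacle. I would attack it through the minimal projective presentation $P^1 \to P^0 \to I_R \to 0$: since $\Hom_\Lambda(I_R,P_0)$ embeds into $\Hom_\Lambda(P^0,P_0)$, it suffices to show that the projective cover $P^0$ of $I_R = \mathbb{S}P_R$ lies in $\add P_R$, for then $\Hom_\Lambda(P^0,P_0) = 0$ by hypothesis~(1). Equivalently one must check that $\mathrm{top}(\mathbb{S}P_R)$ is supported on the vertices of $P_R$; I expect to deduce this by identifying $\mathrm{top}(\mathbb{S}P_R) \cong D\,\mathrm{soc}(\Lambda e_R)$ and tracking, via the source/sink position of the vertices of $P_0$ encoded by hypothesis~(1), which vertices can occur. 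This is the delicate step where the precise hypotheses enter.

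For generation I would use the standard fact that a self-orthogonal perfect complex whose number of pairwise non-isomorphic indecomposable summands equals $\operatorname{rk} K_0(\Dd^b\Lambda) = n$ is automatically a tilting complex; here the summands of $\SS^{-1}P_0$ and of $P_R$ sit at complementary vertices, so $T$ has exactly $n$ such summands and $\thick_\Dd(T) = \Dd^b(\Lambda)$ follows. Alternatively one can resolve $\SS^{-1}P_0 \cong \mathbb{S}^{-1}(I^\bullet)[2]$ using an injective resolution $I^\bullet$ of $P_0$ (so that $\mathbb{S}^{-1}$ turns it into a bounded complex of projectives) and extract $P_0$ from $T$ by brutal truncation. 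Finally, for $\gldim \End_\Dd(T) \leq 2$ I note that a derived equivalence does not preserve global dimension, so this must be read off from the explicit $2$-APR structure: I would compute the quiver and relations of $\End_\Dd(T)$, or directly bound the projective dimension of each simple $\End_\Dd(T)$-module by analysing the cohomological width of the corresponding object of $\Dd^b(\Lambda)$, and verify it never exceeds $2$. I expect the self-orthogonality step, specifically the vanishing of $\Hom_\Lambda(\mathbb{S}P_R,P_0)$, together with this last global-dimension estimate, to be the two principal difficulties.
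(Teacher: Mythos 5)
The paper itself offers no proof of Proposition~\ref{2apr}: it is quoted directly from \cite{IO} (Theorem~4.5 and Proposition~4.7 there), so your attempt can only be judged on its own merits. The self-orthogonality part of your argument is correct, and the step you single out as the principal difficulty --- the vanishing of $\Hom_\Lambda(\mathbb{S}P_R,P_0)$ --- is in fact immediate from hypothesis~(1) alone: writing $P_0=e_0\Lambda$ and $P_R=e_R\Lambda$, hypothesis~(1) says $e_0\Lambda e_R=0$, hence $P_0e_R=0$ and also $(\mathbb{S}P_R)e_0=D(\Lambda e_R)e_0\simeq D(e_0\Lambda e_R)=0$; the image of any map $\mathbb{S}P_R\rightarrow P_0$ is then killed by $e_0$ (being a quotient of $\mathbb{S}P_R$) and by $e_R$ (being a submodule of $P_0$), hence by $1=e_0+e_R$, so it is zero. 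This also confirms your sketch: ${\rm top}(\mathbb{S}P_R)\simeq D\,{\rm soc}(\Lambda e_R)$ is supported on the $R$-vertices, so the projective cover of $\mathbb{S}P_R$ lies in $\add P_R$. So this ``delicate step'' is fine, and easier than you fear.

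The genuine gaps are the two remaining steps. For generation you invoke ``the standard fact'' that a self-orthogonal perfect complex with $n={\rm rk}\,K_0(\Dd^b\Lambda)$ pairwise non-isomorphic indecomposable summands is a tilting complex. This is not a standard fact and cannot be cited: Bongartz-type completion and counting results are available for tilting \emph{modules} and for $2$-term silting complexes ($\tau$-tilting theory), but for general complexes the statement that a pretilting/presilting object with $n$ summands generates --- or even that it can be completed --- is a well-known open problem (raised by Aihara and Iyama in the silting context), not a theorem. Your fallback via brutal truncation also fails as stated: truncating the complex of projectives $\mathbb{S}^{-1}(I^\bullet)[2]\simeq\SS^{-1}P_0$ yields triangles showing $\SS^{-1}P_0\in\thick(\Lambda)$, which is the wrong direction; to conclude $P_0\in\thick(T)$ you would need to know, for instance, that the projectives $\mathbb{S}^{-1}I^j$ for $j\geq 1$ lie in $\add P_R$, or alternatively run the orthogonality criterion (if $\Hom_\Dd(T,X[i])=0$ for all $i$, then $\Hom_\Dd(P_R,X[i])=0$ directly and, by Serre duality applied to the summand $\SS^{-1}P_0$, also $\Hom_\Dd(X,P_0[i])=0$ for all $i$; one must then prove $X=0$). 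Either route requires a real argument using hypotheses (1) and (2), and none is given. Finally, the bound $\gldim\End_\Dd(T)\leq 2$ --- which in \cite{IO} is a separate statement (Proposition~4.7) with its own proof --- is not proved at all in your proposal; you only declare that you would compute quivers and relations or estimate projective dimensions of simples. As it stands, only the self-orthogonality third of the proposition is actually established.
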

In this case the complex $T$ is called a \emph{2-APR-tilt} of $\Lambda$.

We denote by $\Uu$ the cluster-tilting subcategory
$\add\{\SSS^p\Lambda \mid p\in \mathbb{Z}\}$ of $\Dd^b(\Lambda)$.
\begin{dfa}
An object $\Sigma$ in $\Uu$ is called a \emph{slice} if the following
holds:
\begin{enumerate}
\item $\Sigma$ intersects every $\SSS$-orbit in exactly one point.
\item $\Hom_\Dd(\Sigma,\SSS^p\Sigma)=0$ for all $p>0$.
\item $\Hom_\Dd(\Sigma, \SSS^p\Sigma[-1])=0$ for all $p\geq 0$.
\end{enumerate}
\end{dfa}
Note that, if $\Sigma$ is a slice,
then $\SSS \Sigma$ and $\SSS^{-1}\Sigma$ are also slices.

We define a partial order on slices. Let $\Sigma=\bigoplus_{i=1}^n\SSS^{-s_i}P_i$ and $\Sigma'=\bigoplus_{i=1}^n \SSS^{-t_i}P_i$ be two slices. Then we write $\Sigma\leq \Sigma'$ if for all $i=1,\ldots ,n$ we have $s_i\leq t_i$.  

For two complexes $\Sigma=\bigoplus_{i=1}^n\SSS^{-s_i}P_i$ and $\Sigma'=\bigoplus_{i=1}^n \SSS^{-t_i}P_i$, we will denote by $\max(\Sigma,\Sigma')$ the complex $\bigoplus_{i=1}^n\SSS^{-u_i}P_i$ where $u_i=\max(s_i,t_i)$, and by $\min(\Sigma,\Sigma')$ the complex $\bigoplus_{i=1}^n\SSS^{-v_i}P_i$ where $v_i=\min(s_i,t_i)$.

\begin{lema}\label{max}
If $\Sigma$ and $\Sigma'$ are slices, then $\max(\Sigma,\Sigma')$ and $\min(\Sigma,\Sigma')$ are slices.
\end{lema}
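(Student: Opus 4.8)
The plan is to prove Lemma~\ref{max} by verifying the three defining conditions of a slice for $\max(\Sigma,\Sigma')$; the argument for $\min(\Sigma,\Sigma')$ is entirely dual and I would only indicate the changes. Condition~(1) is free of charge: by construction $\max(\Sigma,\Sigma')=\bigoplus_i \SS^{-u_i}P_i$ with $u_i=\max(s_i,t_i)$, so it contains exactly one summand from each $\SS$-orbit of indecomposables in $\Uu$. So the content of the lemma lies entirely in the two vanishing conditions (2) and (3).

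For the vanishing conditions, the key point to exploit is that both $\Sigma$ and $\Sigma'$ are slices, and that the partial order is defined component-wise. I would fix two indices $i,j$ and analyze the $\Hom$-space $\Hom_\Dd(\SS^{-u_i}P_i, \SS^{p-u_j}P_j)$ between summands of $\max(\Sigma,\Sigma')$, which after applying the $\SS$-equivalence equals $\Hom_\Dd(P_i, \SS^{p+u_i-u_j}P_j)$. The goal is to show this vanishes for all $p>0$ (condition (2)) and that the analogous space with a shift $[-1]$ vanishes for all $p\geq 0$ (condition (3)). The strategy is to bound the relevant exponent $p+u_i-u_j$ so that it falls into a range where the slice property of $\Sigma$ (or of $\Sigma'$) already forces vanishing. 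Concretely, the component-wise maximum gives inequalities such as $u_i\geq s_i$, $u_i\geq t_i$, $u_j\geq s_j$, $u_j\geq t_j$, and I would use these together with the sign of $\Hom_\Dd(P_i,\SS^m P_j)$ to push $m=p+u_i-u_j$ into the region governed by whichever of $\Sigma,\Sigma'$ has the larger exponent at the relevant index.

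The main obstacle, and where the real work lies, is that component-wise the inequalities on $u_i$ and $u_j$ can point in opposite directions, so a single naive comparison to $\Sigma$ or to $\Sigma'$ need not suffice. The clean way around this is to case-split according to which of $s_i,t_i$ achieves $u_i$ and which of $s_j,t_j$ achieves $u_j$; in each case one of the two slices has both its $i$- and $j$-exponents positioned to apply its own slice property. I expect the decisive observation to be a monotonicity statement: for the relevant degrees, $\Hom_\Dd(P_i,\SS^m P_j)\neq 0$ can only occur in a bounded window of $m$, and slices are precisely objects placed so that all nonzero $\Hom$'s among their shifts sit in degree $0$ (this is the content of conditions (2) and (3) combined with Serre duality and the 2-Calabi-Yau property of $\Cc_\Lambda$). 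Using $\SS$-stability of $\Uu$ and the fact that the bimodule $\Hom_\Dd(\SS^a\Lambda,\SS^b\Lambda)$ is concentrated according to $b-a$, I would reduce each case to a vanishing already guaranteed for $\Sigma$ or $\Sigma'$.

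Once conditions (2) and (3) are established for $\max$, I would remark that replacing $\max$ by $\min$, $\geq$ by $\leq$, and interchanging the roles of the left/right exchange structure yields the statement for $\min$ by the same case analysis, so that $\min(\Sigma,\Sigma')$ is a slice as well. This completes the proof of Lemma~\ref{max}.
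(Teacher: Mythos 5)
Your proposal is, in essence, the paper's own proof: condition (1) is automatic, and for conditions (2) and (3) the paper performs exactly your case analysis (compressed into a ``without loss of generality''), namely for summands $S$ of $\Sigma$ and $S'$ of $\Sigma'$ appearing in $\max(\Sigma,\Sigma')$ it applies a nonnegative power $\SS^d$ carrying $S$ onto the corresponding summand of $\Sigma'$, so that the exponent only increases and the vanishing conditions of $\Sigma'$ --- which hold for \emph{all} positive (resp.\ nonnegative) shifts --- apply, with the dual shift handling $\min(\Sigma,\Sigma')$. One warning: your proposed \emph{decisive observation}, that nonzero $\Hom$'s among $\SS$-shifts of a slice are confined to degree $0$ in a bounded window (via Serre duality and the 2-Calabi-Yau property), is false --- for instance $\Hom_\Dd(\Lambda,\SS^{-1}\Lambda)$ is the degree-one part of $\widetilde{\Lambda}$ and is nonzero whenever $\gldim\Lambda=2$ --- but it is also unnecessary, since the upward-closedness of the vanishing range built into the definition of a slice is all that your case-split actually uses.
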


\begin{proof}
By definition $\max(\Sigma,\Sigma')$ intersects every $\SSS$-orbit in exactly one point. Let $S$ and $S'$ be indecomposable summands of $\max(\Sigma,\Sigma')$. We can assume that $S$ is a summand of $\Sigma$ and $S'$ a summand of $\Sigma'$. Then there exists $d\geq 0$ such that $\SSS^dS$ is a summand of $\Sigma'$.
For $p\geq 0$, the space $\Hom_\Dd(S,\SSS^p S')\simeq \Hom(\SSS^d S,\SSS^{p+d}S')$ vanishes since $\SSS^dS$ and $S'$ are summand of $\Sigma'$ and $p+d\geq 0$.
For the same reasons, the space $\Hom_\Dd(S,\SSS^p S'[-1])\simeq \Hom(\SSS^d S,\SSS^{p+d}S'[-1])$ vanishes. Hence $\max(\Sigma,\Sigma')$ is a slice. 

The proof is similar for $\min(\Sigma,\Sigma')$.
\end{proof}

\begin{lema}\label{lambda}
The object $\Lambda$ is a slice.
\end{lema}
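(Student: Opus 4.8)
The plan is to verify directly the three conditions in the definition of a slice for $\Sigma=\Lambda=\bigoplus_{i=1}^nP_i$. Condition~(1) holds essentially by construction of $\Uu$: the indecomposable objects of $\Uu$ are the $\SS^pP_i$, so the $\SS$-orbits of indecomposables are represented by $P_1,\dots,P_n$, and $\Lambda$ contains exactly one object, namely $P_i$, from each of them. These orbits are pairwise distinct since the images $\pi P_i$ are the pairwise non-isomorphic indecomposable summands of the basic cluster-tilting object $\pi\Lambda$. For conditions~(2) and~(3) the key reduction is that $\Lambda$ is projective, so that $\Hom_\Dd(\Lambda,Y[n])\simeq H^n(Y)$ for every $Y\in\Dd$ and every $n\in\ZZ$. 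Thus condition~(2) becomes $H^0(\SS^p\Lambda)=0$ for $p>0$, and condition~(3) becomes $H^{-1}(\SS^p\Lambda)=0$ for $p\geq 0$.

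The case $p=0$ of condition~(3) is the vanishing $H^{-1}(\Lambda)=0$, which is clear. For $p\geq 1$ both remaining vanishings follow once one controls the cohomological amplitude of $\SS^p\Lambda$, and I would argue as follows. Recall that the Serre functor is $\mathbb{S}=-\lten_\Lambda D\Lambda$ and that $\SS=\mathbb{S}[-2]$. Since $\gldim\Lambda\leq 2$, the bimodule $D\Lambda$ has projective dimension at most $2$, so $\Tor^\Lambda_j(-,D\Lambda)=0$ for $j>2$; hence $\mathbb{S}$ lowers the bottom cohomological degree by at most $2$, i.e.\ it sends complexes with cohomology in degrees $\geq 0$ to complexes with cohomology in degrees $\geq -2$. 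Consequently $\SS=\mathbb{S}[-2]$ preserves the class of complexes whose cohomology is concentrated in degrees $\geq 0$. Moreover $\SS\Lambda=\mathbb{S}\Lambda[-2]=D\Lambda[-2]$ is concentrated in degree $2$. An immediate induction on $p$ then shows that $\SS^p\Lambda=\SS^{p-1}(D\Lambda[-2])$ has cohomology concentrated in degrees $\geq 2$ for every $p\geq 1$. In particular $H^0(\SS^p\Lambda)=H^{-1}(\SS^p\Lambda)=0$, which establishes conditions~(2) and~(3) for $p\geq 1$ and finishes the proof.

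The only real content is this amplitude estimate; condition~(1) and the case $p=0$ are formal. I expect the global dimension hypothesis to be used precisely at the point where $D\Lambda$ is given projective dimension $\leq 2$, since it is exactly this bound that forces $\SS$ to be one-sidedly $t$-exact. As a sanity check, condition~(2) can alternatively be read off directly from the identification $\widetilde{\Lambda}\simeq\bigoplus_{p\geq 0}\Hom_\Dd(\Lambda,\SS^{-p}\Lambda)$, and condition~(3) for $p\geq 1$ can be rephrased via Serre duality as the vanishing of $H^3(\SS^{-m}\Lambda)$ for $m\geq 0$, which reduces to the same control on the $\SS$-orbit of $\Lambda$.
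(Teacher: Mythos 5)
Your proof is correct and takes essentially the same route as the paper: both arguments reduce conditions (2) and (3) to the statement that the cohomology of $\SS^p\Lambda$ is concentrated in degrees $\geq 2$ for all $p\geq 1$. The only difference is that the paper quotes this amplitude bound from \cite[Lemma~4.6]{Ami3}, while you reprove it directly via the projective dimension of $D\Lambda$ (and you also verify condition (1) explicitly, which the paper leaves implicit).
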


\begin{proof}
Since the global dimension of $\Lambda$ is $\leq 2$, it is not hard to see (cf.\ \cite[Lemma~4.6]{Ami3}), that the cohomology of $\SSS^p \Lambda$ is in degree $\geq 2$ for $p\geq 1$. Therefore we immediately get (2) and (3).
\end{proof}

\begin{lema}\label{2aprslice}
Let $\Sigma$ and $\Sigma'$ be two slices such that $\SSS\Sigma'
\leq\Sigma\leq \Sigma'$. Assume that $\Sigma$ is a tilting complex with endomorphism algebra of global dimension~$\leq 2$. Then $\Sigma'$ is a $2$-APR-tilt of $\Sigma$.
\end{lema}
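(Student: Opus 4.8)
The plan is to transport the whole situation through the tilting equivalence attached to $\Sigma$, so that the slice $\Sigma$ becomes the free module of a new algebra and the two defining conditions of a $2$-APR-tilt (Proposition~\ref{2apr}) reduce directly to the slice axioms for $\Sigma'$. Since $\Sigma$ is a tilting complex whose endomorphism algebra $A:=\End_\Dd(\Sigma)$ has global dimension $\leq 2$, Keller's tilting theorem yields a triangle equivalence $F\colon \Dd^b(A)\xrightarrow{\sim}\Dd^b(\Lambda)$ with $F(A)=\Sigma$; moreover $F$ commutes with $\SS$ by uniqueness of the Serre functor. Writing $\Sigma=\bigoplus_{i=1}^n\SS^{-s_i}P_i$, the indecomposable summand $\SS^{-s_i}P_i$ is the image under $F$ of the $i$-th indecomposable projective $A$-module $Q_i$.

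Next I would read off the order hypothesis. Writing $\Sigma'=\bigoplus_{i=1}^n\SS^{-t_i}P_i$, the inequalities $\SS\Sigma'\leq\Sigma\leq\Sigma'$ say precisely that $s_i\leq t_i\leq s_i+1$, i.e.\ $t_i-s_i\in\{0,1\}$ for every $i$. Let $P_0:=\bigoplus_{t_i-s_i=1}Q_i$ and $P_R:=\bigoplus_{t_i-s_i=0}Q_i$, so that $A=P_0\oplus P_R$. Since $\SS^{-t_i}P_i=\SS^{-(t_i-s_i)}(\SS^{-s_i}P_i)$ and $F$ commutes with $\SS$, we get $F(\SS^{-1}P_0\oplus P_R)\simeq\Sigma'$. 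Thus it suffices to check that the decomposition $A=P_0\oplus P_R$ satisfies the two hypotheses of Proposition~\ref{2apr}, for then $\SS^{-1}P_0\oplus P_R$ is a $2$-APR-tilt of $A$, i.e.\ $\Sigma'$ is a $2$-APR-tilt of $\Sigma$.

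The verification of both conditions is where the slice axioms for $\Sigma'$ enter, and the only care needed is bookkeeping of shifts via $F$ together with $\mathbb{S}=\SS[2]$. For condition (1), full faithfulness of $F$ gives $\Hom_{\Dd^b(A)}(P_R,P_0)\simeq\bigoplus\Hom_\Dd(\SS^{-s_i}P_i,\SS^{-s_j}P_j)$ over the $i$ with $t_i-s_i=0$ and $j$ with $t_j-s_j=1$; using $s_i=t_i$ and $s_j=t_j-1$ this rewrites as $\Hom_\Dd(\SS^{-t_i}P_i,\SS^{1}\SS^{-t_j}P_j)$, which vanishes because $\SS^{-t_i}P_i$ and $\SS^{-t_j}P_j$ are summands of the slice $\Sigma'$ and $\Hom_\Dd(\Sigma',\SS^{p}\Sigma')=0$ for $p>0$. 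For condition (2), transporting through $F$ and using $\mathbb{S}=\SS[2]$ one gets $\Ext^1_A(\mathbb{S}P_R,P_0)\simeq\bigoplus\Hom_\Dd(\SS^{-(s_i-1)}P_i,\SS^{-s_j}P_j[-1])$; again substituting $s_i=t_i$, $s_j=t_j-1$ and cancelling one copy of $\SS$ turns this into $\Hom_\Dd(\SS^{-t_i}P_i,\SS^{-t_j}P_j[-1])$, which vanishes by the slice axiom $\Hom_\Dd(\Sigma',\SS^{p}\Sigma'[-1])=0$ for $p\geq 0$ (here with $p=0$).

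The main obstacle is conceptual rather than technical: recognizing that the right move is to pass to $A=\End_\Dd(\Sigma)$, so that the fact that $\Sigma'$ sits one step above $\Sigma$ in the $\SS$-direction becomes literally the shape $\SS^{-1}P_0\oplus P_R$ of a $2$-APR-tilt. Once this is in place the remaining work is the routine shift bookkeeping above; the one point to double-check carefully is that $F$ genuinely intertwines $\SS$ (hence $\mathbb{S}$), so that the two translations of conditions (1) and (2) across the equivalence are legitimate.
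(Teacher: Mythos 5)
Your proof is correct and takes essentially the same approach as the paper: the paper also splits $\Sigma$ as $P_0\oplus P_R$ with $P_0=\Sigma\cap\SS\Sigma'$ and $P_R=\Sigma\cap\Sigma'$ (exactly your index-based decomposition $t_i-s_i=1$ versus $t_i-s_i=0$), and verifies conditions (1) and (2) of Proposition~\ref{2apr} via the slice axioms for $\Sigma'$ (axiom (2) with $p=1$ and axiom (3) with $p=0$). The only difference is that you make explicit the transport through the tilting equivalence $\Dd^b(\End_\Dd(\Sigma))\simeq\Dd^b(\Lambda)$ and its compatibility with $\SS$, which the paper leaves implicit.
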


\begin{proof}
The slices $\Sigma'$ and $\SSS\Sigma'$ have no common summands, and
since $\SSS\Sigma'
\leq\Sigma\leq \Sigma'$, the slice $\Sigma$ is a direct summand of $\SSS\Sigma'\oplus\Sigma'$. 

Let $P_0$ be the intersection $\Sigma\cap\SSS\Sigma'$ and $P_R$ be the intersection $\Sigma\cap\Sigma'$.  Then we have $\SSS^{-1}P_0\oplus P_R=\Sigma'$.

We shall prove that the decomposition $\Sigma=P_0\oplus P_R$ satisfies the properties (1) and (2) of Theorem~\ref{2apr}. The space $\Hom_\Dd(P_R,P_0)$ vanishes since $P_R$ is in $\Sigma'$ and $P_0$ is in $\SSS \Sigma'$. The space $\Ext^{-1}_\Lambda(P_R,\SSS^{-1}P_0)$ vanishes since $P_R$ and $\SSS^{-1}P_0$ are both in $\Sigma'$. Therefore $\Sigma'$ is a 2-APR-tilt of $\Sigma$.
\end{proof}

\begin{prop}\label{2aprslice2}
All slices are iterated 2-APR-tilts of $\Lambda$.
\end{prop}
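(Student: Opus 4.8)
The plan is to join an arbitrary slice $\Sigma=\bigoplus_{i=1}^n\SS^{-s_i}P_i$ to $\Lambda$ by a chain of slices in which consecutive terms differ by a single $2$-APR-tilt, while carrying the property ``tilting complex with endomorphism algebra of global dimension $\leq 2$'' along the chain. The conceptual point I want to flag at the outset is that the $2$-APR-tilt produced by Lemma~\ref{2aprslice} can only \emph{increase} the exponents $s_i$ (it trades certain summands $\SS^{-s_i}P_i$ for $\SS^{-s_i-1}P_i$); hence one cannot reach a slice having some $s_i<0$ merely by climbing upward from $\Lambda$. I therefore first arrange to descend far enough, and only then climb.

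First I would record that every shift $\SS^k\Lambda$ with $k\in\ZZ$ is an iterated $2$-APR-tilt of $\Lambda$. Indeed $\SS^k\Lambda$ is a slice with $\End_\Dd(\SS^k\Lambda)\cong\End_\Dd(\Lambda)=\Lambda$ of global dimension $\leq 2$, and since $\SS(\SS^k\Lambda)=\SS^{k+1}\Lambda\leq\SS^{k+1}\Lambda\leq\SS^k\Lambda$, Lemma~\ref{2aprslice} shows that $\SS^k\Lambda$ is a $2$-APR-tilt of $\SS^{k+1}\Lambda$; letting $k$ run through $\ZZ$ connects all these shifts to $\Lambda$. Next, writing $m=\min_i s_i$ and $M=\max_i s_i$, I would set, for $m\leq k\leq M$,
\[ \Sigma^{(k)}:=\min(\Sigma,\SS^{-k}\Lambda)=\bigoplus_{i=1}^n\SS^{-\min(s_i,k)}P_i, \]
which is a slice by Lemma~\ref{max}. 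Here $\Sigma^{(m)}=\SS^{-m}\Lambda$ and $\Sigma^{(M)}=\Sigma$, and comparing the exponents $\min(s_i,k)$ yields at once the adjacency $\SS\Sigma^{(k+1)}\leq\Sigma^{(k)}\leq\Sigma^{(k+1)}$ for every $k$.

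Finally I would run an induction up this chain. The base $\Sigma^{(m)}=\SS^{-m}\Lambda$ is a tilting complex of global dimension $\leq 2$ and, by the first step, an iterated $2$-APR-tilt of $\Lambda$. Assuming $\Sigma^{(k)}$ is such a tilting complex, Lemma~\ref{2aprslice} applies to the adjacent pair $\Sigma^{(k)}\leq\Sigma^{(k+1)}$ and shows that $\Sigma^{(k+1)}$ is a $2$-APR-tilt of $\Sigma^{(k)}$; Proposition~\ref{2apr} then guarantees that $\Sigma^{(k+1)}$ is again a tilting complex of global dimension $\leq 2$, so the induction continues up to $\Sigma^{(M)}=\Sigma$. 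Thus $\Sigma$ is an iterated $2$-APR-tilt of $\SS^{-m}\Lambda$, hence of $\Lambda$. The only genuine obstacle is the directionality noted above: because tilting raises exponents, the descent to the level $m=\min_i s_i$ through the shifts $\SS^k\Lambda$ must be performed before climbing. Once the base has been placed below $\Sigma$, the remaining verifications—that the intermediate $\min$-slices are indeed slices (Lemma~\ref{max}) and that consecutive ones are adjacent—are a routine exponent computation.
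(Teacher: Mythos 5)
Your proof is correct, and it rests on the same ingredients as the paper's: Lemma~\ref{max}, Lemma~\ref{2aprslice}, and an induction on the width $\max_i s_i-\min_i s_i$ along a chain of slices interpolating between $\Sigma$ and a constant slice $\SS^k\Lambda$. But your chain runs in the opposite direction, and this is a genuine (and in fact fortunate) difference. The paper recurses via $\Sigma'=\max(\SS^{N-1}\Lambda,\Sigma)$, i.e.\ it moves from $\Sigma$ upward to $\SS^{M}\Lambda$, and then asserts that ``$\Sigma$ is a $2$-APR-tilt of $\Sigma'$''; this reverses Lemma~\ref{2aprslice}, which (as stated and proved) makes the \emph{larger} slice a tilt of the \emph{smaller} one and requires the smaller one to be already known to be a tilting complex with endomorphism algebra of global dimension $\leq 2$ --- something one does not know about an arbitrary $\Sigma$ at that stage of the paper's induction. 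Your chain $\Sigma^{(k)}=\min(\Sigma,\SS^{-k}\Lambda)$ climbs from $\SS^{-m}\Lambda$ up to $\Sigma$, applies the lemma exactly in its stated direction, and carries the hypothesis ``tilting with global dimension $\leq 2$'' along via Proposition~\ref{2apr}; reversing your chain is evidently what the paper intended, so your version repairs a directional slip in the paper's write-up. Finally, both proofs share the same looseness at the shifts: since a $2$-APR tilt only raises the exponents $s_i$, a slice with some $s_i<0$ (e.g.\ $\SS\Lambda$) is never an iterated tilt of $\Lambda$ in the literal one-way sense. The paper buries this in its base case $\Sigma=\SS^N\Lambda$; you flag it and descend through the shifts first, but note that your first step really shows that $\Lambda$ is an iterated $2$-APR-tilt of $\SS^{k}\Lambda$ for $k>0$, not the converse, so like the paper you are implicitly reading ``iterated $2$-APR-tilt'' symmetrically (equivalently, allowing $2$-APR cotilts). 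That is a defect of the statement itself rather than of your argument.
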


\begin{proof}
Let $\Sigma$ be a slice. Let $N$ be minimal with $\SSS^N \Lambda \leq \Sigma$, and $M$ maximal with $\Sigma \leq \SSS^M \Lambda$. We prove the claim by induction on $N - M$. For $N=M$ we have $\Sigma = \SSS^N \Lambda$, thus $\Sigma$ is an iterated $2$-APR-tilt of $\Lambda$. So assume $N > M$. Let $\Sigma' = \max(\SSS^{N-1} \Lambda, \Sigma)$. Then $\SSS \Sigma' \leq \Sigma \leq \Sigma'$, and by Lemma~\ref{2aprslice} $\Sigma$ is a $2$-APR-tilt of $\Sigma'$. Since $\SSS^{N-1} \leq \Sigma' \leq \SSS^M \Lambda$ we know inductively that $\Sigma'$ is an iterated $2$-APR-tilt of $\Lambda$, so $\Sigma$ is also an iterated $2$-APR-tilt of $\Lambda$.
\end{proof}

\begin{thma}\label{characterizationinverseimage}
For $T=\bigoplus_{i=1}^n\SSS^{-d_i}P_i$, the following are equivalent:
\begin{enumerate}
\item $T$ is a tilting complex with $\gldim \End(T)\leq 2$;
\item $T$ is a slice;
\item $T$ is an iterated $2$-APR-tilt of $\Lambda$;
\item  \begin{enumerate}
\item $\Ext^1_\Lambda(S_j,S_i)\neq 0$ or $\Ext^2_\Lambda(S_i,S_j)\neq 0$
  implies that $d_j-d_i\in\{0,1\}$ and
\item for any $r$ we have $\Hom_\Dd(P_i,\SSS^{-r}P_j[-1])\neq 0$
  implies that $d_j-d_i<r$.
\end{enumerate}
\end{enumerate}
\end{thma}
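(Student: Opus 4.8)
The strategy is to prove the cycle $(3)\Rightarrow(1)\Rightarrow(2)\Rightarrow(3)$, which yields the equivalence of the first three conditions, and then to establish $(2)\Leftrightarrow(4)$ by unwinding the two vanishing conditions in the definition of a slice. I first record two reductions valid for any $T=\bigoplus_{i=1}^n\SS^{-d_i}P_i$: the first defining property of a slice (meeting each $\SS$-orbit exactly once) holds automatically, and $\add\{\SS^pT\mid p\in\ZZ\}=\Uu$. Thus ``$T$ is a slice'' means exactly that $\Hom_\Dd(T,\SS^pT)=0$ for $p>0$ (call this (S2)) and $\Hom_\Dd(T,\SS^pT[-1])=0$ for $p\geq0$ (call this (S3)).

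For $(1)\Rightarrow(2)$, assume $T$ is a tilting complex with $\Gamma:=\End_\Dd(T)$ of global dimension $\leq2$. The tilting theorem provides a triangle equivalence $\Dd^b(\Lambda)\simeq\Dd^b(\Gamma)$ identifying $T$ with the free module $\Gamma$; being an equivalence of triangulated categories it commutes with the Serre functors, hence with $\SS$. Therefore $\Hom_\Dd(T,\SS^pT)\cong\Hom_{\Dd^b\Gamma}(\Gamma,\SS^p\Gamma)$ and $\Hom_\Dd(T,\SS^pT[-1])\cong\Hom_{\Dd^b\Gamma}(\Gamma,\SS^p\Gamma[-1])$. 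Since $\gldim\Gamma\leq2$, Lemma~\ref{lambda} applies to $\Gamma$ and says that $\Gamma$ is a slice in $\Dd^b(\Gamma)$, so the right-hand sides vanish for $p>0$, resp.\ $p\geq0$. Hence (S2) and (S3) hold and $T$ is a slice.

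For $(2)\Rightarrow(3)$ I simply invoke Proposition~\ref{2aprslice2}. For $(3)\Rightarrow(1)$ I induct on the number of 2-APR-tilts, the base case $T=\Lambda$ being clear. If $\Sigma$ is an iterated 2-APR-tilt already known to be a tilting complex with $\Gamma:=\End_\Dd(\Sigma)$ of global dimension $\leq2$, then via the equivalence $\Dd^b(\Lambda)\simeq\Dd^b(\Gamma)$ a further 2-APR-tilt corresponds, by Proposition~\ref{2apr}, to a 2-APR-tilt of $\Gamma$, that is, a tilting complex over $\Gamma$ whose endomorphism algebra again has global dimension $\leq2$; composing it with $\Sigma$ gives a tilting complex over $\Lambda$ with endomorphism algebra of global dimension $\leq2$. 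This closes the cycle and proves $(1)\Leftrightarrow(2)\Leftrightarrow(3)$.

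Finally, $(2)\Leftrightarrow(4)$ is obtained by expressing (S2) and (S3) through the $d_i$ and the twisted Hom-spaces $\Hom_\Dd(P_i,\SS^sP_j[m])$. After the substitution $s=-r$, condition (S3) reads $\Hom_\Dd(P_i,\SS^{-r}P_j[-1])=0$ for all $r\leq d_j-d_i$, whose contrapositive is verbatim condition (4b); this step is a pure reindexing. For (S2) one notes, exactly as in the proof of Lemma~\ref{lambda}, that $\SS^pP_j$ has cohomology in degrees $\geq2$ for $p\geq1$, so $\Hom_\Dd(P_i,\SS^pP_j)=0$ automatically for $p\geq1$; the surviving constraints are $\Hom_\Dd(P_i,\SS^{-t}P_j)=0$ for $0\leq t<d_j-d_i$, a statement about the non-negative graded pieces of $\widetilde\Lambda$. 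One then identifies the generators: by Theorem~\ref{keller} (and the underlying computation of \cite[Lemma~4.6]{Ami3}) the degree-zero and degree-one parts of $\widetilde\Lambda$ are spanned by the arrows of $\widetilde Q$, namely the original ones, of degree zero, corresponding to $\Ext^1_\Lambda(S_j,S_i)$, and the added ones, of degree one, corresponding to $\Ext^2_\Lambda(S_i,S_j)$; condition (4a) is precisely the slice inequality read off on these generators. Since $\widetilde\Lambda=\Jac(\widetilde Q,W)$ is generated in degrees $0$ and $1$, the inequalities imposed on the arrows telescope along any path to recover (S2) for all $t$. I expect this last equivalence to be the main obstacle: one must locate $\Ext^1_\Lambda(S_j,S_i)$ and $\Ext^2_\Lambda(S_i,S_j)$ precisely inside the twisted Hom-spaces at the boundary values of $s$ and $m$, keeping careful track of variance, and then verify the telescoping argument that lifts the generator-level constraints (4a) to the all-degrees vanishing (S2). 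By contrast, the implications among (1), (2), (3) are formal, resting only on the tilting theorem together with Lemma~\ref{lambda}, Proposition~\ref{2apr}, and Proposition~\ref{2aprslice2}.
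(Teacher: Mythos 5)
Your handling of the equivalence of (1), (2) and (3) is correct and is essentially the paper's own argument: $(1)\Rightarrow(2)$ via the tilting theorem, uniqueness of the Serre functor, and Lemma~\ref{lambda} applied to $\Gamma=\End_\Dd(T)$; $(2)\Rightarrow(3)$ is Proposition~\ref{2aprslice2}; and $(3)\Rightarrow(1)$ is the transport of Proposition~\ref{2apr} along derived equivalences, which is what the paper's citation of Theorem~\ref{2apr} implicitly means. The reductions you record at the start (condition (1) of a slice is automatic, and $\add\{\SS^pT\mid p\in\ZZ\}=\Uu$) are also fine.

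The gap is in your splitting of $(2)\Leftrightarrow(4)$ into two independent equivalences ``(S3)$\Leftrightarrow$(4b)'' and ``(S2)$\Leftrightarrow$(4a)''. The first is indeed a pure reindexing, exactly as in the paper. The second is false. What your (correct) telescoping argument actually shows is that (S2) is equivalent to the \emph{one-sided} family of inequalities: for every arrow of $\widetilde{Q}$ from $i$ to $j$ of degree $\delta\in\{0,1\}$, the corresponding difference of the $d$'s is at most $\delta$. Condition (4a), however, is \emph{two-sided} (a difference must lie in $\{0,1\}$), and the reverse bounds do not follow from (S2). Concretely, for $\Lambda=kA_2$ one has $\widetilde{\Lambda}=kA_2$ concentrated in degree $0$, so (S2) amounts to a single inequality on $d_1-d_2$ coming from the unique arrow and is satisfied by infinitely many values of $d_1-d_2$, while (4a) confines this difference to two values; such $T$ are of course not slices, but only because they violate (S3) -- which shows that the equivalence $(2)\Leftrightarrow(4)$ does not decompose coordinatewise as you propose. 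This is precisely why the paper never proves $(2)\Rightarrow(4)$ directly: it proves $(1)\Rightarrow(4)$. The two-sided bound in (4a) comes from condition (1): since $\Gamma=\End_\Dd(T)$ has global dimension $\leq 2$, Theorem~\ref{keller} (together with Proposition~\ref{Zgrading}) applies to $\Gamma$ itself, and hence the grading on $\widetilde{\Lambda}\simeq\widetilde{\Gamma}=\bigoplus_p\Hom_\Dd(T,\SS^{-p}T)$ induced by $T$ has \emph{all} its arrows in degrees $0$ and $1$ -- both bounds at once. This use of $\gldim\End_\Dd(T)\leq 2$, rather than of the vanishing (S2), is the input your argument is missing; the cheapest repair is to prove $(2)\Rightarrow(1)\Rightarrow(4)$ instead. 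Your converse direction, telescoping the arrow inequalities along paths using that $\widetilde{\Lambda}$ is generated in degrees $0$ and $1$, is sound and is exactly the paper's proof of $(4)\Rightarrow(2)$; note, though, that carrying it out carefully forces one to fix the variance conventions (an irreducible map $P_i\to P_j$ yields the bound on $d_i-d_j$, not on $d_j-d_i$), which is exactly the bookkeeping you flagged but did not do.
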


\begin{proof}
We have $(2)\Rightarrow (3)\Rightarrow (1)$ by Proposition~\ref{2aprslice2} and by Theorem~\ref{2apr}.

Assume now that $T$ is a tilting complex whose endomorphism algebra $\Gamma=\End_{\Dd^b(\Lambda)}(T)$ is of global dimension~$\leq 2$. Then by Lemma~\ref{lambda}, $\Gamma$ is a slice in $\Dd^b(\Gamma)$. Since $T$ is a tilting complex, there is a triangle equivalence $$\xymatrix{\RHom_{\Dd^b(\Lambda)}(T,-) \colon \Dd^b(\Lambda)\ar[r]^-\sim & \Dd^b(\Gamma)}.$$ By uniqueness of the Serre functor, the functors $_\Lambda\SSS$ and $_\Gamma\SSS$ are isomorphic. Since $T$ is of the form $\bigoplus_{i=1}^n\SSS^{-d_i}P_i$, we have the following commutative square 
$$\xymatrix{\Dd^b(\Lambda)\ar[r]^\sim & \Dd(\Gamma)\\ \Uu_\Lambda\ar[r]^\sim \ar@{^(->}[u]& \Uu_\Gamma\ar@{^(->}[u]}$$
where $\Uu_\Lambda$ is the additive subcategory $\add\{\SSS^p \Lambda \mid p\in\mathbb{Z}\}$ of $\Dd^b(\Lambda)$.
Therefore $\Sigma$ is a slice in $\Dd^b(\Lambda)$ if and only if $\RHom_{\Dd^b(\Lambda)}(T,\Sigma)$ is a slice in $\Dd^b(\Gamma)$. Hence $T$ is a slice in $\Dd^b(\Lambda)$ and we have $(1)\Rightarrow (2)$.

For the last equivalence, first note that Condition~(4b) is clearly equivalent to Condition~(3) of the definition of a slice.

Let $\widetilde{\Lambda}$ be the endomorphism algebra $\End_{\Cc}(\pi \Lambda)=\bigoplus_{p\in \mathbb{Z}}\Hom_{\Dd^b(\Lambda)}(\Lambda,\SSS^{-p}\Lambda)$. This algebra is a graded algebra with positive grading generated in degrees $0$ and $1$. The arrows of its quiver $Q_{\widetilde{\Lambda}}$ have degree $0$ or $1$. Moreover we have 
$$\sharp\{i\rightarrow j \in Q_{\widetilde{\Lambda}}\}= \dim\Ext_\Lambda^1(S_j,S_i)+\dim \Ext^2_\Lambda(S_i,S_j),$$
where $S_i$ (resp.\ $S_j$) is the simple module top of $P_i$ (resp.\ $P_j$). 

Let $T$ be a tilting complex of the form $\bigoplus_{i=1}^n\SSS^{-d_i}P_i=\bigoplus_{i=1}^nT_i$ and such that its endomorphism algebra $\Gamma=\End_{\Dd^b(\Lambda)}(T)$ is of global dimension~$\leq 2$. The algebra  $$\widetilde{\Gamma}=\End_{\Cc}(\pi(T))=\bigoplus_{p\in\mathbb{Z}}\Hom_{\Dd^b(\Lambda)}(T,\SSS^{-p}T)$$ is isomorphic to $\widetilde{\Lambda}$.
By the above remark, every arrow of the graded quiver $Q_{\widetilde{\Gamma}}$ has degree $0$ or $1$.  
Assume that $\Ext^1_\Lambda(S_j, S_i)$ does not vanish. This means that there exists an irreducible map $P_i\rightarrow P_j$, therefore there exists an irreducible map $T_i\rightarrow \SSS^{-d_i+d_j}T_j$. But since every arrow of the graded quiver $Q_{\widetilde{\Gamma}}$ has degree $0$ or $1$, we have $d_i-d_j\in\{0,1\}$.
Assume now that $\Ext^2_\Lambda(S_i,S_j)$ does not vanish. This means that there exists an irreducible map $P_i\rightarrow \SSS^{-1}P_j$, therefore there exists an irreducible map $T_i\rightarrow \SSS^{-d_i+d_j-1}T_j$. Since every arrow of the graded quiver $Q_{\widetilde{\Gamma}}$ has degree $0$ or $1$, we have $d_i-d_j-1\in\{0,1\}$. Hence we have $d_j-d_i\in\{0,1\}$. Therefore we have $(1)\Rightarrow (4)$.

Now assume that we have Condition~(4) for the object $T=\bigoplus_{i=1}^n\SSS^{-d_i}P_i$. We will prove that it is a slice. We obviously have Condition~(1) and we have Condition~(3) since we have (4b). The algebra
\[ \widetilde{\Gamma}=\End_{\Cc}(\pi(T))=\bigoplus_{p\in\mathbb{Z}}\Hom_{\Dd^b(\Lambda)}(T,\SSS^{-p}T) \]
is isomorphic to $\widetilde{\Lambda}$. Condition~(4a) exactly means, as we just saw above, that the graded quiver $Q_{\widetilde{\Gamma}}$ only has arrows of degree $0$ and $1$. A non-zero morphism in $\Hom_{\Dd^b(\Lambda)}(T,\SSS^pT)$ implies that there exists a path of degree $-p$ in the graded quiver $Q_{\widetilde{\Gamma}}$. Hence $p$ must be non-positive, and we have Condition~(2) of the definition of a slice. Therefore we have $(4)\Rightarrow (2)$.
\end{proof}

\section{Left (and right) mutation in the derived category and graded quivers with potential} \label{section_left_mutation}

The aim of this section is to provide a combinatorial description of the mutation in the derived category. We first give a link between right and left mutation. Then we recall some results about quivers with potential (QP for short) and generalized cluster categories attached to them. Finally, we define the notion of mutation of a graded QP and state the main result which permits to compute explicitly the grading of the endomorphism ring of a cluster-tilting object in the cluster category.

\subsection{Relation between left mutation and right mutation}

Let $\Tt$ be a triangulated category with Serre functor $\mathbb{S}$ and $\Vv\subset\Tt$ be a cluster-tilting subcategory. Let $X\in \Vv$ be indecomposable, and set
\[ \Vv' := \add ( \ind(\Vv) \setminus \{\SSS^p X \mid p\in\ZZ\}), \]
where $\ind(\Vv)$ denotes the indecomposable objects in $\Vv$. Then by Theorem~\ref{IyamaYoshino} there exists a unique cluster-tilting subcategory $\Vv^*$ with $\Vv' \subseteq \Vv^* \neq \Vv$.
\begin{prop} \label{tirtil}
In the setup of Theorem~\ref{IyamaYoshino} (left and right exchange triangles), if for any $p \neq 0$ any map $X\rightarrow \SSS^pX$ factors through $\Vv'$, then we have $$X^R\simeq \SSS X^L.$$
\end{prop}

\begin{proof}
Let $\xymatrix{X^L[-1]\ar[r]^-f & X\ar[r]^-u & B\ar[r] & X^L}$ be the left approximation triangle. Let $g \colon X^L[-1]\rightarrow U$ be a non-zero morphism with $U$ an indecomposable object in $\Vv$. Then $U$ is not in $\Vv'$ since $\Hom(X^L[-1], \Vv') = 0$ (Theorem~\ref{IyamaYoshino}). Hence there exists a $p$ such that $U=\SSS^pX$. Since $f$ is a left $\Vv$-approximation $g$ factors through $f$. Since for $p \neq 0$ all maps $X \rightarrow \SSS^pX$ factor through $\Vv'$, hence through $u$, we have $p=0$. Therefore we have
\[ \Hom_{\Tt}(X^L[-1],\Vv)=\Hom_{\Tt}(X^L[-1],X). \]
But now we have isomorphisms in $\mod \Vv$
\[ D\Hom_{\Tt}(\Vv,\SSS X^L[1])\simeq\Hom_{\Tt}(X^L[-1],\Vv)=\Hom_{\Tt}(X^L[-1],X)\simeq D\Hom_{\Tt}(X,\SSS X^L[1]). \]
Hence the only indecomposable object in $\Vv$ admitting non-zero maps to $\SSS X^L[1]$ is $X$.

Now let
\[ \xymatrix{\SSS X^L\ar[r] & B' \ar[r] & X' \ar[r] & \SSS X^L[1]} \]
be the triangle obtained from a left $\Vv'$-approximation of $\SSS X^L$. By Theorem~\ref{IyamaYoshino} we have $X' = \SSS^q X$ for some $q \in \mathbb{Z}$. By the above observation we have $q = 0$, so $X' = X$.

Now, since the above map $\xymatrix@-.3cm{B' \ar[r] & X' = X}$ is a right $\Vv'$-approximation, this triangle is precisely the triangle defining $X^R$. So we see $X^R = \SSS X^L$.
\end{proof}

\subsection{Jacobian algebras and cluster-tilting objects}\label{subsection jacobian}

Quivers with potentials and the associated Jacobian algebras have been studied in \cite{DWZ}.
Let $Q$ be a finite quiver. For each arrow $a$ in $Q$, the \emph{cyclic derivative} $\partial_a$ with
respect to $a$ is the unique linear map
$$\partial_a \colon kQ\rightarrow kQ$$
which sends a path $p$ to the sum $\sum_{p=uav}vu$ taken over all decompositions of the 
path $p$ (where $u$ and $v$ are possibly idempotent elements $e_i$ associated to  a vertex $i$).
A \emph{potential} on $Q$ is any (possibly infinite) linear combination $W$ of cycles 
in $Q$. The associated Jacobian algebra is
$$\Jac(Q,W):=k\hat{Q}/\langle \partial_aW; a\in Q_1\rangle,$$
where $k\hat{Q}$ is the completed path algebra, that is the completion of $kQ$ with respect at the ideal generated by the arrows, and $\langle \partial_aW; a\in Q_1\rangle$ is the closure of the ideal generated by $\partial_aW$ for $a\in Q_1$.

Associated with any quiver with potential $(Q,W)$, a cluster category $\Cc_{(Q,W)}$ is constructed in \cite{Ami3}. It uses the notion of Ginzburg dg algebra. We refer the reader to \cite{Ami3} for an explicit construction. When the associated Jacobian algebra is finite dimensional, the category $\Cc_{(Q,W)}$ is $2$-Calabi-Yau and endowed with a canonical cluster-tilting object $T_{(Q,W)}$ (that is an object such that $\add (T_{(Q,W)})$ is a cluster-tilting subcategory) whose endomorphism algebra is isomorphic to $\Jac(Q,W)$. The next result gives a link between cluster categories associated with algebras of global dimension~$\leq 2$ and cluster categories associated with QP.

\begin{thma}[{\cite[Theorem 6.11 \textit{a)}]{Kel10}}] \label{keller}
Let $\Lambda=kQ/I$ be a $\tau_2$-finite algebra of global dimension~$\leq 2$, such that $I$
is generated by a finite minimal set of relations $\{ r_i \}$. (By this we mean that the set $\{r_i\}$ is the disjoint union of sets representing a basis of the $\Ext_{\Lambda}^2$-space between any two simple $\Lambda$-modules.) The relation
$r_i$ starts at the vertex $s(r_i)$ and ends at the vertex
$t(r_i)$. 
Then there is a triangle equivalence:
$$ \Cc_\Lambda \simeq \Cc_{(\widetilde{Q},W)},$$ where the quiver $\widetilde{Q}$ is the quiver $Q$ with additional
arrows $a_i \colon t(r_i)\rightarrow s(r_i)$, and the potential $W$ is
$\sum_i a_ir_i$. This equivalence sends the cluster-tilting object $\pi(\Lambda)$ on the cluster-tilting object $T_{(\widetilde{Q},W)}$.

As a consequence we have an isomorphism of algebras:
$$\End_\Cc(\pi \Lambda) \simeq \Jac(\widetilde{Q},W).$$
\end{thma}

Here is an immediate observation which introduces the natural grading of $\End_\Cc(\pi \Lambda)$ to Theorem~\ref{keller}. It gives a motivation for introducing the left graded-mutation of a quiver.

\begin{prop}\label{Zgrading}
Let $\Lambda=kQ/I$ be a $\tau_2$-finite algebra of global dimension~$\leq 2$. Denote by $(\widetilde{Q}, W)$ the quiver with potential defined in Theorem~\ref{keller}. 
Then there exists a unique $\ZZ$-grading on $\widetilde{Q}$ such that
\begin{enumerate}
\item the potential $W$ is homogeneous of degree $1$;
\item there is an isomorphism of quivers $\xymatrix{\widetilde{Q}^{\{0\}}\ar[r]^-\sim & Q}$, where $\widetilde{Q}^{\{0\}}$ is the subquiver of $\widetilde{Q}$ of arrows of degree $0$.
\end{enumerate}
This grading on $\widetilde{Q}$ yields a grading on $\Jac(\widetilde{Q},W)$ and we have an isomorphism of $\ZZ$-graded algebras
$$\xymatrix{\Jac(\widetilde{Q},W)\ar[r]^-\sim_-{\ZZ} &\bigoplus_{p\in\ZZ}\Hom_{\Dd^\ZZ}(\Lambda,\SSS^{-p}\Lambda)}.$$
\end{prop}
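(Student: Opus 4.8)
The plan is to first pin down the grading on $\widetilde{Q}$ and check that it descends to the Jacobian algebra, and then to verify that Keller's isomorphism from Theorem~\ref{keller} is homogeneous for this grading and the natural grading on $\widetilde{\Lambda}=\bigoplus_{p\geq 0}\Hom_{\Dd^b\Lambda}(\Lambda,\SS^{-p}\Lambda)$, where the degree $p$ part is $\Hom_{\Dd^b\Lambda}(\Lambda,\SS^{-p}\Lambda)$. (Note this grading is positive, since $\Hom_{\Dd^b\Lambda}(\Lambda,\SS^{-p}\Lambda)=0$ for $p<0$ because $\Lambda$ has global dimension $\leq 2$; this is what reconciles the indexing set $\ZZ$ on the right-hand side with the earlier formula.)

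For existence and uniqueness of the grading on $\widetilde{Q}$, I would argue as follows. Condition~(2) forces every arrow of $Q\subseteq\widetilde{Q}$ to have degree $0$. Since each minimal relation $r_i$ is a linear combination of paths in $Q$, it is then homogeneous of degree $0$, so the cyclic term $a_ir_i$ has degree $\deg a_i$. Requiring $W=\sum_i a_ir_i$ to be homogeneous of degree $1$ therefore forces $\deg a_i=1$ for every $i$. Conversely, assigning degree $0$ to the arrows of $Q$ and degree $1$ to each $a_i$ manifestly satisfies both (1) and (2), which proves existence and uniqueness. Next I would check that this grading descends to $\Jac(\widetilde{Q},W)$, i.e.\ that the Jacobian ideal is homogeneous. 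Since $W$ is homogeneous of degree $1$, the cyclic derivative $\partial_aW$ is homogeneous of degree $1-\deg a$ for each arrow $a$: concretely $\partial_{a_i}W=r_i$ is homogeneous of degree $0$, while for an arrow $a$ of $Q$ the element $\partial_aW$ is homogeneous of degree $1$. Hence all the generators $\partial_aW$ are homogeneous, the ideal is graded, and the grading passes to $\Jac(\widetilde{Q},W)$.

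The main step is to show that the algebra isomorphism $\Phi\colon\Jac(\widetilde{Q},W)\to\widetilde{\Lambda}$ of Theorem~\ref{keller} is a graded isomorphism. By construction $\Phi$ identifies the quiver $\widetilde{Q}$ with the quiver $Q_{\widetilde{\Lambda}}$ of $\widetilde{\Lambda}$ and sends vertex idempotents to vertex idempotents; since both algebras are generated over their (degree $0$) vertices by their arrows, it suffices to check that $\Phi$ sends each arrow of $\widetilde{Q}$ to a homogeneous element of the same degree. Here I would invoke the description of $Q_{\widetilde{\Lambda}}$ recorded in the proof of Theorem~\ref{characterizationinverseimage}: the arrows $i\to j$ of degree $0$ are exactly those accounting for $\Ext^1_\Lambda(S_j,S_i)$, namely the arrows of $Q$, realized inside the degree-$0$ part $\Hom_{\Dd^b\Lambda}(\Lambda,\Lambda)=\Lambda$; and the arrows of degree $1$ are exactly those accounting for $\Ext^2_\Lambda(S_i,S_j)$, namely the ones indexed by the minimal relations, realized inside the degree-$1$ part $\Hom_{\Dd^b\Lambda}(\Lambda,\SS^{-1}\Lambda)$. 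Under $\Phi$ the arrows of $Q$ then land in degree $0$ and the relation-arrows $a_i$ land in degree $1$, matching the grading defined on $\widetilde{Q}$, so $\Phi$ is an isomorphism of $\ZZ$-graded algebras.

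The step I expect to be the main obstacle is precisely this last matching: identifying the \emph{homological} grading on $\widetilde{\Lambda}$ (by the power of $\SS$) with the \emph{combinatorial} grading on $\widetilde{Q}$ (degree $1$ on the relation-arrows). The essential input is that the arrows carrying degree $1$ correspond to $\Ext^2_\Lambda$ and sit in $\Hom_{\Dd^b\Lambda}(\Lambda,\SS^{-1}\Lambda)$, which is exactly what the quiver analysis in the proof of Theorem~\ref{characterizationinverseimage} supplies; once this identification is in hand, the gradedness of $\Phi$ is formal, as gradings on a basic algebra are determined by the degrees assigned to arrows.
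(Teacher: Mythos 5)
Your proposal is correct and takes essentially the same approach as the paper: the paper's own proof is just the one-line construction (degree $0$ on the arrows of $Q$, degree $1$ on the relation-arrows $a_i$), and all the verifications you supply are the intended ones. In fact they coincide with the argument the paper writes out in detail for the $\ZZ^2$-graded analogue (Proposition~\ref{Z2grading}): degrees forced by conditions (1) and (2), and gradedness of Keller's isomorphism checked on generators via the identification of the arrows of $Q$ with elements of the degree-$0$ part $\Lambda$ and of the relation-arrows with elements of $\Ext^2$ sitting inside the degree-$1$ part $\Hom_{\Dd^b\Lambda}(\Lambda,\SS^{-1}\Lambda)$.
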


\begin{proof}
This is achieved by giving the arrows in $Q_1$ degree zero, and the arrows in $\widetilde{Q}_1 \setminus Q_1$ (that is the arrows corresponding to minimal relations) degree one.
\end{proof}

\subsection{Left (and right) mutation of a graded quiver with potential}

Extending Fomin and Zelevinsky mutations of quivers \cite{FZ1}, Derksen, Weyman, and Zelevinsky have introduced the notion of mutation of quivers with potential in \cite{DWZ}. We adapt this notion to $G$-graded quivers with  potential homogeneous of degree $r\in G$. In the following subsections of this section we will use this definition for $G=\ZZ$ and $r=1$, and in Section~\ref{section_gr_der_eq} for $G=\ZZ^2$ and $r=(1,1)$.

\begin{dfa}
Let $(Q,W,d)$ be a $G$-graded quiver with potential homogeneous
  of degree $r$ ($G$-graded QP for short). Let $i\in Q_0$ be a vertex, such that there are neither loops
  nor 2-cycles incident to $i$. We define $\tilde{\mu}_i^L(Q,W,d)=(Q',W',d')$. The quiver $Q'$ is defined as in
  \cite{DWZ} as follows
\begin{itemize}
\item for any subquiver $\xymatrix{u\ar[r]^a &i\ar[r]^b & v}$ with
  $i$, $u$ and $v$ pairwise different vertices, we add an arrow $[ba] \colon u\rightarrow v$;
\item we replace all arrows $a$ incident with $i$ by an arrow $a^*$ in
  the opposite direction.
\end{itemize}

The potential $W'$ is also defined as in \cite{DWZ} by the sum $[W]+W^*$ where $[W]$ is formed
from the
potential $W$ replacing all compositions $ba$ through the vertex $i$ by the new arrows $[ba]$, and where $W^*$ is the sum $\sum b^*a^*[ba]$.

The new degree $d'$ is defined as follows:

\begin{itemize}
\item $d'(a)=d(a)$ if $a$ is an arrow of $Q$ and $Q'$;
\item $d'([ba])=d(b)+d(a)$ if $ba$ is a composition passing through
  $i$;
\item $d'(a^*)=-d(a)+r$ if the target of $a$ is $i$;
\item $d'(b^*)=-d(b)$ if the source of $b$ is $i$.
\end{itemize} 
\end{dfa}

Similarly, we can define $\tilde{\mu}_i^R(Q,W,d)=(Q',W',d')$ by setting
$d'(a^*)=-d(a)$ for arrows $a$ such that $t(a)=i$, and
$d'(b^*)=-d(b)+r$ for arrows $b$ with $s(b)=i$.

As in \cite{DWZ}, it is possible to define trivial and reduced graded quivers with potential. A $G$-graded QP $(Q,W,d)$ is \emph{trivial} if the potential $W$ is in the space $kQ_2$ spanned by paths of length 2, and if 
the Jacobian algebra $\Jac(Q,W,d)$ is isomorphic to the semisimple algebra $kQ_0$. A $G$-graded QP $(Q,W,d)$ is \emph{reduced} if $W\cap kQ_2$ is zero.

\begin{dfa}
Two $G$-graded QP $(Q,W,d)$ and $(Q',W',d')$ are \emph{graded right equivalent} if there exists an isomorphism of $G$-graded algebras $\xymatrix{\varphi \colon (kQ,d)\ar[r]^\sim_{G} & (kQ',d')}$ such that $\varphi|_{kQ_0} = {\rm id}$ and such that $\varphi(W)$ is cyclically equivalent to $W'$ in the sense of \cite{DWZ}.
\end{dfa}

Then Splitting Theorem of \cite{DWZ} still holds in the graded case. Indeed the right equivalence constructed in the proof of \cite[Lemma~4.8]{DWZ}, is graded.

\begin{thma}[{Graded Splitting Theorem -- compare \cite[Theorem~4.6]{DWZ}}]
Let $(Q,W,d)$ be a $G$-graded QP. Then there exists a (unique up to graded right equivalence) reduced graded QP $(Q^{\rm red},W^{\rm red},d^{\rm red})$ and a (unique up to graded right equivalence) trivial QP $(Q^{\rm triv},W^{\rm triv},d^{\rm triv})$ such that $(Q,W,d)$ is graded right equivalent to the direct sum $$(Q^{\rm red},W^{\rm red},d^{\rm red})\oplus(Q^{\rm triv},W^{\rm triv},d^{\rm triv}).$$
(The direct sum means that the vertices of both summands and the sum coincide, and the arrows of the sum are the disjoint union if the arrows of the summands.)
\end{thma}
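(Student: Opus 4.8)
The plan is to follow the proof of the ungraded Splitting Theorem \cite[Theorem~4.6]{DWZ} line by line, and to verify at each stage that all the right equivalences it produces can be taken homogeneous for $d$. The starting observation is that $kQ$ is bigraded, by path length and by the $G$-grading $d$, and that these two gradings are compatible: the length-$n$ component of a $d$-homogeneous element is again $d$-homogeneous. Since $W$ is homogeneous of degree $r$ for $d$, it follows that each length-homogeneous component of $W$ is again of $d$-degree $r$; in particular the quadratic part $W^{(2)}$ (the component of $W$ lying in $kQ_2$) and the higher part $W^{(\geq 3)}$ are both homogeneous of degree $r$.

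First I would bring $W^{(2)}$ into normal form by a \emph{graded} linear change of variables. The quadratic part defines a pairing between the arrows $a\colon i\to j$ and the arrows $b\colon j\to i$, and because $W^{(2)}$ is homogeneous of degree $r$ this pairing only couples arrows with $d(a)+d(b)=r$. Hence the diagonalization of this pairing can be carried out inside each graded piece separately, producing a graded algebra automorphism $\varphi_0$ of $(kQ,d)$ fixing the vertices with $\varphi_0(W)^{(2)}=\sum_k x_k y_k$, where the conjugate arrows satisfy $d(x_k)+d(y_k)=r$. The arrows appearing in this normal form span the trivial part; the remaining arrows span the reduced part.

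The heart of the argument is the iterative construction of \cite[Lemma~4.8]{DWZ}, which successively removes from $W^{(\geq 3)}$ all terms involving the trivial arrows $x_k,y_k$. At each step one applies an automorphism fixing the reduced arrows and replacing a trivial arrow, say $y_k$, by $y_k$ plus a correction built from the cyclic derivative $\partial_{x_k}W$. The key point is that this correction is automatically of the correct degree: any path $p$ occurring in $\partial_{x_k}W$ comes from a cycle $u x_k v$ of $W$ with $p=vu$, so $d(p)=r-d(x_k)=d(y_k)$ by homogeneity of $W$. Thus $y_k$ is replaced by an element of the same $d$-degree, so every automorphism in the sequence is a graded automorphism of $(kQ,d)$, as is its formal limit. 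This produces a graded right equivalence carrying $(Q,W,d)$ to the direct sum $(Q^{\rm red},W^{\rm red},d^{\rm red})\oplus(Q^{\rm triv},W^{\rm triv},d^{\rm triv})$, where $d^{\rm red}$ and $d^{\rm triv}$ are inherited from the transformed degree function.

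For uniqueness I would rerun the corresponding DWZ argument: a right equivalence intertwining two such splittings is again assembled from substitutions of the above form, so by the same degree-matching it restricts to graded right equivalences of the trivial and reduced summands. The main obstacle is purely a matter of bookkeeping, namely checking that \emph{every} substitution in \cite[Lemma~4.8]{DWZ}, and not merely the first, replaces an arrow by an expression of matching $d$-degree. This reduces to verifying that the intermediate potentials produced along the way remain homogeneous of degree $r$, which follows inductively from the very degree-matching of the substitutions; once this invariant is established, the whole DWZ machinery transfers without change.
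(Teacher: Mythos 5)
Your proposal is correct and takes essentially the same approach as the paper: the paper's entire proof consists of the remark that the right equivalence constructed in the proof of \cite[Lemma~4.8]{DWZ} is graded, and your degree computation (the correction terms in $\partial_{x_k}W$ have degree $r-d(x_k)=d(y_k)$ by homogeneity of $W$, so every substitution is a graded automorphism) is precisely the verification behind that remark. If anything, your write-up is more detailed than the paper's, since it also spells out the graded normal form for the quadratic part and the transfer of the uniqueness argument.
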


Therefore, we can deduce the following.

\begin{prop}
Let $(Q,W,d)$ be a $G$-graded quiver with potential homogeneous of degree
$r$ and let $i$ be vertex of $Q$ without incident loops or
$2$-cycles. Then the reduction $(Q'^{\rm red},W'^{\rm red}, d')$ in the sense of \cite{DWZ} of  $\tilde{\mu}_i^L(Q,W,d)=(Q',W',d')$ has potential homogeneous of degree $r$ with
respect to the grading $d'$.
\end{prop}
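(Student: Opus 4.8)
The plan is to track the degrees through the explicit formulas in the definition of left mutation, and then to verify that the reduction step preserves homogeneity. First I would observe that the key claim is really about the \emph{potential}: I must show that both $[W]$ and $W^*$ are homogeneous of degree $r$ with respect to the new grading $d'$, and that homogeneity survives passing to $(Q'^{\mathrm{red}}, W'^{\mathrm{red}}, d')$ via the Graded Splitting Theorem. The degree assignment on arrows is already fixed by the definition, so the content is to check that the potential's terms all land in degree $r$.

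For $[W]$, each cycle $c$ appearing in $W$ has degree $r$ by hypothesis. When we replace a composition $ba$ through $i$ by the new arrow $[ba]$, we have set $d'([ba]) = d(b) + d(a)$, so the total degree of the rewritten cycle is unchanged; all other arrows keep their degrees $d'(a) = d(a)$. Hence every term of $[W]$ remains homogeneous of degree $r$. For $W^* = \sum b^* a^* [ba]$, I would compute the degree of a typical summand: writing $a \colon u \to i$ and $b \colon i \to v$ so that $[ba] \colon u \to v$, the definition gives $d'(a^*) = -d(a) + r$ (since $t(a) = i$), $d'(b^*) = -d(b)$ (since $s(b) = i$), and $d'([ba]) = d(a) + d(b)$. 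Summing,
\[
d'(b^*) + d'(a^*) + d'([ba]) = -d(b) + \bigl(-d(a) + r\bigr) + \bigl(d(a) + d(b)\bigr) = r,
\]
so each term of $W^*$ is homogeneous of degree $r$ as well. Therefore $W' = [W] + W^*$ is homogeneous of degree $r$ with respect to $d'$. (For right mutation the roles of $a^*$ and $b^*$ are swapped in carrying the shift by $r$, and the same cancellation occurs.)

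Finally I would invoke the Graded Splitting Theorem to handle the reduction. That theorem decomposes $(Q', W', d')$ up to graded right equivalence as a direct sum of a reduced graded QP and a trivial graded QP, and a graded right equivalence is by definition an isomorphism of $G$-graded algebras fixing the vertices and sending $W'$ to something cyclically equivalent to $W'^{\mathrm{red}} \oplus W'^{\mathrm{triv}}$. Since such an isomorphism is degree-preserving and cyclic equivalence preserves the degree of each term, the homogeneity of $W'$ of degree $r$ is inherited by $W'^{\mathrm{red}}$. Thus the reduction $(Q'^{\mathrm{red}}, W'^{\mathrm{red}}, d')$ has potential homogeneous of degree $r$.

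The only genuinely delicate point is the last step: one must be sure that the right equivalence produced by the Splitting Theorem is truly graded (so that it cannot mix degrees and thereby destroy homogeneity). I expect this to be the main obstacle, but it has already been secured by the remark preceding the Graded Splitting Theorem, namely that the right equivalence built in the proof of \cite[Lemma~4.8]{DWZ} respects the grading. With that in hand the degree computations above complete the argument.
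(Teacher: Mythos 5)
Your proposal is correct and follows essentially the same route as the paper: the paper's proof consists precisely of the "easy computation" that $W'=[W]+W^*$ is homogeneous of degree $r$ (which you carry out explicitly, with the correct cancellation $-d(b)+(-d(a)+r)+(d(a)+d(b))=r$) followed by an appeal to the Graded Splitting Theorem, whose graded right equivalence preserves homogeneity under reduction. The only difference is that you spell out the degree bookkeeping that the paper leaves implicit.
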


\begin{proof}
It is an easy computation to check that the potential $W'$ is
homogeneous of degree $r$ with respect to $d'$. Then the graded splitting theorem implies that the reduction process does not change the homogeneity of the potential. 
\end{proof}
\begin{dfa} The Graded Splitting Theorem and the above proposition allows us to defined \emph{the left mutation at vertex $i$} $\mu_i^L(Q,W,d)$ of a $G$-graded quiver with potential $(Q,W,d)$ as the reduction of the graded quiver with potential $\tilde{\mu}_i^L(Q,W,d)$. Similarly we define \emph{the right mutation at vertex $i$} $\mu_i^R(Q,W,d)$ of a graded QP $(Q,W,d)$ as the reduction of the graded quiver with potential $\tilde{\mu}_i^R(Q,W,d)$.
\end{dfa}

One immediately checks the following.

\begin{lema}\label{lemma left=right inverse}
Let $(Q,W,d)$ be a $G$-graded QP with potential homogeneous of degree $r$, and let $i$ be a vertex without incident loops or $2$-cycles. Then the $G$-graded QP $(Q,W,d)$, $\mu_i^L\mu_i^R(Q,W,d)$ and $\mu_i^R\mu_i^L(Q,W,d)$ are graded right equivalent.
\end{lema}

The following lemma gives a direct link between left mutation and right mutation of a graded quiver.
\begin{lema}
Let $(Q,W,d)$ be a $G$-graded quiver with potential homogeneous of degree
$r$ and let $i$ be vertex of $Q$ without incident loops or
$2$-cycles. Then there is a graded equivalence
$$\mod \Jac(\mu_i^L(Q,W,d))\rightarrow \mod \Jac(\mu_i^R(Q,W,d)).$$
\end{lema}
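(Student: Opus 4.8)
The plan is to observe first that the \emph{left} and \emph{right} mutations $\mu_i^L(Q,W,d)=(Q',W',d'_L)$ and $\mu_i^R(Q,W,d)=(Q',W',d'_R)$ share the \emph{same} underlying quiver $Q'$ and the \emph{same} potential $W'=[W]+W^*$; they differ \emph{only} in the degree function. Consequently $\Jac(\mu_i^L(Q,W,d))$ and $\Jac(\mu_i^R(Q,W,d))$ are one and the same underlying algebra $A:=\Jac(Q',W')$, equipped with two $G$-gradings $d'_L$ and $d'_R$. So the task reduces to showing that these two gradings on $A$ are equivalent in the sense of Theorem~\ref{gordongreen}. The key claim is that $d'_R$ is obtained from $d'_L$ by a \emph{gauge transformation} supported at the mutated vertex: define $h\colon Q'_0\rightarrow G$ by $h(i)=r$ and $h(j)=0$ for $j\neq i$, and set $(\delta h)(\alpha)=h(t(\alpha))-h(s(\alpha))$ for each arrow $\alpha$. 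I claim
\[ d'_R(\alpha)=d'_L(\alpha)+(\delta h)(\alpha) \qquad \text{for every arrow } \alpha \text{ of } Q'. \]

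To verify this claim I would go through the arrow types occurring in the mutation, which is the computational heart of the argument. Arrows of $Q'$ not incident to $i$ (including the new arrows $[ba]\colon u\rightarrow v$, whose degree is $d(a)+d(b)$ in \emph{both} mutations) keep the same degree in $d'_L$ and $d'_R$, and $(\delta h)$ vanishes on them since neither endpoint is $i$. For a reversed arrow $a^*\colon i\rightarrow u$ (coming from $a\colon u\rightarrow i$) we have $d'_R(a^*)=-d(a)$ while $d'_L(a^*)=-d(a)+r$, and $(\delta h)(a^*)=h(u)-h(i)=-r$, so the identity holds; dually for a reversed arrow $b^*\colon v\rightarrow i$ (from $b\colon i\rightarrow v$) we have $d'_R(b^*)=-d(b)+r=d'_L(b^*)+r$ with $(\delta h)(b^*)=h(i)-h(v)=r$. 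This confirms the coboundary relation. One should also record that $W'$ is homogeneous of degree $r$ for \emph{both} gradings (for $W^*=\sum b^*a^*[ba]$ this is the direct check $-d(b)+(-d(a)+r)+(d(a)+d(b))=r=(-d(b)+r)+(-d(a))+(d(a)+d(b))$), so in each grading the Jacobian ideal $\langle \partial_\alpha W'\rangle$ is homogeneous and the coboundary relation descends from $kQ'$ to the quotient $A$.

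Finally, I would translate the coboundary relation into the equivalence. Since $d'_R=d'_L+\delta h$, a path from $j$ to $k$ has its $d'_R$-degree differing from its $d'_L$-degree exactly by $h(k)-h(j)$; this telescoping is precisely what is needed to meet condition~(2) of Theorem~\ref{gordongreen}. Concretely, taking the shifts to be prescribed by $h$ (so $P_i$ is shifted by $r$ and all other indecomposable projectives by $0$), one checks that the degree-$p$ component of $\bigoplus_{p\in G}\Hom_{\cov{(A,d'_L)}{G}}(\bigoplus_j P_j\langle r_j\rangle,\bigoplus_j P_j\langle r_j+p\rangle)$ is identified, block by block between vertices $j$ and $k$, with the degree-$p$ part of $A$ for the grading $d'_R$. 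Thus $(A,d'_R)$ is graded-isomorphic to this endomorphism algebra, so by Theorem~\ref{gordongreen} the two gradings are equivalent and we obtain a commuting equivalence $\mod\cov{(A,d'_L)}{G}\overset{\sim}{\rightarrow}\mod\cov{(A,d'_R)}{G}$, i.e.\ the desired graded equivalence $\mod\Jac(\mu_i^L(Q,W,d))\rightarrow\mod\Jac(\mu_i^R(Q,W,d))$. The only genuine obstacle is the case-by-case verification of the coboundary identity together with its descent to the Jacobian quotient; once that is in place, the conclusion is a direct application of Gordon–Green.
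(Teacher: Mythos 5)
Your proposal is correct and takes essentially the same route as the paper: the paper likewise notes that the two Jacobian algebras coincide as ungraded algebras and then applies Gordon--Green (Theorem~\ref{gordongreen}) to the graded endomorphism algebra of $\bigl(\bigoplus_{j\neq i}P_j\bigr)\oplus P_i\langle -r\rangle$ over $\Jac(\mu_i^L(Q,W,d))$, and your coboundary verification $d'_R=d'_L+\delta h$ is precisely the ``one can check'' step left implicit there. Only mind the shift convention: with the paper's definition $M\langle q\rangle^p=M^{p+q}$, the shift of the projective at the vertex $i$ that realizes your gauge function $h$ is $\langle -r\rangle$ rather than $\langle r\rangle$ (a harmless sign, since graded equivalence is symmetric, but worth stating consistently).
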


\begin{proof}
The algebras $\Jac(\mu_i^L(Q,W,d))$ and $\Jac(\mu_i^R(Q,W,d))$ are isomorphic.
Let us decompose the Jacobian algebra $\Jac(\mu_i^L(Q,W,d))$ as a direct sum of graded projective $\Jac(\mu_i^L(Q,W,d))$-modules.
\[ \Jac(\mu_i^L(Q,W,d))= \bigoplus_{j\in Q_0}P_j.\]
One can check that the graded endomorphism algebra of
\[ (\bigoplus_{j\in Q_0, j\neq i}P_j ) \oplus P_i\langle -r\rangle \]
is then isomorphic as graded algebra to $\Jac(\mu_i^R(Q,W,d))$.
\end{proof}

\subsection{Mutation and generalized cluster categories associated with QP}

Let $(Q,W)$ be a quiver with potential, such that the Jacobian algebra $\Jac(Q,W)$ is finite dimensional, and $T_{(Q,W)}\in\Cc_{(Q,W)}$ be the canonical cluster-tilting object of the generalized cluster category $\Cc_{(Q,W)}$. There is a canonical bijection between the vertices of $Q$ and the indecomposable objects of $T_{(Q,W)}\simeq T_i\oplus T'$. Let $i$ be in $Q_0$, $T_i$ be the corresponding summand of $T_{(Q,W)}$, and 
\[\xymatrix{T_i\ar[r] & B\ar[r] & T_i^L\ar[r] & T_i[1]} \quad \textrm{and }\xymatrix{T_i^R\ar[r] & B'\ar[r] & T_i\ar[r] & T_i^R[1]}\] be the approximation triangles as defined in Theorem~\ref{IyamaYoshino}. Then, we have $T_i^R\simeq T_i^L$ since the category $\Cc_{(Q,W)}$ is $2$-Calabi-Yau. We denote by $\mu_{i}(T_{(Q,W)})$ the new cluster-tilting object $T'\oplus T_i^L$. The following fundamental result links the \cite{DWZ}-mutation of QP and the \cite{IY}-mutation of the cluster-tilting object $T_{(Q,W)}$. 

\begin{thma}[Keller-Yang \cite{KY}] \label{kelleryang}
Let $(Q,W)$ be a quiver with potential whose Jacobian algebra is finite dimensional, and $i\in Q_0$ a vertex such that there is no 2-cycles nor loops at vertex $i$ in $Q$. Then there exists a triangle equivalence $$\Cc_{\mu_i(Q,W)}\simeq \Cc_{(Q,W)}$$ sending the cluster-tilting object $T_{\mu_i(Q,W)}\in \Cc_{\mu_i(Q,W)}$ on the cluster-tilting object $\mu_{i}(T_{(Q,W)})\in \Cc_{(Q,W)}$, where $T_i$ is the indecomposable summand of $T_{(Q,W)}$ associated with the vertex $i$ of $Q$, and where $\mu_i(Q,W)$ is the mutation of the quiver with potential $(Q,W)$ at vertex $i$.
\end{thma}

This triangle equivalence is compatible with vertices in the following sense: the bijection between the vertices of $Q$ and the indecomposable direct summands of $T_{(Q,W)}$ together with this equivalence induce a bijection between the vertices of $\tilde{\mu}_i(Q,W)$ and the indecomposable summands of $T_{\mu_i(Q,W)}$, which is the canonical one.
\medskip

 The compatibility between \cite{DWZ}-mutation of quivers with potential and \cite{IY}-mutation of cluster tilting objects, given by Theorem~\ref{kelleryang}, can be understood more precisely in the following way:

Let $T_{(Q,W)} \simeq T_i\oplus T'$ as above. If the 
 map $T_i\to B$ is a minimal left $(\add T')$-approximation, then $B$ is isomorphic to $\bigoplus_{j, a \colon i\to j}T_j$. So for any arrow $a \colon i \to j$ in $Q$, there is a non-zero map $T^j \to T_i^L$. This map corresponds to the new arrow $a^* \colon j \to i$ in $\tilde{\mu}_i(Q,W)$. In the same way, for any arrow $b \colon j \to i$ in $Q$, there is a non-zero map $T_i^R \simeq T_i^L \to T_j$ which corresponds to the new arrow $b^*$ in $\tilde{\mu}_i(Q,W)$. Furthermore, arrows $[ba] \colon j \to \ell$ in $\mu_i(Q)$, where $a \colon j\to i$ and $b \colon i\to \ell$ are in $Q$, correspond to the composition of the maps associated to $b$ and $a$.

\subsection{Relation between mutation of graded QP and mutation in the derived category}

Let $\Lambda$ be a $\tau_2$-finite algebra of global dimension~$\leq 2$. Let $T \simeq T_1 \oplus \ldots \oplus T_n$ be an object in $\Dd^b(\Lambda)$ such that $\pi(T)$ is a (basic) cluster-tilting object in $\Cc_\Lambda$. By Proposition~\ref{preimage}, the category $\Uu_T = \add\{\SSS^pT \mid p\in \ZZ\}=\pi^{-1}(\pi(T))$ is a cluster-tilting subcategory of $\Dd^b(\Lambda)$. 

Let $i \in \{1, \ldots, n\}$, and $T' = \oplus_{j \neq i} T_j$.
We denote by $\Uu_i$ the additive subcategory $\add\{\SSS^p T' \mid p\in \mathbb{Z}\}$ of $\Dd^b(\Lambda)$. Consider the left exchange triangle associated to $T_i$ (defined in Theorem~\ref{IyamaYoshino}) in $\Dd^b(\Lambda)$
$$\xymatrix{T_i\ar[r]^u & B\ar[r]^v & T_i^L\ar[r]^w & T_i[1]}$$
where $u \colon T_i\rightarrow B$ is a minimal left $\Uu_i$-approximation of $T_i$. We denote by $\mu_i^L(T)$ the object $T'\oplus T_i^L\in\Dd^b(\Lambda)$. Then, by Theorem~\ref{IyamaYoshino}, the category $\add\{\SSS^p (\mu_i^L(T)) \mid p\in \ZZ\}$ is a cluster-tilting subcategory of $\Dd^b(\Lambda)$, and by Proposition~\ref{prop_image_ct_again} the object $\pi(\mu_i^L(T))$ is cluster-tilting in $\Cc_\Lambda$.

The endomorphism  algebras of $\pi(T)$ and $\pi(\mu_i^L(T))$ are naturally $\ZZ$-graded since we have 
$$\End_\Cc(\pi(T))\simeq \bigoplus_{p\in\ZZ}\Hom_{\Dd}(T,\SSS^pT)\textrm{ and } \End_\Cc(\pi(\mu_i^L(T)))\simeq \bigoplus_{p\in \mathbb{Z}}\Hom_\Dd(\mu_i^L(T),\SSS^{-p}(\mu_i^L(T))).$$  
  
 The following theorem links the gradings of $\End_{\Cc}(\pi (\Lambda))$ and $\End_\Cc(\pi(T))$, when $T$ is obtained from $\Lambda$ by iterated mutations and is mainly a consequence of Theorem~\ref{kelleryang}.

\begin{thma}\label{Zgradedbirs}
Let $\Lambda=kQ/I$ be a $\tau_2$-finite algebra of global dimension~$\leq 2$, and denote by $(\widetilde{Q},W,d)$ the graded QP defined in Proposition~\ref{Zgrading}. Assume that there exists a sequence $i_1,i_2,\ldots,i_l$ of vertices of $\widetilde{Q}$ such that for any $j=0,\ldots,l$ there is no $2$-cycle on the vertex $i_{j+1}$ in the quiver $Q^{j}$ where $(Q^j,W^j):=\mu_{i_{j}}\circ\cdots\circ \mu_{i_1}(\widetilde{Q},W)$. Denote by $T$ the object in $\Dd^b(\Lambda)$ defined by $T:=\mu^L_{i_l}\circ\cdots\circ\mu_{i_1}^L(\Lambda)$.

Then there is an isomorphism of $\ZZ$-graded algebras
$$\xymatrix{\bigoplus_{p\in\mathbb{Z}}\Hom_{\Dd}(T,\SSS^{-p}(T))\ar[r]^-\sim_-{\ZZ}& \Jac(\mu_{i_l}^L\circ\cdots\circ\mu_{i_1}^L(\widetilde{Q},W,d)).}$$
\end{thma}

\begin{proof}
For $j=1,\ldots,l$ denote by $T^{j}$ the object $\mu^L_{i_{j}}\circ\cdots\circ\mu_{i_1}^L(\Lambda)$, and by $(Q^{j},W^{j},d^{j})$ the graded QP $\mu_{i_{j}}^L\circ\cdots\circ\mu_{i_1}^L(\widetilde{Q},W,d)$. We put $T^0:=\Lambda$ and $(Q^0,W^0,d^0):=(\widetilde{Q},W,d)$. The object $\pi(T^{j})$ is a cluster-tilting object of the cluster category $\Cc_\Lambda$. By Theorem \ref{keller}, there is a triangle equivalence $\Cc_\Lambda\simeq \Cc_{(Q^0,W^0)}$ sending the cluster-tilting $\pi(\Lambda)$ on the cluster-tilting object $T_{(Q^0,W^0)}\in\Cc_{(Q^0,W^0)}$. 

Now note that the quiver $Q^0$ does not have loops. Indeed, since $\Lambda$ is of finite global dimension, its Gabriel quiver does not contain any loop \cite{Len,Igu}. Moreover since $\widetilde{\Lambda}$ is finite dimensional, it is easy to see that there is no loop of degree 1 in the quiver $Q^0$. Furthermore, if a QP does not contain any loops, after one mutation at a vertex not on a $2$-cycle, it still does not contain any loops. Therefore by an immediate induction, one checks that $(Q^j,W^j,d^j)$ does not have loops for any $j=0,\ldots, l$.

Hence we can apply iteratively Theorem~\ref{kelleryang} and we obtain an equivalence of triangulated categories $$\Cc_{(Q^0,W^0)}\simeq \Cc_{(Q^j,W^j)}$$
sending $\mu_{i_j}\circ\cdots \circ \mu_{i_1}(T_{(Q^0,W^0)})$ onto $T_{(Q^j,W^j)}$. Hence we obtain an isomorphism of algebras
$$\xymatrix{\bigoplus_{p\in\mathbb{Z}}\Hom_{\Dd}(T^j,\SSS^{-p}(T^j))\ar[r]^-\sim& \Jac(Q^{j},W^{j}).}$$
The only thing to check is that this isomorphism preserves the grading. 
We proceed by induction on the number of mutations $j$. For $j=0$, we have 
$$\bigoplus_{p\in\mathbb{Z}}\Hom_{\Dd}(T^0,\SSS^{-p}(T^0))\underset{\mathbb{Z}}{\simeq} \bigoplus_{p\in\mathbb{Z}}\Hom_\Dd(\Lambda,\SSS^{-p}(\Lambda)) \underset{\mathbb{Z}}{\simeq} \Jac(Q^{0},W^{0},d^0)$$ by Proposition~\ref{Zgrading}. So assume the result for $j\geq 0$, we have by induction hypothesis $$\xymatrix{\bigoplus_{p\in\mathbb{Z}}\Hom_{\Dd}(T^{j},\SSS^{-p}(T^{j}))\ar[r]^-\sim_-{\ZZ}& \Jac(Q^{j},W^{j},d^{j}).}$$

Let $(T^j)'$ be the direct sum of the summands of $T^j$ not corresponding to the vertex $i_{j+1}$. Then the left exchange triangle is of the form
\[ T_{i_{j+1}}^{j} \to \bigoplus_{a \colon i_{j+1} \to s} \SSS^{-d(a)} T^{j}_s \to (T_{i_{j+1}}^{j})^L \to T_{i_{j+1}}^{j}[1], \]
where the sum in the second term runs over all arrows of $Q^j$ starting in $i_{j+1}$. An arrow $a^* \colon s \to i_{j+1}$ of the mutated quiver corresponds to the component map $\SSS^{-d(a)} T^{j}_s \to (T_{i_{j+1}}^{j})^L$, and therefore has degree $-d(a)$.

Similarly we see that, for an arrow $b \colon s \to i_{j+1}$, the new arrow $b^*$ corresponds to map $(T_{i_{j+1}}^{j})^R \to \SSS^{d(b)} T^{j}_s$, and thus, by Proposition~\ref{tirtil}, to a map $\SSS (T_{i_{j+1}}^{j})^L \to \SSS^{d(b)} T^{j}_s$. Therefore the degree of $b^*$ is $1 - d(b)$.

Finally, any arrow of type $[ab]$ in the quiver of $\Jac(\mu^L_{i_{j+1}}(Q^{j},W^{j},d^{j}))$ corresponds to a non-zero composition $\xymatrix{\SSS^{d(b)}T^{j}_r\ar[r] & T^{j}_{i_{j+1}} \ar[r]& \SSS^{-d(a)}T^{j}_s}$, therefore its degree in $\End_{\Cc}(\pi(\mu_{i_{j+1}}^L(T^j))$ is $d(a)+d(b)$.

Thus using $T^{j+1}=\mu_{i_{j+1}}^L(T^j)$ we obtain an isomorphism of graded algebras
\[ \bigoplus_{p\in\ZZ}\Hom_{\Dd}(T^{j+1},\SSS^{-p}T^{j+1})\underset{\ZZ}{\simeq}\Jac (\mu_{i_{j+1}}^L(Q^j,W^j,d^j))=\Jac(Q^{j+1},W^{j+1},d^{j+1}). \qedhere \]

\end{proof}

\begin{rema}
There exists a similar `right version' of this theorem. 
\end{rema}

Graded quivers with potential and Theorem~\ref{Zgradedbirs} permit to see the existence of a $T$ as in Theorem~\ref{derivedeq1mutation} without explicitly constructing it, and thus to show that certain algebras are derived equivalent. More precisely we have the following consequence.
 \begin{cora}\label{graded mutation and derived equivalence}
Let $\Lambda_1$ and $\Lambda_2$ be two algebras of global dimension 2 , which are $\tau_2$-finite. Assume that one can pass from the graded QP  associated with $\widetilde{\Lambda}_1$ to the graded QP associated with $\widetilde{\Lambda}_2$ using a finite sequence of left and right mutations at vertices not on $2$-cycles.  Then the algebras $\Lambda_1$ and $\Lambda_2$ are derived equivalent.
\end{cora}

\begin{exa}\label{examplemutation}
 Let $\Lambda_1$ and $\Lambda_2$ be the following algebras: 
 \[\scalebox{.8}{
\begin{tikzpicture}[>=stealth,scale=.5]
\node at (-2,0){$\Lambda_1=$};
\node (P1) at (0,0){$1$};
\node (P2) at (3,3){$2$};
\node (P3) at (6,1){$3$};
\node (P4) at (9,3){$4$};
\node (P5) at (12,0){$5$};
\node (P6) at (6,-2){$6$};
\node at (18,0){$\Lambda_2=$};
\node (Q1) at (20,0){$1$};
\node (Q2) at (23,3){$2$};
\node (Q3) at (26,1){$3$};
\node (Q4) at (29,3){$4$};
\node (Q5) at (32,0){$5$};
\node (Q6) at (26,-2){$6$};
\draw [->] (P2) -- (P1);
\draw [->] (P3) -- (P2);
\draw [->] (P4) -- (P3);
\draw [->] (P5) -- (P4);
\draw [->] (P5) -- (P6);
\draw [->] (P6) -- (P1);
 \draw [loosely dotted,very thick] (P1) .. controls (3,2.5) .. (P3);
\draw [loosely dotted,very thick] (P3) .. controls (9,2.5) .. (P5);
\draw [loosely dotted,very thick] (P1) .. controls (6,-1.8) .. (P5);
\draw [->] (Q3) -- (Q1);
\draw [->] (Q2) -- (Q3);
\draw [->] (Q4) -- (Q2);
\draw [->] (Q5) -- (Q3);
\draw [->] (Q6) -- (Q5);
\draw [->] (Q1) -- (Q6);
\draw [loosely dotted,very thick] (Q3) .. controls (25,2.5) .. (Q4);
\draw [loosely dotted,very thick] (Q1) .. controls (26,0.8) .. (Q5);
\end{tikzpicture}}
\]
One can easily compute the quiver with potential associated with the algebras $\widetilde{\Lambda}_1$ and $\widetilde{\Lambda}_2$. Their graded quivers are
\[\scalebox{.8}{
\begin{tikzpicture}[>=stealth,scale=.5]
\node at (-3,0){$(\widetilde{Q}_1,d_1)=$};
\node (P1) at (0,0){$1$};
\node (P2) at (3,3){$2$};
\node (P3) at (6,1){$3$};
\node (P4) at (9,3){$4$};
\node (P5) at (12,0){$5$};
\node (P6) at (6,-2){$6$};
\node at (17,0){$(\widetilde{Q}_2,d_2)=$};
\node (Q1) at (20,0){$1$};
\node (Q2) at (23,3){$2$};
\node (Q3) at (26,1){$3$};
\node (Q4) at (29,3){$4$};
\node (Q5) at (32,0){$5$};
\node (Q6) at (26,-2){$6$};
\draw [->] (P2) -- node [fill=white,inner sep=.5mm]{0} (P1);
\draw [->] (P3) -- node [fill=white,inner sep=.5mm]{0}(P2);
\draw [->] (P4) -- node [fill=white,inner sep=.5mm]{0}(P3);
\draw [->] (P5) -- node [fill=white,inner sep=.5mm]{0}(P4);
\draw [->] (P5) -- node [fill=white,inner sep=.5mm]{0}(P6);
\draw [->] (P6) -- node [fill=white,inner sep=.5mm]{0}(P1);
\draw [<-] (P3) -- node [fill=white,inner sep=.5mm]{1}(P1);
\draw [<-] (P5) -- node [fill=white,inner sep=.5mm]{1}(P3);
\draw [->] (P5) -- node [fill=white,inner sep=.5mm]{1}(P1);

\draw [->] (Q3) -- node [fill=white,inner sep=.5mm]{0}(Q1);
\draw [->] (Q2) -- node [fill=white,inner sep=.5mm]{0}(Q3);
\draw [->] (Q4) -- node [fill=white,inner sep=.5mm]{0}(Q2);
\draw [->] (Q5) -- node [fill=white,inner sep=.5mm]{0}(Q3);
\draw [->] (Q6) -- node [fill=white,inner sep=.5mm]{0}(Q5);
\draw [->] (Q1) -- node [fill=white,inner sep=.5mm]{0}(Q6);
\draw [->] (Q3) -- node [fill=white,inner sep=.5mm]{1}(Q4);
\draw [->] (Q1) -- node [fill=white,inner sep=.5mm]{1}(Q5);
\end{tikzpicture}}
\]
Now applying the left graded mutations $\mu_6^L\circ \mu_3^L$ to the graded quiver with potential $(\widetilde{Q}_1,W_1,d_1)$ one can check that we obtain
\[\scalebox{.9}{
\begin{tikzpicture}[>=stealth,scale=.5]
\node at (15,0){$\mu_6^L\circ \mu_3^L(\widetilde{Q}_1,d_1)=$};
\node (Q1) at (20,0){$1$};
\node (Q2) at (23,3){$2$};
\node (Q3) at (26,1){$3$};
\node (Q4) at (29,3){$4$};
\node (Q5) at (32,0){$5$};
\node (Q6) at (26,-2){$6$};

\draw [->] (Q3) -- node [fill=white,inner sep=.5mm]{0}(Q1);
\draw [->] (Q2) -- node [fill=white,inner sep=.5mm]{0}(Q3);
\draw [->] (Q4) -- node [fill=white,inner sep=.5mm]{0}(Q2);
\draw [->] (Q5) -- node [fill=white,inner sep=.5mm]{-1}(Q3);
\draw [->] (Q6) -- node [fill=white,inner sep=.5mm]{1}(Q5);
\draw [->] (Q1) -- node [fill=white,inner sep=.5mm]{0}(Q6);
\draw [->] (Q3) -- node [fill=white,inner sep=.5mm]{1}(Q4);
\draw [->] (Q1) -- node [fill=white,inner sep=.5mm]{2}(Q5);
\end{tikzpicture}}
\]
By Theorem~\ref{Zgradedbirs}, we have an isomorphism of $\ZZ$-graded algebras $$\Jac(\mu_6^L\circ \mu_3^L(\widetilde{Q}_1,W_1,d_1))\underset{\ZZ}{\simeq} \End_{\Cc_1}(\pi_1(\mu_{P_6}^L\circ \mu_{P_3}^L(\Lambda_1)).$$
It is immediate to see that the $\ZZ$-graded algebras $\Jac(\mu_6^L\circ \mu_3^L(\widetilde{Q}_1,W_1,d_1))$ and $\Jac(\widetilde{Q}_2,W_2,d_2)$ are graded equivalent. Therefore, by Corollary~\ref{graded mutation and derived equivalence}, the algebras $\Lambda_1$ and $\Lambda_2$ are derived equivalent.

Now if we apply the left graded mutations $\mu_6^L\circ \mu_2^L\circ \mu_4^L$ to the graded quiver with potential $(\widetilde{Q}_1,W_1,d_1)$ one can check that we obtain the acyclic graded quiver

\[ (Q', d') := \mu_6^L\circ \mu_2^L\circ \mu_4^L(\widetilde{Q}_1,d_1) =
\begin{tikzpicture}[>=stealth,scale=.5,baseline=-1mm]
\node (P1) at (0,0){$1$};
\node (P2) at (3,3){$2$};
\node (P3) at (6,1){$3$};
\node (P4) at (9,3){$4$};
\node (P5) at (12,0){$5$};
\node (P6) at (6,-2){$6$};
\draw [->] (P1) -- node [fill=white,inner sep=.5mm] {0} (P2);
\draw [->] (P2) -- node [fill=white,inner sep=.5mm] {1} (P3);
\draw [->] (P3) -- node [fill=white,inner sep=.5mm] {0} (P4);
\draw [->] (P4) -- node [fill=white,inner sep=.5mm] {1} (P5);
\draw [->] (P6) -- node [fill=white,inner sep=.5mm] {1} (P5);
\draw [->] (P1) -- node [fill=white,inner sep=.5mm] {0} (P6);
\end{tikzpicture}
\]
It is easy to see that $(Q',d')$ is not graded equivalent to $(Q',0)$. 
Therefore Theorem~\ref{derivedeq1mutation} does not yield a derived equivalence between $\Lambda_1$ and $kQ'$. In fact, since there is an oriented cycle in the quiver of $\Lambda_2$, we know that $\Lambda_2$, and hence also $\Lambda_1$, is not piecewise hereditary.
\end{exa}

\section{Triangulated orbit categories} \label{section_triang_orbit}

This section is devoted to recalling some results of Keller \cite{Kel3,Kel,Kel2} (see also the appendix of \cite{IO2}), and to applying them to our setup.

\subsection{Pretriangulated DG categories}

\begin{dfa}
A \emph{DG category} is a $\ZZ$-graded category (i.e.\ morphism spaces are $\ZZ$-graded, and composition of morphisms respects this grading) with a differential $d$ of degree 1 satisfying the Leibniz rule.

\noindent 
For a DG category $\Xx$ we denote by $H^0\Xx$ the category with the same objects as $\Xx$ and with $$\Hom_{H^0\Xx}(X,Y):=H^0(\Hh om^\bullet_\Xx(X,Y)).$$
\end{dfa}

\begin{exa}
Let $\Aa$ be an additive category.
Then the class $\Cc(\Aa)_{dg}$ of complexes over $\Aa$ becomes a DG category if we set:
$$\Hh om^n_{\Aa}(X,Y):=\coprod_{i\in \ZZ}\Hom_\Aa(X^i,Y^{i+n}), \textrm{ and}$$
$$d((f_i)_{i\in\ZZ})=(f_id_Y-(-1)^nd_Xf_{i+1}), \ \textrm{for } (f_i)_{i\in\ZZ}\in \Hh om^n_\Aa(X,Y).$$
Then it is not hard to check that $Z^0(\Cc(\Aa)_{dg}) \simeq \Cc(\Aa)$ (where $Z^0$ is the kernel of the differential $d^0$), the category of complexes, and $H^0(\Cc(\Aa)_{dg})\simeq  \Hh(\Aa)$ the homotopy category of complexes over $\Aa$.
\end{exa}

The opposite category of a DG category and the tensor product of two DG categories are DG categories again (see \cite{Kel3} -- one has to be careful with the signs).

\begin{dfa}
Let $\Xx$ be a DG category. A DG $\Xx$-module is a DG functor $\Xx^{\rm op}\rightarrow \Cc(\Mod k)_{dg}$. The DG $\Xx$-modules form a DG category again, that we also denote by $\Cc(\Xx)_{dg}$ by abuse of notation.  Whenever we say two DG $\Xx$-modules are isomorphic, we mean they are isomorphic in $Z^0(\Cc(\Xx)_{dg})$.  We denote by $\Dd\Xx$ the category obtained from $H^0(\Cc(\Xx)_{dg})$ by inverting quasi-isomorphisms.

A DG $\Xx$-module is \emph{representable}, if it is isomorphic to a DG $\Xx$-module of the form $\Hh om^\bullet_\Xx(-,X)$ for some object $X$ in $\Xx$.

We denote by $\pretr \Xx$ the \emph{pretriangulated hull} of $\Xx$, i.e.\ the smallest subcategory of $\Cc(\Xx)_{dg}$ which contains the representable DG $\Xx$-modules, and which is closed under mapping cones (of morphisms in $Z^0(\Cc(\Xx)_{dg})$) and translations.
\end{dfa}

Note that by the Yoneda lemma, the natural DG functor $\Hh om^\bullet_\Xx(-,?) \colon \Xx\rightarrow \pretr\Xx$ is fully faithful.
 We call a DG category $\Xx$ \emph{pretriangulated} if the Yoneda functor is dense.

\begin{rema}
If  $F \colon \Xx\rightarrow \Yy$ is a DG functor between DG categories, it induces an induction functor $F^* \colon \Cc(\Xx)_{dg}\rightarrow \Cc(\Yy)_{dg}$. It sends representable functors to representable functors, and hence it induces a DG functor $F^* \colon \pretr \Xx \rightarrow \pretr \Yy$.
\end{rema}

\begin{prop}[Keller \cite{Kel2}]
Let $\Xx$ be a pretriangulated DG category. Then $H^0(\Xx)$ is an algebraic triangulated  category. Moreover any algebraic triangulated category comes up in this construction.
\end{prop}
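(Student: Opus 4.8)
The plan is to treat the two assertions separately, using in both cases the realisation of a homotopy category as the stable category of a Frobenius exact category. For the first statement I would begin by replacing $\Xx$ with its pretriangulated hull: since $\Xx$ is pretriangulated, the Yoneda DG functor $\Hh om^\bullet_\Xx(-,?) \colon \Xx \rightarrow \pretr\Xx$ is fully faithful (by the Yoneda lemma, as already noted above) and dense, hence a DG equivalence. Passing to $H^0$ yields an equivalence $H^0\Xx \simeq H^0(\pretr\Xx)$ compatible with the cone-and-shift structure, so it suffices to prove that $H^0(\pretr\Xx)$ is algebraic triangulated.

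To see this I would work inside the DG category $\Cc(\Xx)_{dg}$ of DG $\Xx$-modules. Its degree-zero cocycle category $\Cc(\Xx) := Z^0(\Cc(\Xx)_{dg})$ becomes an exact category once we declare the conflations to be the short exact sequences of DG modules that split in each degree. With this exact structure $\Cc(\Xx)$ is Frobenius: the projective-injective objects are exactly the contractible DG modules, and for every $M$ the mapping cones $\textsf{Cone}(\id_M)$ (and their shifts) are contractible and fit into degreewise split conflations $M \rightarrowtail \textsf{Cone}(\id_M) \twoheadrightarrow M[1]$, which furnish enough projectives and injectives. The associated stable category is the homotopy category $H^0(\Cc(\Xx)_{dg})$, with suspension the shift $[1]$. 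Now $\pretr\Xx$ is, by construction, a full subcategory of $\Cc(\Xx)_{dg}$ closed under shifts and mapping cones; passing to $Z^0$ it becomes a full subcategory $\Ff$ of $\Cc(\Xx)$ that is closed under extensions (a degreewise split extension of $C$ by $A$ is a cone on a map $C[-1] \rightarrow A$) and contains the contractibles $\textsf{Cone}(\id_M)$ for $M \in \Ff$. Hence the Frobenius structure restricts to $\Ff$, and its stable category $\underline{\Ff} = H^0(\pretr\Xx)$ is algebraic triangulated, as wanted.

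For the converse, let $\Tt = \underline{\Ee}$ with $\Ee$ a Frobenius exact category, and let $\Pp = \proj\Ee$ be its category of projective-injective objects (the projectives coincide with the injectives). I would take $\Xx$ to be the full DG subcategory of the DG category $\Cc(\Pp)_{dg}$ of complexes over $\Pp$ whose objects are the acyclic complexes. Being closed under shifts and mapping cones, $\Xx$ has dense Yoneda functor by the argument of the first part, so it is pretriangulated. On the other hand $H^0\Xx$ is the homotopy category of acyclic complexes of projective-injectives, and sending an object of $\Ee$ to a complete resolution produces a triangle equivalence $\underline{\Ee} \simeq H^0\Xx$; thus every algebraic $\Tt$ arises as $H^0$ of a pretriangulated DG category.

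The main technical burden lies in the first part: verifying that the degreewise split exact structure on DG modules is genuinely Frobenius with the contractibles as projective-injectives, and — more delicately — that this structure restricts to the extension-closed subcategory coming from $\pretr\Xx$ so that $\underline{\Ff}$ is a \emph{triangle} equivalent, not merely equivalent, to $H^0(\pretr\Xx)$, with the Frobenius suspension matching the shift $[1]$. In the converse direction the analogous subtle point is the essential surjectivity of the complete-resolution functor, i.e.\ that every acyclic complex of projective-injectives is, up to homotopy, the complete resolution of an object of $\Ee$; here one uses crucially that in a Frobenius category projectives and injectives coincide, so that acyclicity already forces total acyclicity.
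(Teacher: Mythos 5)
The paper contains no proof of this proposition: it is quoted verbatim from Keller's survey \cite{Kel2}, so there is nothing in-paper to compare against, and your attempt has to be measured against the standard argument that the citation stands for. Your strategy is exactly that standard argument, in both directions: the degreewise split exact structure on DG modules is Frobenius with the contractible modules as projective-injectives, and restricting it to the extension-closed, shift- and cone-closed subcategory $Z^0(\pretr\Xx)$ exhibits $H^0(\pretr\Xx)\simeq H^0(\Xx)$ as a stable category; conversely, the acyclic complexes with projective-injective components (complete resolutions) give the enhancement of $\underline{\Ee}$. The first half of your write-up is essentially complete, modulo the routine checks you yourself flag (matching the stable triangles with the cone triangles, and the harmless point that $\pretr\Xx$ is only extension-closed up to isomorphism in $Z^0$).

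In the converse direction, however, the weight is placed on the wrong step. The phrase ``being closed under shifts and mapping cones'' assumes precisely what makes $\Xx$ pretriangulated, namely that the mapping cone of a chain map between acyclic complexes is again acyclic. First, ``acyclic'' must be taken relative to the exact structure of $\Ee$ (each differential factors as a deflation followed by an inflation, the resulting short sequences being conflations), since $\Pp$ is merely additive and has no intrinsic notion of acyclicity. With that meaning, cone-closure is a genuine theorem about exact categories, not a formality: one constructs the cycle objects of the cone as pullbacks and needs the so-called obscure axiom (if $gf$ is a deflation and $g$ admits a kernel, then $g$ is a deflation, valid in any exact category by a result of Keller) to see that the relevant maps are deflations; this is carried out in Keller's \emph{Chain complexes and stable categories} and in B\"uhler's survey on exact categories. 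Nearby statements really do fail in general -- for instance, acyclicity is not invariant under homotopy equivalence without weak idempotent completeness -- so the step cannot be waved through as self-evident. By contrast, the point you single out as the delicate one (that every acyclic complex of projective-injectives is a complete resolution) is immediate once acyclicity is defined as above: the zero-cycles exist in $\Ee$, and the two halves of the complex are a projective resolution and an injective coresolution of them. The remaining verifications -- fullness and faithfulness of $Z^0$ and compatibility of triangles -- are the classical comparison arguments, and citing them is fine.
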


\begin{exa}\label{exampleXY}
Let $\Lambda$ be an algebra of finite global dimension. 
\begin{itemize}
\item Let $\Xx:=\Cc^b(\proj \Lambda)_{dg}$ be the DG category of bounded complexes of finitely generated projective $\Lambda$-modules. Then $\Xx$ is pretriangulated and the triangulated category $H^0(\Xx)$ is equivalent to $\Dd^b(\Lambda)$.
\item Similarly, assume that $\Lambda$ is $G$-graded, where $G$ is an abelian group. Let $\Yy:=\Cc^b(\proj \cov{\Lambda}{G})_{dg}$ be the DG category of bounded complexes of finitely generated projective $\cov{\Lambda}{G}$-modules. Then $\Yy$ is pretriangulated and the triangulated category $H^0(\Yy)$ is equivalent to $\Dd^b(\cov{\Lambda}{G})\simeq \Dd^b(\gr \Lambda)$.
\end{itemize}
\end{exa}

\begin{dfa}
Let $\Xx$ and $\Yy$ be DG categories. We denote by $\rep(\Xx,\Yy)$ the full subcategory of  $\Dd(\Xx^{\rm op}\ten \Yy)$ formed by the objects $R$, such that for all $X\in\Xx$, the object $R(X\ten-)$ is isomorphic to a representable DG $\Yy$-module in $\Dd(\Yy)$.
\end{dfa}

\begin{exa}
Let $F \colon \Xx\rightarrow \Yy$ be a DG functor. Then $F$ induces an object $R_F$ in $\Dd(\Xx^{\rm op}\ten \Yy)$ given by $R_F(X\ten Y):= \Hh om^\bullet_\Yy (Y,FX)$. Since $R_F(X\ten -)$ is represented by $FX$, $R_F$ is in $\rep(\Xx,\Yy)$.
\end{exa}

\subsection{Universal property}

\begin{dfa}
Let $\Xx$ be a DG category, and $S \colon \Xx\rightarrow \Xx$ a DG functor inducing an equivalence on $H^0(\Xx)$. Then the \textit{DG orbit category }$\Xx/S$ has the same objects as $\Xx$, and 
$$\Hh om^\bullet_{\Xx/S}(X,Y):=\underset{p\gg 0}{\colim} \bigoplus_{i\geq 0}\Hh om_{\Xx}^\bullet(S^pX,S^{p+i}Y)\simeq \bigoplus_{i\in\ZZ}\underset{p\gg 0}{\colim}\Hh om^\bullet_{\Xx}(S^pX,S^{p+i}Y).$$
\end{dfa}

\begin{dfa}\label{deforbitcat}
Let $\Tt:=H^0(\Xx)$ be an algebraic triangulated category, and $S \colon \Xx\rightarrow \Xx$ be a DG functor inducing an equivalence  on $\Tt$. Then the \emph{triangulated orbit category} of $\Tt$ modulo $S$ is defined to be $$(\Tt/S)_\Delta:=H^0(\pretr(\Xx/S)).$$
There is a natural DG functor $\pi_\Xx \colon \Xx\rightarrow \Xx/S$ which induces a triangle functor $$\xymatrix{\pi_\Tt:=H^0(\pi_\Xx) \colon \Tt\ar[r] & (\Tt/S)_\Delta }.$$
\end{dfa}

\begin{rema}
The notation $(\Tt/S)_\Delta$ is not strictly justified. Indeed the triangulated category $H^0(\pretr(\Xx/S))$ depends on $\Xx$ and  $S \colon \Xx\rightarrow \Xx$ and not only on $\Tt$ and  $H^0S$. But in this paper the triangulated categories that we use have canonical enhancement in DG categories, and the auto-equivalences have canonical lifts.
\end{rema}

\begin{exa}\label{defclustercat} We can now give a more precise definition of the generalized cluster category given in Section~\ref{section_backgr}.

Let $\Lambda$ be a $\tau_2$-finite algebra of global dimension~$\leq 2$. Let $\Xx:=\Cc^b(\proj \Lambda)_{dg}$ be the DG category of bounded complexes of finitely generated projective $\Lambda$-modules, and let $S \colon \Xx\rightarrow \Xx$ be the DG functor $S:=-\ten_\Lambda p_{\Lambda^{\rm op}\ten \Lambda}D\Lambda [-2]$ where $  p_{\Lambda^{\rm op}\ten \Lambda}D\Lambda$ is a projective resolution of $D\Lambda$ as a $\Lambda$-$\Lambda$-bimodule. Then we have $$H^0(\Xx/S)\simeq \Dd^b(\Lambda)/\SSS \quad \textrm{and}\quad H^0(\pretr(\Xx/S))\simeq (\Dd^b(\Lambda)/\SSS)_\Delta=:\Cc_\Lambda.$$
\end{exa}

We are now ready to state a consequence of the universal property of the triangulated orbit category.
\begin{prop}[\cite{Kel}] \label{universalproperty}
Let $\Tt:=H^0(\Yy)$ and $\Tt':=H^0(\Xx)$ be two algebraic triangulated categories, and $S \colon \Yy \rightarrow \Yy$ be a DG functor inducing an equivalence on $\Tt$. Let $F \colon \Yy \rightarrow \Xx$ be a DG functor, and assume that there is an isomorphism in $\rep(\Yy,\Xx)$
$$F\circ S\simeq F.$$ Then $F$ induces a triangulated functor $(\Tt/S)_\Delta\rightarrow \Tt'$ such that the following diagram commutes
$$\xymatrix{\Tt\ar[r]^{H^0F}\ar[d]_{\pi_\Tt} & \Tt' \\ (\Tt/S)_\Delta\ar[ur] & }.$$
\end{prop}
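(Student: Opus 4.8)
The plan is to encode the DG functor $F$ as a DG bimodule and then \emph{descend} that bimodule along the projection $\pi_\Yy \colon \Yy \to \Yy/S$, working entirely in the $\rep$-language already set up. First I would record that $F$ determines the object $R_F \in \rep(\Yy,\Xx)\subseteq \Dd(\Yy^{\rm op}\ten\Xx)$ with $R_F(Y'\ten X)=\Hh om^\bullet_\Xx(X,FY')$, and that the projection gives $R_{\pi_\Yy}\in\rep(\Yy,\Yy/S)$, along which one may restrict any bimodule $\bar R\in\Dd((\Yy/S)^{\rm op}\ten\Xx)$ to an object of $\Dd(\Yy^{\rm op}\ten\Xx)$ (restriction is possible since $\pi_\Yy$ is the identity on objects). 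The goal is to build $\bar R\in\rep(\Yy/S,\Xx)$ whose restriction along $\pi_\Yy$ is isomorphic to $R_F$; everything else will then be formal.

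The construction of $\bar R$ is the heart of the matter. By definition of the DG orbit category, for objects $X,Y$ of $\Yy$ (which are also the objects of $\Yy/S$) one has
\[ \Hh om^\bullet_{\Yy/S}(X,Y)=\underset{p\gg 0}{\colim}\bigoplus_{i\geq 0}\Hh om^\bullet_\Yy(S^pX,S^{p+i}Y). \]
The hypothesis $F\circ S\simeq F$ in $\rep(\Yy,\Xx)$ yields, by iteration, compatible isomorphisms $FS^p\simeq F$ in $\Dd(\Xx)$, and I would use these to turn the structure maps of the above colimit into identifications after applying $F$. This is exactly the descent datum needed to define $\bar R$ on $(\Yy/S)^{\rm op}\ten\Xx$ with $\bar R|_\Yy\simeq R_F$. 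Since the objects of $\Yy/S$ coincide with those of $\Yy$ and $R_F\in\rep(\Yy,\Xx)$, each $\bar R(\bar X\ten -)$ remains representable in $\Dd(\Xx)$, so $\bar R\in\rep(\Yy/S,\Xx)$. I expect this step to be the main obstacle: the assumption only provides an isomorphism in the \emph{derived} category of bimodules, so one must check that these isomorphisms respect the transition maps of the colimit coherently enough to assemble an honest object of $\Dd((\Yy/S)^{\rm op}\ten\Xx)$ restricting to $R_F$. This homotopy-coherence issue is what forces the explicit DG/colimit model; the saving grace is that $\rep$ is defined inside the derived category, so only quasi-isomorphism-level compatibility is required.

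Once $\bar R$ is in hand, the rest is routine. Tensoring gives a triangle functor $-\lten_{\Yy/S}\bar R\colon \Dd(\Yy/S)\to\Dd(\Xx)$; because $\bar R\in\rep(\Yy/S,\Xx)$ it sends each representable $(\Yy/S)$-module to a representable $\Xx$-module, hence to (the image of) an object of $\Xx$, and being triangulated it commutes with shifts and cones. Therefore it carries $\pretr(\Yy/S)$ into $\pretr\Xx$ and induces on $H^0$ a triangle functor
\[ (\Tt/S)_\Delta=H^0(\pretr(\Yy/S))\longrightarrow H^0(\pretr\Xx)=\Tt', \]
the target being $\Tt'$ since the representables land among the objects $FX\in H^0(\Xx)=\Tt'$ and $\Tt'$ is triangulated. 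Finally, commutativity of the triangle with $\pi_\Tt=H^0(\pi_\Yy)$ follows from $\bar R|_\Yy\simeq R_F$: restricting $\bar R$ along $\pi_\Yy$ recovers $R_F$, which induces $H^0F$, and evaluating on representables shows the composite $\Tt\xrightarrow{\pi_\Tt}(\Tt/S)_\Delta\to\Tt'$ agrees with $H^0F$ on objects $X\mapsto FX$ and on morphisms, giving the asserted commutative diagram.
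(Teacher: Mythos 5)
The paper offers no proof of this proposition --- it is quoted from Keller's work on triangulated orbit categories --- so your proposal must be measured against Keller's argument, whose overall strategy you have in fact reconstructed correctly: encode $F$ as the bimodule $R_F$, descend it along $\pi_\Yy$ to an object $\bar R\in\rep(\Yy/S,\Xx)$, then pass to pretriangulated hulls. Your formal endgame is also fine: tensoring with $\bar R$ is triangulated and sends representables to representables, the essential image of $H^0(\Xx)$ in $\Dd(\Xx)$ is a triangulated subcategory (as $\Xx$ is pretriangulated), so $\pretr(\Yy/S)$ lands in it, and commutativity of the triangle follows from $\bar R\circ\pi_\Yy\simeq R_F$.

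However, the step you yourself flag as ``the main obstacle'' is a genuine gap, and your proposed escape --- that ``only quasi-isomorphism-level compatibility is required'' because $\rep$ lives in the derived category --- does not close it. To define $\bar R$ you must form a colimit of complexes, and a colimit requires honest chain-level transition maps; an isomorphism in $\Dd(\Yy^{\rm op}\ten\Xx)$ is a priori only a zig-zag of chain maps, and rectifying an infinite family of such zig-zags compatibly is exactly the coherence problem, which cannot be waved away inside the derived category. The resolution (this is the content of Keller's proof) is to observe that only \emph{one} chain-level map is needed. Replace $R_F$ by an h-projective resolution $R$; since restriction along $S\ten{\rm id}$ preserves quasi-isomorphisms, the hypothesis gives an isomorphism $R\simeq R_F\simeq R_F\circ S\simeq R\circ S$ in the derived category, and since $R$ is h-projective, any derived-category morphism \emph{out of} $R$ is realized by an honest closed bimodule morphism; so this isomorphism lifts to a genuine quasi-isomorphism of DG bimodules $\gamma\colon R\rightarrow R\circ S$ (the direction matters: it must start at the h-projective object). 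Restricting this single $\gamma$ along the powers $S^p$ generates the whole direct system $R\rightarrow R\circ S\rightarrow R\circ S^2\rightarrow\cdots$ with no further coherence to check; one sets $\bar R(Y\ten X):=\colim_p R(S^pY\ten X)$, naturality of $\gamma$ (the $\Yy$-action on $R\circ S$ being twisted by $S$) gives a well-defined action of $\Hh om^\bullet_{\Yy/S}(Y,Y')$ on these colimits, and exactness of filtered colimits shows the canonical map $R\rightarrow\bar R\circ\pi_\Yy$ is a quasi-isomorphism, whence $\bar R\in\rep(\Yy/S,\Xx)$ and $\bar R$ restricts to $R_F$. Without the h-projective replacement and the observation that a single strict map suffices, your construction of $\bar R$ remains an assertion rather than a proof.
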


\begin{cora}\label{hullofgraded}
Let  $\Lambda$ be a $\ZZ$-graded algebra of finite global dimension. Then we have a triangle equivalence:
 $$(\Dd^b(\cov{\Lambda}{\ZZ})/\langle 1 \rangle)_\Delta\simeq \Dd^b(\Lambda).$$
\end{cora}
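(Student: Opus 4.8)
The plan is to realize the equivalence through the universal property of the triangulated orbit category (Proposition~\ref{universalproperty}), and then check that the resulting functor is fully faithful and dense. First I would fix DG enhancements as in Example~\ref{exampleXY}: set $\Yy := \Cc^b(\proj \cov{\Lambda}{\ZZ})_{dg}$, so that $H^0(\Yy) \simeq \Dd^b(\cov{\Lambda}{\ZZ}) \simeq \Dd^b(\gr\Lambda)$, and $\Xx := \Cc^b(\proj \Lambda)_{dg}$, so that $H^0(\Xx) \simeq \Dd^b(\Lambda)$. The degree shift $\langle 1 \rangle$ permutes the objects $\Lambda\langle p\rangle$ of $\cov{\Lambda}{\ZZ}$ and hence lifts to a DG functor $S \colon \Yy \to \Yy$ inducing $\langle 1 \rangle$ on $H^0(\Yy)$; by Definition~\ref{deforbitcat} we then have $(\Dd^b(\cov{\Lambda}{\ZZ})/\langle 1 \rangle)_\Delta = H^0(\pretr(\Yy/S))$. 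Forgetting the grading defines a DG functor $F \colon \Yy \to \Xx$ sending a graded projective to its underlying ungraded projective. Since forgetting the grading does not see the shift, one has $F \circ S = F$, in particular $F \circ S \simeq F$ in $\rep(\Yy,\Xx)$, so Proposition~\ref{universalproperty} produces a triangle functor $\bar{F} \colon (\Dd^b(\cov{\Lambda}{\ZZ})/\langle 1 \rangle)_\Delta \to \Dd^b(\Lambda)$ whose composite with the projection $\pi$ is the forgetful functor $U \colon \Dd^b(\gr\Lambda) \to \Dd^b(\Lambda)$.

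Next I would check that $\bar F$ is fully faithful on the generating objects $\Lambda\langle p\rangle$. By the definition of the orbit category the morphism spaces in the source are
\[ \Hom(\pi(\Lambda\langle a\rangle), \pi(\Lambda\langle b\rangle)[n]) \simeq \bigoplus_{i\in\ZZ} \Hom_{\Dd^b(\gr\Lambda)}(\Lambda\langle a\rangle, \Lambda\langle b+i\rangle[n]), \]
the colimit over powers of $S$ collapsing because graded Homs between shifts of $\Lambda$ are $S$-invariant. As $\Lambda\langle a\rangle$ is projective this vanishes for $n\neq 0$ and, for $n=0$, equals $\bigoplus_{i\in\ZZ}\Lambda^{\,b+i-a}=\Lambda$, which is exactly $\Hom_{\Dd^b(\Lambda)}(\Lambda,\Lambda)$. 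This identification is the Galois covering property of $U$, and one checks it is compatible with composition; hence $\bar F$ is bijective on morphisms between the $\Lambda\langle p\rangle$ and their shifts, and therefore fully faithful on the triangulated subcategory they generate.

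Finally, since $\Lambda$ has finite global dimension we have $\Dd^b(\Lambda) = \thick(\Lambda)$, and the essential image of $\bar F$, being a thick subcategory containing $\bar F\pi(\Lambda) = U(\Lambda) = \Lambda$, is all of $\Dd^b(\Lambda)$; likewise the $\pi(\Lambda\langle p\rangle)$ generate $(\Dd^b(\cov{\Lambda}{\ZZ})/\langle 1 \rangle)_\Delta$. A fully faithful dense triangle functor is an equivalence, which gives the claim. I expect the main obstacle to be the bookkeeping in the fully faithful step: one must justify that the colimit defining the orbit-category morphisms genuinely stabilizes to the direct sum $\bigoplus_{i}\Hom_{\gr\Lambda}(\Lambda\langle a\rangle, \Lambda\langle b+i\rangle[n])$ and that these pieces reassemble, compatibly with the DG (hence triangulated) composition, into the ungraded morphism spaces — that is, a careful verification of the covering property at the DG level and not merely on objects.
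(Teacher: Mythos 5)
Your first step coincides with the paper's: both apply Proposition~\ref{universalproperty} to the forgetful functor, using the bimodule isomorphism expressing $F\circ S\simeq F$, to obtain a triangle functor $\bar F\colon (\Dd^b(\cov{\Lambda}{\ZZ})/\langle 1\rangle)_\Delta\rightarrow \Dd^b(\Lambda)$, and your computation of orbit-category $\Hom$-spaces between the $\pi(\Lambda\langle p\rangle)$ is correct. The gap is in everything you build on top of it, and it has a single root: you ignore direct summands. First, your claim that the objects $\pi(\Lambda\langle p\rangle)$ generate the hull as a triangulated category is false in general. The hull is generated by \emph{all} representables $\pi(P)$, where $P$ runs over bounded complexes of finitely generated projective graded modules, i.e.\ over all of $\cov{\Lambda}{\ZZ}=\add\{\Lambda\langle p\rangle\}$; a non-free projective is \emph{not} an iterated cone of shifts of the $\Lambda\langle p\rangle$ (take $\Lambda=k\times k$ with any grading: the class of $\pi(k\times 0)$ in $K_0$ is not a multiple of $[\pi\Lambda]$). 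Second, and more seriously, your density argument asserts that the essential image of $\bar F$ is \emph{thick}. The essential image of a fully faithful triangle functor is a triangulated subcategory, but closure under direct summands would require idempotents to split in the source, and idempotent completeness of a triangulated hull is not known a priori (it follows only \emph{a posteriori} from the equivalence you are trying to prove). Without thickness, "contains $\Lambda$" only gives you the triangulated subcategory of $\Dd^b(\Lambda)$ generated by $\Lambda$, which for $\Lambda=k\times k$ is strictly smaller than $\thick(\Lambda)=\Dd^b(\Lambda)$, by the same $K_0$ obstruction. So as written, both the source and the target of your "equivalence" are too small.

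Both defects are repairable, and the paper's proof shows how. For full faithfulness, run your $\Hom$-computation over all of $\cov{\Lambda}{\ZZ}$ (equivalently, note that full faithfulness of the functor $\Dd^b(\cov{\Lambda}{\ZZ})/\langle 1\rangle\rightarrow\Dd^b(\Lambda)$ on the whole orbit category — this is the covering property, valid for arbitrary bounded complexes of finitely generated graded modules — passes to the hull by d\'evissage, since every object of the hull is an iterated cone of representables). For density, the paper replaces your thickness claim by the observation that the \emph{simple} $\Lambda$-modules are gradable, hence lie in the image of $\bar F$, and they generate $\Dd^b(\Lambda)$ as a plain triangulated subcategory — every module is a finite iterated extension of simples, every complex an iterated cone of its shifted cohomologies — so no summand-closure of the image is ever needed. (Alternatively you could use that every finitely generated projective $\Lambda$-module is gradable and that these generate $K^b(\proj\Lambda)=\Dd^b(\Lambda)$ by cones, but that requires its own argument.) Until you make one of these repairs, the proof does not go through.
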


\begin{proof}
Let $\Xx:=\Cc^b(\proj \Lambda)_{dg}$ and $\Yy:=\Cc^b(\proj \cov{\Lambda}{\ZZ})_{dg}$ be the DG categories defined in Example~\ref{exampleXY}.

The autoequivalence $\langle 1 \rangle$ of $\Dd^b(\cov{\Lambda}{\ZZ})$ is induced by tensoring with the $\Yy$-$\Yy$-bimodule $\leftsub{\rm id}{\cov{\Lambda}{\ZZ}}_{\left< 1 \right>}$ (that is, the right $\Yy$-action is twisted by $\langle 1 \rangle$).

The forgetful functor $\Dd^b(\cov{\Lambda}{\ZZ}) \rightarrow \Dd^b(\Lambda)$ is induced by the $\Yy$-$\Xx$-bimodule $\cov{\Lambda}{\ZZ}$ (obtained from the $\Yy$-$\Yy$-bimodule $\cov{\Lambda}{\ZZ}$ by applying the forgetful functor on the right side).

It is clear that we have an isomorphism of $\cov{\Lambda}{\ZZ}$-$\Lambda$-bimodules
$$\leftsub{\rm id}{\cov{\Lambda}{\ZZ}}_{\langle 1 \rangle} \simeq \cov{\Lambda}{\ZZ}$$
Therefore, by Proposition~\ref{universalproperty}, there exists a triangle functor $G$
$$\xymatrix{\Dd^b(\cov{\Lambda}{\ZZ})\ar[rr] \ar[dr] & & \Dd^b(\Lambda) \\ & (\Dd^b(\cov{\Lambda}{\ZZ})/\langle 1 \rangle)_{\Delta}\ar@{-->}[ur]^G &}$$
The functor $\xymatrix{\Dd^b(\cov{\Lambda}{\ZZ})/\langle 1 \rangle\ar[r] & \Dd^b(\Lambda)}$ is fully faithful, hence so is $G$. Moreover $\Dd^b(\Lambda)$ is generated as triangulated category by the simples, which are in the image of $G$. Therefore the functor $G$ is an equivalence of triangulated categories.
\end{proof}

\begin{prop}\label{universalprop2}
Let $\Tt:=H^0(\Xx)$ and $\Tt':=H^0(\Yy)$ be two algebraic triangulated categories, and $S \colon \Xx\rightarrow \Xx$ (resp.\ $S' \colon \Yy\rightarrow\Yy$) be a DG functor inducing an equivalence on $\Tt$ (resp.\ on $\Tt'$). Let $F \colon \Xx\rightarrow \Yy$ be a DG functor, and assume that there is an isomorphism in $\rep(\Xx,\Yy)$
$$F\circ S\simeq S'\circ F.$$ Then $F$ induces a triangulated functor $(\Tt/S)_\Delta \rightarrow (\Tt'/S')_\Delta$ such that the following diagram commutes.
$$\xymatrix{\Tt \ar[r]^{H^0(F)}\ar[d]_{\pi_\Tt} & \Tt'\ar[d]^{\pi_{\Tt'}} \\ (\Tt/S)_\Delta \ar[r] & (\Tt'/S')_\Delta}$$
\end{prop}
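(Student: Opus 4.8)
The plan is to deduce Proposition~\ref{universalprop2} from the already-established Proposition~\ref{universalproperty} by absorbing the functor $S'$ into an orbit category on the target side. First I would form the composite DG functor
\[ G := \pi_\Yy \circ F \colon \Xx \xrightarrow{F} \Yy \xrightarrow{\pi_\Yy} \Yy/S' \hookrightarrow \pretr(\Yy/S'), \]
where $\pi_\Yy$ is the canonical functor of Definition~\ref{deforbitcat} and the last arrow is the Yoneda embedding into the pretriangulated hull. Since $\pretr(\Yy/S')$ is pretriangulated with $H^0(\pretr(\Yy/S'))=(\Tt'/S')_\Delta$, it is a legitimate target DG category for Proposition~\ref{universalproperty}, with $\Xx$ (carrying its functor $S$) in the role of the source. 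Thus the whole statement reduces to checking the single hypothesis $G\circ S\simeq G$ in $\rep(\Xx,\pretr(\Yy/S'))$.

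The crucial input is that $S'$ becomes canonically isomorphic to the identity after passing to the orbit category, i.e.\ that there is an isomorphism $\pi_\Yy\circ S'\simeq \pi_\Yy$ in $\rep(\Yy,\Yy/S')$. This I would extract directly from the colimit formula for $\Hh om^\bullet_{\Yy/S'}$ in Definition~\ref{deforbitcat}: the class of $\mathrm{id}_{S'Y}$ lying in the summand indexed by $i=1$ defines a closed, degree-zero morphism $S'Y\to Y$ in $\Yy/S'$, natural in $Y$, whose inverse is the class of $\mathrm{id}_Y$ in the summand $i=-1$ (the two compositions collapse to the identity in the summand $i=0$). These assemble into an isomorphism of the associated $\Yy$-$\Yy/S'$-bimodules, hence an isomorphism in $\rep$, not merely a natural isomorphism of the induced ordinary functors on $H^0$.

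With this in hand I would chain the isomorphisms, using that composition of quasi-functors (derived tensor product of bimodules) is functorial and associative up to coherent isomorphism. Post-composing the hypothesis $F\circ S\simeq S'\circ F$ of $\rep(\Xx,\Yy)$ with $\pi_\Yy$ and then invoking the key isomorphism above gives
\[ G\circ S = \pi_\Yy\circ(F\circ S)\simeq \pi_\Yy\circ(S'\circ F)\simeq (\pi_\Yy\circ S')\circ F\simeq \pi_\Yy\circ F = G \]
in $\rep(\Xx,\pretr(\Yy/S'))$. Proposition~\ref{universalproperty} then produces a triangle functor $(\Tt/S)_\Delta\to(\Tt'/S')_\Delta$ together with a commuting triangle relating it to $H^0(G)$ and $\pi_\Tt$. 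To turn this triangle into the asserted square it remains to identify $H^0(G)=H^0(\pi_\Yy\circ F)=H^0(\pi_\Yy)\circ H^0(F)=\pi_{\Tt'}\circ H^0(F)$, the last equality being exactly the definition of $\pi_{\Tt'}$ in Definition~\ref{deforbitcat}.

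The main obstacle will be making the canonical isomorphism $\pi_\Yy\circ S'\simeq\pi_\Yy$ precise \emph{at the level of $\rep$}, i.e.\ as an isomorphism of quasi-functors rather than just of the functors they induce on $H^0$, and verifying that the hypothesis $F\circ S\simeq S'\circ F$ and all the intermediate compositions genuinely interact within the framework of quasi-functors. Once the bimodule description of each composite is set up carefully, the remaining steps are formal consequences of the universal property already recorded in Proposition~\ref{universalproperty}.
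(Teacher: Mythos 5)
Your proposal is correct and follows essentially the same route as the paper's own proof: compose with $\pi_\Yy \colon \Yy \rightarrow \pretr(\Yy/S')$, use the canonical isomorphism $\pi_\Yy\circ S'\simeq \pi_\Yy$ to verify the single hypothesis $(\pi_\Yy\circ F)\circ S\simeq \pi_\Yy\circ F$ in $\rep(\Xx,\pretr(\Yy/S'))$, and then apply Proposition~\ref{universalproperty} together with the identification $H^0(\pi_\Yy\circ F)=\pi_{\Tt'}\circ H^0(F)$. The only difference is that you make explicit, via the colimit formula, the isomorphism $\pi_\Yy\circ S'\simeq\pi_\Yy$, which the paper simply asserts holds ``by definition'' of the DG orbit category.
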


\begin{proof}
Let $\pi_\Yy$ be the DG functor $\Yy\rightarrow \pretr(\Yy/S')$. Then we have $\pi_\Yy\circ S'\simeq \pi_\Yy$ in $\rep(\Yy,\pretr(\Yy/S'))$ by definition. Therefore we have isomorphism in $\rep(\Xx,\pretr(\Yy/T))$: $$(\pi_\Yy\circ F)\circ S\simeq \pi_\Yy \circ S'\circ F\simeq \pi_\Yy\circ F.$$
Hence by Proposition~\ref{universalproperty}, we get a commutative diagram:
$$\xymatrix{\Tt=H^0(\Xx)\ar[r]^{H^0(F)}\ar[d]_{H^0(\pi_\Xx)} & \Tt'=H^0(\Yy)\ar[d]^{H^0(\pi_\Yy)} \\ H^0(\pretr(\Xx/S))\ar[r]^f & H^0(\pretr(\Yy/S'))}.$$
 where $f$ is a triangle functor.
\end{proof}

\begin{cora}\label{derivedeqclustereq}
Let $\Lambda_1$ and $\Lambda_2$ be derived equivalent algebras of global dimension~$\leq 2 $ which are $\tau_2$-finite. Then they are cluster equivalent.
\end{cora}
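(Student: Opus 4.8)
The plan is to deduce this directly from the universal property of the triangulated orbit category, Proposition~\ref{universalprop2}, applied to the canonical DG enhancements of the two derived categories. Recall from Example~\ref{defclustercat} that, writing $\Xx_i := \Cc^b(\proj \Lambda_i)_{dg}$ and letting $S_i \colon \Xx_i \to \Xx_i$ be the DG functor $-\ten_{\Lambda_i} p_{\Lambda_i^{\rm op}\ten\Lambda_i}D\Lambda_i[-2]$ lifting $\SS_i$, one has $\Dd^b(\Lambda_i) = H^0(\Xx_i)$ and $\Cc_{\Lambda_i} = H^0(\pretr(\Xx_i/S_i))$. Thus it suffices to produce a DG functor $F \colon \Xx_1 \to \Xx_2$ which is a quasi-equivalence (so that $H^0(F)$ is the given derived equivalence) and which satisfies $F\circ S_1 \simeq S_2\circ F$ in $\rep(\Xx_1,\Xx_2)$: Proposition~\ref{universalprop2} then yields a triangle functor $\Cc_{\Lambda_1}\to\Cc_{\Lambda_2}$ commuting with the projections $\pi_i$, and this functor is automatically an equivalence since $F$ is a quasi-equivalence.

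To construct $F$, I would invoke the fact that a derived equivalence between two finite-dimensional algebras is standard: by Rickard's theorem it is given by a two-sided tilting complex $X \in \Dd^b(\Lambda_1^{\rm op}\ten\Lambda_2)$, so that the equivalence is isomorphic to $-\lten_{\Lambda_1} X$. Such a bimodule complex determines a DG functor $F \colon \Xx_1\to\Xx_2$ (tensoring a bounded complex of projectives with a projective resolution of $X$) whose induced functor on $H^0$ is the prescribed equivalence; in particular $F$ is a quasi-equivalence, and it represents the given derived equivalence as an object of $\rep(\Xx_1,\Xx_2)$.

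It remains to verify the intertwining relation $F\circ S_1\simeq S_2\circ F$ at the level of $\rep(\Xx_1,\Xx_2)$, i.e.\ of bimodule complexes. Since $S_i$ lifts $\SS_i = -\lten_{\Lambda_i} D\Lambda_i[-2]$, the two composites correspond to the $\Lambda_1$-$\Lambda_2$-bimodule complexes $D\Lambda_1\lten_{\Lambda_1} X[-2]$ and $X\lten_{\Lambda_2} D\Lambda_2[-2]$, so the required isomorphism is exactly the compatibility $D\Lambda_1\lten_{\Lambda_1} X \simeq X\lten_{\Lambda_2} D\Lambda_2$ of the invertible bimodule $X$ with the dualizing complexes $D\Lambda_i$, that is, the statement that the Serre functors commute with the standard equivalence $-\lten X$. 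I expect this bimodule-level commutation to be the main obstacle: the uniqueness of the Serre functor supplies the intertwining only up to isomorphism of triangle functors on $H^0$, whereas Proposition~\ref{universalprop2} demands an isomorphism already in $\rep(\Xx_1,\Xx_2)$, so one must genuinely produce the isomorphism of bimodule complexes rather than merely of the induced endofunctors. Once this is in place, applying Proposition~\ref{universalprop2} and using that $F$ is a quasi-equivalence completes the proof that $\Cc_{\Lambda_1}\simeq\Cc_{\Lambda_2}$, i.e.\ that $\Lambda_1$ and $\Lambda_2$ are cluster equivalent.
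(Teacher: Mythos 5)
Your proposal follows the same route as the paper: represent the derived equivalence by a two-sided tilting complex $T\in\Dd^b(\Lambda_1^{\rm op}\ten\Lambda_2)$, lift it to a DG functor between the canonical enhancements, and apply Proposition~\ref{universalprop2}, which requires the intertwining with the Serre functors to hold at the bimodule level. But you stop exactly at the point that carries all the mathematical content: you correctly observe that uniqueness of the Serre functor only gives $F\circ S_1\simeq S_2\circ F$ as triangle functors on $H^0$, that Proposition~\ref{universalprop2} instead needs an isomorphism $D\Lambda_1\lten_{\Lambda_1}T\simeq T\lten_{\Lambda_2}D\Lambda_2$ in $\Dd^b(\Lambda_1^{\rm op}\ten\Lambda_2)$, and that this is ``the main obstacle'' --- and then you simply assume it (``once this is in place''). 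Everything else in your argument (Rickard's theorem, the induced functor on orbit categories being an equivalence) is framing; the corollary \emph{is} this bimodule-level compatibility, so as written your proof has a genuine gap.

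The paper closes the gap with a two-step argument. First, one does not try to deduce the bimodule isomorphism abstractly from the $H^0$-statement; one writes down a canonical candidate morphism. Since $T$ is a two-sided tilting complex, $\Lambda_1\simeq \Hom_{\Lambda_2}(T,T)$ in the derived category of $\Lambda_1$-bimodules, so
\[
D\Lambda_1\lten_{\Lambda_1}T \;\simeq\; D\Hom_{\Lambda_2}(T,T)\lten_{\Lambda_1}T \;\simeq\; \Hom_{\Lambda_2}(T,\,T\lten_{\Lambda_2}D\Lambda_2)\lten_{\Lambda_1}T,
\]
and the evaluation morphism $\Hom_{\Lambda_2}(T,\,T\lten_{\Lambda_2}D\Lambda_2)\lten_{\Lambda_1}T\rightarrow T\lten_{\Lambda_2}D\Lambda_2$ is then a natural morphism in $\Dd^b(\Lambda_1^{\rm op}\ten\Lambda_2)$ between precisely the two objects you need to identify. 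Second, whether a morphism in $\Dd^b(\Lambda_1^{\rm op}\ten\Lambda_2)$ is an isomorphism can be tested after forgetting the left $\Lambda_1$-structure, i.e.\ in $\Dd^b(\Lambda_2)$ --- and there uniqueness of the Serre functor applies and shows the evaluation map is invertible, hence it is invertible as a map of bimodule complexes. So the Serre-functor argument you dismissed as insufficient is exactly what finishes the proof, once it is applied to verify invertibility of a concrete natural map rather than asked to produce an isomorphism out of thin air.
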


\begin{proof}
Since $\Lambda_1$ and $\Lambda_2$ are derived equivalent, there exists $T\in \Dd^b(\Lambda_1^{\rm op}\ten\Lambda_2)$ such that $$\xymatrix{\Dd^b(\Lambda_1)\ar[rr]^{-\lten_{\Lambda_1}T} && \Dd^b(\Lambda_2)}$$ is an equivalence.
By the previous proposition it is enough to check that there exists an isomorphism in $\Dd^b(\Lambda_1^{\rm op}\ten\Lambda_2)$ between $D\Lambda_1\lten_{\Lambda_1}T$ and $T\lten_{\Lambda_2}D\Lambda_2$.

Using the isomorphisms in $\Dd^b(\Lambda_1^{\rm op}\ten\Lambda_2)$
\begin{align*}
D\Lambda_1\lten_{\Lambda_1}T & \simeq D\Hom_{\Lambda_2}(T,T)\lten_{\Lambda_1}T \\
& \simeq \Hom_{\Lambda_2}(T,T\lten_{\Lambda_2}D\Lambda_2) \lten_{\Lambda_1}T,
\end{align*}
we get a natural morphism $D\Lambda_1\lten_{\Lambda_1}T\rightarrow  T\lten_{\Lambda_2}D\Lambda_2$ in $\Dd^b(\Lambda_1^{\rm op}\ten\Lambda_2)$ induced by the evaluation morphism. This morphism is clearly an isomorphism in $\Dd^b(\Lambda_2)$ by uniqueness of the Serre functor. Hence it is an isomorphism in $\Dd^b(\Lambda_1^{\rm op}\ten\Lambda_2)$.
\end{proof}

\subsection{Iterated triangulated orbit categories}
\begin{prop}\label{commutativeorbit}
Let $\Tt:=H^0(\Xx)$ be an algebraic triangulated category, and $S,T \colon \Xx\rightarrow \Xx$ be DG functors inducing equivalences on $\Tt$,  such that there is a natural isomorphism $S\circ T\simeq T\circ S$. Then there is an equivalence of DG categories $$\pretr(\pretr(\Xx/S)/T) \simeq \pretr(\pretr(\Xx/T)/S).$$
Therefore, there is a triangle equivalence: 
$$( (\Tt/S)_\Delta)/T)_\Delta \simeq ( (\Tt/T)_\Delta)/S)_\Delta.$$
\end{prop}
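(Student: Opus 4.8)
The plan is to prove the DG-level equivalence $\pretr(\pretr(\Xx/S)/T)\simeq\pretr(\pretr(\Xx/T)/S)$ and then obtain the triangle equivalence of the iterated triangulated orbit categories by applying $H^0$. First I would use the natural isomorphism $S\circ T\simeq T\circ S$ to make the constructions meaningful: since $T$ commutes with $S$, it descends to a DG endofunctor of the orbit category $\Xx/S$, and, via the induction functor from the remark following the definition of $\pretr$, it extends to a DG endofunctor (still denoted $T$) of $\pretr(\Xx/S)$ inducing an equivalence on $H^0$. Symmetrically $S$ acts on $\pretr(\Xx/T)$. This is exactly the data needed to form $\pretr(\pretr(\Xx/S)/T)$ and $\pretr(\pretr(\Xx/T)/S)$, and a DG equivalence between them yields the stated triangle equivalence after applying $H^0$ (using Definition~\ref{deforbitcat} with the respective pretriangulated enhancements).

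Next I would reduce the DG statement to a computation on the objects of $\Xx$. The orbit category construction leaves the class of objects unchanged, and $\pretr\Yy$ is generated, under shifts and mapping cones, by the representable modules, i.e.\ by the objects of $\Yy$. Tracking objects through the two steps, both $\pretr(\pretr(\Xx/S)/T)$ and $\pretr(\pretr(\Xx/T)/S)$ are therefore the pretriangulated hulls of the full DG subcategories spanned by the images of $\mathrm{ob}\,\Xx$. Since a DG isomorphism between DG categories induces a DG equivalence between their pretriangulated hulls (functoriality of $\pretr$ under the induction functor), it suffices to produce a DG isomorphism between these two full subcategories on $\mathrm{ob}\,\Xx$.

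Then I would compute the morphism complexes on $\mathrm{ob}\,\Xx$ explicitly. For $X,Y\in\Xx$, the Yoneda embedding identifies morphisms of representables in $\pretr(\Xx/S)$ with morphisms in $\Xx/S$, so unwinding the two orbit-category definitions in turn gives
\[ \Hh om^\bullet_{\pretr(\pretr(\Xx/S)/T)}(X,Y)\simeq\bigoplus_{(i,j)\in\ZZ^2}\;\underset{p,q\gg 0}{\colim}\;\Hh om^\bullet_\Xx(S^pT^qX,\,S^{p+i}T^{q+j}Y), \]
and the analogous computation for $\pretr(\pretr(\Xx/T)/S)$ produces the same expression with the roles of $(S,p,i)$ and $(T,q,j)$ interchanged. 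The natural isomorphism $S\circ T\simeq T\circ S$ induces compatible isomorphisms $S^pT^q\simeq T^qS^p$, under which the two double colimits are identified; since the indexing set $\ZZ^2$ and the filtered colimits over $p$ and over $q$ are symmetric in the two variables, this yields the required DG isomorphism of the full subcategories on $\mathrm{ob}\,\Xx$. Here I use that the colimits $\underset{p\gg 0}{\colim}$ stabilise (this is built into the definition of the DG orbit category for $\Xx=\Cc^b(\proj\Lambda)_{dg}$) and that filtered colimits commute with one another and with the degreewise finite direct sums.

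The main obstacle will be the coherence of this symmetry. One must check that conjugation by the fixed commutativity isomorphism $S\circ T\simeq T\circ S$, propagated through all powers $S^pT^q\simeq T^qS^p$, is compatible with composition of morphisms, with the differentials, and with the structure maps of both colimit systems, so that the swap is a genuine isomorphism of DG categories and not merely a degreewise bijection. The other point requiring care is the second reduction: verifying that inserting the pretriangulated hull $\pretr(\Xx/S)$ before taking the $T$-orbit does not enlarge the morphism complexes between objects of $\Xx$ beyond the $\Xx/S$-morphisms. This is precisely the fully faithfulness of the Yoneda functor $\Xx/S\to\pretr(\Xx/S)$, together with the fact that the extension of $T$ to $\pretr(\Xx/S)$ is the induction functor and hence sends representables to representables.
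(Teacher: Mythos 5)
Your proposal is correct and takes essentially the same route as the paper's proof, which is split into exactly your two steps: a first lemma identifying $(\Xx/S)/T \simeq (\Xx/T)/S$ by the same exchange of colimits and direct sums in the Hom-complexes (justified by colimits commuting with colimits and the commutativity isomorphism of $S$ and $T$), and a second lemma showing $\pretr(\pretr(\Xx/S)/T) \simeq \pretr((\Xx/S)/T)$ via fully faithfulness of the Yoneda embedding $\Xx/S \hookrightarrow \pretr(\Xx/S)$ and density (every object of the iterated hull is an iterated cone of objects of $\Xx$). Applying $H^0$ then yields the triangle equivalence, just as you describe.
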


\begin{proof}
We divide the proof into two lemmas.

\begin{lema}
Under the hypothesis of Proposition~\ref{commutativeorbit} there is
 an equivalence of DG categories $$(\Xx/S)/T\simeq (\Xx/T)/S.$$ 
\end{lema}
\begin{proof}
The functor $\colim$ is a left adjoint, hence it commutes with colimits. Therefore, since $S$ and $T$ commute, we have
\begin{align*}
\Hh om^\bullet_{(\Xx/S)/T}(X,Y) & = \underset{q}{\colim}\bigoplus_{j\geq 0}\underset{p}{\colim} \bigoplus_{i\geq 0}\Hh om^\bullet_{\Xx}(T^{q}S^pX,T^{q+j}S^{p+i}Y)\\
& \simeq  \underset{p}{\colim}\bigoplus_{i\geq 0}\underset{q}{\colim} \bigoplus_{j\geq 0}\Hh om^\bullet_{\Xx}(T^{q}S^pX,T^{q+j}S^{p+i}Y)\\
& \simeq \underset{p}{\colim}\bigoplus_{i\geq 0}\underset{q}{\colim} \bigoplus_{j\geq 0}\Hh om^\bullet_{\Xx}(S^pT^{q}X,S^{p+i}T^{q+j}Y)\\
& \simeq \Hh om^\bullet_{(\Xx/T)/S}(X,Y) \qedhere
\end{align*}
\end{proof}
We denote by $\Xx/\{S,T\}$ the DG category $(\Xx/S)/T\simeq (\Xx/T)/S$. 

\begin{lema} \label{lemma.pretr_twofunct}
Under the hypothesis of Proposition~\ref{commutativeorbit} there is an equivalence of DG categories
 $$\xymatrix{ \pretr(\Xx/\{S,T\})\ar[r]^(.45)\sim & \pretr(\pretr(\Xx/S)/T)}.$$
\end{lema}
\begin{proof}
We have a fully faithful DG functor $$\xymatrix{\Xx/S\ar@{^(->}[r] &\pretr(\Xx/S)}.$$ It induces a fully faithful DG functor $$\xymatrix{\Xx/\{S,T\}\ar@{^(->}[r] & \pretr(\Xx/S)/T\ar@{^(->}[r] & \pretr(\pretr(\Xx/S)/T)}.$$
Since the category $\pretr(\pretr(\Xx/S)/T)$ is a pretriangulated DG category, we get a fully faithful DG functor $$\xymatrix{\pretr(\Xx/\{S,T\})\ar@{^(->}[r] & \pretr(\pretr(\Xx/S)/T)}.$$ Now the objects of $\pretr(\pretr(\Xx/S)/T)$ are iterated cones of objects in $\Xx$, hence this DG functor is also dense.
Thus we get an equivalence of DG categories
\[ \xymatrix{ \pretr(\Xx/\{S,T\})\ar[r]^(.45)\sim & \pretr(\pretr(\Xx/S)/T)}. \qedhere \]
\end{proof}
We can now prove Proposition~\ref{commutativeorbit}.
Using Lemma~\ref{lemma.pretr_twofunct}  and its symmetric version (interchanging $S$ and $T$), we immediately get a DG equivalence 
$$\pretr(\pretr(\Xx/S)/T) \simeq \pretr(\pretr(\Xx/T)/S).$$ By taking the $H^0$ of each side we get the triangle equivalence
\[ ( (\Tt/S)_\Delta)/T)_\Delta \simeq ( (\Tt/T)_\Delta)/S)_\Delta. \qedhere \]
\end{proof}

\begin{cora}\label{doubleorbit}
Let $\Lambda$ be a $\ZZ$-graded algebra of global dimension~$\leq 2$ which is $\tau_2$-finite. Let $\cov{\Lambda}{\ZZ}$ be the $\ZZ$-covering of $\Lambda$, and $\SSS:=- \lten_{\cov{\Lambda}{\ZZ}} D \cov{\Lambda}{\ZZ} [-2]$. Then there is a triangle equivalence
$$\left((\Dd^b(\cov{\Lambda}{\ZZ})/\SSS)_{\Delta}/\langle 1 \rangle\right)_{\Delta}\simeq \Cc_{\Lambda}.$$
\end{cora}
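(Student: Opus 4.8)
The plan is to recognize the left-hand side as an iterated triangulated orbit category and to apply Proposition~\ref{commutativeorbit} to interchange the two orbit constructions. Set $\Xx := \Cc^b(\proj \cov{\Lambda}{\ZZ})_{dg}$, so that $\Tt := H^0(\Xx) \simeq \Dd^b(\cov{\Lambda}{\ZZ})$ by Example~\ref{exampleXY}. On $\Xx$ we have the two DG functors $S := \SS = -\lten_{\cov{\Lambda}{\ZZ}} D\cov{\Lambda}{\ZZ}[-2]$ and $T := \langle 1 \rangle$, each inducing an autoequivalence on $\Tt$. With this notation the category we want to identify is precisely $((\Tt/S)_\Delta / T)_\Delta$.

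The first thing to check is that $S$ and $T$ commute, i.e.\ that there is a natural isomorphism $\SS \circ \langle 1 \rangle \simeq \langle 1 \rangle \circ \SS$ in $\rep(\Xx, \Xx)$ as required by Proposition~\ref{commutativeorbit}. Both functors are given by tensoring with bimodules, so it suffices to produce an isomorphism of $\cov{\Lambda}{\ZZ}$-bimodules between the two corresponding twists of $D\cov{\Lambda}{\ZZ}$. The functor $\langle 1 \rangle$ restricts to the automorphism $\sigma$ of the $k$-category $\cov{\Lambda}{\ZZ}$ permuting the generating objects $\Lambda\langle p\rangle \mapsto \Lambda\langle p+1\rangle$; since $\sigma$ is an automorphism of categories we have $\Hom_{\cov{\Lambda}{\ZZ}}(\sigma Y, \sigma X) \simeq \Hom_{\cov{\Lambda}{\ZZ}}(Y,X)$ naturally, so the duality bimodule $D\cov{\Lambda}{\ZZ}$ is $\sigma$-invariant. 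This invariance is exactly the sought commutativity. It is the computational heart of the argument, and the one point requiring a little care: the isomorphism must be exhibited at the level of the bimodules (or of the projective resolution defining $\SS$), not merely on cohomology.

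Granting the commutativity, Proposition~\ref{commutativeorbit} yields a triangle equivalence
$$((\Tt/\SS)_\Delta / \langle 1 \rangle)_\Delta \simeq ((\Tt/\langle 1 \rangle)_\Delta / \SS)_\Delta.$$
I then apply Corollary~\ref{hullofgraded} to the inner orbit category on the right: since $\cov{\Lambda}{\ZZ}$ has finite global dimension, it gives a triangle equivalence $(\Tt/\langle 1 \rangle)_\Delta = (\Dd^b(\cov{\Lambda}{\ZZ})/\langle 1 \rangle)_\Delta \simeq \Dd^b(\Lambda)$, compatible with the forgetful functor $\Dd^b(\cov{\Lambda}{\ZZ}) \to \Dd^b(\Lambda)$.

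It remains to identify the autoequivalence that $\SS$ induces on $\Dd^b(\Lambda)$ through this equivalence, and I expect this to be the main (conceptual) obstacle. The forgetful functor intertwines the relative Serre functors on both sides, since the Serre bimodule $D\cov{\Lambda}{\ZZ}$ forgets to $D\Lambda$; hence the functor induced on $\Dd^b(\Lambda)$ is a lift of $\mathbb{S}[-2]$. By uniqueness of the Serre functor on $\Dd^b(\Lambda)$ the induced autoequivalence is therefore precisely $\SS = \mathbb{S}[-2]$ of $\Dd^b(\Lambda)$. Consequently the right-hand side is $(\Dd^b(\Lambda)/\SS)_\Delta$, which is $\Cc_\Lambda$ by its definition (Example~\ref{defclustercat}). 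Chaining the equivalences gives the claimed triangle equivalence $((\Dd^b(\cov{\Lambda}{\ZZ})/\SS)_\Delta / \langle 1 \rangle)_\Delta \simeq \Cc_\Lambda$.
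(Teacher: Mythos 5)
Your proposal is correct and follows essentially the same route as the paper: the paper also applies Proposition~\ref{commutativeorbit} to $\Xx=\Cc^b(\proj \cov{\Lambda}{\ZZ})_{dg}$ with $S=-\ten_{\cov{\Lambda}{\ZZ}}P$ and $T=\langle 1\rangle$, and then identifies $((\Tt/T)_\Delta/S)_\Delta$ with $\Cc_\Lambda$. The only difference is one of exposition: the paper declares the commutation $S\circ T\simeq T\circ S$ to be clear and leaves the final identification implicit, whereas you spell out both the $\sigma$-invariance of the bimodule $D\cov{\Lambda}{\ZZ}$ and the use of Corollary~\ref{hullofgraded} together with the identification of the induced functor on $\Dd^b(\Lambda)$.
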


\begin{proof}
We apply Proposition~\ref{commutativeorbit} for
$\Xx:= \Cc^b(\proj \cov{\Lambda}{\ZZ})_{dg}$, $S:=-\otimes_{\cov{\Lambda}{\ZZ}} P$, and $T:=\langle 1 \rangle$, where $P$ is a bimodule projective resolution of $D \cov{\Lambda}{\ZZ} [-2]$.

Then we clearly have $S\circ T\simeq T\circ S.$
Thus we get
\[ \left((\Dd^b(\cov{\Lambda}{\ZZ})/\SSS)_{\Delta}/\langle 1 \rangle\right)_{\Delta}\simeq H^0(\pretr(\pretr(\Xx/S)/T)) \simeq H^0(\pretr(\pretr(\Xx/T)/S))\simeq \Cc_\Lambda. \qedhere \]  
\end{proof}

\section{Graded derived equivalence for cluster equivalent algebras} \label{section_gr_der_eq}

\subsection{Graded version of results of Section~\ref{section_left_mutation}}

In this section we generalize the previous results to the case of a graded algebra $\Lambda$.

Let $\Lambda$ be a $\mathbb{Z}$-graded algebra of global dimension~$\leq 2$ which is $\tau_2$-finite. We denote by $\cov{\Lambda}{\ZZ}$ its $\ZZ$-covering. The functor $-\lten_{\cov{\Lambda}{\ZZ}}D \cov{\Lambda}{\ZZ} [-2]$ is an autoequivalence of $\Dd^b(\cov{\Lambda}{\ZZ})$ that we will denote by $\SSS$ by  abuse of notation. We denote by $\pi_{\Lambda}^{\ZZ}$ the composition
$$\xymatrix{\pi^\ZZ_\Lambda \colon \Dd^b(\cov{\Lambda}{\ZZ})\ar[r] & \Dd^b(\Lambda)\ar[r]^{\pi_\Lambda} & \Cc_\Lambda}.$$
This graded version of Proposition~\ref{preimage} is not hard to check:
\begin{prop}\label{graded2clustertilting} 
Let $\Lambda$ be a $\ZZ$-graded algebra which is $\tau_2$-finite and of global dimension~$\leq 2$. Let $T$ be a cluster-tilting object in $\Cc_\Lambda$. The subcategory $(\pi_{\Lambda}^\ZZ)^{-1}(T)$ is cluster-tilting subcategory of $\Dd^b(\cov{\Lambda}{\ZZ})$. 

In particular $\add\{\SSS^p\Lambda\langle q\rangle \mid p,q\in \ZZ\}=(\pi^\ZZ_\Lambda)^{-1}(\pi_\Lambda(\Lambda))$ is a cluster-tilting subcategory of $\Dd^b(\cov{\Lambda}{\ZZ})$.
\end{prop}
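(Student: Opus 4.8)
The plan is to mirror the proof of Proposition~\ref{preimage}, the only genuinely new ingredient being a morphism formula for $\Cc_\Lambda$ that keeps track of the extra grading. Write $\Dd^\ZZ:=\Dd^b(\cov{\Lambda}{\ZZ})$ and recall that $\pi^\ZZ_\Lambda$ is a triangle functor. By Corollary~\ref{doubleorbit} the category $\Cc_\Lambda$ is the triangulated hull of the double orbit of $\Dd^\ZZ$ under the two commuting autoequivalences $\SS$ and $\langle 1\rangle$, and $\pi^\ZZ_\Lambda$ is the associated projection. Since the orbit category embeds fully faithfully into its triangulated hull, the standard computation of morphisms in a triangulated orbit category (recalled in Section~\ref{section_triang_orbit}) gives, for all $Y,Y'\in\Dd^\ZZ$, a functorial isomorphism
\[
\Hom_{\Cc_\Lambda}(\pi^\ZZ_\Lambda Y,\pi^\ZZ_\Lambda Y')\simeq\bigoplus_{p,q\in\ZZ}\Hom_{\Dd^\ZZ}(Y,\SS^pY'\langle q\rangle)\qquad(\star)
\]
where the sum is finite because $\Dd^\ZZ$ is $\Hom$-finite and $\Cc_\Lambda$ is $\Hom$-finite. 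This is the graded analogue of the formula used in Proposition~\ref{preimage}, now running over the $\ZZ^2$-orbit.

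The two ``easy'' inclusions are then formal. Note first that $(\pi^\ZZ_\Lambda)^{-1}(T)$ is stable under $\SS$ and under $\langle 1\rangle$, since both become the identity after projecting to $\Cc_\Lambda$. Now if $Y,Y'\in(\pi^\ZZ_\Lambda)^{-1}(T)$ then $\pi^\ZZ_\Lambda Y,\pi^\ZZ_\Lambda Y'\in\add T$, which is rigid, so $\Hom_{\Cc_\Lambda}(\pi^\ZZ_\Lambda Y,\pi^\ZZ_\Lambda Y'[1])=0$; reading off the summand $(p,q)=(0,0)$ in $(\star)$ gives $\Hom_{\Dd^\ZZ}(Y,Y'[1])=0$. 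This shows
\[
(\pi^\ZZ_\Lambda)^{-1}(T)\subseteq\{Y\mid\Hom_{\Dd^\ZZ}(Y,(\pi^\ZZ_\Lambda)^{-1}(T)[1])=0\},
\]
and symmetrically the corresponding inclusion on the other side.

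For the reverse inclusions I would first lift $T$ to the graded derived category: choose $\tilde T\in\Dd^\ZZ$ with $\pi^\ZZ_\Lambda\tilde T\simeq T$, so that $(\pi^\ZZ_\Lambda)^{-1}(T)=\add\{\SS^p\tilde T_i\langle q\rangle\mid i,\ p,q\in\ZZ\}$ as $\tilde T_i$ ranges over the indecomposable summands of $\tilde T$. Given $Y$ with $\Hom_{\Dd^\ZZ}((\pi^\ZZ_\Lambda)^{-1}(T),Y[1])=0$, every summand of the right-hand side of $(\star)$ applied to $\Hom_{\Cc_\Lambda}(\pi^\ZZ_\Lambda\tilde T_i,\pi^\ZZ_\Lambda Y[1])$ has the form $\Hom_{\Dd^\ZZ}(\SS^a\tilde T_i\langle b\rangle,Y[1])$, hence vanishes by hypothesis. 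Thus $\Hom_{\Cc_\Lambda}(T_i,\pi^\ZZ_\Lambda Y[1])=0$ for every summand $T_i$ of $T$, and since $T$ is cluster-tilting in $\Cc_\Lambda$ this forces $\pi^\ZZ_\Lambda Y\in\add T$, i.e.\ $Y\in(\pi^\ZZ_\Lambda)^{-1}(T)$; the other inclusion is identical. Functorial finiteness holds because $(\pi^\ZZ_\Lambda)^{-1}(T)$ is the $\SS$- and $\langle 1\rangle$-stable additive closure of the finitely many $\tilde T_i$, so approximations of an arbitrary object are assembled from the approximation triangles of Proposition~\ref{approximation}.

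The ``in particular'' statement is the case $T=\pi_\Lambda(\Lambda)$: here the canonical lift is $\Lambda$ itself regarded as a graded module, whence $(\pi^\ZZ_\Lambda)^{-1}(\pi_\Lambda\Lambda)=\add\{\SS^p\Lambda\langle q\rangle\mid p,q\in\ZZ\}$, as claimed. The step I expect to require the most care in the general case is precisely the first reduction above, namely the existence of a graded lift $\tilde T\in\Dd^\ZZ$ of the cluster-tilting object $T$ (the graded counterpart of the lifting result of \cite{AO2} invoked in Proposition~\ref{preimage}). Without gradability of $T$ the summands of $\pi_\Lambda^{-1}(T)\subset\Dd^b(\Lambda)$ need not come from $\Dd^\ZZ$, and the bridge between ungraded and graded $\Hom$-spaces via $(\star)$ breaks down. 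For the canonical object $\pi_\Lambda(\Lambda)$ — which is what is actually used in the sequel — this obstacle disappears, since $\Lambda$ is visibly gradable.
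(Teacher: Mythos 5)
The paper never writes out a proof of Proposition~\ref{graded2clustertilting}: it is dismissed with ``this graded version of Proposition~\ref{preimage} is not hard to check'', so the only fair comparison is with that intended sketch. Your argument is precisely that intended proof, and the details you supply are sound: the formula $(\star)$ does follow from Corollary~\ref{doubleorbit} together with the full faithfulness of an orbit category in its triangulated hull (alternatively, one can combine the formula $\Hom_{\Dd^b(\Lambda)}(FY,FY')\simeq\bigoplus_{q}\Hom_{\Dd^\ZZ}(Y,Y'\langle q\rangle)$ for the forgetful functor $F \colon \Dd^b(\cov{\Lambda}{\ZZ})\rightarrow\Dd^b(\Lambda)$ of Corollary~\ref{hullofgraded} with the usual orbit formula for $\pi_\Lambda$); the two easy inclusions, and the deduction $\Hom_{\Cc_\Lambda}(T_i,\pi^\ZZ_\Lambda Y[1])=0$ for all $i$ $\Rightarrow$ $\pi^\ZZ_\Lambda Y\in\add T$, are the exact graded transcription of the proof of Proposition~\ref{preimage}. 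One small repair: functorial finiteness should not be justified by appeal to Proposition~\ref{approximation}, which presupposes a cluster-tilting subcategory and is therefore circular here; it follows instead from $(\star)$ itself, since $\bigoplus_{p,q}\Hom_{\Dd^\ZZ}(X,\SS^p\tilde T\langle q\rangle)\simeq\Hom_{\Cc_\Lambda}(\pi^\ZZ_\Lambda X,T)$ is finite dimensional, so summing the finitely many indecomposables with nonzero Hom-spaces produces right (and dually left) approximations.

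The gap you flag is genuine, and it is the right one to flag. The statement is asserted for an arbitrary cluster-tilting object $T\in\Cc_\Lambda$, but \cite{AO2} only produces a lift of $T$ in $\Dd^b(\Lambda)$, and an object of $\Dd^b(\Lambda)$ need not be gradable, i.e.\ need not lie in the image of $\Dd^b(\cov{\Lambda}{\ZZ})$, which by Corollary~\ref{hullofgraded} is only an orbit category sitting inside its triangulated hull. The apparent way around this --- apply Proposition~\ref{preimage} to get that $\Vv=\pi_\Lambda^{-1}(T)$ is cluster-tilting in $\Dd^b(\Lambda)$ and pull $\Vv$ back along the forgetful functor --- stalls at exactly the same point, because verifying $\Hom_{\Dd^b(\Lambda)}(\Vv,FY[1])=0$ requires the objects of $\Vv$ to be gradable. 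In fact, granting the graded analogue of Proposition~\ref{prop_image_ct_again} (whose proof does go through without any lifting), the validity of the proposition for all $T$ essentially forces every cluster-tilting object of $\Cc_\Lambda$ to admit a graded lift, so no proof can avoid this ingredient; it is a real omission hidden behind the paper's ``not hard to check''. The gap is, however, harmless for everything the paper does: the ``in particular'' statement and the sole application in the proof of Theorem~\ref{maintheorem} (where $T^2_0$ is constructed inside $\Dd^b(\cov{\Lambda_2}{\ZZ})$ by graded mutation) concern objects that come equipped with an explicit graded lift, and for such $T$ your proof is complete.
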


Here is the graded version of Proposition~\ref{Zgrading}.

\begin{prop}\label{Z2grading}
Let $Q$ be a $\ZZ$-graded quiver, and $I\subset kQ$ be an admissible ideal which is generated by homogeneous elements and such that the algebra $\Lambda=kQ/I$ is of global dimension~$\leq 2$ and $\tau_2$-finite. Denote by $(\widetilde{Q}, W)$ the quiver with potential defined in Theorem~\ref{keller}. 
Then there exists a unique $\ZZ^2$-grading on $\widetilde{Q}$ such that
\begin{enumerate}
\item the potential $W$ is homogeneous of degree $(1,1)$;
\item there is an isomorphism of $\ZZ$-graded quivers $\xymatrix{\widetilde{Q}^{\{0\}\times\ZZ}\ar[r]^(.6)\sim_(.6){\ZZ} & Q}$.
\end{enumerate}
This grading on $\widetilde{Q}$ yields a grading on $\Jac(\widetilde{Q},W)$ and we have an isomorphism of $\ZZ^2$-graded algebras
$$\xymatrix{\Jac(\widetilde{Q},W)\ar[r]^(.3)\sim_(.3){\ZZ^2} &\bigoplus_{p,q\in\ZZ}\Hom_{\Dd^\ZZ}(\Lambda\langle 0\rangle,\SSS^{-p}\Lambda\langle -p+q\rangle)}.$$
\end{prop}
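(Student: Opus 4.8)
The plan is to write down the grading explicitly and then to bootstrap the desired $\ZZ^2$-graded isomorphism from the ungraded isomorphism of Theorem~\ref{keller} together with the first-coordinate statement already contained in Proposition~\ref{Zgrading}. Since $I$ is generated by homogeneous elements, the minimal relations $r_i$ may be taken homogeneous; write $\delta_i=\deg r_i$ for the degree of $r_i$ in the given grading of $Q$. I would assign to each arrow $a\in Q_1$ the degree $(0,d(a))$, where $d$ is the original grading of $Q$, and to each new arrow $a_i$ the degree $(1,1-\delta_i)$. Then each term $a_ir_i$ of $W=\sum_i a_ir_i$ has degree $(1,1-\delta_i)+(0,\delta_i)=(1,1)$, so $W$ is homogeneous of degree $(1,1)$, which is~(1); and (2) holds by construction. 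For uniqueness, (2) forces every arrow of $Q$ to have first degree $0$ with second degree prescribed by the grading of $Q$; since each $a_i$ is not an arrow of $Q$ it must have nonzero first degree, and homogeneity of $W$ in the first coordinate (degree $(1,1)$) forces this to be $1$, after which homogeneity in the second coordinate forces $d(a_i)=(1,1-\delta_i)$. As $W$ is homogeneous, the Jacobian ideal $\langle\partial_a W\rangle$ is homogeneous, so the grading descends to $\Jac(\widetilde{Q},W)$.

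For the isomorphism I would pass to the graded enhancement $\Dd^\ZZ:=\Dd^b(\cov{\Lambda}{\ZZ})\simeq\Dd^b(\gr\Lambda)$ (Example~\ref{exampleXY}), on which $\SS$ and the grading shift $\langle 1\rangle$ commute; consequently the right-hand side $\bigoplus_{p,q}\Hom_{\Dd^\ZZ}(\Lambda\langle0\rangle,\SS^{-p}\Lambda\langle -p+q\rangle)$ is a $\ZZ^2$-graded algebra under composition, its first grading being the $\SS$-grading of Proposition~\ref{Zgrading} and its second grading the internal one. Forgetting the grading (via the functor of Corollary~\ref{hullofgraded}, equivalently $\mod\cov{\Lambda}{\ZZ}\simeq\gr\Lambda$ of \cite{GM}) decomposes each space as $\Hom_{\Dd^b(\Lambda)}(\Lambda,\SS^{-p}\Lambda)=\bigoplus_q\Hom_{\Dd^\ZZ}(\Lambda\langle0\rangle,\SS^{-p}\Lambda\langle -p+q\rangle)$ and identifies the ungraded algebra with $\widetilde{\Lambda}$. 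Hence the isomorphism $\Jac(\widetilde{Q},W)\simeq\widetilde{\Lambda}$ of Theorem~\ref{keller}, which respects the first grading by Proposition~\ref{Zgrading}, will automatically be $\ZZ^2$-graded once I check that it carries each arrow of $\widetilde{Q}$ into the homogeneous component prescribed by the degree above; since $\Jac(\widetilde{Q},W)$ is generated as an algebra by its arrows (the vertices being degree $(0,0)$ idempotents), this suffices.

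It thus remains to compute the second degree of the generators. For $a\in Q_1$ this is immediate: $a$ is represented by a graded module map of internal degree $d(a)$, i.e.\ an element of $\Hom_{\Dd^\ZZ}(\Lambda\langle0\rangle,\Lambda\langle d(a)\rangle)$, matching $(p,q)=(0,d(a))$. The arrow $a_i$ is sent to the class of the relation $r_i$ in $\Hom_{\Dd^b(\Lambda)}(\Lambda,\SS^{-1}\Lambda)$, and the main obstacle is to verify that this class is homogeneous of internal degree $-\delta_i$, so that it lies in $\Hom_{\Dd^\ZZ}(\Lambda\langle0\rangle,\SS^{-1}\Lambda\langle-\delta_i\rangle)$ and hence in the $(1,1-\delta_i)$ component as required. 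I expect this sign reversal — which is exactly what the reindexing $\langle -p+q\rangle$ records — to emerge from tracking the internal grading through the Serre functor $\mathbb{S}=-\lten D\Lambda$: since minimal relations of degree $\delta_i$ correspond to $\Ext^2_\Lambda$-classes of internal degree $\delta_i$, and Serre duality applied to $\SS^{-1}=\mathbb{S}^{-1}[2]$ dualizes internal degrees, the relation class acquires internal degree $-\delta_i$. Making this single computation precise — following the grading through the bimodule resolution of $D\cov{\Lambda}{\ZZ}$ defining $\SS$ — is the only genuinely technical point; all the remaining compatibilities are forced by the ungraded case together with Proposition~\ref{Zgrading}.
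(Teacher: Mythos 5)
Your construction of the $\ZZ^2$-grading and your uniqueness argument coincide with the paper's and are complete: condition (2) forces every arrow $a$ of $Q$ to have degree $(0,\deg a)$ and every new arrow to have nonzero first degree, and then homogeneity of $W$ in each coordinate forces $d(a_i)=(1,1-\delta_i)$; homogeneity of $W$ also makes the Jacobian ideal homogeneous. Your strategy for the graded isomorphism is likewise the paper's: identify $\widetilde{\Lambda}$ with $\bigoplus_{p,q}\Hom_{\Dd^\ZZ}(\Lambda\langle 0\rangle,\SS^{-p}\Lambda\langle -p+q\rangle)$, observe that the isomorphism of Theorem~\ref{keller} need only be checked on the arrows, and note that arrows of $Q$ visibly land in bidegree $(0,\deg a)$.

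However, the single step that carries the content of the proposition --- that the class of the minimal relation $r_i$ lies in $\Hom_{\Dd^\ZZ}(\Lambda\langle 0\rangle,\SS^{-1}\Lambda\langle -\delta_i\rangle)$, hence in bidegree $(1,1-\delta_i)$ --- is exactly the step you leave unproven, deferring it as ``the only genuinely technical point.'' That is a genuine gap: everything else is formal bookkeeping, as you yourself observe, so without this verification nothing beyond Proposition~\ref{Zgrading} has been established. The paper closes this gap with a short direct computation that does not pass through any bimodule resolution of $D\cov{\Lambda}{\ZZ}$: the minimal relation $r$, being homogeneous of degree $\delta_i$, corresponds to a class in $\Ext^2_{\gr\Lambda}(S_i,S_j\langle -\delta_i\rangle)$ (with $i=t(r)$, $j=s(r)$); the shift $\langle -\delta_i\rangle$ appears already here, because in the graded projective resolution of $S_i$ the summand attached to $r$ is $P_{s(r)}\langle -\delta_i\rangle$ --- relations correspond to $\Ext^2$-classes between simples by a duality --- and such a graded $\Ext^2$-class is by definition an element of $\Hom_{\Dd^\ZZ}(\Lambda\langle 0\rangle,\SS^{-1}\Lambda\langle -\delta_i\rangle)=\Hom_{\Dd^\ZZ}(\Lambda\langle 0\rangle,\SS^{-1}\Lambda\langle -1+(1-\delta_i)\rangle)$. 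Note that this also corrects the mechanism you propose: the sign reversal does not arise from Serre duality ``dualizing internal degrees'' (no degree reversal occurs in passing from graded $\Ext^2$ to the $\Hom$-space into $\SS^{-1}\Lambda$); rather, your intermediate assertion that the $\Ext^2$-class has internal degree $+\delta_i$ is the convention-dependent point at which the sketch goes astray, and pinning it down is precisely the computation you postponed.
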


\begin{proof}
There are two kinds of arrows in the quiver $\widetilde{Q}$: arrows of $Q$ and arrows coming from minimal relations. By $(2)$ any arrow $a$ coming from an arrow of $Q$ has to be of degree $(0,{\rm deg}(a))$. Let $r$ be a minimal relation in $I$. By definition ${\rm deg}(r)$ is well defined. Then by Condition~(1) since $ra_r$ is a term of $W$, the degree of $a_r$ has to be $(1,1-{\rm deg}(r))$. Hence we have existence and uniqueness of such a grading.

We have the following isomorphisms:
$$\widetilde{\Lambda}:=\End_{\Cc_\Lambda}(\pi(\Lambda))\simeq \End_{\Cc_\Lambda}(\pi^\ZZ(\Lambda\langle 0 \rangle))\simeq \bigoplus_{p,q\in\ZZ}\Hom_{\Dd^\ZZ}(\Lambda\langle 0\rangle,\SSS^{-p}\Lambda\langle -p+q\rangle).$$
Since $\Jac(\widetilde{Q},W)\simeq\widetilde{\Lambda}$ by Theorem~\ref{keller}, we just have to check that it respects the gradings previously defined. Let $a$ be an arrow of the quiver $Q$. It can be seen as an element of $\Hom_{\Dd^\ZZ}(\Lambda\langle 0 \rangle,\Lambda \langle {\rm deg}(a)\rangle)$ so as an element of degree $(0,{\rm deg}(a))$ of the algebra $\widetilde{\Lambda}=\bigoplus_{p,q\in\ZZ}\Hom_{\Dd^\ZZ}(\Lambda\langle 0\rangle,\SSS^{-p}\Lambda\langle -p+q\rangle).$ Now let $r$ be a minimal relation in $I$, and $a_r$ be the corresponding arrow in $\widetilde{Q}$. The minimal relation $r$ corresponds to an element of $\Ext^2_\Lambda(S_i,S_j\langle -{\rm deg}(r)\rangle)$ where $s(r)=j$ and $t(r)=i$. Hence it is an element in 
$$\Hom_{\Dd^\ZZ}(\Lambda\langle 0 \rangle, \SSS^{-1}\Lambda\langle -{\rm deg}(r)\rangle) = \Hom_{\Dd^\ZZ}(\Lambda\langle 0 \rangle, \SSS^{-1}\Lambda\langle -1+(-{\rm deg}(r)+1)\rangle)$$
so an element of degree $(1,-{\rm deg}(r)+1)$ in $\widetilde{\Lambda}$. Hence the isomorphism $\Jac(\widetilde{Q},W,d)\simeq\widetilde{\Lambda}$ given by Theorem~\ref{keller} is an isomorphism of $\ZZ^2$-graded algebras.
\end{proof}

Let $\Lambda$ be a $\ZZ$-graded algebra of global dimension
$\leq 2$ which is $\tau_2$-finite. Let $T$ be an object in $\Dd^b(\cov{\Lambda}{\ZZ})$ such that $\pi^\ZZ(T)$ is a
(basic) cluster-tilting object in $\Cc_\Lambda$. The endomorphism algebra $$\End_\Cc(\pi^\ZZ(T))=\bigoplus_{p,q\in
  \mathbb{Z}}\Hom_{\Dd^\ZZ}(T,\SSS^{-p}T\langle -p+q\rangle)$$ is naturally $\ZZ^2$-graded. 

Let $T_i$ be an indecomposable summand of $T\simeq T_i\oplus T'$.
We denote by $\Uu_i$ the additive subcategory $\add\{\SSS^p
T'\langle q\rangle \mid p,q\in \mathbb{Z}\}$ of $\Dd^b(\cov{\Lambda}{\ZZ})$. Consider a triangle in
$\Dd^b(\cov{\Lambda}{\ZZ})$
$$\xymatrix{T_i\ar[r]^u & B\ar[r]^v & T_i^L\ar[r]^w & T_i[1]}$$
where $u \colon T_i\rightarrow B$ is a
minimal left $\Uu_i$-approximation of $T_i$. We call this triangle the \emph{graded left exchange triangle} associated with $T_i$. We write $\mu_i^L(T):=T'\oplus T_i^L$. 
Since the image of the graded left exchange triangle in $\Dd^b(\cov{\Lambda}{\ZZ})$ is an exchange triangle in $\Cc_\Lambda$, the object $\pi^\ZZ(T'\oplus T^L_i)$ is cluster-tilting in $\Cc_\Lambda$.

It is also possible to consider the \emph{graded right exchange triangle} associated with $T_i$:
$$\xymatrix{T_i^R\ar[r]^{u'} & B'\ar[r]^{v'} & T_i\ar[r] & T_i^R[1]}.$$

\begin{thma}\label{Z2gradedbirs}
Let $\Lambda=kQ/I$ be a graded $\tau_2$-finite algebra of global dimension~$\leq 2$, and denote by $(\widetilde{Q},W,d)$ the $\ZZ^2$-graded QP defined in Proposition~\ref{Z2grading}. Assume that there exists a sequence $i_1,i_2,\ldots,i_l$ of vertices of $\widetilde{Q}$ such that for any $j=0,\ldots,l$ there is no $2$-cycle on the vertex $i_{j+1}$ in the quiver $Q^{j}$ where $(Q^j,W^j):=\mu_{i_{j}}\circ\cdots\circ \mu_{i_1}(\widetilde{Q},W)$. Denote by $T$ the object in $\Dd^\ZZ:=\Dd^b(\cov{\Lambda}{\ZZ})$ defined by $T:=\mu^L_{i_l}\circ\cdots\circ\mu_{i_1}^L(\Lambda\langle 0\rangle)$. Then there is an isomorphism of $\ZZ^2$-graded algebras
 $$\xymatrix{\bigoplus_{p\in\mathbb{Z}}\Hom_{\Dd^\ZZ}(T,\SSS^{-p}T\langle -p+q\rangle)\ar[r]^-\sim_-{\ZZ^2}& \Jac(\mu_{i_l}^L\circ\cdots\circ\mu_{i_1}^L(\widetilde{Q},W,d)).}$$  
\end{thma}

\begin{proof}
The proof is very similar to the proof of Theorem~\ref{Zgradedbirs}. We briefly outline the last step, which is a bit more technical.

Let $b \colon r\rightarrow i_{j+1}$ be an arrow in $Q^j$ and denote by $(x,y)$ the degree of $b$. We consider the graded right exchange triangle in $\Dd^b(\cov{\Lambda}{\ZZ})$
\[ \xymatrix{(T^j_{i_{j+1}})^R\ar[r]^-{u'} & B'\ar[r]^-{v'} & T^j_{i_{j+1}}\ar[r] & (T^j_{i_{j+1}})^R[1]}, \] where $T^j_{i_{j+1}}$ denotes the summand of $T^j:=\mu_{i_j}^L\circ\ldots\circ\mu_{i_1}^L(\Lambda\langle 0\rangle)\in\Dd^b(\cov{\Lambda}{\ZZ})$ corresponding to the vertex $i_{j+1}$.
Then $\SSS^{x}T^j_r\langle x-y\rangle$ is a direct summand of $B'$,
and the reverse arrow $b^*$ corresponds to the component $(T^j_{i_{j+1}})^R \to \SSS^{x}T^j_r\langle x-y\rangle$. Since, by Proposition~\ref{tirtil}, we have $(T^j_{i_{j+1}})^R \simeq \SSS (T^j_{i_{j+1}})^L$ we obtain that $b^*$
corresponds to a map
$$\xymatrix{(T^j_{i_{j+1}})^L\ar[r] & \SSS^{x-1}T^j_r\langle x-1-(y - 1)\rangle},$$
thus to a map of degree $(1-x,1-y)$. 
\end{proof}

\subsection{Graded derived equivalence}

In this section we generalize Theorem~\ref{derivedeq1} to the setup where the algebras are not graded equivalent. In this setup, we will not get a derived equivalence between the algebras $\Lambda_1$ and $\Lambda_2$, but a derived equivalence between their coverings $\cov{\Lambda_1}{\ZZ}$ and $\cov{\Lambda_2}{\ZZ}$, for suitable gradings on them.

In order to do that, we will use a graded version of the recognition theorem (Theorem~\ref{clustertilt}). The proof of this theorem is very similar to the proof of Theorem~\ref{clustertilt}.

\begin{thma}\label{Zclustertilt}
Let $\Tt$ be an algebraic triangulated category with a Serre functor and with a cluster-tilting subcategory $\Vv$. Let $\Lambda$ be a $\tau_2$-finite algebra with global dimension~$\leq 2$, and with a $\ZZ$-grading. Denote by $\Uu$ the cluster-tilting subcategory $\add\{\SSS^{-p}\Lambda\langle -p+q\rangle \mid p,q\in \mathbb{Z}\}$ of $\Dd^b(\cov{\Lambda}{\ZZ})$. Assume that there is an equivalence of additive categories with $\SSS$-action  $\xymatrix{f \colon \Uu \ar[r]^\sim & \Vv}$. Then there exists a triangle equivalence $F \colon \Dd^b(\cov{\Lambda}{\ZZ})\rightarrow \Tt$ such that the following diagram commutes
$$\xymatrix{\Dd^b(\cov{\Lambda}{\ZZ})\ar[r]^-F & \Tt\\ \Uu\ar[r]^f \ar@{^(->}[u] & \Vv\ar@{^(->}[u]}$$
\end{thma}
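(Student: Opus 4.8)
The plan is to follow the proof of Theorem~\ref{clustertilt} almost verbatim, the one essential change being that the single tilting object $\Lambda$ is replaced by the tilting \emph{subcategory} $\add\{\Lambda\langle q\rangle \mid q\in\ZZ\}=\proj\cov{\Lambda}{\ZZ}$. Note first that $\Uu=\add\{\SS^{-p}\Lambda\langle -p+q\rangle\mid p,q\in\ZZ\}=\add\{\SS^p\Lambda\langle q\rangle\mid p,q\in\ZZ\}$ after the obvious reindexing, which by Proposition~\ref{graded2clustertilting} is indeed a cluster-tilting subcategory of $\Dd^b(\cov{\Lambda}{\ZZ})$. Since $\Lambda$ is $\tau_2$-finite, the graded algebra $\widetilde{\Lambda}$ is finite-dimensional, so $\Uu$ is locally finite, and hence so is $\Vv\simeq\Uu$. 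Thus the categories $\mor\Uu$ and $\mor\Vv$, the autobijections $\Sigma,\Sigma^-$, and the map $\textsf{Cone}\colon\mor\Vv\to\Tt$ are all defined exactly as in the proof of Theorem~\ref{clustertilt}, and Lemma~\ref{cone} holds unchanged: its proof uses only that $\Vv$ is a locally finite cluster-tilting subcategory stable under $\SS$, and nothing specific to $\Lambda$. The equivalence $f$ then transports the entire $\mor$-formalism on the $\Uu$-side to the one on the $\Vv$-side.

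Next I would establish the graded analog of Lemma~\ref{isomorphism}: for every $u\in\mor\Uu$, every $q\in\ZZ$ and every $p\in\ZZ$,
$$\Hom_\Tt(f(\Lambda\langle q\rangle),\textsf{Cone}(fu)[p])\simeq\Hom_{\Dd^b(\cov{\Lambda}{\ZZ})}(\Lambda\langle q\rangle,\textsf{Cone}(u)[p]).$$
The proof is word-for-word that of Lemma~\ref{isomorphism} with $\Lambda$ replaced by $\Lambda\langle q\rangle$: one takes a $\Uu$-approximation triangle of $\textsf{Cone}(u)[p]$, applies $f$, and uses that $f\Uu=\Vv$ is cluster-tilting (so $\Hom_\Tt(f(\Lambda\langle q\rangle),fU_1^p[1])$ vanishes) together with the full faithfulness of $f$. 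Taking $u=[0\to\Lambda\langle q'\rangle]$ yields
$$\Hom_\Tt(f(\Lambda\langle q\rangle),f(\Lambda\langle q'\rangle)[p])\simeq\Hom_{\Dd^b(\cov{\Lambda}{\ZZ})}(\Lambda\langle q\rangle,\Lambda\langle q'\rangle[p]),$$
which vanishes for $p\neq0$ since the $\Lambda\langle q'\rangle$ are projective in $\gr\Lambda$. Hence $\{f(\Lambda\langle q\rangle)\mid q\in\ZZ\}$ is a tilting subcategory of $\Tt$ whose graded endomorphism category is $\cov{\Lambda}{\ZZ}$, and the latter has global dimension $\leq2$ because $\gldim\cov{\Lambda}{\ZZ}=\gldim\gr\Lambda=\gldim\Lambda$.

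I would then invoke the tilting-\emph{subcategory} version of Keller's theorem (\cite{tiltingKeller}) to obtain a triangle equivalence $\Dd^b(\cov{\Lambda}{\ZZ})\simeq\thick_\Tt(f(\proj\cov{\Lambda}{\ZZ}))$ sending $\Lambda\langle q\rangle$ to $f(\Lambda\langle q\rangle)$. That this thick subcategory exhausts $\Tt$ is shown exactly as at the end of the proof of Theorem~\ref{clustertilt}: if $Y\in\Tt$ satisfies $\Hom_\Tt(f(\Lambda\langle q\rangle),Y[p])=0$ for all $p,q$, form an approximation triangle $\xymatrix@-.5cm{V_1\ar[r]&V_0\ar[r]&Y\ar[r]&V_1[1]}$, lift the left map through $f$ to $u\in\mor\Uu$ with cone $X$, and deduce from the graded isomorphism above that $\Hom_{\Dd^b(\cov{\Lambda}{\ZZ})}(\Lambda\langle q\rangle,X[p])=0$ for all $p,q$; since $\proj\cov{\Lambda}{\ZZ}$ generates and $\cov{\Lambda}{\ZZ}$ has finite global dimension, $X=0$, whence $U_0=U_1=0$ and $Y=0$. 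Commutativity of the square is then automatic: $F$ sends $\Lambda\langle q\rangle$ to $f(\Lambda\langle q\rangle)$ and commutes with $\SS$ by uniqueness of the Serre functor, so $F$ and $f$ agree on every generator $\SS^p\Lambda\langle q\rangle$ of $\Uu$, hence on all of $\Uu$.

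The main obstacle, and essentially the only genuinely new point compared with Theorem~\ref{clustertilt}, is that the tilting object is now an infinite tilting subcategory rather than a single object: one must therefore appeal to the DG Morita theory for tilting subcategories (not merely for a tilting object) and check that $\cov{\Lambda}{\ZZ}$ is locally bounded of global dimension $\leq2$. Once this is in place, every lemma from Section~\ref{section.recognition} transfers mechanically.
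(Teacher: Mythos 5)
Your proposal is correct and is exactly the argument the paper intends: for Theorem~\ref{Zclustertilt} the paper gives no separate proof, stating only that it is ``very similar to the proof of Theorem~\ref{clustertilt}'', and your adaptation --- replacing the tilting object $\Lambda$ by the tilting subcategory $\proj\cov{\Lambda}{\ZZ}$, rerunning the $\mor$/\textsf{Cone} formalism and Lemma~\ref{isomorphism} verbatim, and invoking Keller's derived Morita theory for a (locally bounded, finite global dimension) tilting subcategory rather than a single tilting object --- is precisely the intended fleshing-out, including correctly identifying the DG Morita step for subcategories as the only genuinely new ingredient.
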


For the statement of the main theorem of this section, we will need this technical definition.

\begin{dfa}
Let $\Lambda_1$ and $\Lambda_2$ be two algebras of global dimension~$\leq 2$ which are $\tau_2$-finite. For $j=1,2$, we denote by $\pi_j$ the canonical functor $\pi_j \colon \Dd_j\rightarrow \Cc_j$ where $\Dd_j:=\Dd^b(\Lambda_j)$ and $\Cc_j=\Cc_{\Lambda_j}$. We will say that $\Lambda_1$ and $\Lambda_2$ satisfy the \emph{compatibility condition} if there exists a sequence $i_1,i_2,\ldots, i_l$ (called \emph{compatible sequence}) and a $\ZZ$-graded QP $(\widetilde{Q}^1,W^1,d^1)$ with $W^1$ homogeneous of degree $1$ such that:
\begin{enumerate}
\item we have an isomorphism of $\ZZ$-graded algebras $$\Jac(\widetilde{Q}^1,W^1,d^1)\underset{\ZZ}{\simeq} \bigoplus_{p\in\ZZ}\Hom_{\Dd_1}(\Lambda_1,\SSS^{-p}\Lambda_1)$$
\item for any $0\leq j\leq l$, the quiver of $\End_{\Cc_1}(T_{j})$ has neither loops nor 2-cycles at the vertex $i_{j+1}$, where $T_j:=\mu_{i_j}\circ\cdots\circ \mu_{i_2}\circ \mu_{i_1}(\pi_1\Lambda_1)$;
\item there exists an isomorphism $\End_{\Cc_1}(T_l)\simeq \End_{\Cc_2}(\pi_2\Lambda_2)$;
\item the isomorphisms in (1) and (3) can be chosen in such a way that there exists a $\ZZ$-grading $d^2$ on $(\widetilde{Q}^2,W^2):= \mu_{i_l}\circ\cdots\circ \mu_{i_2}\circ \mu_{i_1}(\widetilde{Q}^1,W^1)$ such that we have an isomorphism of $\ZZ$-graded algebras $$\Jac(\widetilde{Q}^2,W^2,d^2)\underset{\ZZ}{\simeq} \bigoplus_{q\in \ZZ}\Hom_{\Dd_2}(\Lambda_2,\SSS^{-q}\Lambda_2).$$ 
\end{enumerate}
\end{dfa}

\begin{rema}
\begin{enumerate}
\item The grading $d^2$ will typically not be the grading obtained by mutation on graded quivers with potential.
\item In this definition, (1), (2), (3) mean that the quivers with potential of $\widetilde{\Lambda}_1$ and $\widetilde{\Lambda}_2$ (see Theorem~\ref{keller}) are linked by a sequence of mutations, and that neither loops nor 2-cycles occur at any intermediate step of this sequence of mutations. (The condition of not having loops or 2-cycles is automatic if the QP is rigid.)
\item If $\widetilde{\Lambda}_1 \simeq \widetilde{\Lambda}_2$, then conditions (1), (2) and (3) hold. We then have two (possibly) different $\ZZ$-gradings on $\widetilde{\Lambda}_1=\widetilde{\Lambda}_2$. Condition (4) means that these two gradings yield a $\ZZ^2$-grading on $\widetilde{\Lambda}_1=\widetilde{\Lambda}_2$.
\item When conditions (1), (2) and (3) are satisfied, we do not know of any counterexamples to condition (4) being satisfied.
\end{enumerate}\end{rema}

\begin{thma}\label{maintheorem}
Let $\Lambda_1$ and $\Lambda_2$ be two algebras of global dimension~$\leq 2$ which are $\tau_2$-finite and which satisfy the compatibility condition.
Then 
\begin{enumerate}
\item there are $\ZZ$-gradings on $\Lambda_1$ and on $\Lambda_2$, such that there exists a derived equivalence $$\xymatrix{F^\ZZ \colon \Dd_1^\ZZ\ar[r]^(.6)\sim & \Dd_2^\ZZ},$$ where $\Dd_j^\ZZ:=\Dd^b(\cov{\Lambda_j}{\ZZ})$. 
\item the equivalence $F^\ZZ$ induces a triangle equivalence $\xymatrix{F \colon \Cc_1\ar[r]^(.55)\sim & \Cc_2}$ such that the following diagram commutes: $$\xymatrix@C=1.5cm{\Dd_1^\ZZ\ar[d] \ar@/_7mm/[dd]_{\pi_1^\ZZ}\ar[r]^\sim_{F^\ZZ} & \Dd_2^\ZZ\ar[d] \ar@/^7mm/[dd]^{\pi_2^\ZZ} \\ \Dd_1\ar[d]^{\pi_1} & \Dd_2\ar[d]_{\pi_2} \\ \Cc_1 \ar[r]^\sim_F & \Cc_2}$$
\end{enumerate}

\end{thma}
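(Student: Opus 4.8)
The plan is to realize both $\Dd_1^\ZZ$ and $\Dd_2^\ZZ$ as reconstructions, via the graded recognition theorem (Theorem~\ref{Zclustertilt}), of one and the same $\ZZ^2$-graded Jacobian algebra, and then to push the resulting equivalence down to the cluster categories using the iterated orbit machinery of Section~\ref{section_triang_orbit}. The compatibility condition is exactly what packages the two natural $\ZZ$-gradings (the one coming from $\Cc_1$ and the one coming from $\Cc_2$) into a single $\ZZ^2$-grading on which everything can be read off.

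First I would fix the gradings. Let $(\widetilde{Q}^1,W^1,d^1)$ be the graded QP of the compatibility condition, so $d^1$ is the natural grading of $\widetilde\Lambda_1$ (degree $0$ on arrows of $Q^1$, degree $1$ on relation arrows, by Proposition~\ref{Zgrading}). Since graded QP mutation is invertible, let $d'^1$ be the unique grading on $\widetilde Q^1$ carried to $d^2$ by $\mu_{i_l}\circ\cdots\circ\mu_{i_1}$; as $W^2$ is homogeneous of degree $1$ for $d^2$ and mutation preserves homogeneity, $W^1$ is homogeneous of degree $1$ for $d'^1$. Thus $(\widetilde Q^1,W^1)$ carries the $\ZZ^2$-grading $(d^1,d'^1)$ with $W^1$ homogeneous of degree $(1,1)$, and $(\widetilde Q^2,W^2)$ carries $(\bar d^1,d^2)$ with $\bar d^1:=\mu_{i_l}\cdots\mu_{i_1}(d^1)$. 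Restricting $d'^1$ to the $d^1$-degree-$0$ arrows defines the $\ZZ$-grading on $\Lambda_1$, and restricting $\bar d^1$ to the $d^2$-degree-$0$ arrows defines the $\ZZ$-grading on $\Lambda_2$; these are the gradings of assertion~(1). By Proposition~\ref{Z2grading} the canonical cluster-tilting subcategories $\Uu_1\subset\Dd_1^\ZZ$ and $\Uu_2\subset\Dd_2^\ZZ$ then have $\ZZ^2$-graded endomorphism algebras $\Jac(\widetilde Q^1,W^1,(d^1,d'^1))$ and $\Jac(\widetilde Q^2,W^2,(d^2,\bar d^1))$.

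Next I would mutate inside $\Dd_1^\ZZ$. Applying iterated graded left mutation of cluster-tilting subcategories along $i_1,\dots,i_l$, legitimate at each step by condition~(2) (no loops or $2$-cycles) and governed by the $\ZZ^2$-graded Buan--Iyama--Reiten--Smith theorem (Theorem~\ref{Z2gradedbirs}), I obtain a cluster-tilting subcategory $\Uu_1'\subset\Dd_1^\ZZ$ with $\ZZ^2$-graded endomorphism algebra $\Jac(\widetilde Q^2,W^2,(\bar d^1,d^2))$. So $\Uu_1'$ and $\Uu_2$ have the \emph{same} underlying Jacobian algebra, but with the two grading directions interchanged. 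The decisive observation is that $\SS$ acts on both $\Uu_1'$ and $\Uu_2$ as the \emph{diagonal} translation of the bidegree lattice (it sends the summand indexed by $(p,q)$ to the one indexed by $(p-1,q-1)$ and preserves morphism bidegrees), and the coordinate swap $(p,q)\mapsto(q,p)$ commutes with this diagonal action; hence the swap induces an equivalence $\phi\colon\Uu_1'\xrightarrow{\sim}\Uu_2$ of additive categories \emph{with $\SS$-action}, even though it exchanges the direction tied to $\langle1\rangle$ with the one tied to $\SS$. Feeding $\phi^{-1}$ into Theorem~\ref{Zclustertilt} (with $\Tt=\Dd_1^\ZZ$, $\Vv=\Uu_1'$, $\Lambda=\Lambda_2$, $\Uu=\Uu_2$) produces a triangle equivalence which, inverted, gives $F^\ZZ\colon\Dd_1^\ZZ\xrightarrow{\sim}\Dd_2^\ZZ$ restricting to $\phi$, proving~(1).

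Finally, for~(2) I would descend $F^\ZZ$. Theorem~\ref{Zclustertilt} gives $F^\ZZ\SS\simeq\SS F^\ZZ$, while tracking the swap on objects shows $\phi\langle1\rangle_1=\langle1\rangle_2^{-1}\SS^{-1}\phi$, so (since $F^\ZZ$ is determined on the generating subcategory $\Uu_1'$) one gets $F^\ZZ\langle1\rangle_1\simeq\langle1\rangle_2^{-1}\SS^{-1}F^\ZZ$; in particular $F^\ZZ$ carries the rank-two group $\langle\SS,\langle1\rangle_1\rangle$ onto $\langle\SS,\langle1\rangle_2\rangle$. Lifting these intertwiners to the canonical DG enhancements and applying the universal property of triangulated orbit categories (Proposition~\ref{universalprop2}) twice, first dividing by $\SS$ (on whose quotient $\langle1\rangle_2^{-1}\SS^{-1}$ reduces to $\langle1\rangle_2^{-1}$, and orbits by $\langle1\rangle_2$ and $\langle1\rangle_2^{-1}$ agree) and then by $\langle1\rangle$, together with the double-orbit descriptions of Corollary~\ref{doubleorbit} and the commutation of iterated orbits (Proposition~\ref{commutativeorbit}), yields $F\colon\Cc_1\xrightarrow{\sim}\Cc_2$ and the stated commuting diagram. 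I expect the main obstacle to be precisely this bidegree bookkeeping: verifying that reverse graded mutation gives a well-defined $d'^1$, that Proposition~\ref{Z2grading} pins down the $\ZZ^2$-graded endomorphism algebras on the nose, and above all confirming that the grading swap relating $\Uu_1'$ and $\Uu_2$ really is compatible with the diagonal $\SS$-action (so Theorem~\ref{Zclustertilt} applies despite $\langle1\rangle$ and $\SS$ being exchanged), and that the induced $\langle1\rangle$-intertwiner lands in $\langle\SS,\langle1\rangle_2\rangle$ cleanly enough to descend.
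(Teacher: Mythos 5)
Your proposal is correct and takes essentially the same route as the paper: the same construction of the two gradings via inverse graded QP mutation of $d^2$, the same decisive observation that the coordinate swap of the bidegree lattice commutes with the diagonal $\SS$-action (this is precisely the paper's check that its equivalence $f$ of cluster-tilting subcategories commutes with $\SS$), the graded recognition theorem (Theorem~\ref{Zclustertilt}) for part (1), and the intertwining relation $F^\ZZ\langle 1\rangle_1\simeq\langle 1\rangle_2^{-1}\SS^{-1}F^\ZZ$ fed through Proposition~\ref{universalprop2} twice together with Corollary~\ref{doubleorbit} for part (2). The only deviation is a mirror-image arrangement --- you mutate forward by graded left mutations inside $\Dd_1^\ZZ$ and apply the recognition theorem with target $\Dd_1^\ZZ$, whereas the paper mutates backward by graded right mutations inside $\Dd_2^\ZZ$ starting from $\Lambda_2\langle 0\rangle$ and recognizes $\Dd_2^\ZZ$ --- which is if anything marginally cleaner, since you invoke Theorem~\ref{Z2gradedbirs} in its stated (left) form rather than the ``right version'' the paper only mentions in a remark.
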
 

\begin{proof}[Proof of (1)]

Take $(\widetilde{Q}^1,W^1,d^1)$  and the isomorphism (1) of the compatibility condition 
$$\xymatrix{\Jac(\widetilde{Q}^1,W^1, d^1)\ar[r]^(.4)\sim_(.4){\ZZ} &\bigoplus_{p\in\ZZ}\Hom_{\Dd_1}(\Lambda_1,\SSS^{-p}\Lambda_1)}.$$
Let $s:=i_1,\ldots, i_l$ be a compatible sequence and define $T^1$ as the object
\[ T^1:=\mu^L_{i_l}\circ\cdots\circ \mu^L_{i_2}\circ \mu^L_{i_1}(\Lambda_1)\in \Dd^b(\Lambda_1). \]
By Condition~(2) of the compatible sequence we can apply Theorem~\ref{Zgradedbirs} and we get an isomorphism:
$$\xymatrix{\Jac(\mu_s^L(\widetilde{Q}^1,W^1,d^1))\ar[r]^(.45)\sim_(.45){\ZZ} &\bigoplus_{p\in\ZZ}\Hom_{\Dd_1}(T^1,\SSS^{-p}T^1)},$$
where $\mu_s^L$ is the composition $\mu_{i_l}^L\ldots\mu_{i_1}^L$.

By Condition~(4) of the compatibility condition,  there exists a $\ZZ$-grading $d^2$ on $(\widetilde{Q}^2,W^2):=\mu_{i_l}\ldots \mu_{i_1}(\widetilde{Q}^1,W^1)$ such that $W^2$ is homogeneous of degree $1$ and such that we have an isomorphism
$$\xymatrix{\Jac(\widetilde{Q}^2,W^2,d^2)\ar[r]^(.4)\sim_(.4){\ZZ} &\bigoplus_{q\in\ZZ}\Hom_{\Dd_2}(\Lambda_2,\SSS^{-p}\Lambda_2)}.$$
Therefore the $\ZZ^2$-grading $(\mu_s^L(d^1),d^2)$ on $\widetilde{Q}^2$ makes $W^2$ homogeneous of degree $(1,1)$. Moreover we have an isomorphism $$\Jac(\widetilde{Q}^2,W^2,(\mu_s^L(d^1),d^2))^{\ZZ\times\{0\}}\simeq \Lambda_2,$$ hence we get a $\ZZ$-grading on $\Lambda_2$. By the uniqueness of the $\ZZ^2$-grading of Proposition~\ref{Z2grading} we have an isomorphism of $\ZZ^2$-graded algebras:
$$\xymatrix{\Jac(\widetilde{Q}_2,W^2,(\mu_s^L(d^1),d^2))\ar[r]^(.4)\sim_(.4){\ZZ^2} &\bigoplus_{p,q\in\ZZ}\Hom_{\Dd_2^\ZZ}(\Lambda_2\langle 0\rangle,\SSS^{-q}\Lambda_2\langle -q+p\rangle)}.$$

We define $T^2\in \Dd^b(\cov{\Lambda_2}{\ZZ})$ as $T^2:=\mu_{i_{1}}^R\cdots \mu_{i_l}^R(\Lambda_2\langle 0 \rangle)$. By Condition~(2) of the compatible sequence we can apply Theorem~\ref{Z2gradedbirs} and we get an isomorphism:
$$\xymatrix{\Jac(\mu_s^R(\widetilde{Q}^2,W^2,\mu^L_s(d^1),d^2))\ar[r]^(.45)\sim_(.45){\ZZ^2} &\bigoplus_{p,q\in\ZZ}\Hom_{\Dd^\ZZ_2}(T^2,\SSS^{-q}T^2\langle -q+p\rangle)},$$
where $\mu_s^R$ is the composition $\mu_{i_1}^R\ldots\mu_{i_l}^R$.
By Lemma~\ref{lemma left=right inverse} the graded QP $\mu_s^R(\widetilde{Q}^2,W^2,\mu^L_s(d^1))=\mu^R_s(\mu_s^L(\widetilde{Q}^1,W^1,d^1))$ is graded right equivalent to $(\widetilde{Q}^1,W^1,d^1)$.
 Therefore we have an isomorphism 
\[ \tag{$*$} \xymatrix{\Jac(\widetilde{Q}^1,W^1,( d^1,\mu_s^R(d^2)))\ar[r]^(.42)\sim_(.42){\ZZ^2} &\bigoplus_{p,q\in\ZZ}\Hom_{\Dd^\ZZ_2}(T^2,\SSS^{-q}T^2\langle -q+p\rangle)} \]
By definition of $d^1$ there is an isomorphism $$\Lambda_1\simeq \Jac((\widetilde{Q}^1,W^1,d^1,\mu_s^R(d^2)))^{\{0\}\times \ZZ}.$$ Therefore we get a $\ZZ$-grading on $Q^1$. The ideal $I^1$ is generated by $\{\partial_aW^1, d^1(a)=1\}$, hence is generated by elements which are homogeneous with respect to the grading $\mu_s^R(d^2)$. By the uniqueness of the $\ZZ^2$-grading of Proposition~\ref{Z2grading} we have 
\[ \tag{$\dagger$} \xymatrix{\Jac(\widetilde{Q}_1,W^1,( d^1,\mu^R_s(d^2)))\ar[r]^(.4)\sim_(.4){\ZZ^2} &\bigoplus_{p,q\in\ZZ}\Hom_{\Dd_1^\ZZ}(\Lambda_1\langle 0\rangle,\SSS^{-p}\Lambda_1\langle -p+q\rangle)}. \]
Finally, combining $(*)$ and $(\dagger)$ we get
\[ \tag{$\ddagger$} \xymatrix{ \bigoplus_{p,q\in\ZZ}\Hom_{\Dd^\ZZ_2}(T^2,\SSS^{-q}T^2\langle -q+p\rangle) \ar[r]^(.5)\sim_(.5){\ZZ^2} &\bigoplus_{p,q\in\ZZ}\Hom_{\Dd_1^\ZZ}(\Lambda_1\langle 0\rangle,\SSS^{-p}\Lambda_1\langle -p+q\rangle)}. \]

The category $\add\{\SSS^pT^2\langle q\rangle \mid p,q\in \ZZ\}=(\pi^\ZZ_2)^{-1}(\mu_S \pi_2 \Lambda_2)$ is cluster-tilting in $\Dd^b(\cov{\Lambda_2}{\ZZ})$ by Proposition~\ref{graded2clustertilting}. The isomorphism $(\ddagger)$ implies that we have an equivalence of $\ZZ^2$-categories 
$$\xymatrix{\add\{\SSS^{-p}\Lambda_1\langle -p+q\rangle \mid p,q\in \ZZ\}\ar[r]^\sim_f & \add\{\SSS^{-q}T^2\langle -q+p\rangle \mid p,q\in \ZZ\}}.$$ In order to apply Theorem~\ref{Zclustertilt}, we have to check that $f$ commutes with $\SSS$:
\begin{align*}
f(\SSS(\SSS^{-p}\Lambda_1\langle -p+q\rangle)) &= f (\SSS^{-p+1}\Lambda_1\langle -p+1 +(q-1)\rangle)\\
& = \SSS^{-q+1}T^2\langle -q+1 + (p-1)\rangle \\
& = \SSS(\SSS^{-p}T^2\langle -q+p\rangle) \\
& = \SSS f(\SSS^{-p}\Lambda_1\langle -p+q\rangle)
\end{align*}

Therefore, by Theorem~\ref{Zclustertilt}, we get a triangle equivalence $F \colon \Dd^b(\cov{\Lambda_1}{\ZZ})\rightarrow \Dd^b(\cov{\Lambda_2}{\ZZ})$ which extends $f$.

\bigskip

\noindent\textit{Proof of (2).}
Now for $i=1,2$ let $\Xx_i:=\Cc^b(\proj \cov{\Lambda_i}{\ZZ})_{dg}$ be the DG category of bounded complexes of projective $\cov{\Lambda_i}{\ZZ}$-modules. The functor $$\xymatrix{F \colon \Dd^b(\cov{\Lambda_1}{\ZZ})=H^0(\Xx_1)\ar[r] & \Dd^b(\cov{\Lambda_2}{\ZZ})=H^0(\Xx_2)}$$ can be seen as $H^0(F_{dg})$ where $F_{dg}:=-\ten_{\cov{\Lambda_1}{\ZZ}} P$ and $P$ is a projective resolution of $\bigoplus_{p\in \ZZ}\SSS^{-p}T^2\langle -p\rangle$ as $\cov{\Lambda_1}{\ZZ}$-$\cov{\Lambda_2}{\ZZ}$-bimodule. 

For $i=1,2$ we set $S_i:= -\ten_{\cov{\Lambda}{\ZZ}}X_i$ where $X_i$ is a projective resolution of $D \cov{\Lambda_i}{\ZZ} [-2]$ as $\cov{\Lambda_i}{\ZZ}$-bimodule. 

In order to prove that we have an isomorphism  
  $$F_{dg}\circ S_1\simeq S_2\circ F_{dg}  \quad\textrm{ in }\rep(\Xx_1,\Xx_2),$$ it is enough to prove that we have an isomorphism  $$X_1 \ten_{\cov{\Lambda_1}{\ZZ}} P\simeq P\ten_{\cov{\Lambda_1}{\ZZ}}X_2 \quad\textrm{ in } \Dd^b((\cov{\Lambda_1}{\ZZ})^{\rm op}\ten \cov{\Lambda_2}{\ZZ}),$$ and the proof is similar to the proof of Corollary~\ref{derivedeqclustereq}.

Now, by Proposition~\ref{universalprop2}, we get a DG functor $F_S \colon \pretr(\Xx_1/S_1)\rightarrow \pretr(\Xx_2/S_2)$ such that $H^0(F_S)$ is an equivalence and such that  the following diagram commutes 
\[ \xymatrix{
{\begin{array}{c} \Dd^b(\cov{\Lambda_1}{\ZZ}) \\ =H^0(\Xx_1) \end{array}} \ar[d]^{H^0(\pi_{\Xx_1})}\ar[rr]^{F=H^0(F_{dg})} && {\begin{array}{c} \Dd^b(\cov{\Lambda_2}{\ZZ})\\ =H^0(\Xx_2)\ar[d]^{H^0(\pi_{\Xx_2})} \end{array}} \\ {\begin{array}{c} (\Dd^b(\cov{\Lambda_1}{\ZZ})/\SSS)_{\Delta} \\ =H^0(\pretr(\Xx_1/S_1)) \end{array} } \ar[rr]_{H^0(F_S)} && { \begin{array}{c} (\Dd^b(\cov{\Lambda_2}{\ZZ})/\SSS)_{\Delta} \\ =H^0(\pretr(\Xx_2/S_2)) \end{array} } } \]

For $i=1,2$ we set $T_i:=\langle 1 \rangle \colon \Xx_i\rightarrow \Xx_i$. 
It is immediate to check that we have $$S_i\circ T_i\simeq T_i\circ S_i\quad \textrm{ in }\rep(\Xx_i,\Xx_i).$$ Moreover, the functor $T_i$, as DG equivalence of $\Xx_i$ induces a  DG functor $T_i$ on $\pretr(\Xx_i/S_i)$ such that $\pi_{\Xx_i}\circ T_i\simeq T_i\circ \pi_{\Xx_i}$.

We have the following isomorphisms in $\Dd((\cov{\Lambda_1}{\ZZ})^{\rm op}\ten \cov{\Lambda_2}{\ZZ}),$

\begin{align*}
\leftsub{\rm id}{\cov{\Lambda_1}{\ZZ}}_{\langle -1\rangle} \ten_{\cov{\Lambda_1}{\ZZ}} P & \simeq \leftsub{\rm id}{\cov{\Lambda_1}{\ZZ}}_{\langle -1\rangle} \lten_{\cov{\Lambda_1}{\ZZ}}\bigoplus_{q\in\ZZ}\SSS^{-q}T^2\langle -q\rangle \\
& \simeq \bigoplus_{q\in\ZZ}\SSS^{-q+1}T^2\langle -q+1\rangle \\
& \simeq \SSS(\bigoplus_{q\in \ZZ}\SSS^{-q}T^2\langle -q\rangle) \langle 1\rangle \\
& \simeq P\langle 1\rangle \ten_{\cov{\Lambda_2}{\ZZ}}X_2
\end{align*}
and hence an isomorphism $$F_{dg}\circ T_1^{-1}\simeq S_2\circ T_2\circ F_{dg}, \quad \textrm{ in } \rep(\Xx_1,\Xx_2).$$
Therefore we have isomorphisms in $\rep(\Xx_1,\pretr(\Xx_2/S_2))$
\begin{align*}
 F_S \circ T_1^{-1} \circ \pi_{\Xx_1} & \simeq F_S \circ \pi_{\Xx_1} \circ T_1^{-1} \\
& \simeq \pi_{\Xx_2}\circ F_{dg}\circ T_1^{-1}\\
& \simeq \pi_{\Xx_2}\circ S_2\circ T_2\circ F_{dg} \\
& \simeq \pi_{\Xx_2}\circ T_2\circ F_{dg} \\
& \simeq T_2 \circ F_s \circ \pi_{\Xx_2},
\end{align*}
and we deduce an isomorphism  
$$F_S\circ T_1^{-1}\simeq T_2\circ F_S \quad \textrm{ in }\rep(\pretr(\Xx_1/S_1),\pretr(\Xx_2,S_2)).$$

Applying again Proposition~\ref{universalprop2} we get a DG functor $F_{S,T} \colon \pretr(\pretr(\Xx_1/S_1)/T_1)\rightarrow \pretr(\pretr(\Xx_2/S_2)/T_2)$ such that $H^0(F_{S,T})$ is a triangle equivalence and such that the following diagram commutes
$$\xymatrix{\pretr(\Xx_1/S_1)\ar[rr]^{F_S}\ar[d]^{\pi_{\Xx_1/S_1}} && \pretr(\Xx_2/S_2)\ar[d]^{\pi_{\Xx_2/S_2}}\\ 
\pretr(\pretr(\Xx_1/S_1)/T_1))\ar[rr]^{F_{S,T}}&& \pretr(\pretr(\Xx_2/S_2)/T_2)}.$$
Applying $H^0$ and using Corollary~\ref{doubleorbit} we obtain a commutative diagram 
$$\xymatrix{\Dd^b(\cov{\Lambda_1}{\ZZ})\ar[d]^{H^0(\pi_{\Xx_1})}\ar[rr]^{F=H^0(F_{dg})} && \Dd^b(\cov{\Lambda_2}{\ZZ})\ar[d]^{H^0(\pi_{\Xx_2})} \\ (\Dd^b(\cov{\Lambda_1}{\ZZ})/\SSS)_{\Delta}\ar[rr]_{H^0(F_S)}\ar[d]^{H^0(\pi_{\Xx_1/S_1})} && (\Dd^b(\cov{\Lambda_2}{\ZZ})/\SSS)_{\Delta}\ar[d]^{H^0(\pi_{\Xx_2/S_2})}\\ \Cc_{\Lambda_1}\simeq \left((\Dd^b(\cov{\Lambda_1}{\ZZ})/\SSS)_{\Delta}/\langle 1 \rangle\right)_{\Delta}\ar[rr]^{H^0(F_{S,T})} &&\left((\Dd^b(\cov{\Lambda_1}{\ZZ})/\SSS)_{\Delta} / \langle 1 \rangle\right)_{\Delta} \simeq \Cc_{\Lambda_2} }$$
\end{proof}

\begin{exa}
Let $H=kQ$ and $\Lambda_3=kQ^3/I^3$ be the algebras given by the following quivers (we keep the notation of Example~\ref{example}):
$$ Q=\xymatrix@-.4cm{ &2\ar[dl]_\alpha &\\ 1&&3\ar[ll]^\gamma \ar[ul]_\beta },\textrm{ and }  Q^3=\xymatrix@-.4cm{ &2\ar[dr]^b&\\ 1\ar@{..}@/^6mm/[rr]\ar[ur]^a&&3\ar[ll]^d}$$
with relations  $I^3=\langle ba\rangle$.
The graded QP associated with these algebras of global dimension~$\leq 2 $ are $(Q,0,d)$ and $(\widetilde{Q}_3,W_3,d_3)$ given by 
$$Q=\xymatrix{ &2\ar[dl]|0_(.4)\alpha &\\ 1&&3\ar[ll]|0^(.4)\gamma \ar[ul]|0_(.4)\beta } \textrm{ and } \widetilde{Q}^3= \xymatrix{ &2\ar[dr]|{0}^(.4)b&\\ 1\ar[ur]|{0}^(.6)a&&3\ar@<1mm>[ll]|(.4){0}^(.6)d\ar@<-1mm>[ll]|(.6){1}_(.4)c}$$ with $W_3=cba$. It is immediate to see that there is an isomorphism $\mu_2(Q,0)\simeq(\widetilde{Q}^3,W_3)$ which sends $\alpha^*$ on $a$, $\beta^*$ on $b$, $[\alpha\beta]$ on $c$ and $\gamma$ on $d$. If we compute $\mu_2^L(Q,0,d)$ we get the following grading on $\mu_2(Q)$:
 $$\mu_2(Q)= \xymatrix{ &2\ar[dr]|{1}^(.4){\beta^*}&\\ 1\ar[ur]|{0}^(.6){\alpha^*}&&3\ar@<1mm>[ll]|(.4){0}^(.6)\gamma\ar@<-1mm>[ll]|(.6){0}_(.4){[\alpha\beta]}}$$
Then we get a $\ZZ^2$-grading on $\widetilde{Q}^3$ given by $(\mu_2^L(d),d_3)$:
$$\widetilde{Q}^3= \xymatrix{ &2\ar[dr]|{(1,0)}^(.4)b&\\ 1\ar[ur]|{(0,0)}^(.6)a&&3\ar@<1mm>[ll]|(.4){(0,0)}^(.6)d\ar@<-1mm>[ll]|(.6){(0,1)}_(.4)c}$$ which makes the potential $W_3$ homogeneous of degree $(1,1)$ and 
which induces a grading on $\Lambda_3$ 
$$ Q^3=\xymatrix@-.3cm{ &2\ar[dr]|1^(.4)b&\\ \ar@{..}@/^6mm/[rr]1\ar[ur]|0^(.6)a&&3.\ar[ll]|0^(.4)d}$$
Now $\mu_2^R(\widetilde{Q}_3,W_3,d_3)$ yields a $\ZZ$-grading on $Q$
$$Q=\xymatrix@-.3cm{ &2\ar[dl]|1_(.4){a^*} &\\ 1&&3\ar[ll]|0^(.4)d \ar[ul]|0_(.4){b^*}. }$$
Here are the $\ZZ$-coverings of the $\ZZ$-graded algebras $H$ and $\Lambda_3$
$$\xymatrix@-.4cm{ &&&\\&&2\ar@{..}[ul]&\\& 1&&3\ar[ll]_\gamma\ar[ul]_\beta\\& &2\ar[ul]^\alpha&\\ \cov{H}{\ZZ} =&1&&3\ar[ul]_\beta\ar[ll]^\gamma \\&&2\ar[ul]^\alpha&\\& 1&&3\ar[ul]_\beta\ar[ll]_\gamma\\&&\ar@{..}[ul]&}\quad  \xymatrix@-.4cm{&&&\\ &&2\ar@{..}[ur]&\\ &1\ar[ur]^a&&3\ar[ll]_d\\ &&2\ar[ur]_b&\\\textrm{and}\quad \cov{\Lambda_3}{\ZZ}=& 1\ar@{..}@/^5mm/[uurr]\ar[ur]^a&&3\ar[ll]_d\\ &&2\ar[ur]_b&\\& 1\ar@{..}@/^5mm/[uurr]\ar[ur]^a&&3\ar[ll]_d\\& &\ar@{..}[ur]&} $$
By Theorem~\ref{maintheorem} we have a derived equivalence $\Dd^b(\cov{H}{\ZZ})\simeq \Dd^b(\cov{\Lambda_3}{\ZZ}).$
\end{exa}

\begin{rema}
\begin{enumerate}
\item
The algebras given by the quivers $$\xymatrix@-.4cm{ 1&&\\ &2\ar[ul]^\alpha&\\ 1&&3\ar[ul]_\beta\ar[ll]^\gamma} \textrm{ and } \xymatrix@-.4cm{1&&3\ar[ll]_d\\ &2\ar[ur]_b&\\ 1\ar@{..}@/^5mm/[uurr]\ar[ur]^a&&}$$ and the relation $ba=0$ are derived equivalent, one can pass from one to the other by doing the left mutation in the derived category at vertex 2. Using this repeatedly one can also directly check that we have a derived equivalence $\Dd^b(\cov{H}{\ZZ})\simeq \Dd^b(\cov{\Lambda_3}{\ZZ})$.
\item In the paper \cite{AO3}, we use Theorem~\ref{maintheorem} to deduce the shape of AR-quiver of the derived category $\Dd^b(\Lambda_3)$, and of any algebra which is cluster equivalent to the path algebra of a quiver of type $\tilde{A}_n$.
\end{enumerate}
\end{rema}

\newcommand{\etalchar}[1]{$^{#1}$}
\providecommand{\bysame}{\leavevmode\hbox to3em{\hrulefill}\thinspace}
\providecommand{\MR}{\relax\ifhmode\unskip\space\fi MR }
\providecommand{\MRhref}[2]{%
  \href{http://www.ams.org/mathscinet-getitem?mr=#1}{#2}
}
\providecommand{\href}[2]{#2}

\end{document}